\setlist{itemsep=3pt plus 1pt minus 1pt, leftmargin=2em}
\newcommand{\setof}[1]{\left\{ #1 \right\}}
\newcommand{\FF}{\mathcal F}
\newcommand{\pair}[2]{\left< #1, #2\right>}
\newcommand{\pg}[2]{\pair{#1}{#2}_\G}
\newcommand{\pgn}{\pg{}{}}
\newcommand{\cl}[1]{\overline{#1}}
\newcommand{\norm}[1]{\left\Vert #1 \right\Vert}
\newcommand{\utau}{\underline\tau}
\newcommand{\inbetti}[2]{b_1(#1 \to #2)}
\DeclareMathOperator{\Comp}{Comp}
\DeclareMathOperator{\diam}{diam}
\newcommand{\numcomp}{\#\Comp}
\newcommand{\ceil}[1]{\lceil #1 \rceil}
\newcommand{\PSLR}{{\bf {PSL}}(2,\R)}
\newcommand{\PSLC}{{\bf {PSL}}(2,\C)}
\newcommand{\Ha}{{\mathbb {H}}^{2}}
\newcommand{{\Ho}}{ {\mathbb {H}}^{3}}
\newcommand{\R} {\mathbb {R} }
\newcommand{\Z} {\mathbb {Z}}
\newcommand{\N}{\mathbb {N}}
\newcommand{\C} {\mathbb {C}}
\newcommand{\wt}{\widetilde}
\newcommand{\wh}{\widehat}
\newcommand{\M}{{\bf {M}} }
\newcommand{\Su}{{\mathcal S}}
\newcommand{\dis}{{\bf d}}
\newcommand{\HH}{{\mathcal {H}}}
\newcommand{\hh}{\HH_h}
\newcommand{\hhh}{\HH'_h}
\newcommand{\Ne}{{\mathcal {N}}}
\newcommand{\G}{{\mathcal {G}}}
\newcommand{\la}{{\bf {\lambda}}}
\newcommand{\numb}{{\bf {n}}}
\newcommand{\gen}{{\bf {g}}}
\newcommand{\IM}{{\bf {i}}}
\newcommand{\Area}{\operatorname{Area}}
\newcommand{\len}{{\operatorname {l}}}
\newcommand{\Top}{{\operatorname {top}}}
\newcommand{\Geo}{{\operatorname {geo}}}
\newcommand{\Hull}{{\operatorname {Hull}}}
\newcommand{\inj}{{\bf{r}}}
\newcommand{\sys}{{\bf{sys}}}
\newcommand{\Dol}{{\mathcal {D}}}
\newcommand{\Tr}{{\mathcal {T}}}
\newcommand{\lab}{\psi}
\newcommand{\from}{\colon}
\newtheorem{corollary}{Corollary}[section]
\newtheorem{theorem}[corollary]{Theorem}
\newtheorem{lemma}[corollary]{Lemma}
\newtheorem{proposition}[corollary]{Proposition}
\newtheorem{claim}[corollary]{Claim}
\newtheorem{definition}[corollary]{Definition}
\newtheorem{conjecture}[corollary]{Conjecture}
\theoremstyle{remark}
\newtheorem{remark}{Remark}
\begin{document}

\title[random surfaces] {Geometrically and topologically random surfaces in a closed hyperbolic three manifold}

\author[Kahn]{Jeremy Kahn}

\address{\newline Mathematics Department \newline Brown University \newline Providence, RI 02912, USA}

\email{jeremy\_kahn@brown.edu}

\author[Markovi\'c]{Vladimir Markovi\'c}

\address{ \newline All Souls College  \newline University of Oxford  \newline United Kingdom }

\email{markovic@maths.ox.ac.uk}

\author[Smilga]{Ilia Smilga}

\address{\newline Mathematical Institute  \newline University of Oxford  \newline United Kingdom }

\email{ilia.smilga@normalesup.org}


\subjclass[2000]{Primary 20H10}

\begin{abstract}  We study the distribution of geometrically and topologically nearly geodesic random surfaces in a closed hyperbolic 3-manifold $\M$. In particular, we describe $\PSLR$ invariant measures on the Grassmann bundle $\G_2(\M)$ which arise as limits of random minimal surfaces. It is showed that if $\M$ contains at least one totally geodesic subsurface then every topological limiting measure is  totally scarring (i.e. supported on the totally geodesic locus), while we prove that  geometrical limiting measures are never  totally scarring.
\end{abstract}

\maketitle

\let\johnny\thefootnote
\renewcommand{\thefootnote}{}

\footnotetext{ This work was supported by the \textsl{Simons Investigator Award}  409735 from the Simons Foundation}
\let\thefootnote\johnny

\section{Introduction} 

Fix a closed hyperbolic 3-manifold $\M$. We begin by defining  the notions of geometrically and topologically random surfaces in $\M$.
\begin{definition}\label{def-1} We let  $\Su$ denote the set of conjugacy classes of surface subgroups of $\pi_1(\M)$.  For $\epsilon>0$, we let $\Su_\epsilon$ denote the subset of $\Su$ consisting of conjugacy classes of quasifuchsian surface subgroups whose limit set is a  $(1+\epsilon)$-quasicircle.
\end{definition}
The following well known proposition follows from the works of  Uhlenbeck \cite{uhlenbeck} and Seppi \cite{seppi}.
\begin{proposition}\label{prop-us}
There exists a universal constant  $\wh{\epsilon}>0$ such that when $\epsilon<\wh{\epsilon}$ there exists a unique minimal surface in the homotopy class defined by $\Sigma\in \Su_\epsilon$, and this minimal surface is immersed in $\M$.
\end{proposition}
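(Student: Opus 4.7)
The plan is to work in the universal cover $\Ho$, where a representative $\Gamma$ of $\Sigma\in\Su_\epsilon$ acts with limit set $\Lambda_\Gamma\subset\pt\Ho$ a $(1+\epsilon)$-quasicircle. A minimal immersion in $\M$ in the homotopy class of $\Sigma$ corresponds bijectively to a $\Gamma$-equivariant complete minimal disk in $\Ho$, whose asymptotic boundary must be the unique $\Gamma$-invariant Jordan curve in $\pt\Ho$, namely $\Lambda_\Gamma$. The question thus reduces to the $\Gamma$-equivariant asymptotic Plateau problem for $\Lambda_\Gamma$.

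First I would apply Seppi's theorem \cite{seppi}: there is a universal $\wh\epsilon>0$ such that every $(1+\epsilon)$-quasicircle $\Lambda\subset\pt\Ho$ with $\epsilon<\wh\epsilon$ bounds a unique complete minimal disk $\wt S\subset\Ho$, and the principal curvatures of $\wt S$ satisfy $|\lambda|\le\delta(\epsilon)$ with $\delta(\epsilon)\to 0$ as $\epsilon\to 0$. Applied to $\Lambda=\Lambda_\Gamma$, this produces the desired disk $\wt S$. Since $\Gamma$ preserves $\Lambda_\Gamma$, for every $\gamma\in\Gamma$ the minimal disk $\gamma\cdot\wt S$ has the same asymptotic boundary and, by uniqueness, equals $\wt S$. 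Hence $\wt S$ is $\Gamma$-invariant and descends to a minimal surface $S\subset\M$ in the given homotopy class.

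To conclude that $S$ is immersed and unique in its homotopy class, I would invoke Uhlenbeck's almost-Fuchsian criterion \cite{uhlenbeck}: when the principal curvatures of a minimal surface in $\Ho$ are strictly bounded by $1$ in absolute value, the normal exponential map is a diffeomorphism onto an embedded equidistant tube, and in particular the surface itself is smoothly embedded with no branch points. Combined with Seppi's bound $\delta(\epsilon)<1$ (shrinking $\wh\epsilon$ further if necessary), this shows that $\wt S$ is embedded in $\Ho$ and that the projection $\wt S\to\M$ is a local isometry, so $S$ is immersed. Uniqueness in the homotopy class is then immediate: any other minimal representative lifts to a $\Gamma$-equivariant complete minimal disk with asymptotic boundary $\Lambda_\Gamma$, which by Seppi's uniqueness must coincide with $\wt S$.

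The only genuinely non-routine step is the quantitative passage from the $(1+\epsilon)$-quasicircle hypothesis at infinity to the strict interior curvature bound $|\lambda|<1$, together with the existence--uniqueness for the asymptotic Plateau problem in the near-circular regime; this is exactly the content of Seppi's estimates. Everything else is a standard combination of equivariance-from-uniqueness with Uhlenbeck's almost-Fuchsian tube criterion.
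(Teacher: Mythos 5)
Your argument is correct and is exactly the route the paper intends: the paper gives no proof of Proposition \ref{prop-us}, stating only that it ``follows from the works of Uhlenbeck and Seppi,'' and your combination of Seppi's existence--uniqueness and curvature estimate for the asymptotic Plateau problem over a near-round quasicircle with Uhlenbeck's almost-Fuchsian criterion is precisely that deduction.
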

Thus, when $\epsilon<\wh{\epsilon}$ we can define the function $\Area_\M\from\Su_\epsilon\to (0,\infty)$ by letting $\Area_\M(\Sigma)$ be the area of the (unique) minimal surface in the corresponding homotopy class. In the remainder of the paper we assume that 
$\epsilon<\wh{\epsilon}$. 
\begin{definition}\label{def-2} Let $g\ge 2$, and $T>0$. We set
$$
\Su^\Top_\epsilon(g)=\{\Sigma\in \Su_\epsilon: \gen(\Sigma)\le g\},
$$
and
$$
\Su^\Geo_\epsilon(T)=\{\Sigma\in \Su_\epsilon: \Area_\M(\Sigma) \le T\},
$$
where $\gen(\Sigma)$ denotes the genus. (To alleviate the notation  we often write $\Su_\epsilon(g)$ instead of 
$\Su^\Top_\epsilon(g)$, and $\Su_\epsilon(T)$ instead of $\Su^\Geo_\epsilon(T)$).
\end{definition}
A typical element of $\Su_\epsilon(g)$ is called a topologically random surface, and 
a typical element of $\Su_\epsilon(T)$ a geometrically random surface. The study of topologically random surfaces was initiated by Kahn-Markovi\'c \cite{k-m-1}, while the study of geometrically random ones was  initiated by  Calegari-Marques-Neves \cite{c-m-n}, and  Labourie \cite{labourie}.

\subsection{The motivation} In the next subsection we define  probability measures on the Grassmann bundle $\G_2(\M)$ which capture the distribution of random surfaces. Topological and geometrical limiting measures inform our knowledge regarding the distribution of random surfaces. 
Establishing the uniqueness of such measures (or at least describing the range of such limits) would tell us how a random surface distributes in $\M$.

As observed in \cite{labourie}, the motivation for studying geometrical limiting measures comes from the classical case dealing with the limiting measures arising as limits (when $T\to \infty$) of measures supported on closed geodesics of length at most $T>0$. It was proved by Bowen and Margulis that in this case the limiting measure is unique and equal to the corresponding Liouville measure on the tangent bundle of $\M$. The motivation for studying topological limiting measures is clear.

\subsection{Invariant limiting measures}
Let $\G_2(\M)$ denote the Grassmann bundle  (the 2-plane bundle) over $\M$. We begin by explaining how each $\Sigma\in \Su_\epsilon$ yields a probability measure on $\G_2(\M)$. Let $f\from S_\Sigma\to \M$ be the minimal immersion representing the homotopy class of $\Sigma$ (here $S_\Sigma$ is the  Riemann surface determined by the requirement that $f$ is conformal and harmonic).  
Since $f$ is an immersion, we have the  induced map $f\from S_\Sigma \to \G_2(\M)$ (to simplify the notation, we use the same letter to denote both  maps).  Let $m_S$ denote the hyperbolic area measure on $S_\Sigma$ normalised to be a probability measure.  We  define the pushforward measure  
\begin{equation}\label{eq-1}
\mu(\Sigma)=f_*m_S
\end{equation}
on the Grassmann bundle $\G_2(\M)$. 
\begin{remark} One can define another probability measure $\mu_1(\Sigma)$ on $\G_2(\M)$ using the induced area form on the subsurface $f(S)$.  Unless the minimal surface $f(S)$ is totally geodesic, the measure $\mu_1(\Sigma)$ may not be the same as $\mu(\Sigma)$. However, when $\epsilon$ is small the minimal map $f$ is uniformly locally close to a M\"obius map. Thus, the distance between the  probability measures $\mu(\Sigma)$ and $\mu_1(\Sigma)$ uniformly tends to zero when $\epsilon\to 0$, which means that the limiting measures we define below do not depend whether we use $\mu(\Sigma)$ or $\mu_1(\Sigma)$ to define them.
\end{remark}
\begin{definition} We say that a probability measure $\mu^\Top_\epsilon$ on $\G_2(\M)$ is an $\epsilon$-topological limiting measure  if there exists a sequence $g_n\to \infty$ such that 
$$
\mu^\Top_\epsilon=\lim\limits_{g_{n}\to \infty} \frac{1}{|\Su_\epsilon(g_n)|} \sum_{\Sigma\in \Su_\epsilon(g_n)} \mu(\Sigma).
$$
We say that  a probability measure $\mu^\Geo _\epsilon$ on $\G_2(\M)$ is an $\epsilon$-geometrical limiting measure if there exists a sequence $T_n\to \infty$ such that 
$$
\mu^\Geo_\epsilon=\lim\limits_{T_{n}\to \infty} \frac{1}{|\Su_\epsilon(T_n)|} \sum_{\Sigma\in \Su_\epsilon(T_n)} \mu(\Sigma).
$$
\end{definition}

\begin{definition} We say that a probability measure $\mu^\Top$ on $\G_2(\M)$ is a topological limiting measure  
if there exists a sequence $\epsilon_n\to 0$ such that 
$$
\mu^\Top=\lim\limits_{n\to \infty} \mu^\Top_{\epsilon_{n}},
$$
where $\mu^\Top_{\epsilon_{n}}$ is some $\epsilon_n$-topological limiting measure.
Likewise, we say that  a probability measure $\mu^\Geo$ on $\G_2(\M)$ is a geometrical limiting measure if there exists a sequence $\epsilon_n\to 0$ such that 
$$
\mu^\Geo=\lim\limits_{n\to \infty} \mu^\Geo_{\epsilon_{n}},
$$
where $\mu^\Geo_{\epsilon_{n}}$ is some $\epsilon_n$-geometrical limiting measure.
\end{definition}

\subsection{The main results} Each topological or geometrical limiting measure is invariant under the natural left  
$\PSLR$ action on $\G_2(\M)$. The classical Ratner theorem says that each such measure is a linear combination of the probability Liouville measure on $\G_2(\M)$, and the  probability measures supported on the images of totally geodesic surfaces in $\G_2(\M)$.  In particular, if $\M$ does not contain a totally geodesic subsurface then there exists 
a unique limiting measure (either topological or geometrical) and it is equal to the Liouville measure on $\G_2(\M)$.
Therefore, it remains to understand the case when $\M$ contains totally geodesic subsurfaces.
\begin{definition} Let $\mu$ denote a $\PSLR$-invariant probability measure on $\G_2(\M)$. Then by $\mu_{\mathcal{L}}$ we denote its Liouville part. Moreover, we say that $\mu$  is totally scarring if $\mu_{\mathcal{L}}$ is the zero measure.
\end{definition}

\begin{theorem}\label{thm-main-geo} Suppose $\M$ is a  closed hyperbolic 3-manifold. There exists a constant $0<q$ such that if $\mu^\Geo$ is a geometrical limiting measure then $q<|\mu^\Geo_{\mathcal{L}}|$. 
\end{theorem}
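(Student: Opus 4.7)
The plan is to show that, in the averaged measures $\tfrac{1}{|\Su_\epsilon(T)|}\sum_\Sigma \mu(\Sigma)$ defining $\mu^\Geo$, a uniformly positive fraction of the mass lies outside every fixed neighborhood of the (finitely many) totally geodesic subsurfaces of $\M$. By the Ratner theorem cited above, together with finiteness of totally geodesic subsurfaces in a closed hyperbolic $3$-manifold (Shah, strengthened by Bader--Fisher--Miller--Stover), one decomposes
\begin{equation*}
\mu^\Geo = |\mu^\Geo_{\mathcal{L}}| \cdot \wt\mu_{\mathcal{L}} + \sum_{i=1}^{k} c_i \nu_i,
\end{equation*}
where $\wt\mu_{\mathcal{L}}$ is the normalized Liouville measure and $\nu_i$ is the probability measure on the image in $\G_2(\M)$ of the totally geodesic subsurface $\Sigma_0^{(i)}$. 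Writing $\mathcal{Z} = \bigcup_i \mathrm{supp}(\nu_i)$, the theorem reduces to producing constants $\delta, q > 0$ such that
\begin{equation*}
\frac{1}{|\Su_\epsilon(T)|}\sum_{\Sigma \in \Su_\epsilon(T)} \mu(\Sigma)\bigl(\G_2(\M) \setminus \mathcal{N}_\delta(\mathcal{Z})\bigr) \;\ge\; q
\end{equation*}
uniformly in small $\epsilon$ and large $T$, since this estimate passes to the weak-$*$ limit while each $\nu_i$ charges $\mathcal{N}_\delta(\mathcal{Z})$ fully.

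For the uniform estimate I would carry out a counting comparison. Let $\Su_\epsilon^{(i)}(T) \subset \Su_\epsilon(T)$ be the subset of surfaces whose minimal image spends more than half of its area inside $\mathcal{N}_\delta(\Sigma_0^{(i)})$. Each surface outside $\bigcup_i \Su_\epsilon^{(i)}(T)$ contributes at least one half of its mass to $\G_2(\M) \setminus \mathcal{N}_\delta(\mathcal{Z})$, so the estimate follows from $\sum_i |\Su_\epsilon^{(i)}(T)|/|\Su_\epsilon(T)| \le 1 - 2q$. For small $\epsilon$, the near-M\"obius property of the minimal map (the Remark following \eqref{eq-1}) forces the limit set of any $\Sigma \in \Su_\epsilon^{(i)}(T)$ to be close on $\pt \HH^3$ to the round circle stabilized by a lift of $\pi_1(\Sigma_0^{(i)})$; rigidity of totally geodesic planes in $\HH^3$ then forces $\pi_1(\Sigma)$, up to conjugation in $\pi_1(\M)$, into $\pi_1(\Sigma_0^{(i)})$. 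Thus $\Su_\epsilon^{(i)}(T)$ embeds into the set of finite-index subgroups of the closed surface group $\pi_1(\Sigma_0^{(i)})$ of index at most $T/\Area(\Sigma_0^{(i)})$, whose count is controlled by the classical subgroup-growth asymptotics for surface groups. Comparing with the Kahn--Markovi\'c-style lower bound $|\Su_\epsilon(T)| \gtrsim T^{cT}$ and summing over the finitely many $\Sigma_0^{(i)}$ yields the required bound.

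The hard part will be making the rigidity step effective: a $(1+\epsilon)$-quasifuchsian surface can a priori oscillate in and out of a neighborhood of $\Sigma_0^{(i)}$, and one needs a quantitative statement that ``at least half the area lies in $\mathcal{N}_\delta(\Sigma_0^{(i)})$'' implies conjugate containment in $\pi_1(\Sigma_0^{(i)})$. This will blend the Uhlenbeck--Seppi small-$\epsilon$ control on the drift of a minimal surface from a totally geodesic leaf with Fuchsian rigidity at infinity. A secondary difficulty lies in the counting comparison itself: the leading $T\log T$ exponents of the two counts can coincide when $\Sigma_0^{(i)}$ has small genus, so one must either track the sub-leading behavior or refine the Kahn--Markovi\'c construction to produce a robust family of surfaces not arising from any $\Sigma_0^{(i)}$.
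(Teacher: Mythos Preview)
Your approach has several genuine gaps that I do not see how to close.

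First, you assume $\M$ has only finitely many totally geodesic subsurfaces. This holds only in the non-arithmetic case; arithmetic manifolds can have infinitely many. The paper handles this by working with the truncated locus $\HH_h$ (totally geodesic surfaces of genus $\le h$) for each $h$ separately, obtaining a bound independent of $h$, and then letting $h\to\infty$.

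Second, and more seriously, the rigidity step you propose is false, not merely hard to make effective. A $(1+\epsilon)$-quasifuchsian surface can have an arbitrarily large fraction (even $1-o(1)$) of its area in $\Ne_\delta(\Sigma_0^{(i)})$ while $\pi_1(\Sigma)$ is \emph{not} conjugate into $\pi_1(\Sigma_0^{(i)})$: just take a high-degree cover of $\Sigma_0^{(i)}$, cut along a short curve, and reglue with a small shear. The limit set is then globally $\epsilon$-close to the round circle yet the group is not Fuchsian. So ``half the area near $\Sigma_0^{(i)}$'' cannot force containment.

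Third, even granting the rigidity step, the counting comparison does not separate the two sets. The covers of a fixed totally geodesic surface of area $\le T$ grow like $(cT)^{cT}$ by M\"uller--Puchta, and this is \emph{exactly} the growth rate of $|\Su_\epsilon(T)|$ itself (the Kahn--Markovi\'c lower bound is also obtained by counting covers). The leading exponents coincide regardless of the genus of $\Sigma_0^{(i)}$, so there is no slack.

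The paper's argument is entirely different and does not attempt to identify which surfaces are covers. It exploits the Gauss--Bonnet defect: for a totally geodesic surface $4\pi(\gen-1)=\Area$, while a typical $\Sigma\in\Su_\epsilon(T)$ satisfies $4\pi(\gen-1)\ge(1+C_1\epsilon^2)\Area$ (Lemma~\ref{lemma-comp}, proved via Rao's theorem and an explicit bent model surface). On the other hand, Seppi's curvature estimate plus a PDE argument show that any $\epsilon$-nearly geodesic surface with fraction $(1-p)$ of its area near $\HH_h$ satisfies $4\pi(\gen-1)\le(1+C_2 p\,\epsilon^2)\Area$ (Lemma~\ref{lemma-comp-1}). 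Comparing the two inequalities forces $p\ge C_1/C_2$ uniformly, which is the desired lower bound on the Liouville part. The $\epsilon^2$ scale is what makes the comparison work; a purely topological count at scale $T\log T$ cannot see it.
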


The previous theorem says that a geometrical limiting measure is never totally scarring. The opposite is true for topological limiting measures as the following theorem attests.

\begin{theorem}\label{thm-main-top} If $\M$ is a closed hyperbolic 3-manifold with a totally geodesic surface then every topological limiting measure is totally scarring. 
\end{theorem}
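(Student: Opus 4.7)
My plan is to exploit the elementary observation that, for any totally geodesic immersed surface $F\subset\M$, every finite-index subgroup $H'\le \pi_1(F)$ yields some $\Sigma\in\Su_\epsilon$ (for every $\epsilon>0$), and the associated measure $\mu(\Sigma)$ equals one common measure $\mu_F$ supported on the totally geodesic locus of $\G_2(\M)$. Indeed, the minimal surface in the homotopy class of such an $H'$ is precisely the $[\pi_1(F):H']$-fold cover of $F$, itself totally geodesic, so after normalizing hyperbolic area to a probability measure the degree of the cover cancels; every cover of $F$ contributes the same totally scarring measure $\mu_F$ to the average defining $\mu^\Top_\epsilon$.

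The strategy is then to show that these contributions exhaust $\Su_\epsilon(g)$ asymptotically. Setting
\[
\Su_\epsilon^{\mathrm{TG}}(g) := \{\Sigma\in\Su_\epsilon(g) : \Sigma \text{ is commensurable with } \pi_1(F_i) \text{ for some totally geodesic } F_i\subset\M\},
\]
the goal is to prove
\[
\frac{|\Su_\epsilon(g) \setminus \Su_\epsilon^{\mathrm{TG}}(g)|}{|\Su_\epsilon(g)|} \xrightarrow[g\to\infty]{} 0
\]
for all sufficiently small $\epsilon$. The lower bound on $|\Su_\epsilon^{\mathrm{TG}}(g)|$ comes from the M\"uller--Puchta asymptotic $s_n(\pi_1(F))\sim c_F\cdot n\cdot (n!)^{|\chi(F)|}$ for the number of conjugacy classes of index-$n$ subgroups of a closed surface group; since by Riemann--Hurwitz an index-$n$ subgroup of $\pi_1(F)$ has genus $1+n(g_F-1)$, summing gives $|\Su_\epsilon^{\mathrm{TG}}(g)|\gtrsim \exp\bigl(2g\log g+O(g)\bigr)$, and passing from $\pi_1(F)$-conjugacy to $\pi_1(\M)$-conjugacy only alters the $O(g)$ constant (even in the arithmetic case, where the commensurator of $\pi_1(F)$ in $\pi_1(\M)$ may be strictly larger than $\pi_1(F)$ itself).

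The main obstacle is the matching sharp upper bound on the total count $|\Su_\epsilon(g)|$. I would attack it by refining the Kahn--Markovi\'c pants-gluing construction: every $\Sigma\in\Su_\epsilon(g)$ is obtained (up to bounded combinatorial loss) by gluing $O(g)$ ``good pants'' with cuffs of length close to a large $R=R(\epsilon)$, and the task is to show that, to leading order in the exponential, the count of such gluings is dominated by those for which both pants and gluing seams lie in an $\epsilon$-neighbourhood of some totally geodesic locus of $\M$. A $\Sigma$ not commensurable with a totally geodesic subgroup must contain pants or seams that persistently escape the (at most countable) family of totally geodesic lifts in $\G_2(\M)$, and this persistence should translate into a multiplicative deficit in the combinatorial count relative to the in-$F$ construction. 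Granted this dominance, the conclusion is immediate: for any open $U\subset\G_2(\M)$ disjoint from the totally geodesic locus, $\mu(\Sigma)(U)=0$ when $\Sigma\in\Su_\epsilon^{\mathrm{TG}}(g)$ and $\mu(\Sigma)(U)\le 1$ otherwise, so $\mu^\Top_\epsilon(U)$ is bounded by the vanishing ratio above; letting $\epsilon\to 0$ gives $\mu^\Top(U)=0$, i.e.\ $(\mu^\Top)_{\mathcal L}=0$, which is the claim.
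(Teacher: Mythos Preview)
Your overall architecture---M\"uller--Puchta lower bound for covers of a totally geodesic $F$ versus a refined Kahn--Markovi\'c upper bound for the complement---is exactly the paper's. The gap is in the target of the upper bound.

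You aim to show that $|\Su_\epsilon(g)\setminus\Su_\epsilon^{\mathrm{TG}}(g)|=o(|\Su_\epsilon(g)|)$, i.e.\ that non--totally-geodesic classes are negligible \emph{in count}. This is not known; it is essentially the content of Conjecture~\ref{conj-top} (or stronger), and your pants-gluing sketch does not prove it. The difficulty is that there can be $(1+\epsilon)$-quasifuchsian surfaces that are \emph{not} commensurable with any totally geodesic surface yet have almost all of their $\mu(\Sigma)$-mass near $\HH$ (think of perturbing a single gluing in an otherwise totally geodesic assembly). Such surfaces are in your ``bad'' set $\Su_\epsilon(g)\setminus\Su_\epsilon^{\mathrm{TG}}(g)$, and nothing in a pants-escape argument forces a deficit in the \emph{base} of $(Cg)^{2g}$ for them; a priori their count could match the M\"uller--Puchta rate. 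So the crude Kahn--Markovi\'c bound $(Cg)^{2g}$ with $C=C(\M)$ is all you have for them, and there is no reason $C<\beta$.

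The paper sidesteps this by proving something weaker but sufficient: for every $q>0$ and every $c>0$ there exist $h$ and $\epsilon_0$ such that the set $\Su_\epsilon(\gen,h,q)$ of classes with at least a $q$-fraction of their mass \emph{outside} $\Ne_{\delta_0(h)}(\HH_h)$ satisfies $|\Su_\epsilon(\gen,h,q)|=o((c\gen)^{2\gen})$ (Lemma~\ref{lemma-baza}). The refinement that drives $c$ down is geometric, not combinatorial: one covers $\G_2(\M)$ by ``wafer-thin'' disks so that any $\epsilon$-nearly geodesic surface with $q$ of its mass off $\HH_h$ has $q\gen$ distinguished edges whose labels are spread thinly among the cover (via Ratner), and this equidistribution constraint cuts the label count. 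Comparing with the M\"uller--Puchta lower bound then shows $|\Su_\epsilon(g,h,q)|/|\Su_\epsilon(g)|\to 0$, and since every $\Sigma\notin\Su_\epsilon(g,h,q)$ already has $\mu(\Sigma)(\Ne_{\delta_0}(\HH_h))\ge 1-q$, the averaged measure concentrates near $\HH_h$ for all $h$. Your final paragraph works verbatim once you replace ``$\Sigma\in\Su_\epsilon^{\mathrm{TG}}$'' by ``$\Sigma\notin\Su_\epsilon(g,h,q)$''.
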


We finish this subsection with two conjectures.

\begin{conjecture}  \label{conj-geo}
Every geometrical limiting measure is Liouville.
\end{conjecture}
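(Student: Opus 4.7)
The plan is to strengthen Theorem~\ref{thm-main-geo} by a counting argument which forces the scarring mass of any geometrical limiting measure to vanish. By Ratner one can write $\mu^\Geo = \mu^\Geo_\mathcal{L} + \sum_i c_i \mu_i$, where $\mu^\Geo_\mathcal{L}$ is a non-negative multiple of the probability Liouville measure and each $\mu_i$ is the canonical probability measure supported on the closed $\PSLR$-orbit $G_i \subset \G_2(\M)$ associated to a totally geodesic subsurface $S_i \subset \M$. Theorem~\ref{thm-main-geo} already gives $|\mu^\Geo_\mathcal{L}| \ge q > 0$; the conjecture is equivalent to $c_i = 0$ for every $i$.

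The key geometric input is the Gauss equation: for any minimal immersion $f\from S \to \Ho$ the induced curvature satisfies $K = -1 - \tfrac{1}{2}|II|^2 \le -1$, with equality exactly in the totally geodesic case. Combined with Gauss-Bonnet this yields the sharp bound $\Area_\M(\Sigma) \le 4\pi(\gen(\Sigma)-1)$, saturated precisely by the totally geodesic covers of the $S_i$. Hence any $\Sigma \in \Su_\epsilon(T)$ whose $\mu(\Sigma)$ places mass at least $\delta$ on a small neighborhood of $G_i$ must, for $\delta$ and $\epsilon$ small enough, be homotopic to a minimal surface lying in a uniform tubular neighborhood of $S_i$; in particular its conjugacy class is controlled by the index-$n$ subgroup growth of $\pi_1(S_i)$ with $n \le T/(4\pi(\gen(S_i)-1))$. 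This yields a (factorial but one-parameter) upper bound on the near-$S_i$ contribution to the geometric count.

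To close the argument one needs a lower bound on $|\Su_\epsilon(T)|$ that strictly dominates the sum of these near-geodesic counts as $T \to \infty$. A natural route is to refine the Kahn-Markovi\'c / Calegari-Marques-Neves pants assembly so as to produce, at each target area $T$, a quantitatively large family of quasifuchsian surfaces built from pants which are uniformly transverse to every $S_i$; this already underlies the mass $q$ of Liouville measure in Theorem~\ref{thm-main-geo} and must be boosted to a matching-exponent estimate between the full geometric count and the near-geodesic count. The principal obstacle is subtle: for fixed small $\epsilon$ the area-genus relation $\Area \approx 4\pi(\gen-1)$ is tight to order $O(\epsilon^2)$, so a crude comparison virtually identifies the geometric and topological counts, and Theorem~\ref{thm-main-top} then pushes in exactly the wrong direction by showing that the topological count is dominated by totally geodesic covers. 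The conjecture must therefore exploit the $O(\epsilon^2)$ slack between $\Area$ and $4\pi(\gen-1)$, together with the fact that the geometric count admits higher-genus lower-area surfaces that are structurally far from totally geodesic; making this quantitative through a joint handling of the $T \to \infty$ and $\epsilon \to 0$ limits is the step where I expect the main new work to lie.
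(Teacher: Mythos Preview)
This statement is labeled as a \emph{conjecture} in the paper; the authors do not prove it, and explicitly say that ``answering these two conjectures would complete the description of how random surfaces distribute in $\M$.'' So there is no proof in the paper to compare your proposal against.

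Your proposal is also not a proof: it is an outline that honestly identifies its own principal gap in the final paragraph. Let me make that gap sharper. The counting strategy you sketch needs, at fixed small $\epsilon$, that the number of $\Sigma\in\Su_\epsilon(T)$ which place a definite proportion of their mass \emph{away} from $\HH$ dominates (or at least is not dominated by) the number that concentrate near $\HH$. But Lemma~\ref{lemma-comp-1} and the Gauss--Bonnet argument show that surfaces with area $\le T$ and genus $\gen$ satisfying $4\pi(\gen-1)\ge(1+C\epsilon^2)T$ must already push a definite fraction of their mass off $\HH$; conversely, covers of a totally geodesic component live exactly on the line $4\pi(\gen-1)=T$ and, by M\"uller--Puchta, grow at rate $(\beta\gen)^{2\gen}$. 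The whole content of Theorem~\ref{thm-main-top} is that these covers \emph{dominate} the genus-$\gen$ count once $\epsilon$ is small. Since $\Area_\M(\Sigma)$ and $4\pi(\gen(\Sigma)-1)$ agree to within a factor $1+O(\epsilon^2)$, the geometric and topological counts at comparable scales differ only through this $O(\epsilon^2)$ window, and you give no mechanism by which the geometric ordering (by area rather than genus) reverses the domination. The proof of Theorem~\ref{thm-main-geo} exploits exactly this window via Proposition~\ref{prop-model-1} and Rao's theorem (not a transverse pants construction as you suggest), but only to secure a \emph{positive} Liouville mass $q$, not to annihilate the scarring part.

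There is also a local error earlier: the claim that a surface with $\mu(\Sigma)$-mass $\ge\delta$ near some $G_i$ must be homotopic to a minimal surface in a tubular neighborhood of $S_i$ is false as stated---a surface can spend half its area near $S_i$ and half far from it without being anywhere close to a cover of $S_i$. What you presumably need is mass $\ge 1-\delta$, and even then the conclusion requires work (compare the rigidity argument in Section~10 of the paper, which handles only the $f$-rigid pieces, not the whole surface).
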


\begin{conjecture}  \label{conj-top}
Suppose that $\M$ is a closed hyperbolic 3-manifold with a totally geodesic surface. Then every topological  limiting measure is supported on $\HH_{h_{0}}$ where $h_0=h_0(\M)$ is the genus of the  totally geodesic surface of the smallest genus in 
$\M$.
\end{conjecture}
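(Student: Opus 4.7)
The plan is to refine Theorem \ref{thm-main-top} by a quantitative subgroup-counting argument. By Ratner together with Theorem \ref{thm-main-top}, any topological limiting measure admits the decomposition $\mu^\Top = \sum_T c_T \mu_T$, where the sum runs over totally geodesic surfaces $T \subset \M$ and $\mu_T$ is the normalised probability measure supported on the image of $T$ in $\G_2(\M)$. The goal is to show $c_T = 0$ whenever $\gen(T) > h_0$. To this end, for each totally geodesic $T$ of genus $h$ I would introduce the localised set $\Su_\epsilon(g;T) \subset \Su_\epsilon(g)$ consisting of those $\Sigma$ whose minimal immersion is contained in a small tubular neighbourhood of $T$ inside $\M$. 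A compactness/partition argument together with the scarring conclusion of Theorem \ref{thm-main-top} shows that for $\epsilon$ small every element of $\Su_\epsilon(g)$ lies in some $\Su_\epsilon(g;T)$, and that $c_T$ is the joint limit (in $\epsilon \to 0$, $g \to \infty$) of $|\Su_\epsilon(g;T)| / |\Su_\epsilon(g)|$.

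Next I would carry out a two-sided estimate on $|\Su_\epsilon(g;T)|$. For the lower bound at a totally geodesic $T_0$ of minimal genus $h_0$, every conjugacy class of subgroups of $\pi_1(T_0)$ of index $d \le (g-1)/(h_0-1)$ produces a Fuchsian surface subgroup of $\pi_1(\M)$ of genus at most $g$, and hence lies in $\Su_\epsilon(g;T_0)$ for every $\epsilon>0$. M\"uller's subgroup-growth asymptotics for a surface group of genus $h_0$ yield of order $(d!)^{2h_0-2}$ such subgroups (up to a polynomial factor); taking $d = \lfloor (g-1)/(h_0-1)\rfloor$ and applying Stirling gives
$$
\log |\Su_\epsilon(g;T_0)| \ge 2g \log g - 2g \log(h_0-1) - 2g + o(g).
$$
For the matching upper bound at $T$ of genus $h > h_0$, the plan requires an almost-rigidity input: when $\epsilon$ is small, every $\Sigma \in \Su_\epsilon(g;T)$ is, after a uniformly small quasifuchsian deformation, a finite-index subgroup of $\pi_1(T)$. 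Granting this, the same asymptotics applied to $\pi_1(T)$ give
$$
\log |\Su_\epsilon(g;T)| \le 2g \log g - 2g \log(h-1) - 2g + o(g).
$$
Subtracting, for any $h > h_0$,
$$
\log \frac{|\Su_\epsilon(g;T_0)|}{|\Su_\epsilon(g;T)|} \ge 2g \log \frac{h-1}{h_0-1} - o(g),
$$
which tends to $+\infty$ as $g \to \infty$. Since $\M$ contains only finitely many totally geodesic surfaces of any bounded genus, the total contribution of $T$'s with $\gen(T) > h_0$ is exponentially dominated by the genus-$h_0$ contribution, forcing $c_T = 0$ for every $T$ with $\gen(T) > h_0$ and proving the conjecture.

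The main obstacle is the almost-rigidity input underpinning the upper bound. Concretely, one needs to show that a quasifuchsian surface subgroup whose limit set is a $(1+\epsilon)$-quasicircle $\epsilon$-close to a round circle stabilising a totally geodesic $T$ must, up to a uniformly small deformation, actually lie inside $\pi_1(T)$ as a finite-index subgroup. Without such a statement, nothing rules out the existence of many quasifuchsian subgroups of genus $\le g$ accumulating near $T$ but not commensurable with $\pi_1(T)$, which could a priori tilt the balance of the counting argument. Some qualitative form of this rigidity is implicit in the proof of Theorem \ref{thm-main-top}, where scarring on the totally geodesic locus is established; upgrading that qualitative scarring to a quantitative count of subgroups matching the subgroup-growth lower bound appears to be the genuine difficulty in the conjecture.
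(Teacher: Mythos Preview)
This statement is labeled a \emph{conjecture} in the paper and is not proved there; the authors explicitly list it as open. So there is no paper proof to compare against, and your proposal should be read as a sketch toward an open problem rather than as an alternative argument.

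Your overall strategy---dominate the count by covers of a minimal-genus totally geodesic surface via M\"uller--Puchta, and upper-bound the count near higher-genus $T$ by the subgroup growth of $\pi_1(T)$---is the natural one, and the leading-order computation $\log|\Su_\epsilon(g;T)|\approx 2g\log g - 2g\log(h-1)$ is correct for covers. You are also right that the decisive missing ingredient is the ``almost-rigidity'' step. Two further gaps deserve emphasis.

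First, the partition $\Su_\epsilon(g)=\bigcup_T \Su_\epsilon(g;T)$ does not follow from Theorem~\ref{thm-main-top}. That theorem (via Proposition~\ref{prop-baza-cor}) says that for most $\Sigma$ most of the mass of $\mu(\Sigma)$ lies near $\HH_h$; it does \emph{not} say the mass of an individual $\Sigma$ concentrates near a single component $T$. A connected minimal surface can have large pieces near several components of $\HH_h$ joined by thin necks outside $\HH_h$, and nothing in the paper rules this out. Consequently the identification of $c_T$ with $\lim |\Su_\epsilon(g;T)|/|\Su_\epsilon(g)|$ is unjustified as stated.

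Second, the rigidity input is not ``implicit in the proof of Theorem~\ref{thm-main-top}''. That proof works by comparing two counts (the $(c\gen)^{2\gen}$ upper bound of Lemma~\ref{lemma-baza} against the M\"uller--Puchta lower bound) and never asserts that a nearly-geodesic surface close to $T$ is commensurable with $\pi_1(T)$. In fact the claim that a $(1+\epsilon)$-quasifuchsian subgroup of $\pi_1(\M)$ whose limit set is $\epsilon$-close to $\partial P$ must, after a small deformation \emph{within} $\pi_1(\M)$, lie in $\mathrm{Stab}(P)\cap\pi_1(\M)$ is a strong and, as far as I know, unproven statement; the deformation you need has to land back in the fixed lattice $\pi_1(\M)$, not merely in $\PSLC$. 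Without it, there is no mechanism to exclude a super-cover-count number of genuinely quasifuchsian classes accumulating on a higher-genus $T$, and this is exactly why the conjecture remains open.
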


Answering these two conjectures  would complete the  description of  how random surfaces distribute in $\M$.

\begin{remark} The proof of Theorem \ref{thm-main-top} should go through in any symmetric space without many changes. The proof of Theorem \ref{thm-main-geo} should likewise go through given the analog of Rao's theorem. Conjectures \ref{conj-geo} and \ref{conj-top} should presumably apply as well in this more general setting  (see  Kassel's survey article  \cite{kassel}).
\end{remark}

\begin{remark}
Labourie  \cite{labourie} proved that the Liouville measure on $\G_2(\M)$ is the limit of $\mu_{f_n}$ for some sequence of $\epsilon_n$-nearly geodesic minimal (possibly disconnected) surfaces $f_n\from S_n\to \M$. Lowe and Neves \cite{l-n} showed that one can find such a sequence consisting of connected surfaces. Al Assal \cite{assal} then proved that the same is true for any convex combination of invariant probability measures on $\G_2(\M)$.
\end{remark}

\begin{remark} We expect both Theorem \ref{thm-main-geo} and Theorem \ref{thm-main-top} to hold if instead of conjugacy classes $\Su_\epsilon$ one considers commensurability classes of $(1+\epsilon)$-quasifuchsian sufraces in $\M$.
\end{remark}

\subsection{The totally geodesic locus} We let $\HH \subset \G_2(\M)$ be the union of the totally geodesic subsurfaces. For any $h \in \N$, 
we let $\hh \subset \HH$ be the union of the totally geodesic subsurfaces of genus at most $h$, 
and let $\hhh = \HH \setminus \hh$. We let $h_0(\M)$ be the genus of the least genus totally geodesic surface in $\M$. 

\begin{remark}
In Theorem \ref{thm-main-top} we can  prove the explicit estimate  $\mu^\Top(\hhh) \le 1/(C \log h)$ for large  $h$, and constant $C=C(\M)$.
\end{remark}

By the results of Margulis-Mohammadi \cite{m-m}, and Bader-Fisher-Miller-Stover \cite{b-f-m-s},  we know that a non-arithmetic hyperbolic 3-manifold has at most finitely many totally geodesic subsurfaces. We observe that in this case 
$\HH=\HH_h$ for some $h$.

\subsection{Outline of the proof of Theorem \ref{thm-main-geo}}
The proof of this theorem follows readily from two key lemmas. In the first lemma (which is Lemma \ref{lemma-comp} below) we show that there exists a constant $C_1>0$ such that a  geometrically random surface $\Sigma \in \Su_\epsilon(T)$ satisfies the bound
\begin{equation}\label{eq-l-1}
4\pi(\gen(\Sigma)-1) \ge (1+C_1\epsilon^2)T
\end{equation}
when $\epsilon$ is small enough. Here $C_1$ is a constant which only depends on $\M$ (and in particular it does not depend on $\epsilon$ or $T$).
Note that if $\Sigma$ is a totally geodesic surface of area $T$ then  $4\pi(\gen(\Sigma)-1) =T$. Therefore, the inequality (\ref{eq-l-1}) shows that the genus of a geometrically random surface is significantly larger than the genus of a totally geodesic surface of the same area. This is the key factor in the proof of  Theorem \ref{thm-main-geo}, and the key difference between geometrically and topologically random surfaces.

The second key lemma bounds from above the genus of a geometrically random surface $\Sigma\in \Su_\epsilon(T)$ in terms of the percentage of its area which
concentrates near the totally geodesic locus $\HH_h$.  In Lemma \ref{lemma-comp-1}, for every $h\in \N$, and every $\epsilon<\epsilon_0(h)$, we establish the estimate
\begin{equation}\label{eq-l-2}
4\pi(\gen(S)-1)\le \left(1+C_2\left(1-\frac{|S(\delta_1(h),h)|}{|S|}\right)\epsilon^2 \right)T.
\end{equation}
Here $f\from S\to \M$ is the $\epsilon$-nearly geodesic minimal surface representing the class $\Sigma$, and $S(\delta_1(h),h)$ is the subset of $S$ such that $f\big(S(\delta_1(h),h)\big)$ is close to the geodesic locus $\HH_h$. Note that the constant $C_2$ does not depend on $h$. 

Combing the two lemmas yields the estimate 
$$
0<\frac{C_1}{C_2} \le |\mu^\Geo_{\mathcal{L}}|
$$
which then proves Theorem \ref{thm-main-geo}.

\subsection{Outline of the proof of Theorem \ref{thm-main-top}}

Before outlining the basic idea of the proof we briefly recall the proof of the upper bound on the number of homotopy classes of genus $\gen$ surfaces in $\M$. Let $\Dol$ be a finite cover of $\G_2(\M)$ by open sets of sufficiently small diameter. 

Given a $\pi_1$-injective map $f\from S\to \M$ of a closed surface of genus $\gen$,
we can realize it as a pleated surface and find a bounded geometry triangulation of $S$.
We can then take a spanning tree $\wh\tau$ of the triangulation, 
along with a set of another $2\gen$ \emph{distinguished} edges, denoted by $e(\tau)$, so that the inclusion of $\wh\tau \cup e(\tau)$ into $S$ is surjective on $H_1$ and hence on $\pi_1$. For each vertex $v$ of $\tau$, we choose $D \in \Dol$ such that $f(v) \in D$ and denote $D$ by $\lab(v)$. We thereby obtain a \emph{$\Dol$-valued $\gen$-polygonalization} consisting of the pair $(\tau, \lab)$ where $\tau = (\wh\tau, e(\tau))$ is a \emph{graph pair} that we call a $\gen$-polygonalization because it efficiently divides a genus $\gen$ surface into polygons. 

The point is that we can reconstruct the homotopy class of $f$ from the $\Dol$-valued $\gen$-polygonalization,
where we think of $\tau = (\wh\tau, e(\tau))$ as an abstract graph (with a cyclic ordering of the edges at each vertex), and hence as purely combinatorial data. We then need only bound the number of possible such data, and in that way we obtain the $(Cg)^{2g}$ upper bound in \cite{k-m-1}.

In this paper%
\footnote{In this introduction we assume that $\M$ is non-arithmetic so $\HH = \hh$ for some $h$; the proof in the general case is very similar.}
 we refine this bound in the case where $f$ is $\epsilon$-nearly geodesic and at least $q$ of the induced measure $f_*m_S$ is away from $\HH$, to obtain a bound of $(cg)^{2g}$, where $c \equiv c(q, \epsilon) \to 0$ as $\epsilon \to 0$ and $q > 0$ remains fixed. 
To do so we let $\Dol$ be a system of ``wafer-thin disks'' that efficiently cover $\G_2(\M)$. 
Namely, 
a (two-dimensional) disk in $\M$ lifts to a disk in $\G_2(\M)$,
and each wafer-thin disk will be the thickening of such a disk in $\G_2(\M)$ by a small orthogonal radius $\eta$ to form an open set in $\G_2(\M)$. 
These wafer-thin disks all have small volume, 
but travel between them is carefully restricted for an $\epsilon$-nearly geodesic surface 
(with $\epsilon$ small given $\eta$):
if two points on such a surface are at bounded distance, lying respectively in $D_0, D_1 \in \Dol$,
then $D_0$ and $D_1$ are \emph{mutually compatible}, and there are only a bounded number of disks in $\Dol$ that are compatible with a given disk, where this bound is independent of the choice of $\eta > 0$.  

Then given a nearly geodesic $f\from S \to \M$ of genus $\gen$,
we 
\begin{enumerate}
\item
lift $f$ to $f\from S \to \G_2(\M)$,
\item
find a bounded triangulation and spanning tree for $S$ just as in \cite{k-m-1},
\item
find our  $2\gen$ distinguished edges so that at least $q\gen$ of them are some distance from $\HH$, and
\item
assign, 
as before,
an element $\psi(v) \in \Dol$ to each vertex $v$ so that $f(v) \in \psi(v)$.
\end{enumerate}
Our additional constraint in Step 3 on the choice of distinguished edges implies that the endpoints of the $q\gen$ edges (that are away from $\HH$) are roughly evenly distributed in $\G_2(\M)$, and hence with respect to $\Dol$. (This follows from Ratner's theorem.) The small volume of each disk in $\Dol$, combined with this equidistribution, implies that only a small proportion of these endpoints can be in any $D \in \Dol$. 
So we obtain a $\Dol$-valued $\gen$-polygonalization, along with some constraint on how many $v$ can satisfy $\lab(v) = D$ for a given $D \in \Dol$,  and for which the vertices of a given edge lie in mutually compatible elements of $\Dol$. We now need only bound the number of such combinatorial structures, and these constraints are sufficient to obtain the desired bound of $(c\gen)^{2\gen}$, where $c$ is small when $\epsilon$ is. 

On the other hand, since $\HH\neq \emptyset$, the M\"uller-Puchta \cite{m-p} estimate implies that the number of \emph{totally geodesic} surfaces in $\M$ of genus $\gen < g$ grows as $(Cg)^{2g}$. (These are all just covers of a component of $\HH$.) 
We can now compare this to the number of nearly geodesic surfaces where a definite part of the area lies \emph{away} from $\HH$,
and see that the latter number is much smaller than the former. 
Thus the only significant contributors to the measure for a topologically random nearly geodesic surface of genus at most $g$ are surfaces where most of their area lies close to $\HH$. This implies Theorem \ref{thm-main-top}.

\subsection{How to read this paper}
After Section \ref{sec-prelim}
the paper is divided into two halves; the first half proves Theorem \ref{thm-main-geo}, and the second half proves Theorem \ref{thm-main-top}.
Section \ref{sec-prelim}  provides some preliminaries and background, 
including a smoothing lemma and isoperimetric theorem for subsurfaces of a hyperbolic surface. 
After Section \ref{sec-prelim}, the two halves can be read in either order, starting with Sections \ref{section-lemmas} or \ref{sec-main-top}.
Both halves have a top-down structure, so the main theorems are proven in one section assuming one or more lemmas, which are then proven in the subsequent sections.

\section{Preliminaries} \label{sec-prelim}

Once and for all we fix a  closed hyperbolic 3-manifold $\M$. In this section we discuss the  Grassmann bundle of $\M$, and define and discuss elementary properties of $\epsilon$-nearly geodesic minimal surfaces and how they relate to the totally geodesic locus $\HH$ in $\M$.
In particular, we fix two constants $\delta_0(h)$ and $\delta_1(h)$ which capture the geometry of the locus $\HH_h$ in $\M$ and its interaction with nearly isometric minimal maps.

\subsection{Notation and conventions} If we define a constant without specifying what it depends on, it means that it only depends  on
$\M$.  If $X$ is a hyperbolic manifold (which in this paper is either a surface or a 3-manifold) then $d_X$  denotes the hyperbolic distance on $X$.  We use $|Y|$ to denote the hyperbolic area of a subset $Y\subset X$, and  $|\alpha|$ the hyperbolic length of a smooth curve $\alpha\subset X$. Furthermore, $B_r(x)$ denotes the ball of radius $r$ centred at $x$, and  $\Ne_r(Y)$  the $r$-neighbourhood of a set $Y$.

\subsection{The Grassmann bundle} 
The Grassmann bundle  $\G_2(\M)$ is the 2-plane bundle over $\M$. A point in $\G_2(\M)$ is a pair  $(x,\Pi)$ where $x\in \M$, and $\Pi<T_x\M$ a 2-plane. The universal cover of $\G_2(\M)$ is $\G_2({\Ho})$. By $\xi\from\G_2(\M)\to \M$ we denote the obvious fibration. The Levi-Civita connection on $T\M$ induces a connection on $\G_2(\M)$ and hence a canonical complement in $T\G_2(\M)$ to the fibers of $T\xi\from T\G_2(\M)\to T\M$. The standard Riemannian metric $\pgn$ on $\G_2(\M)$ is characterised by the following properties:
\begin{enumerate}
\item  
within each fibre $\pgn$ coincides with the usual Riemannian metric on 
$\G_2^3 = \R P^2$,
\item the map $\xi$ is a Riemannian submersion, and
\item
the fibres described in (1) are orthogonal to their canonical complements.  
\end{enumerate}
These properties imply that the canonical complement to the fiber at each point of $T\G_2(\M)$ is mapped isometrically by $T\xi$; this in turn implies that the lift of a path in $\M$ to $\G_2(\M)$ by parallel transport (with some starting 2-plane at the initial point) is an isometric lift.  

Obviously the lift of this metric to $\G_2({\Ho})$ is invariant under the action by $\text{Isom}({\Ho})$, and
agrees with the hyperbolic metric on totally geodesic planes in
$\G_2({\Ho})$. Moreover, for each $(x_1, \Pi_1), (x_2, \Pi_2) \in \G_2({\Ho})$, we have
$$
\dis_{\Ho}(x_1,x_2)\le \dis_\G\big((x_1,\Pi_1),(x_2,\Pi_2) \big).
$$

\subsection{The geodesic locus} \label{subsec-geod}
Given $h\in \N$, we let  $\HH_h\subset \G_2(\M)$ denote the union of totally geodesic surfaces of genus at most $h$.  We refer to $\HH_h$ as the totally geodesic locus of $\M$ of level $h$.  By $\wt{\HH}_h\subset \G_2({\Ho})$ we denote the preimage of $\HH_h$ by the covering from $\G_2({\Ho})$ to $\G_2(\M)$. We observe that each connected component of $\wt{\HH}_h$ is a geodesic plane.
\begin{proposition}\label{prop-delta} 
There exists $\delta_0(h)>0$ such that
\begin{itemize}
\item
$|\cl{\Ne_{\delta_0(h)}(\hh)}| \le \frac12 |\G_2(\M)|$ and 
\item
$\dis_\G(P_1,P_2)\ge 10\delta_0(h)$ when $P_1$ and $P_2$ are two different components of $\wt{\HH}_h$.
\end{itemize}
\end{proposition}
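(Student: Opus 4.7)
The plan is to reduce both assertions to the facts that $\hh \subset \G_2(\M)$ is a closed subset with only finitely many connected components, each an embedded closed $2$-dimensional submanifold. Each component is the image of a closed totally geodesic immersion $\Sigma \to \M$ of area $4\pi(\gen(\Sigma)-1) \le 4\pi(h-1)$; this image is embedded in $\G_2(\M)$ because a point there records both the location in $\M$ and the tangent $2$-plane, which identifies the local sheet of $\Sigma$. Finiteness of the number of such surfaces in a closed hyperbolic $3$-manifold is a standard consequence of compactness and local rigidity: any infinite family of Fuchsian subgroups of $\pi_1(\M)$ whose quotient surfaces have bounded area would, after passing to a subsequence and appropriate conjugation, yield a non-trivial limit deformation of a Fuchsian representation into $\pi_1(\M)$, which is ruled out. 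Moreover, distinct components are automatically disjoint in $\G_2(\M)$, since two distinct totally geodesic planes in $\Ho$ cannot share a common tangent $2$-plane by unique continuation.

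The first bullet is then immediate. The union $\hh$ is a finite union of embedded $2$-dimensional submanifolds of the compact $5$-manifold $\G_2(\M)$, hence closed and of Lebesgue measure zero. The closed neighborhoods $\cl{\Ne_\delta(\hh)}$ decrease to $\hh$ as $\delta \to 0$, so by continuity of measure $|\cl{\Ne_\delta(\hh)}| \to 0$, giving the bound below $\tfrac12|\G_2(\M)|$ for all sufficiently small $\delta$.

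For the second bullet, I would pick $\eta > 0$ small enough that in $\G_2(\M)$ both conditions hold: (a) the pairwise $\dis_\G$-distances between distinct components of $\hh$ all exceed $3\eta$, and (b) for each component $C$ of $\hh$, the open neighborhood $\Ne_\eta(C)$ is an embedded tubular neighborhood of $C$. Finiteness of the components, together with the fact that each is an embedded closed submanifold of the compact $\G_2(\M)$, makes both conditions simultaneously achievable. The preimage of $\Ne_\eta(\hh)$ in $\G_2(\Ho)$ then splits into disjoint open sets, one per component $P$ of $\wt{\HH}_h$: lifts of two different components of $\hh$ are separated by (a), and two lifts $P_\alpha, P_\beta$ of a single component $C$ are separated because (b) pulls back, via the local isometry $\G_2(\Ho) \to \G_2(\M)$, to a disjoint union of tubular neighborhoods of the various $P_\alpha$. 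Hence distinct components of $\wt{\HH}_h$ are at $\dis_\G$-distance at least $2\eta$, and setting $\delta_0(h)$ equal to the minimum of $\eta/5$ and any $\delta'$ realizing the first bullet completes the proof. The only non-elementary ingredient here is the finiteness of totally geodesic subsurfaces of bounded area in $\M$ (a well-known fact, and in the non-arithmetic case a consequence of the results of Margulis--Mohammadi and Bader--Fisher--Miller--Stover cited in the paper); everything else is routine tubular-neighborhood bookkeeping.
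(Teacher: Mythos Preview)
Your proof is correct and follows essentially the same approach as the paper's: both bullets are reduced to the facts that $\hh$ is a measure-zero closed set with finitely many compact embedded components. The paper's proof is extremely terse (two sentences), whereas you supply the tubular-neighbourhood argument that explains why distinct lifts of a \emph{single} component of $\hh$ are uniformly separated in $\G_2(\Ho)$---a point the paper leaves implicit. One minor remark: the finiteness you need (totally geodesic surfaces of genus $\le h$) is more elementary than the Margulis--Mohammadi / Bader--Fisher--Miller--Stover results you cite at the end; those give finiteness over \emph{all} genera in the non-arithmetic case, while your own compactness-plus-rigidity sketch already handles the bounded-genus statement in any closed $\M$.
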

\begin{remark} In particular,  the geometry of the components of $\Ne_{\delta_{0}(h)}(\HH_h)$ depends only on $\delta_0(h)$ and the geometry of the components of $\hh$. 
\end{remark}

\begin{proof} 
The first property of $\delta_0(h)$ holds because $\HH_h$ is a closed subset of $\G_2(\M)$ of measure 0. 
The second property follows from the fact that $\HH_h$ has finitely many components, each of which is compact. 
\end{proof}

\subsection{Nearly geodesic minimal maps }

We begin with a definition.
\begin{definition}
We say that $f\from S\to \M$ is an $\epsilon$-nearly geodesic minimal map if:
\begin{enumerate}
\item $S$ is a closed hyperbolic surface, 
\item $f$ is a minimal map,
\item $f_*\from\pi_1(S)\to \pi_1(\M)$ is a quasifuchsian surface subgroup
whose limit set is a $(1+\epsilon)$-quasicircle.
\end{enumerate}
\end{definition}
Since every $\epsilon$-nearly geodesic minimal map $f\from S\to \M$ is an
immersion assuming $\epsilon$ is small,  the induced map  $f\from S\to
\G_2(\M)$ is well defined. We now define the subset of the minimal
surface which is close to the geodesic locus $\HH_h$.
\begin{definition}\label{def-deltaset}
Let $f\from S\to \M$ be an $\epsilon$-nearly geodesic minimal map. For
$\delta>0$, we let
$$
S(\delta,h)=\{p\in S: \dis_\G(f(p),\HH_h)<\delta  \}.
$$
\end{definition}

Next, we observe that  $\epsilon$-nearly geodesic minimal maps are nearly locally isometric, meaning that the restriction of $f$ to any ball of a bounded radius is close to an actual isometry. The following proposition readily follows from the compactness principle for minimal maps which is stated as Theorem 2.5  in \cite{h-l-s}.
\begin{proposition}\label{prop-near} For every $\eta>0$ there exists a constant $\epsilon(\eta)>0$ with the following properties. Let $f:S\to \M$ be a  $\epsilon$-nearly geodesic minimal map where  $\epsilon<\epsilon(\eta)$, and let $\iota:B_{10}(x_0)\to \M$ be a local isometry such that $\iota(x_0)=f(x_0)$ as points in $\G_2(\M)$.
Then   $\dis_{\G}(\iota(x),f(x))<\eta$ for every $x\in B_{10}(x_0)$.
\end{proposition}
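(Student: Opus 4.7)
The plan is to argue by contradiction, invoking the compactness principle for minimal maps (Theorem 2.5 of \cite{h-l-s}) that the proposition explicitly cites. Suppose the conclusion fails. Then there exist $\eta_0>0$, a sequence $\epsilon_n \to 0$, $\epsilon_n$-nearly geodesic minimal maps $f_n\from S_n \to \M$, base points $x_n \in S_n$, local isometries $\iota_n\from B_{10}(x_n) \to \M$ with $\iota_n(x_n) = f_n(x_n)$ in $\G_2(\M)$, and witnesses $y_n \in B_{10}(x_n)$ such that $\dis_\G(\iota_n(y_n), f_n(y_n)) \ge \eta_0$. The goal is to show the $f_n$ converge locally to a totally geodesic map that must coincide with the limit of the $\iota_n$, producing a contradiction.

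First I would lift the picture to $\Ho$. Choose a preimage $\tilde x_n$ of $f_n(x_n)$ in $\Ho$ and a lift $\tilde f_n\from \tilde S_n \to \Ho$ of $f_n$ on a ball $B_{10}(x_n) \subset \tilde S_n$ in the universal cover of $S_n$, normalized so $\tilde f_n(x_n) = \tilde x_n$. Since $f_n$ is $\epsilon_n$-nearly geodesic, the limit set of the corresponding quasifuchsian group is a $(1+\epsilon_n)$-quasicircle, so the full lift of $f_n$ extends to an embedded minimal disk in $\Ho$ bounded by this near-circle. Likewise, after composing with the deck action if necessary, we lift $\iota_n$ to a totally geodesic isometric embedding $\tilde \iota_n\from B_{10}(x_n) \to \Ho$ with $\tilde\iota_n(x_n)=\tilde x_n$ and matching tangent 2-plane at $x_n$, which we may arrange to be a fixed 2-plane $\Pi_\infty \subset T_{\tilde x_\infty}\Ho$ after passing to a subsequence and applying isometries of $\Ho$.

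Now apply the compactness theorem for minimal immersions (Theorem 2.5 of \cite{h-l-s}). The maps $\tilde f_n$ are minimal immersions with uniformly bounded principal curvatures (bounded in terms of $\epsilon_n$, hence uniformly bounded) on a fixed ball, so a subsequence converges smoothly to a minimal map $\tilde f_\infty\from B_{10}(x_\infty) \to \Ho$ with $\tilde f_\infty(x_\infty)=\tilde x_\infty$ and tangent 2-plane $\Pi_\infty$. Because $\epsilon_n \to 0$, the quasicircle bounding each lifted disk converges (in Hausdorff distance in $\partial \Ho$) to a round circle, and it is standard that a minimal disk in $\Ho$ whose asymptotic boundary is a round circle is the totally geodesic disk it bounds. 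Hence $\tilde f_\infty$ is the totally geodesic disk through $\tilde x_\infty$ tangent to $\Pi_\infty$, which is precisely $\tilde\iota_\infty$. Smooth convergence then yields $\dis_\G(\tilde\iota_n(y_n), \tilde f_n(y_n)) \to 0$, contradicting the lower bound $\eta_0$.

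The main technical obstacle is the last step, namely justifying that the smooth subsequential limit is forced to be totally geodesic and hence agrees with the limit of $\iota_n$. This rests on two ingredients that are routinely used in this setting: the bound on the second fundamental form of an $\epsilon$-nearly geodesic minimal disk in $\Ho$ in terms of its asymptotic boundary (via Uhlenbeck--Seppi, cf.\ Proposition \ref{prop-us}), which makes the compactness theorem applicable, and the uniqueness of minimal disks with prescribed circular boundary at infinity. Everything else is a straightforward normalization and passage to the limit.
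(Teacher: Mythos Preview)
Your contradiction argument via the compactness principle is exactly what the paper has in mind; the paper offers no details beyond citing Theorem~2.5 of \cite{h-l-s}.

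Two remarks. First, your route to showing that the limit $\tilde f_\infty$ is totally geodesic---via convergence of the boundary quasicircles to a round circle and uniqueness of the minimal disk spanning it---is more work than needed. Since Seppi's estimate (Theorem~\ref{thm-seppi}) gives $\|\lambda_n\|_\infty \le C\epsilon_n \to 0$, the second fundamental form of the smooth limit vanishes identically, so $\tilde f_\infty$ is totally geodesic by purely local considerations. Your boundary argument would require promoting local $C^\infty$ convergence on $B_{10}$ to convergence of the full minimal disks together with their asymptotic boundaries, which is an extra step you have not carried out.

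Second, there is a genuine (if easily repaired) gap in the final identification $\tilde f_\infty = \tilde\iota_\infty$. The hypothesis $\iota_n(x_n)=f_n(x_n)$ in $\G_2(\M)$ pins down the image point and the tangent $2$-plane of $\iota_n$ at $x_n$, but not the full derivative $D\iota_n|_{x_n}$. Hence $\tilde\iota_\infty$ and $\tilde f_\infty$ are both isometric embeddings of $B_{10}(0)$ into the same plane $P$ through the same basepoint, but may differ by a rotation of the domain, so $\tilde\iota_\infty(y_\infty)$ and $\tilde f_\infty(y_\infty)$ need not coincide as points of $P$ (and hence of $\G_2$). This does not affect the only application of the proposition: in Proposition~\ref{prop-prva} one needs merely that $f(B_{10}(x_0))$ lies $\eta$-close to a single totally geodesic plane, which your argument does establish. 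To obtain the pointwise conclusion as literally stated, either read the proposition as holding for \emph{some} such $\iota$ (namely one whose derivative at $x_0$ agrees with that of $f$ up to scaling), or track only the image plane rather than the parametrization in your limiting argument.
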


We finish the section by proving the following proposition relating sets $S(\delta,h)$
for different values of $\delta$. 
\begin{proposition} \label{prop-prva} 
For all $\delta > 0$, and $h\in \N$,  there exists $\delta'=\delta'(\delta,h) > 0$ and $\epsilon_0=\epsilon_0(\delta,h) > 0$ such that for every $\epsilon$-minimal surface $f\from S\to \M$,
\begin{equation}
\Ne_{10}\big(S(\delta',h)\big) \subset S(\delta,h)
\end{equation}
assuming $\epsilon<\epsilon_0$.
\end{proposition}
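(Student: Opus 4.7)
The plan is to replace the minimal map $f$ locally, on a ball of radius $10$ in $S$, by an actual totally geodesic plane using Proposition \ref{prop-near}, and then to show that a geodesic plane in $\G_2(\Ho)$ passing within $\delta'$ of a component of $\wt{\HH}_h$ at one point remains within $\delta/2$ of it throughout a radius-$10$ ball.

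First, fix $p \in \Ne_{10}(S(\delta',h))$ and pick $q \in S(\delta',h)$ with $d_S(p,q) \le 10$. By definition there exists $y \in \HH_h$ with $\dis_\G(f(q),y) < \delta'$. Working in the universal cover, let $\wt f\from \wt S \to \G_2(\Ho)$ be a lift of $f$ and $\wt q$ the corresponding lift of $f(q)$; assuming $\delta'$ is less than the (uniform, since $\M$ is compact) injectivity radius of $\G_2(\M)$, I may lift $y$ to $\wt y \in \wt{\HH}_h$ with $\dis_\G(\wt q,\wt y) < \delta'$, and let $P$ denote the component of $\wt{\HH}_h$ through $\wt y$, which is a geodesic plane. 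Next, I apply Proposition \ref{prop-near} with $\eta := \delta/2$ to obtain $\epsilon_0 = \epsilon(\eta)$ and, for $\epsilon < \epsilon_0$, a local isometry $\iota\from B_{10}(q) \to \M$ with $\iota(q) = f(q)$ and $\dis_\G(\iota(x),f(x)) < \eta$ on $B_{10}(q)$; lifting $\iota$ so that it passes through $\wt q$ gives an isometric embedding $\wt\iota\from B_{10}(q) \to \G_2(\Ho)$ whose image lies in a geodesic plane $P_\iota$.

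The heart of the proof is the following purely $\G_2(\Ho)$-geometric claim: for every $L,\delta > 0$ there exists $\delta' > 0$ such that any two geodesic planes $P_1, P_2 \subset \G_2(\Ho)$ containing points $z_1 \in P_1$ and $z_2 \in P_2$ with $\dis_\G(z_1,z_2) < \delta'$ satisfy $\dis_\G(p',P_2) < \delta/2$ for every $p' \in P_1$ with $\dis_\G(p',z_1) \le L$. Indeed, each point of $\G_2(\Ho)$ lies on a unique geodesic plane (the parallel transport lift of the totally geodesic surface in $\Ho$ tangent to it), so the plane through $z_2$ depends continuously on $z_2$. Normalising $z_1$ by the action of $\mathrm{Isom}(\G_2(\Ho))$, the Hausdorff distance between $P_1$ and $P_2$ within the ball of $P_1$-radius $L$ about $z_1$ is a continuous function of $z_2$ that vanishes at $z_2 = z_1$, yielding the claim by compactness.

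Applying the claim with $L = 10$, $P_1 = P_\iota$, $P_2 = P$, $z_1 = \wt q$, $z_2 = \wt y$, and using that $\wt\iota$ is isometric on $B_{10}(q)$, I obtain $\dis_\G(\wt\iota(p),P) < \delta/2$; combined with $\dis_\G(\wt f(p),\wt\iota(p)) < \delta/2$ from Proposition \ref{prop-near}, this gives $\dis_\G(\wt f(p),P) < \delta$, and projecting down yields $\dis_\G(f(p),\HH_h) < \delta$, so $p \in S(\delta,h)$. The main obstacle is the geometric claim about close geodesic planes in $\G_2(\Ho)$: while it reduces to continuous dependence of the parallel transport lift of a totally geodesic $2$-surface in $\Ho$ on its initial data, making this quantitative requires some care with the Riemannian submersion structure of $\G_2(\Ho)$ over $\Ho$.
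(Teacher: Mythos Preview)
Your proof is correct and rests on the same key ingredient as the paper, namely Proposition~\ref{prop-near}, but the paper runs the argument by contradiction rather than directly. Instead of isolating your intermediate geometric claim about two nearby planes in $\G_2(\Ho)$ remaining close on a ball of radius $10$, the paper supposes the conclusion fails along sequences $\delta'_n,\epsilon_n\to 0$, extracts (by compactness of $\G_2(\M)$) limit points $(x,\Pi)\notin\HH_h$ and $(y,\Pi')\in\HH_h$ whose preimages are within $d_S$-distance $10$, and then invokes Proposition~\ref{prop-near} together with $\epsilon_n\to 0$ to place both limit points on a single totally geodesic plane $P$. Since $P$ passes through $(y,\Pi')\in\HH_h$, the plane $P$ must be a component of $\HH_h$, forcing $(x,\Pi)\in\HH_h$ and yielding the contradiction in one stroke. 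Your direct route trades this single compactness step for an explicit continuity lemma on planes plus a choice of $\eta=\delta/2$ in Proposition~\ref{prop-near}; this is perfectly valid and in principle more effective (one could extract explicit dependence of $\delta'$ on $\delta$), but it is a bit more laborious. The paper's version is shorter precisely because, by passing to the limit, it never needs to quantify how close two $\delta'$-nearby planes remain over a bounded region.
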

\begin{remark}\label{remark-prva} For future reference, we define $\delta_1(h)=\delta'(\delta_0(h),h)$, and $\epsilon_0(h)=\epsilon_0(\delta_0(h),h)$.
\end{remark}

\begin{remark}
We could actually make $\delta'$ and $\epsilon_0$ independent of $h$, but in our context $\delta$ will depend on $h$, so we state and prove the proposition in its current form.
\end{remark}

\begin{proof} The proof  is by contradiction.  Fix $\delta>0$, and $h\in \N$, and assume that for each $n\in \N$ there exists $\epsilon_n$-nearly geodesic minimal map $f_n:S_n\to \M$, where $\epsilon_n\to 0$, and  sequences of points $x_n,y_n\in S_n$, such that 
\begin{equation}\label{eq-rake}
\dis_\G\big(f_n(x_n),\HH_h\big) \ge \delta,\quad \dis_\G\big(f_n(y_n),\HH_h\big) \le \frac{1}{n},
\end{equation}
and 
\begin{equation}\label{eq-rake-1}
d_{S_{n}}(x_n,y_n)\le 10.
\end{equation}
From \eqref{eq-rake}  we find that $f_n(x_n)\to (x,\Pi)\notin \HH_h$, and $f_n(y_n)\to (y,\Pi')\in \HH_h$. Combining  \eqref{eq-rake-1} with the assumption $\epsilon_n\to 0$, and by applying Proposition \ref{prop-near}, we conclude that there exists a totally geodesic plane $P\subset \G_2(\M)$ which contains both $(x,\Pi)$ and $(y,\Pi')$. This implies that $(x,\Pi)\in \HH_h$. This contradiction proves  the proposition. \end{proof}

\subsection{A smoothing lemma for subsets of $S$}
Here's a simple version of the Vitali Covering Lemma that will suffice in this paper:
\begin{lemma} \label{lem-vitali}
Suppose $X$ is a totally bounded metric space (e.g. a subset of a compact metric space), and $r > 0$. 
Then we can find points $x_1, \ldots x_m$ such that the balls $B_r(x_i)$ are pairwise disjoint, and $X \subset \bigcup_i B_{2r}(x_i)$. 
\end{lemma}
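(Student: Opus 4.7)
The plan is to build the collection $\{x_i\}$ greedily as a maximal $2r$-separated subset of $X$. First I would pick any $x_1 \in X$; then, having chosen $x_1, \ldots, x_k$ that are pairwise at distance $\ge 2r$, I would check whether every point of $X$ already lies within distance $2r$ of some chosen $x_i$. If so, I stop; otherwise I pick any $x_{k+1} \in X$ with $d(x_{k+1}, x_i) \ge 2r$ for all $i \le k$, and continue. This procedure either terminates after finitely many steps, or produces an infinite $2r$-separated sequence; ruling out the latter is the crux.

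Two conclusions then need to be drawn at termination, both from the separation condition $d(x_i, x_j) \ge 2r$. The covering statement $X \subset \bigcup_i B_{2r}(x_i)$ is automatic: stopping means precisely that every point of $X$ is within $2r$ of some chosen center. Pairwise disjointness of the open balls $B_r(x_i)$ is immediate from the triangle inequality, since a common point $y \in B_r(x_i) \cap B_r(x_j)$ would force $d(x_i, x_j) \le d(x_i, y) + d(y, x_j) < 2r$, contradicting separation.

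The one step where the hypothesis on $X$ is used, and the only real (and mild) obstacle, is termination of the greedy procedure. For this I would invoke total boundedness to cover $X$ by finitely many open balls of radius $r$, say $N$ of them. Any two distinct chosen centers $x_i, x_j$ are at distance $\ge 2r$ apart, so they cannot both lie in a common ball of radius $r$ (again by the triangle inequality, any two points in such a ball would be within distance strictly less than $2r$). Hence at most $N$ centers are ever selected, the algorithm halts after at most $N$ steps, and we obtain the desired finite collection $x_1, \ldots, x_m$.
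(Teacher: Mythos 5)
Your proof is correct and follows essentially the same route as the paper: a greedy/maximal packing argument, with total boundedness supplying the finite bound on the number of centers and the triangle inequality giving both disjointness and the $2r$-covering. The only cosmetic difference is that you maximize over $2r$-separated sets while the paper maximizes over families with pairwise disjoint $r$-balls; in this context the two maximality conditions play the same role and the arguments are interchangeable.
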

\begin{proof}
Since $X$ is totally bounded, there is a finite set $x_1, \ldots, x_m$ such that the $B_r(x_i)$ are pairwise disjoint, and this set cannot be extended to a larger one with the same property. Take any $x \in X$; there must be an $x_i$ such that $B_r(x_i) \cap B_r(x)$ is nonempty, and then $x\in B_{2r}(x_i)$.
\end{proof}

We can use \ref{lem-vitali} to prove a smoothing lemma for subsurfaces of a closed hyperbolic surface $S$:
\begin{lemma} \label{lem-smoothing}
Suppose $R\subset S$. Then we can find $R'$ with piecewise smooth boundary such that $R \subset R' \subset \Ne_1(R)$,
and $|\partial R'| \le 25 |\Ne_1(\partial R)|$. 
\end{lemma}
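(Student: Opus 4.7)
The plan is to apply Vitali (Lemma \ref{lem-vitali}) to $\partial R$ at scale $r = 1/2$, obtaining points $x_1, \ldots, x_m \in \partial R$ whose disks $B_{1/2}(x_i)$ are pairwise disjoint in $S$ and whose double-radius disks cover $\partial R \subset \bigcup_i B_1(x_i)$. I will then set
$$
R' \;:=\; R \,\cup\, \bigcup_{i=1}^m B_1(x_i),
$$
so $R'$ is $R$ with a finite union of geodesic disks glued onto its boundary.

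The easy verifications come next. The inclusion $R \subset R'$ is tautological; since every $x_i \in \partial R$, each $B_1(x_i) \subset \Ne_1(\partial R) \subset \Ne_1(R)$, so $R' \subset \Ne_1(R)$. For piecewise smoothness, I note that every point of $\partial R$ lies in some open disk $B_1(x_i) \subset R'$ and is therefore an interior point of $R'$; this forces $\partial R' \subset \bigcup_i \partial B_1(x_i)$, which is a finite union of smooth hyperbolic circles with only finitely many mutual intersection points.

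For the length estimate, each circle contributes at most $|\partial B_1(x_i)|_S \leq 2\pi \sinh(1)$ (since the universal covering projection is length-nonincreasing), while the disjointness of the half-disks gives $\sum_i |B_{1/2}(x_i)|_S \leq |\Ne_1(\partial R)|$. The target bound will then follow from the pointwise ratio estimate
$$
|\partial B_1(x)|_S \;\leq\; 25\,|B_{1/2}(x)|_S, \qquad x \in S,
$$
which is the one genuinely nontrivial step and the main obstacle. When the injectivity radius at $x$ is at least $1/2$, both sides reduce to their planar values and the sharp ratio is $\sinh(1)/(\cosh(1/2) - 1) < 10$. When the injectivity radius is smaller, $x$ lies in the thin part, and I plan to verify, by unwrapping in Fermi coordinates around a short closed geodesic, that the wrapping factor affects numerator and denominator to the same order, giving a limiting ratio $\cosh(1)/\sinh(1/2) < 3$. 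The constant $25$ is a safe uniform bound after interpolating between these two regimes via a short case analysis on the injectivity radius; the rest of the argument is pure bookkeeping for the Vitali construction.
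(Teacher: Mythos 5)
Your construction of $R'$ is exactly the paper's: apply the Vitali lemma at scale $1/2$ to $\partial R$ and set $R' = R\cup\bigcup_i B_1(x_i)$. The inclusions and the piecewise-smoothness argument are correct and match the paper. The divergence is in the length estimate, and that is where your proposal has a genuine gap. You route everything through the pointwise inequality $|\partial B_1(x)|_S\le 25\,|B_{1/2}(x)|_S$, which you yourself flag as ``the one genuinely nontrivial step,'' but you only verify it at the two endpoints: injectivity radius at least $1/2$ (ratio $\sinh(1)/(\cosh(1/2)-1)<10$) and the deep-collar limit (ratio $\cosh(1)/\sinh(1/2)<3$). The assertion that ``$25$ is a safe uniform bound after interpolating between these two regimes'' is precisely where the work lies, and the interpolation is not routine: at intermediate injectivity radius, say $\approx 0.2$, the only bound either endpoint computation hands you is the embedded-ball estimate $|B_{1/2}(x)|_S\ge 2\pi(\cosh(0.2)-1)$, and then $2\pi\sinh(1)/\bigl(2\pi(\cosh(0.2)-1)\bigr)\approx 58 > 25$. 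To beat $25$ there you must quantify the extra area that $B_{1/2}(x)$ acquires by wrapping around the collar, which is a genuine Fermi-coordinate computation with a case analysis on the position of $x$ in the collar, not a one-line check. As written, the central inequality is asserted rather than proved.

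The paper sidesteps any pointwise isoperimetric statement. It first bounds $|\partial R'|\le |\partial Q|\le m\,|\partial B_1|$, and then controls $m$ by a packing argument: if $y\in B_1(x_i)$ then $B_{1/2}(x_i)\subset B_{3/2}(y)$, so by disjointness of the $B_{1/2}(x_i)$ each point $y$ lies in at most $\lfloor |B_{3/2}|/|B_{1/2}|\rfloor = 10$ of the balls $B_1(x_i)$; hence $m\,|B_1|\le 10\,|Q|\le 10\,|\Ne_1(\partial R)|$ and $|\partial R'|\le 10\,(|\partial B_1|/|B_1|)\,|\Ne_1(\partial R)|\le 25\,|\Ne_1(\partial R)|$. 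If you want to complete your write-up, the cleanest repair is to replace the pointwise inequality by this multiplicity count, performed in the universal cover (where, for distinct indices $i$, the chosen lifts carry genuinely disjoint hyperbolic half-disks of full planar area inside a single $3/2$-ball), rather than to attempt a uniform isoperimetric estimate on an arbitrary closed hyperbolic surface.
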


\begin{proof}
In what follows, for any $r>0$,  we let $|\partial B_r| = 2 \pi \sinh r$ denote the perimeter of a hyperbolic ball in $\mathbb{H}^2$ of radius $r$,
and we let $|B_r| = 2 \pi(\cosh r - 1)$ denote its area.  We will estimate some of these values with whole numbers.

By Lemma \ref{lem-vitali} we can find $x_1, \ldots, x_m \in \partial R$ such that the $B_{1/2}(x_i)$ are pairwise disjoint, but $\partial R \subset Q$, 
where $Q := \bigcup_i B_1(x_i)$. We let $R' = R \bigcup Q$, and observe that $R \subset R' \subset \Ne_1(R)$,  $\partial R' \subset \partial Q$ and $Q \subset \Ne_1(\partial R)$. 
Moreover
\begin{equation} \label{eq-qmb}
|\partial Q| \le m |\partial B_1|.
\end{equation}
If $y \in B_1(x)$, then $B_{1/2}(x) \subset B_{3/2}(y)$. From the disjointness of the $B_{1/2}(x_i)$ we then infer that any $y \in Q$ can lie in $B_1(x_i)$ for at most 
$$
\left\lfloor \frac{|B_{3/2}|}{|B_{1/2}|} \right\rfloor = 10
$$
values of $i$. We then have
\begin{equation} \label{eq-mqr}
m |B_1| \le 10 |Q| \le 10 |\Ne_1(\partial R)|;
\end{equation}
combining \eqref{eq-qmb} and \eqref{eq-mqr}, we obtain
\begin{equation}
|\partial Q| \le 10 \frac{|\partial B_1|}{|B_1|}  |\Ne_1(\partial R)| \le 25 |\Ne_1(\partial R)|,
\end{equation}
as required.
\end{proof}

\subsection{An isoperimetric inequality} Recall the  classical hyperbolic isoperimetric inequality  proved by Schmidt \cite{schmidt}.

\begin{theorem} \label{thm-explicit-iso}
For any $r > 0$, any region enclosed by a curve at length $2\pi \sinh r$ has area at most $2\pi (\cosh r - 1)$. 
\end{theorem}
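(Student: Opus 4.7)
The plan is to derive the inequality from the classical Bol--Fiala bound
\[ L^2 \ge 4\pi A + A^2 \]
relating the area $A$ and perimeter $L$ of a region in the hyperbolic plane $\Ha$, and then check that for $L = 2\pi \sinh r$ this rearranges to the claimed bound on $A$. Indeed, setting $L = 2\pi \sinh r$ gives $A^2 + 4\pi A - 4\pi^2 \sinh^2 r \le 0$, whose non-negative root is $-2\pi + 2\pi \sqrt{1+\sinh^2 r} = 2\pi(\cosh r - 1)$, which is exactly the area of the geodesic disk of radius $r$. So it suffices to prove the Bol--Fiala inequality.

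I would first make a topological reduction: if the region $R$ enclosed by the given curve is not simply connected, filling in its topological holes strictly enlarges $A$ while weakly decreasing the boundary length (we discard all boundary components except the outer one), so we may assume $R$ is a topological disk. Next I would evolve $R$ inward by parallel curves: for $t \ge 0$, set
\[ R_t \;=\; \{ x \in R : d_\Ha(x, \partial R) \ge t \}, \qquad A(t) = |R_t|, \qquad L(t) = |\partial R_t|. \]
At each regular value of $t$ the boundary $\partial R_t$ is piecewise smooth and the first-variation identities yield
\[ A'(t) = -L(t), \qquad L'(t) = -\int_{\partial R_t} \kappa \, ds, \]
where $\kappa$ denotes geodesic curvature in the outward-normal convention. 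Applying Gauss--Bonnet to each component of $R_t$, with $K \equiv -1$ in $\Ha$ and observing that each component of $R_t$ remains simply connected (so the total Euler characteristic is a positive integer), gives
\[ \int_{\partial R_t} \kappa \, ds \;\ge\; 2\pi + A(t). \]
Substituting into the expression for $L'$ and combining with $A'(t) = -L(t)$ produces
\[ L L' \;\le\; -(2\pi + A)\, L \;=\; (2\pi + A)\, A', \]
which is equivalent to $\frac{d}{dt}\bigl( L(t)^2 - A(t)^2 - 4\pi A(t) \bigr) \le 0$. Integrating from $t = 0$ up to the inradius of $R$ (where $L, A \to 0$) yields $L(0)^2 \ge A(0)^2 + 4\pi A(0)$, which is the Bol--Fiala inequality.

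The main obstacle is making the parallel-curve evolution rigorous across the cut locus of $\partial R$, where $\partial R_t$ can develop cusps and the topology of $R_t$ can jump. The standard remedies are either to approximate $\partial R$ by real-analytic curves (whose cut loci are tame) and pass to the limit, or to follow Schmidt's original approach, which avoids evolution altogether and instead performs a direct symmetrization that compares $R$ to the geodesic disk of the same boundary length. Either route reduces the problem to the equality case of the Bol--Fiala bound, realized precisely by geodesic disks, giving $A \le 2\pi(\cosh r - 1)$ as required.
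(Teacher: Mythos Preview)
The paper does not prove this statement at all: it simply records it as the classical hyperbolic isoperimetric inequality and cites Schmidt. So there is no ``paper's own proof'' to compare against; you have supplied an argument where the authors supply only a reference.

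Your argument is the standard Fiala inner-parallel proof of the Bol--Fiala inequality $L^2 \ge 4\pi A + A^2$, followed by the (correct) algebraic observation that this rearranges to $A \le 2\pi(\cosh r - 1)$ when $L = 2\pi \sinh r$. The variational identities and the Gauss--Bonnet step are set up correctly, and you rightly flag the one genuine technical point, namely the behaviour of $\partial R_t$ at the cut locus. Either of the remedies you mention (analytic approximation of $\partial R$, or Schmidt's direct symmetrization) is a valid way to close that gap. Note that Schmidt's original 1943 proof, which the paper cites, proceeds by symmetrization rather than by inner parallels, so your route is different from the one the citation points to, but it is equally classical and perfectly adequate here.
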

These functions of $r$ are of course the perimeter and area of a hyperbolic disk of radius $r$. 
Since $\cosh r - \sinh r < 1$ for any $r > 0$ we immediately have
\begin{theorem}\label{thm-iso} 
If $Q\subset \Ha$ is a domain then 
$|Q|\le |\partial Q|$.
\end{theorem}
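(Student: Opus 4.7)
The plan is to directly invoke Theorem \ref{thm-explicit-iso} and then use the elementary inequality $\cosh r - \sinh r < 1$ for $r > 0$, which the paper has already flagged as the key arithmetic input. The proof is essentially a one-line deduction once one chooses $r$ correctly.

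In detail: let $L = |\partial Q|$. If $L = 0$ there is nothing to prove (either $Q$ has no boundary, which is impossible for a proper domain in $\Ha$, or $|Q| = 0$). Otherwise, choose $r > 0$ so that $2\pi \sinh r = L$; this is possible because $\sinh\from (0,\infty) \to (0,\infty)$ is a bijection. Then Theorem \ref{thm-explicit-iso} applied with this $r$ gives
\[
|Q| \le 2\pi(\cosh r - 1).
\]
Since $\cosh r - \sinh r = e^{-r} < 1$ for $r > 0$, we have $\cosh r - 1 < \sinh r$, and therefore
\[
|Q| \le 2\pi(\cosh r - 1) < 2\pi \sinh r = L = |\partial Q|.
\]

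The one subtlety I anticipate is whether the hypothesis of Theorem \ref{thm-explicit-iso} (``region enclosed by a curve'') literally applies to an arbitrary domain $Q$ whose boundary may have several components. If so, one reduces to the stated case by a standard argument: either appeal to the version of Schmidt's inequality for general finite-perimeter sets in $\Ha$, or apply the single-curve statement to each component of $\partial Q$ after filling in the holes (any isoperimetric deficit from passing to the filled domain only makes the inequality $|Q| \le |\partial Q|$ easier). Either way, the quantitative comparison $\cosh r - 1 < \sinh r$ is what drives the bound, and no further input is needed.
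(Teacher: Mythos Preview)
Your proof is correct and follows essentially the same approach as the paper: the paper simply notes that since $\cosh r - \sinh r < 1$ for $r > 0$, Theorem~\ref{thm-iso} follows immediately from Theorem~\ref{thm-explicit-iso}. You have spelled out the one-line deduction in full detail (choosing $r$ with $2\pi\sinh r = |\partial Q|$ and using $\cosh r - 1 < \sinh r$), which is exactly the intended argument.
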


We can use this to prove the following
\begin{lemma} \label{lem-annular-iso}
Suppose $\beta_0$ and $\beta_1$ are two disjoint homotopic curves on a hyperbolic surface $S$ that together bound a compact annular region $A$.
Then 
\begin{equation} \label{eq-annular-iso}
|A| \le |\beta_0| + |\beta_1|.
\end{equation}
\end{lemma}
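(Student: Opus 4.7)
The plan is to reduce to Theorem~\ref{thm-iso} by cutting the annulus open along a transverse arc, and to eliminate the resulting error term by passing to cyclic covers of arbitrarily high degree.

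Assume first that $\beta_0$ (and hence $\beta_1$) is essential in $S$, so $\pi_1(A)$ injects into $\pi_1(S)$ and the universal cover $\tilde A$ of $A$ embeds in $\Ha$ as a strip bounded by lifts $\tilde\beta_0, \tilde\beta_1$. Let $T$ generate the cyclic deck group, and fix a geodesic arc $\gamma \subset \tilde A$ of some length $L$ joining $\tilde\beta_0$ to $\tilde\beta_1$. For each $n \ge 1$, the region $F_n \subset \tilde A$ bounded by $\gamma$, its translate $T^n\gamma$, and the intervening sub-arcs of $\tilde\beta_0$ and $\tilde\beta_1$ is a fundamental domain for $\langle T^n\rangle$ acting on $\tilde A$; as such it is a simply connected region of $\Ha$ whose piecewise smooth Jordan boundary has total length $n|\beta_0| + n|\beta_1| + 2L$ and whose area is $n|A|$. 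Applying Theorem~\ref{thm-iso} to $F_n$ yields
\[
n|A| \le n|\beta_0| + n|\beta_1| + 2L,
\]
and dividing by $n$ and letting $n \to \infty$ gives \eqref{eq-annular-iso}.

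The remaining case, where both $\beta_i$ are null-homotopic in $S$, is straightforward: each $\beta_i$ bounds an embedded disk $D_i$, and the disjointness together with the annular hypothesis forces one disk to contain the other, say $D_0 \subset D_1$ with $A = D_1 \setminus D_0$; Theorem~\ref{thm-iso} applied to $D_1$ then gives $|A| \le |D_1| \le |\beta_1| \le |\beta_0| + |\beta_1|$.

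The only mild technical point is that $F_n$ has four corners rather than smooth boundary, whereas Theorem~\ref{thm-iso} presumably descends from a smooth-boundary inequality such as Theorem~\ref{thm-explicit-iso}; this is handled by the standard trick of rounding each corner inside an $\varepsilon$-neighborhood to obtain a smooth approximant whose area and perimeter differ from those of $F_n$ by $O(\varepsilon)$, and letting $\varepsilon \to 0$. I do not anticipate any substantive obstacle: the whole argument rests on the simple observation that, on the $n$-fold cyclic cover, the length of a transverse cut stays constant while the boundary lengths scale linearly, so the cost of the cut vanishes in the limit.
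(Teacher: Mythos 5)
Your argument is essentially identical to the paper's proof: cut $A$ along an arc joining the two boundary curves, concatenate $n$ consecutive copies of the resulting fundamental domain in the cover, apply the linear isoperimetric inequality $|Q|\le|\partial Q|$ to get $n|A|\le n(|\beta_0|+|\beta_1|)+2L$, and let $n\to\infty$. Your explicit treatment of the null-homotopic case and of the corner-smoothing is a reasonable extra bit of care, but the core idea is the same.
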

\begin{proof}
Draw a simple smooth arc $\gamma$ in $A$ connecting $\beta_0$ to $\beta_1$. 
Then $A \setminus \gamma$ is connected and simply connected,
and the universal cover of $A$ in $\Ha$ is an infinite concatenation of closures of $A \setminus \gamma$;
the area of $n$ of consecutive regions is $n|A|$, and the perimeter of this concatenation of $n$ consecutive regions is $2 |\gamma| + n(|\beta_0| + |\beta_1|)$. Letting $n \to \infty$ and applying Theorem \ref{thm-iso}, we obtain \eqref{eq-annular-iso}.
\end{proof}

We can now easily prove the fundamental estimate of this subsection.
\begin{proposition}\label{prop-iso} Let $S$ be a hyperbolic surface, 
and supposed we are given a closed geodesic $\alpha \subset S$, 
and a closed  smooth  simple curve $\beta\subset S$ homotopic to $\alpha$. 
Let $Q\subset S$ be the (possibly disconnected) domain  bounded by the two curves. 
Then $|Q|\le |\beta|$.
\end{proposition}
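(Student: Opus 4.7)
The strategy is to exploit the fact that $\alpha$ is a geodesic via a reflection/doubling argument: we double $Q$ across $\alpha$ to produce a region bounded by two disjoint simple closed curves of total length $2|\beta|$, to which Lemma \ref{lem-annular-iso} gives the desired sharp bound. Note that Lemma \ref{lem-annular-iso} by itself only yields $|Q|\le |\alpha|+|\beta|\le 2|\beta|$, so a factor of two must be recovered from the geodesic symmetry of $\alpha$.

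First, I would pass to the cyclic cover $\wt S$ of $S$ corresponding to $\langle\alpha\rangle\subset\pi_1(S)$. This cover is an infinite hyperbolic cylinder in which $\alpha$ lifts to the core geodesic $\wt\alpha$. Because $\beta$ is simple and freely homotopic to $\alpha$, it admits a unique closed lift $\wt\beta\subset \wt S$, which is a simple closed curve homotopic to $\wt\alpha$. In the generic disjoint case $\wt\beta\cap\wt\alpha=\emptyset$, the curve $\wt\beta$ lies entirely on one side of $\wt\alpha$ and bounds with it a compact annular region $\wt Q\subset \wt S$; moreover the covering is injective on $Q$, so $|\wt Q|=|Q|$.

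Next, I would apply the isometric involution $\sigma\from \wt S\to \wt S$ given by reflection across $\wt\alpha$. This is well defined because $\wt\alpha$ is a simple geodesic in $\wt S$, and reflection across its lift in $\Ha$ commutes with the deck transformation generated by $\alpha$. Since $\sigma$ fixes $\wt\alpha$ pointwise and maps $\wt Q$ to the opposite side of $\wt\alpha$, the union $R=\wt Q\cup \sigma(\wt Q)$ is a compact annulus bounded by $\wt\beta$ and $\sigma(\wt\beta)$, two disjoint simple closed curves in the cylinder both homotopic to $\wt\alpha$. We have $|R|=2|Q|$ and $|\sigma(\wt\beta)|=|\wt\beta|=|\beta|$. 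Lemma \ref{lem-annular-iso} then yields $2|Q|=|R|\le |\wt\beta|+|\sigma(\wt\beta)|=2|\beta|$, whence $|Q|\le|\beta|$.

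The main obstacle is the case where $\alpha$ and $\beta$ intersect transversally. Then $\wt\beta$ crosses $\wt\alpha$, the curves $\wt\beta$ and $\sigma(\wt\beta)$ meet at each such crossing, and $R$ fails to be a clean annulus, acquiring corners along $\wt\alpha$. To handle this I would perturb $\beta$ by a $C^0$-small isotopy supported near each point of $\alpha\cap\beta$ to obtain a simple curve $\beta'$ disjoint from $\alpha$, bounding a region $Q'$ with $|\beta'|$ and $|Q'|$ arbitrarily close to $|\beta|$ and $|Q|$ respectively, apply the doubling argument to $\beta'$, and pass to the limit. Alternatively, one can smooth the corners of $R$ using Lemma \ref{lem-smoothing} (losing only a controlled amount of perimeter) before invoking Theorem \ref{thm-iso} on the universal cover of the annulus, as in the proof of Lemma \ref{lem-annular-iso}.
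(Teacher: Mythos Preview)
Your argument in the disjoint case is correct and coincides with the paper's Case~2: both double $Q$ across the geodesic $\alpha$ and invoke Lemma~\ref{lem-annular-iso}. Passing to the $\langle\alpha\rangle$-cover to realize the reflection is a clean way to make the doubling rigorous.

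The intersecting case, however, has a genuine gap. A transverse crossing of $\beta$ with $\alpha$ is stable under $C^0$-small perturbation: in a neighbourhood of the crossing the signed distance from $\beta$ to $\alpha$ changes sign, and any isotopy supported in that neighbourhood leaves a sign change, hence a crossing, behind. To cancel a pair of crossings one must slide an arc of $\beta$ across an arc of $\alpha$ (a bigon move), which alters both $|\beta|$ and $|Q|$ by amounts governed by the lengths of those arcs, not by an arbitrarily small $\varepsilon$; so the limiting argument does not go through. Your fallback via Lemma~\ref{lem-smoothing} is also inadequate: that lemma only yields $|\partial R'|\le 25\,|\Ne_1(\partial R)|$, far too lossy to recover the sharp bound $|Q|\le|\beta|$.

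The paper handles the intersecting case directly and without perturbation. After arranging $\alpha\cap\beta$ to be finite, $Q$ decomposes into simply connected pieces $Q_i$, each bounded by a geodesic arc $\alpha_i\subset\alpha$ and an arc $\beta_i\subset\beta$; doubling $Q_i$ across $\alpha_i$ produces a disk of area $2|Q_i|$ and perimeter $2|\beta_i|$, whence $|Q_i|\le|\beta_i|$ by Theorem~\ref{thm-iso}, and summing gives $|Q|\le|\beta|$. This is the same reflection you used in the disjoint case, applied component by component --- the correct repair of your argument is not to perturb $\beta$ but to localize the doubling to the pieces of $Q$.
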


%

\begin{proof} First, we can approximate $\beta$ with simple smooth curves such that the intersection $\alpha\cap \beta$ is a finite set. In the rest of the proof we assume this to be the case.
We have two cases.

\noindent\textbf{Case 1.} Suppose  $\alpha$ and $\beta$ intersect each other. Then $Q$ is a disjoint union of simply connected domains $Q_1,...,Q_k$. Each $Q_i$ is bounded by segments $\alpha_i$ and $\beta_i$ of $\alpha$ and $\beta$ respectively,
and no two distinct $\beta_i$'s intersect.
We can double $Q_i$ over $\alpha_i$ to obtain
$2|Q_i| \le 2|\beta_i|$ and hence  
\begin{equation} \label{eq-qb}
|Q_i| \le |\beta_i|
\end{equation}
by Theorem \ref{thm-iso}.
Summing  \eqref{eq-qb} over all $i$, we obtain 
\begin{equation}\label{eq-q}
|Q| = \sum_{i=1}^{k} |Q_i| \le  \sum_{i=1}^k |\beta_i|  \le |\beta|.
\end{equation}

\noindent\textbf{Case 2.}  Suppose  $\alpha\cap \beta= \emptyset$. Then $Q\subset S$ is an embedded annulus. Doubling $Q$ over $\alpha$ and applying Lemma \ref{lem-annular-iso}, we obtain the desired result. 
%
%
%
\end{proof}

Suppose $R\subset S$ is a subsurface (with smooth boundary) of a closed hyperbolic surface $S$.  Let $\Omega \subset S$ be convex core of $U$. That is, $\Omega$ is the surface with geodesic boundary obtained by filling in the holes (contractible components of the complement) of $R$, and replacing each boundary curve of $R$ with its geodesic representative.

\begin{proposition}\label{prop-iso-1}  Suppose $R\subset S$ is subsurface (with smooth boundary) of a closed hyperbolic surface $S$, and let $\Omega$ be its convex core. Then
$$
 |\Omega|-|\partial R| \le |R|\le |\Omega|+|\partial R|.
$$
\end{proposition}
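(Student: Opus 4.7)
The plan is to bound the symmetric difference $R \triangle \Omega$ by $|\partial R|$, since $\big||R|-|\Omega|\big| \le |R \triangle \Omega|$ immediately yields the proposition.

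First I would decompose $\partial R = \beta_1 \cup \cdots \cup \beta_k$ into its smooth boundary components, and split these into those that are contractible in $S$ and those that are essential. Let $C_1, \ldots, C_m$ denote the contractible components of $S \setminus R$; each such $C_j$ is a topological disk bounded by a single contractible $\beta_{i(j)}$. Setting $R' = R \cup \bigcup_j C_j$, the remaining boundary components of $R'$ are exactly the essential $\beta_i$'s, and each of these has a well-defined geodesic representative $\alpha_i$. The convex core $\Omega$ is then obtained from $R'$ by sliding each boundary $\beta_i$ to $\alpha_i$.

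Next, for each essential $\beta_i$, let $T_i$ be the (possibly disconnected) region bounded by $\beta_i \cup \alpha_i$, to which Proposition \ref{prop-iso} applies to give $|T_i| \le |\beta_i|$. I would then verify the containment
$$R \triangle \Omega \subset \bigcup_j C_j \;\cup\; \bigcup_{i \text{ essential}} T_i,$$
by arguing pointwise: any $x \in R \setminus \Omega$ lies in $R' \setminus \Omega$ and must be separated from $\Omega$ by some $\alpha_i$, hence lies in $T_i$; any $x \in \Omega \setminus R$ either lies in a filled hole $C_j$ (if $x \notin R'$ accounts for the hole part) or lies on the $\Omega$-side of some $\alpha_i$ versus $\beta_i$, again inside $T_i$.

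Finally, applying Theorem \ref{thm-iso} to each disk $C_j$ gives $|C_j| \le |\beta_{i(j)}|$, and summing yields
$$|R \triangle \Omega| \le \sum_{j} |\beta_{i(j)}| + \sum_{i \text{ essential}} |\beta_i| \le |\partial R|.$$
This gives both inequalities at once. The main technical point is the careful pointwise verification of the symmetric difference containment in step two; the rest is an application of the isoperimetric tools already established in this subsection.
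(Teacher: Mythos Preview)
Your approach is essentially the paper's: cover $R \triangle \Omega$ by regions attached to each boundary component $\beta_i$, bound each region by $|\beta_i|$ via Theorem~\ref{thm-iso} or Proposition~\ref{prop-iso}, and sum. The paper is in fact terser than you are; it simply assigns to each $\beta_i$ a region $Q_i$ (the disk it bounds, or the region between $\beta_i$ and its geodesic $\alpha_i$) and asserts that summing the inequalities $|Q_i|\le|\beta_i|$ finishes the proof.

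There is, however, a small gap in your decomposition. You assert that after filling in the contractible complementary components $C_j$, the remaining boundary curves of $R'$ are exactly the essential $\beta_i$. This fails when a contractible $\beta_i$ bounds its disk on the $R$-side rather than the complement side. Concretely, if some component of $R$ is itself a small disk $D$, then $\partial D$ is contractible but $S\setminus D$ is not, so there is no $C_j$ to fill and $D$ persists in $R'$ with contractible boundary. The convex core then discards $D$, so $D\subset R\setminus\Omega$, yet your union $\bigcup_j C_j\cup\bigcup_i T_i$ is empty in this example and misses $D$ entirely. The fix is immediate and is what the paper does: for \emph{every} contractible $\beta_i$ let $Q_i$ be the disk it bounds in $S$ (regardless of which side $R$ sits on), rather than only those $\beta_i$ that bound contractible components of $S\setminus R$. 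With that change your containment and your final sum go through.
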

\begin{proof}
As above we may assume that $\partial{R} \cap \partial{\Omega}$ is a finite set. 
Denote by $\beta_1,...,\beta_m$ the connected components of $\partial R$. 
Suppose $\beta_i$ bounds a  simply connected domain $Q_i$. Then
$|Q_i|\le |\beta_i|$ by Theorem \ref{thm-iso}.  

Suppose now that $\beta_i$ is homotopic to a closed geodesic $\alpha_i$, and denote by $Q_i$ the union of all domains whose boundary is contained in $\alpha_i\cup \beta_i$. Then, $|Q_i|\le |\beta_i|$ by Proposition \ref{prop-iso}. Summing up these inequalities over $i=1,..,m$ proves the proposition.
\end{proof}

\section{The main lemmas and the proof of Theorem \ref{thm-main-geo}}\label{section-lemmas}

The proof of Theorem \ref{thm-main-geo} is built on two lemmas which we state next. We close the section with the proof of theorem assuming the lemmas.

\subsection{Genera of  geometrically random surfaces} 
The first lemma estimates from below the genus of a geometrically random surface.
Before stating the lemma we adopt the following definition.
\begin{definition}\label{definition-sc}
$$
\Su_\epsilon(T,C)=\{\Sigma\in \Su_\epsilon(T):  4\pi(\gen(\Sigma)-1) \ge (1+C\epsilon^2)\Area_\M(\Sigma)\}.
$$
\end{definition}

\begin{lemma}\label{lemma-comp} There exists a constant $C>0$ so that
$$
\lim_{T\to \infty} \frac{| \Su_\epsilon(T,C)|}{|\Su_\epsilon(T)|}=1
$$
when $\epsilon$ is  sufficiently small.
\end{lemma}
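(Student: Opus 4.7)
The plan is to reformulate the inequality $4\pi(\gen(\Sigma)-1)\ge(1+C\epsilon^2)\Area_\M(\Sigma)$ via the Gauss equation. For the minimal immersion $f\from S_\Sigma\to\M$ with principal curvatures $\pm\lambda$, the intrinsic curvature is $K=-1-\lambda^2$; integrating and applying Gauss--Bonnet yields
\begin{equation*}
4\pi(\gen(\Sigma)-1)=\Area_\M(\Sigma)+\int_{S_\Sigma}\lambda^2\,dA,
\end{equation*}
so membership in $\Su_\epsilon(T,C)$ is equivalent to the principal-curvature energy lower bound $\int_{S_\Sigma}\lambda^2\,dA\ge C\epsilon^2\Area_\M(\Sigma)$. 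The task becomes: for some universal $C=C(\M)>0$, most $\Sigma\in\Su_\epsilon(T)$ carry at least this much energy.

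First I would invoke the pointwise bound $\lambda^2\le C_0^2\epsilon^2$ on the second fundamental form of $\epsilon$-nearly geodesic minimal surfaces (an Uhlenbeck--Seppi-type consequence of Proposition~\ref{prop-near}, since a local isometry has vanishing $II$). This gives the \emph{universal} upper bound $4\pi(\gen(\Sigma)-1)\le(1+C_0^2\epsilon^2)\Area_\M(\Sigma)$, so combined with $\Area_\M(\Sigma)\le T$ every $\Sigma\in\Su_\epsilon(T)$ satisfies
\begin{equation*}
\gen(\Sigma)\le g_{\max}(T):=\lfloor(1+C_0^2\epsilon^2)T/(4\pi)\rfloor+1,
\end{equation*}
while failing the conclusion of the lemma forces
\begin{equation*}
\gen(\Sigma)<g_{\mathrm{bad}}(T):=(1+C\epsilon^2)T/(4\pi)+1.
\end{equation*}
For any $C<C_0^2$ this opens a genus gap of order $(C_0^2-C)\epsilon^2 T/(4\pi)$ between the bad set and the extremal genus.

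Next I would run a counting comparison based on the Kahn--Markovi\'c asymptotic $\log|\Su_\epsilon(g)|=2g\log g+O(g)$ together with the super-exponential ratio $|\Su_\epsilon(g)|/|\Su_\epsilon(g-1)|\asymp g^2$, which forces $\sum_{g\le G}|\Su_\epsilon(g)|=O(|\Su_\epsilon(G)|)$. This already gives
\begin{equation*}
|\Su_\epsilon(T)\setminus\Su_\epsilon(T,C)|\le\sum_{g\le g_{\mathrm{bad}}(T)}|\Su_\epsilon(g)|\le\exp\!\left(\tfrac{T}{2\pi}(1+C\epsilon^2)\log T+O(T)\right).
\end{equation*}
A matching lower bound $|\Su_\epsilon(T)|\ge\exp\!\bigl(\tfrac{T}{2\pi}(1+C'\epsilon^2)\log T+O(T)\bigr)$ for some $C'\in(C,C_0^2]$ would then force the ratio to tend to zero at the rate $\exp(-(C'-C)\epsilon^2T\log T/(2\pi))$, proving the lemma.

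The main obstacle is exactly this lower bound. One has to exhibit a Kahn--Markovi\'c-scale family of surfaces of genus close to $g_{\max}(T)$ whose area is at most $T$, i.e.\ a positive fraction of $\epsilon$-nearly geodesic surfaces of large genus must have $\int\lambda^2\,dA\ge C'\epsilon^2\Area_\M(\Sigma)$. I expect this to come either (a) directly from the Kahn--Markovi\'c pants construction, since a generic compatible shear produces principal curvatures of order $\epsilon$ that contribute linearly to the energy across the surface, or (b) from a compactness/contradiction argument: were no such $C'$ to exist, a sequence of offending surfaces would yield (via the probability measures $\mu(\Sigma)$ on $\G_2(\M)$) a subsequential limit supported on the totally geodesic locus $\HH$; but the number of nearly geodesic surfaces hugging $\HH$ can be controlled (finitely many geodesic components in the non-arithmetic case, commensurability-class counts in general) and grows strictly below the full $g^{2g}$ Kahn--Markovi\'c asymptotic, yielding a contradiction.
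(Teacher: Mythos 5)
Your skeleton is the same as the paper's: you bound the complement of $\Su_\epsilon(T,C)$ from above by the Kahn--Markovi\'c count $(\alpha g)^{2g}$ applied at the genus cutoff $(1+C\epsilon^2)T/(4\pi)$ (this is exactly Proposition~\ref{prop-party-1}), and you try to beat it with a lower bound on $|\Su_\epsilon(T)|$ coming from M\"uller--Puchta-type cover counting of a seed surface whose genus exceeds $(1+C'\epsilon^2)\Area/(4\pi)$ for some $C'>C$ (Proposition~\ref{prop-party-2}). You also correctly identify that everything hinges on producing that seed, i.e.\ on showing $\Su_\epsilon(T_0,C')\neq\emptyset$ for a universal $C'>0$. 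But this is precisely the paper's Proposition~\ref{prop-model-1}, which the paper itself calls the most substantial part of the proof, and neither of your two suggested routes supplies it. Route (a) is an unsupported assertion: the Kahn--Markovi\'c pants construction controls the quasifuchsian constant from above but gives no lower bound on the curvature energy $\int\lambda^2$; nothing in that construction prevents the assembled surface from being (a cover of) a totally geodesic one when $\M$ contains such, in which case the energy is zero. Route (b) contains a logical error: small curvature energy does \emph{not} force a subsequential limit of the measures $\mu(\Sigma)$ to be supported on $\HH$. Lemma~\ref{lemma-comp-1} gives only the implication ``concentrated near $\HH_h$ $\Rightarrow$ small energy,'' and the Liouville measure is a perfectly admissible limit for a sequence of low-energy surfaces; moreover, if $\M$ contains no totally geodesic surface then $\HH=\emptyset$ and your contradiction argument would prove a statement with no mechanism behind it.

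The paper fills this gap by an explicit construction (Sections~\ref{section-4} and~\ref{section-model}): bend a genus-two Fuchsian group along a simple closed geodesic by angle $t\asymp\epsilon$ to get a $(1+Ct)$-quasifuchsian model manifold $N_t$; exhibit a competitor surface (the pleated surface beveled along the bending locus) of area at most $4\pi/(1+Ct^2)$, so the minimal surface in $N_t$ has a definite $\epsilon^2$ Gauss--Bonnet excess; then use Rao's theorem to find a finite cover of $N_t$ admitting a locally $(1+\epsilon^3)$-bilipschitz map into $\M$, which transports the excess into $\M$ with negligible loss. Without this (or some substitute producing a single $\epsilon$-nearly geodesic surface in $\M$ with $4\pi(\gen-1)\ge(1+C\epsilon^2)\Area$ for universal $C$), your counting comparison has nothing to compare against, so the argument as written does not close.
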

\begin{remark} If $\Sigma$ is a totally geodesic surface of area $T$ then  $4\pi(\gen(\Sigma)-1) =T$. Therefore, the previous lemma shows that the genus of a geometrically random surface is significantly larger than the genus of a totally geodesic surface of the same area. This is the key factor in the proof of  Theorem \ref{thm-main-geo}.
\end{remark}

To put this into context, we refer to Figure \ref{Fig-1}
which (schematically) shows the distribution of minimal surfaces by their area and genus. The key features of this plot are as
follows:
\begin{itemize}
\item All surfaces lie on or above the $\gen - 1 = \frac{T}{4\pi}$ line
(which is the locus of the totally geodesic surfaces of genus $\gen$). This immediately
follows from the Gauss-Bonnet formula (and from the formula
 for the Gaussian curvature of a minimal surface).
\item All surfaces lie below the $\gen - 1 = (1 +
C_1^2\epsilon^2)\frac{T}{4\pi}$ line. This follows from Seppi's
curvature estimate (Theorem~\ref{thm-seppi}), where $C_1$ is the
constant from that theorem.
\item Kahn--Marković's upper bound \cite{k-m-1} tells us that the count
density of surfaces increases superexponentially with their genus (but
depends much more weakly on their area); moreover Müller--Puchta's lower
bound \cite{m-p} tells us that this superexponential growth can be
observed even among the covers of a single surface (which have constant
ratio of $\gen - 1$ to area).
\item From this it is not hard to deduce that the subset
$\Su_\epsilon(T,C_2)$ (where $C_2$ is the constant of
Lemma~\ref{lemma-comp}) dominates the count of $\Su_\epsilon$, as long
as it is nonempty. Verifying this latter condition
(Proposition~\ref{prop-model-1}) is the most substantial part of the proof.
\end{itemize}

\begin{figure}[t]
\begin{center}
\includegraphics[height=5cm]{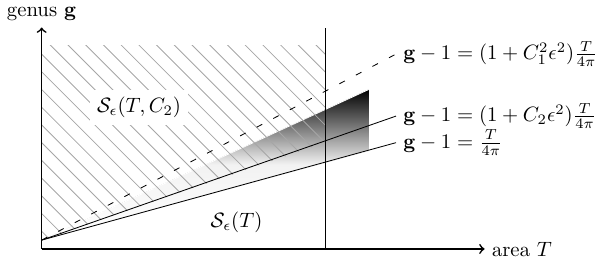}
\caption{}
\label{Fig-1}
\end{center}
\end{figure}

\subsection{Nearly geodesic minimal surfaces and the totally geodesic locus}
The second key lemma bounds from above the genus of an $\epsilon$-nearly geodesic minimal surface which  concentrates too much on the totally geodesic locus $\HH$.  Recall the constant $\delta_1(h)$ (see the remark after Proposition \ref{prop-prva}).

\begin{lemma}\label{lemma-comp-1} There exist  constants $C>0$, and $\epsilon_0(h)>0$,  such  that for every $\Sigma\in \Su_\epsilon(T)$, the inequality
\begin{equation}\label{eq-jutro}
4\pi(\gen(S)-1)\le \left(1+   C\left(1-\frac{|S(\delta_1(h),h))|}{|S|}\right)\epsilon^2 \right)T
\end{equation}
holds for every $h\in \N$ assuming $\epsilon<\epsilon_0(h)$. Here $f\from S\to \M$ is the $\epsilon$-nearly geodesic minimal surface in the class $\Sigma$.
\end{lemma}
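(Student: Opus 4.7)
The starting point is the Gauss equation for the minimal immersion $f\colon S\to\M$. Since $\M$ has constant sectional curvature $-1$ and the shape operator of $f$ is traceless with eigenvalues $\pm\lambda(p)$, the intrinsic Gaussian curvature of $S$ in its induced metric equals $-1-\lambda(p)^2$. Applying Gauss--Bonnet to the closed surface $S$ yields
\[
4\pi(\gen(S)-1)\;=\;-\int_S K_{\mathrm{ind}}\,dA_{\mathrm{ind}}\;=\;T+\int_S \lambda^2\,dA_{\mathrm{ind}}.
\]
Setting $\alpha:=|S(\delta_1(h),h)|/|S|$, the inequality \eqref{eq-jutro} is equivalent to producing a constant $C$, independent of $h$, with
\[
\int_S \lambda^2\,dA_{\mathrm{ind}}\;\le\;C(1-\alpha)\epsilon^2 T.
\]

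My plan is to split this integral along $S(\delta_1(h),h)$ and its complement. On the complement I would invoke Seppi's pointwise curvature estimate $\lambda^2\le C_1^2\epsilon^2$ (the same estimate that, integrated globally, produces the crude bound $4\pi(\gen(S)-1)\le (1+C_1^2\epsilon^2)T$ noted in the outline) to obtain
\[
\int_{S\setminus S(\delta_1(h),h)}\lambda^2\,dA_{\mathrm{ind}}\;\le\;C_1^2\epsilon^2\,(1-\alpha)\,T,
\]
after absorbing the $1+O(\epsilon^2)$ discrepancy between hyperbolic area (used in the definition of $\alpha$) and induced area into the final constant.

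The heart of the argument is the rigidity claim that $\lambda\equiv 0$ on $S(\delta_1(h),h)$ once $\epsilon<\epsilon_0(h)$. For any $p\in S(\delta_1(h),h)$, Proposition \ref{prop-prva} places the whole ball $B_{10}(p)\subset S$ inside $S(\delta_0(h),h)$, and the $10\delta_0(h)$-separation of components in Proposition \ref{prop-delta} confines $f(B_{10}(p))$ to the $\delta_0(h)$-neighborhood of a single connected component $P\subset \HH_h$. Proposition \ref{prop-near}, applied with $\eta\to 0$ as $\epsilon\to 0$, then makes $f|_{B_{10}(p)}$ arbitrarily close in $\G_2(\M)$ to a local isometry of a hyperbolic disk. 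A compactness-and-uniqueness argument (extract a sub-limit of $\epsilon_n$-nearly geodesic minimal surfaces whose image stays trapped in the tube around $P$, and use that a $(1+\epsilon)$-quasifuchsian surface whose limit set in $\partial\H^3$ lies in an arbitrarily small neighborhood of the round circle $\partial\widetilde P$ must cover $P$) forces $f$ to be totally geodesic on $B_{10}(p)$, so in particular $\lambda(p)=0$. By unique continuation for minimal surfaces, nonemptiness of the open set $S(\delta_1(h),h)$ then promotes this to $\lambda\equiv 0$ on all of $S$, so $\alpha=1$ and $4\pi(\gen(S)-1)=T$ and the lemma is automatic; otherwise $S(\delta_1(h),h)=\emptyset$, the complement bound is all we need, and the lemma holds with $C$ proportional to $C_1^2$.

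The main obstacle is the rigidity step forcing $\lambda\equiv 0$ on $S(\delta_1(h),h)$. While Seppi's bound controls $\lambda^2$ by $C_1^2\epsilon^2$ uniformly, sharpening to exact vanishing is what delivers the $(1-\alpha)$ factor in the lemma and requires simultaneously using the $(1+\epsilon)$-quasifuchsian hypothesis, the compactness of each component of $\HH_h$, and their $10\delta_0(h)$-separation; I expect this to absorb the bulk of the technical work.
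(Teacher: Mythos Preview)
Your Gauss--Bonnet reduction to bounding $\int_S \lambda^2$ is correct and matches the paper, but the rigidity step has a genuine gap. The claim that $\lambda\equiv 0$ on $S(\delta_1(h),h)$ for a fixed $\epsilon<\epsilon_0(h)$ is false. A compactness argument over sequences with $\epsilon_n\to 0$ can only tell you that $\lambda$ is \emph{small} (say $\lambda<\eta$ for any prescribed $\eta$, once $\epsilon$ is small enough), not that $\lambda$ vanishes for any fixed positive $\epsilon$. Likewise, the fact that $f(B_{10}(p))$ sits in a $\delta_0(h)$-tube about one component $P$ gives no control on the global limit set of the quasifuchsian group, so your appeal to the limit set lying near $\partial\widetilde P$ is unjustified. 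Consequently your dichotomy (either $\lambda\equiv 0$ globally by unique continuation, or $S(\delta_1(h),h)=\emptyset$) collapses: the generic situation is a surface with nonempty $S(\delta_1(h),h)$ on which $\lambda$ is small but nowhere zero, and you have produced no bound there.

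What the paper does instead is a quantitative PDE estimate, not a rigidity statement. On a subsurface $\Omega$ with geodesic boundary whose lift stays a bounded distance from a totally geodesic plane $P$ (this is the notion of an $f$-rigid subsurface; the convex cores of the components of a smoothed $S(\delta_1(h),h)$ are shown to be $f$-rigid), the signed distance function $u$ to $P$ satisfies $\Delta_\sigma u = 2u$, so $u^2$ is subharmonic, and an averaged version $v$ satisfies both $v\le\Delta v$ and $\lambda^2\le C v$. Green's formula then gives
\[
\int_\Omega \lambda^2\,dA \;\le\; C\int_\Omega \Delta v\,dA \;\le\; C\int_{\partial\Omega}|\nabla v|\,d\ell \;\le\; C'|\partial\Omega|\,\epsilon^2,
\]
using that $|u|\le C\epsilon$ on $\Omega$ (Proposition~\ref{prop-rata}). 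Finally, a smoothing/isoperimetric argument bounds $|\partial\Omega|$ by a constant times $|S|-|S(\delta_1(h),h)|$, which combined with Seppi's bound on the complement yields the $(1-\alpha)\epsilon^2$ factor. The key idea you are missing is this integral-by-parts mechanism that converts the interior $\lambda^2$ integral into a boundary term.
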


\subsection{Proof of Theorem \ref{thm-main-geo}}

Let $\mu^\Geo _\epsilon$ be an $\epsilon$-geometrical limiting measure on $\G_2(\M)$. That is, there exists a sequence $T_n\to \infty$ such that 
$$
\mu^\Geo_\epsilon=\lim\limits_{T_{n}\to \infty} \frac{1}{|\Su_\epsilon(T_n)|} \sum_{\Sigma\in \Su_\epsilon(T_n)} \mu(\Sigma)
$$
(recall that $\mu(\Sigma)$ is the push forward of the hyperbolic area measure from $S$  to $\G_2(\M)$).
Let  $C_1$ be the constant from Lemma \ref{lemma-comp}. Then
\begin{equation}\label{eq-vot}
\mu^\Geo_\epsilon=\lim\limits_{T_{n}\to \infty} \frac{1}{|\Su_\epsilon(T_n,C_1)|} \sum_{\Sigma\in \Su_\epsilon(T_n,C_1)} \mu(\Sigma).
\end{equation}

Let $\Sigma\in \Su_\epsilon(T_n,C_1)$ and denote by $f\from S\to \M$ the minimal surface representing $\Sigma$. Let $h\in \N$, and denote by $C_2$ and $\epsilon_0(h)$ the constants from Lemma \ref{lemma-comp-1}. Applying Lemma \ref{lemma-comp-1} to $\Sigma$ yields the inequality  
\begin{equation}\label{eq-vot-1}
\frac{|S(\delta_1(h),h))|}{|S|}\le 1-\frac{C_1}{C_2}<1
\end{equation}
assuming $\epsilon<\epsilon_0(h)$.

In turn, the inequality (\ref{eq-vot-1}) yields the estimate
$$
\mu(\Sigma)\big(\Ne_{\delta_{1}(h)}(\HH_h)\big) \le \frac{|S(\delta_1(h),h)|}{|S|}< 1-\frac{C_1}{C_2}
$$
for every $\Sigma\in  \Su_\epsilon(T_n,C)$ assuming $\epsilon<\epsilon_0(h)$. Combining this with (\ref{eq-vot}) gives  the estimate
$$
\mu^\Geo_\epsilon\big(\Ne_{\delta_{1}(h)}(\HH_h)\big)\le 1-\frac{C_1}{C_2}
$$
for every $\epsilon$-geometrical limiting measure on $\G_2(\M)$ when $\epsilon<\epsilon_0(h)$. In turn, this shows that
\begin{equation}\label{eq-vot-2}
\mu^\Geo\big(\Ne_{\delta_{1}(h)}(\HH_h)\big)\le1-\frac{C_1}{C_2}
\end{equation}
for every geometrical limiting measure $\mu^\Geo$ on $\G_2(\M)$. We observe that (\ref{eq-vot-2})  holds for every $h\in \N$, and that the constant on the right hand side does not depend on $h$. Define $q=\frac{C_1}{C_2}$, and  note that $q$ only depends on $\M$. Then  (\ref{eq-vot-2}) implies  
$$
\mu^\Geo(\HH_h)\le1-q.
$$
Since $\HH_h$ is an increasing sequence of sets, we have
$$
\mu^\Geo(\HH)\le \lim_{h\to \infty}  \mu^\Geo(\HH_h) \le 1-q.
$$
Thus, $q\le |\mu^\Geo_{\mathcal{L}}|$, and the proof is complete.

\section{Genera of  geometrically random surfaces I}\label{section-4}

The proof of Lemma \ref{lemma-comp} occupies Sections~\ref{section-4} to \ref{section-model}.
The  proof is carried out in Section~\ref{section-5}, by combining two
contrasting statements:
\begin{itemize}
\item Proposition~\ref{prop-party-1}, which gives an upper bound on the
rate of growth of the complement of $\Su_\epsilon(T,C)$, and easily
follows from Kahn--Markovi\'c's upper bound~\cite{k-m-1};
\item and Proposition~\ref{prop-party-2}, which gives a lower bound on
the rate of growth of $\Su_\epsilon(T,C)$ itself. We obtain this bound
in Section~\ref{section-6}, just by counting the covers of a single
element of $\Su_\epsilon(T_0,C)$ (using M\"uller--Puchta's lower bound
\cite{m-p}).
\end{itemize}
However, this last argument relies on $\Su_\epsilon(T,C)$ being
nonempty: 
\begin{proposition}\label{prop-model-1}  There exists a universal constant $C>0$ such that for each small enough $\epsilon$ there exists $T=T(\epsilon)$ so that $\Su_\epsilon(T,C)\ne \emptyset$.
\end{proposition}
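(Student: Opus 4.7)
My starting point is to reinterpret the condition defining $\Su_\epsilon(T,C)$ as a curvature estimate on the minimal surface $f\from S\to\M$. Since $f$ is minimal in a space of constant curvature $-1$, its principal curvatures are $\pm\lambda$ and the Gauss equation gives intrinsic curvature $K_S=-1-\lambda^2$. Gauss--Bonnet therefore yields
\begin{equation*}
4\pi(\gen(\Sigma)-1) \;=\; \Area_\M(\Sigma) \;+\; \int_S \lambda^2\, dA,
\end{equation*}
so $\Sigma\in\Su_\epsilon(T,C)$ is equivalent to $\int_S \lambda^2\, dA \ge C\epsilon^2 T$. Since Seppi's theorem already provides the matching pointwise bound $\lambda^2\le C_1^2\epsilon^2$, the task reduces to exhibiting, for each sufficiently small $\epsilon$, at least one $\epsilon$-nearly geodesic minimal surface whose mean square principal curvature is comparable to (rather than asymptotically smaller than) the a priori upper bound.

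\textbf{Model surface with prescribed bending.} I would construct the required $\Sigma$ by modifying the Kahn--Markovi\'c good pants assembly from \cite{k-m-1}. Using mixing of the frame flow, one assembles for every sufficiently large $R$ a closed pleated surface out of $R$-good pants glued along their cuffs. In \cite{k-m-1} the shears are chosen to make the pleated surface as close to totally geodesic as possible; my variant is instead to shift each cuff shear by a uniform amount $\theta=c\epsilon$ for a small absolute constant $c$, producing a uniform bending angle of order $\epsilon$ at every cuff. Standard quasiconformal quake--bend estimates give a $(1+\epsilon)$-quasicircle limit set for the resulting quasifuchsian surface, so Proposition~\ref{prop-us} yields an $\epsilon$-nearly geodesic minimal representative $f\from S\to\M$, whose area $T=T(\epsilon)$ is determined by the number of pants used.

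\textbf{The lower bound and the main obstacle.} It remains to verify $\int_S\lambda^2\,dA\ge C\epsilon^2 T$. The bent pleated model has, by construction, a bending angle of order $\epsilon$ at each of its $O(\gen(S))$ cuffs, so its distributional second fundamental form has squared $L^2$ mass of order $\epsilon^2$ per cuff, totalling at least a universal constant multiple of $\epsilon^2\cdot\Area_\M(\Sigma)$. The main obstacle is that we need the same estimate for the minimal representative, not for the pleated model; a priori the harmonic smoothing that takes the pleated surface to the minimal one could cancel the bending-induced second fundamental form. Controlling this uses Proposition~\ref{prop-near}: $f$ is uniformly close to a M\"obius map on every ball of bounded radius, so the minimal surface equation on the bounded strip surrounding a bent cuff must realise a fixed positive fraction of the prescribed bending as genuine pointwise second fundamental form. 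Summing these contributions over all cuffs gives $\int_S\lambda^2\,dA\ge C\epsilon^2 T$ for a universal $C>0$, which furnishes the required $\Sigma\in\Su_\epsilon(T,C)$. This smoothing-stability estimate, which ensures that the harmonic regularisation does not spread the second fundamental form below the scale where its $L^2$ mass would be lost, is in my view the delicate technical core of the proposition.
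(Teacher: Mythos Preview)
Your Gauss--Bonnet reduction is correct, and the instinct to produce an $\epsilon$-quasifuchsian surface with a prescribed amount of bending is sound. But the argument has a real gap at exactly the point you flag, and it overlooks a simplification that dissolves that gap entirely.

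\textbf{The gap.} Proposition~\ref{prop-near} only asserts that the minimal map is $C^0$-close to a local isometry on bounded balls; it furnishes no lower bound on $\lambda$. A totally geodesic immersion satisfies the hypothesis of Proposition~\ref{prop-near} just as well, so that proposition by itself cannot separate $\lambda\equiv 0$ from $\lambda\asymp\epsilon$. Converting the distributional bending of the pleated model into a quantitative pointwise lower bound on the second fundamental form of the minimal representative would require a genuine elliptic argument that you have not supplied; the sentence ``must realise a fixed positive fraction of the prescribed bending'' is an assertion, not a proof.

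\textbf{The missed simplification.} You never need to analyse $\lambda$ on the minimal surface. Since $\Area_\M(\Sigma)$ is the \emph{minimum} area in the homotopy class, it is enough to exhibit \emph{any} homotopic surface of area at most $4\pi(\gen-1)/(1+C\epsilon^2)$; your Gauss--Bonnet identity then gives $\int_S\lambda^2\ge C\epsilon^2\,\Area_\M(\Sigma)$ for free. Near each bent cuff one can replace the two flat flanges of the pleated surface by the geodesic ``bevel'' spanning the dihedral angle; elementary hyperbolic trigonometry shows this saves area of order $(\text{cuff length})\cdot\theta^2$ per cuff, and summing gives the desired upper bound on the minimal area with no smoothing analysis whatsoever.

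\textbf{How the paper proceeds.} The paper takes this simplification one step further and decouples the construction from $\M$. It first builds, independently of $\M$, a genus-two $(1+\epsilon)$-quasifuchsian model $N_0$ by bending a fixed Fuchsian genus-two group along one simple closed geodesic by angle $t\asymp\epsilon$; the bevel computation (Section~\ref{section-model}) gives $\Area_{N_0}(\text{min})\le 4\pi/(1+C_0\epsilon^2)$ with a truly universal $C_0$. The passage into $\M$ is then handled in a single stroke by Rao's Ehrenpreis-type theorem (Theorem~\ref{thm-rao}): a finite cover $N\to N_0$ admits a locally $(1+\eta)$-bilipschitz immersion into $\M$ with $\eta=\epsilon^3$, and a short bilipschitz bookkeeping shows the image lies in $\Su_\epsilon(T,C_0/2)$. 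This cleanly separates the universal area estimate from the existence-in-$\M$ step, avoids your smoothing obstacle, and keeps the constant $C$ independent of $\M$---something your good-pants route would not obviously deliver, since the pants parameter $R$ depends on the mixing rate of $\M$.
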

The goal of this section is to prove Proposition \ref{prop-model-1}. To do so,  we need to  construct a
surface satisfying the defining inequality (\ref{eq-l-1}) of
$\Su_\epsilon(T,C)$. This is done in two steps:
\begin{itemize}
\item First, we use a recent result of Rao \cite{rao} to reduce it to the simpler problem of
finding a ``model'' surface that satisfies (a stronger version of) that
inequality, but which exists independently of $\M$.
\item The existence of such a ``model surface'' is the content of
Proposition~\ref{prop-model}. We construct it in Section~\ref{section-model},
essentially by hand.
\end{itemize}

\subsection{The model quasifuchsian manifold}
Recall that a quasifuchsian 3-manifold is $K$-quasifuchsian  if its limit set is a $K$-quasicircle.  The following proposition is proved in Section \ref{section-model}.

\begin{proposition}\label{prop-model} There exists a universal constant $C>0$ with the following property. For every small enough $\epsilon>0$  there exists a 
$(1+\epsilon)$-quasifuchsian manifold $N$ of genus two such that  
\begin{equation}\label{eq-t-0}
T\le \frac{4\pi}{1+C\epsilon^2},
\end{equation}
where $T$ is the area of  the minimal surface homotopic to $N$.
\end{proposition}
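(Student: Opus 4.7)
The plan is to reduce the problem via Gauss--Bonnet to a first-order deformation calculation near the Fuchsian locus, and then build $N$ explicitly as a small perturbation of a totally geodesic genus two surface. For a minimal surface in $\Ho$ the principal curvatures are $\pm\lambda$, so the second fundamental form $B$ satisfies $|B|^2 = 2\lambda^2$ and the Gauss equation reads $K = -1 - \tfrac{1}{2}|B|^2$. Integrating and applying Gauss--Bonnet for genus two gives the key identity
\[
T \;=\; 4\pi \;-\; \tfrac{1}{2}\int_S |B|^2\,dA.
\]
Since $T$ is forced to be close to $4\pi$, the desired inequality $T \le 4\pi/(1+C\epsilon^2)$ is (after rearranging) equivalent to a lower bound of the form $\int_S |B|^2\,dA \ge c\epsilon^2$ on the minimal representative, for some universal $c>0$. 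So the proposition reduces to exhibiting, for each small enough $\epsilon$, a $(1+\epsilon)$-quasifuchsian genus two manifold whose minimal surface carries $L^2$-norm of second fundamental form bounded below by $c^{1/2}\epsilon$.

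To construct such an $N$, I would start from a fixed Fuchsian genus two representation $\Gamma_0$ and perturb using Uhlenbeck's parameterization of almost-Fuchsian quasifuchsian manifolds: near the Fuchsian locus these are in bijection with pairs $(g,\phi)$ consisting of a hyperbolic metric on the abstract surface and a small holomorphic quadratic differential, and the minimal representative has induced metric $g$ and second fundamental form given (up to normalization) by $\mathrm{Re}\,\phi$. Fix a nonzero $\phi_0$ on $\Gamma_0$, set $\phi = \delta\phi_0$ for small $\delta>0$, and call the resulting quasifuchsian manifold $N_\delta$. A concrete alternative, should the Uhlenbeck formalism prove cumbersome to quote with precise estimates, is to bend $\Gamma_0$ along a fixed simple closed geodesic by a small angle $\theta$ and then pass to the minimal representative; both constructions produce an essentially one-parameter family governed by a single small parameter.

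Two quantitative inputs then finish the argument. First, the limit set of $N_\delta$ is a $(1+O(\delta))$-quasicircle: this follows from the Ahlfors--Bers estimate $K\le(1+\|\mu\|_\infty)/(1-\|\mu\|_\infty)$ for the quasicircle constant of the limit set, combined with the fact that a deformation with parameter $\delta$ corresponds to a Beltrami coefficient of $L^\infty$-norm of size $\delta$ on the universal cover. Second, on the minimal surface in $N_\delta$ one has $\int |B|^2\,dA \ge c\delta^2$; in the Uhlenbeck picture this is immediate because $B$ depends linearly on $\phi$ and $\phi_0\ne 0$, and a standard compactness/continuity argument converts the pointwise first-order expansion into a uniform $L^2$ lower bound for all small $\delta$. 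Choosing $\delta$ comparable to $\epsilon$ combines these into $\int|B|^2\,dA \ge c'\epsilon^2$, which substituted back into the Gauss--Bonnet identity yields $T\le 4\pi/(1+C\epsilon^2)$.

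The main obstacle I expect is making the correspondence $\epsilon\sim\delta$ genuinely two-sided: the upper bound $\epsilon\le C\delta$ is a standard Ahlfors--Bers consequence, but to produce a $(1+\epsilon)$-quasifuchsian manifold (rather than merely a $(1+C\epsilon)$-quasifuchsian one) for every small $\epsilon$, one must also exclude the possibility that the limit set is strictly closer to a round circle than the deformation predicts. Equivalently, one must verify that the quadratic term in the $\delta$-expansion of $T$ about the Fuchsian value $4\pi$ is strictly negative along at least one deformation direction, with an $O(1)$ coefficient. This is ultimately an explicit calculation in the quadratic differential (or bending) model, and is the genuine content reserved for Section~\ref{section-model}; everything else is Gauss--Bonnet bookkeeping and the invocation of standard quasiconformal estimates.
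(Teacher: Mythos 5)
Your reduction and overall strategy are sound, but you take a genuinely different route from the paper in the key step. You reduce the area bound, via the Gauss equation and Gauss--Bonnet ($T = 4\pi - \int_S \lambda^2\,dA$ for genus two), to a lower bound $\int_S \lambda^2\,dA \ge c\,\delta^2$ on the \emph{actual minimal surface}, which you then extract from Uhlenbeck's parameterization of almost-Fuchsian manifolds by holomorphic quadratic differentials. The paper instead never touches the minimal surface: it performs the bending construction you mention as an alternative (a complex earthquake along a simple closed geodesic with imaginary weight $t\mathbf{i}$, with the quasicircle bound $1+Ct$ coming from the Epstein--Marden--Markovi\'c estimate), and then bounds $T$ from above by exhibiting an explicit \emph{competitor} surface -- the pleated surface of area exactly $4\pi$, bevelled near the bending geodesic by replacing each bent orthogonal arc with the geodesic arc sharing its endpoints. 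An elementary hyperbolic-trigonometry computation ($\sinh d_t = \cos(t/2)\sinh d_0$) shows the bevelling saves area of order $t^2$, and minimality of the area-minimizer does the rest. Your route buys a conceptually cleaner ``second-order deficit'' statement intrinsic to the minimal surface, at the cost of having to quote the Uhlenbeck correspondence with quantitative control (existence/uniqueness of the minimal surface for small $\phi$, convergence of the induced metric, and the passage from the size of $\phi$ to the quasicircle constant of the limit set); the paper's route needs only soft facts about minimal surfaces plus an explicit computation.

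One correction: the ``main obstacle'' in your last paragraph is not an obstacle. Membership in $\Su_\epsilon$ only requires the limit set to be a $(1+\epsilon)$-quasicircle, which is a one-sided (upper) bound on the dilatation; a limit set that is ``rounder than predicted'' still qualifies. Given a family with quasicircle constant at most $1+C_1\delta$ and area at most $4\pi/(1+C_2\delta^2)$, one simply sets $\delta = \epsilon/C_1$ and absorbs constants, so no lower bound on the dilatation is needed. The genuine content is exactly what you identify in the same paragraph -- that the second-order area deficit is strictly positive along some deformation direction -- and both your quadratic-differential computation and the paper's bevelling computation supply it.
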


\subsection{Rao's theorem} In \cite{rao} Rao proved a version of the Ehrenpreis conjecture for quasifuchsian manifolds of genus two. We state it in a form that is convenient for our purposes.
\begin{theorem}\label{thm-rao} Let $\M$ be a closed 3-manifold, and let $\eta>0$. Then given any quasifuchsian 3-manifold $N_0$ of genus two, we can find:
\begin{itemize} 
\item a finite cover $N\to N_0$, 
\item  a map $F\from N\to \M$ which is locally $(1+\eta)$-bilipschitz.
\end{itemize}
\end{theorem}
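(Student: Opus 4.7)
The plan is to follow the template of Kahn-Markovi\'c's proof of the Ehrenpreis conjecture, adapted to three dimensions. First I would decompose $N_0$ into small geometric ``bricks'' of uniformly controlled shape, then produce a large reservoir of $(1+\eta)$-bilipschitz copies of each brick inside $\M$ using equidistribution, and finally assemble a finite cover of $N_0$ by gluing these copies together.

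For the decomposition, I would exploit the fact that $N_0$ is quasifuchsian of genus $2$, so its convex core deformation-retracts onto a genus-$2$ surface. Choosing a ``good pants'' decomposition of this surface (with long cuffs and nearly-orthogonal half-collars) and thickening slightly in the normal direction produces three-dimensional bricks with uniformly controlled geometry, each carrying framing data along its three boundary annuli that neighbouring bricks must match in order to reglue into $N_0$.

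To produce copies in $\M$, I would invoke exponential mixing of the frame flow on the unit tangent bundle of $\M$ (this is where the closedness of $\M$ enters) to show that for every sufficiently large cuff length $R$, near-isometric immersions of a brick $B$ into $\M$ exist in large numbers and equidistribute in the framing data along each boundary annulus. This yields, along each cuff, an $\eta$-dense reservoir of ``half-bricks'' whose framings are nearly uniformly distributed in the appropriate circle bundle.

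The assembly step couples all the cuffs via a system of local matching problems: along each cuff of $N_0$ one sets up a bipartite matching between the half-bricks on its two sides, admissible when their framings agree up to $\eta$. The equidistribution lets one solve all of these simultaneously via a Hall-type argument, producing a complex of bricks in $\M$ that tiles a finite cover $N$ of $N_0$ together with a locally $(1+\eta)$-bilipschitz map $F \from N \to \M$. The hard part will be that the framing data along each cuff is two-dimensional (a circle bundle rather than a circle as in the surface case), so the equidistribution must be correspondingly strengthened and the combinatorial matching is considerably more delicate; verifying that the assembled object is genuinely a connected finite cover of $N_0$, rather than some unrelated $3$-manifold with the wrong fundamental group, is the main technical obstacle and the core content of Rao's argument.
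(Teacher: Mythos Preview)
The paper does not prove this theorem at all: it is quoted as a black box from Rao~\cite{rao} (``In \cite{rao} Rao proved a version of the Ehrenpreis conjecture for quasifuchsian manifolds of genus two''), and the paper only \emph{uses} it in the proof of Proposition~\ref{prop-model-1}. So there is no proof in the paper to compare your proposal against.

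Your sketch is a reasonable high-level outline of the Kahn--Markovi\'c Ehrenpreis strategy transported to this setting, and is broadly consonant with what Rao actually does. But as a proof it is only a table of contents: the substantive content --- the correct choice of three-dimensional building block, the precise equidistribution statement needed, and above all the matching/assembly step showing that the glued object is a genuine finite cover of $N_0$ --- is exactly what you flag as ``the main technical obstacle and the core content of Rao's argument'' without supplying it. For the purposes of this paper that is fine, since the authors themselves defer to \cite{rao}; but you should be clear that what you have written is a summary of a strategy, not a proof.
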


\subsection{Proof of  Proposition \ref{prop-model-1}}

Let $N_0$ be the quasifuchsian manifold, and $C_0$ the  constant,  from Proposition \ref{prop-model}. We choose  $N_0$ so it is
a $(1+\frac{\epsilon}{2})$-quasifuchsian manifold. Let $g_0\from R_0\to N_0$ be the minimal surface homotopy equivalent to $N_0$ (here $R_0$ is a Riemann surface of genus two). Then from (\ref{eq-t-0}) we get
\begin{equation}\label{eq-t-00}
\Area_\M(g_0(R_0))\le \frac{4\pi}{1+C_0\epsilon^2}.
\end{equation}

Set $\eta=\epsilon^3$. We apply Theorem   \ref{thm-rao}  and get the corresponding finite cover $N\to N_0$, and the $(1+\eta)$-bilipschitz map $F\from N\to \M$. Let $g\from R\to N$ be the lift of the map $g_0\from R_0\to N_0$, where $R$ is the corresponding cover $R\to R_0$. Then $g\from R\to N$  is the minimal surface homotopy equivalent to $N$, and from (\ref{eq-t-00}) we get
\begin{equation}\label{eq-t-1p}
\Area_\M(g(R))\le \frac{4\pi(\gen(R)-1)}{1+C_0\epsilon^2}.
\end{equation}

On the other hand, consider the composition $F\circ g\from R \to \M$. Note that the quasifuchsian group $(F\circ g)_*\from \pi_1(R)\to \pi_1(\M)$ is $(1+\epsilon)$-quasifuchsian when $\eta$ is small enough.  Moreover, since $F$ is $(1+\eta)$-bilipschitz we have
$$
\Area_\M\big( (F\circ g)(R)\big)\le (1+\eta)^2\Area(g(R)).
$$
Combining this with (\ref{eq-t-1p}) we get
\begin{equation}\label{eq-prj}
\Area_\M\big( (F\circ g)(R)\big)\le (1+\eta)^2 \frac{4\pi(\gen(R)-1)}{1+C_0\epsilon^2}.
\end{equation}
Since $\eta=\epsilon^3$ we get 
$$
\frac{(1+\eta)^2}{1+C_0\epsilon^2}=\frac{(1+\epsilon^3)^2}{1+C_0\epsilon^2}\le \frac{1}{1+C_1\epsilon^2}
$$
when $\epsilon$ is small enough and $C_1=\frac{C_0}{2}$. Replacing this into (\ref{eq-prj}) gives
$$
\Area_\M\big( (F\circ g)(R)\big)\le  \frac{4\pi(\gen(R)-1)}{1+C_1\epsilon^2}.
$$
Let $\Sigma$ be the homotopy class of $F\circ g\from R \to \M$.  Since $\Area_\M(\Sigma) \le \Area_\M\big( (F\circ g)(R)\big)$, we get
$$
\Area_\M(\Sigma) \le  \frac{4\pi(\gen(\Sigma)-1)}{1+C_1\epsilon^2}.
$$
Thus, $\Sigma\in \Su_\epsilon(T,C_1)$, and the proposition is proved.

\section{Genera of  geometrically random surfaces II}\label{section-5}

In this section we prove Lemma \ref{lemma-comp} assuming the following two propositions which we prove in the next  section.

\begin{proposition}\label{prop-party-1} There exists a constant $\alpha>0$ such that for every $C>0$ the inequality
$$
|\Su_\epsilon(T)|-|\Su_\epsilon(T,C)| \le  (\alpha r)^{2r}
$$
holds, where 
$$
r=r(T,C)=\frac{(1+C\epsilon^2)T}{4\pi}+2.
$$
\end{proposition}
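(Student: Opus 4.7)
The plan is to extract from membership in the complement $\Su_\epsilon(T) \setminus \Su_\epsilon(T,C)$ a bound on the topological complexity of $\Sigma$, and then invoke the Kahn--Markovi\'c topological count from \cite{k-m-1}. Unwinding Definition~\ref{definition-sc}, any $\Sigma$ in this complement satisfies both $\Area_\M(\Sigma) \le T$ and the reverse of the defining inequality, namely
$$
4\pi(\gen(\Sigma) - 1) < (1+C\epsilon^2)\Area_\M(\Sigma).
$$
Combining these yields
$$
\gen(\Sigma) < \frac{(1+C\epsilon^2)T}{4\pi} + 1 = r - 1,
$$
so in particular $\gen(\Sigma) \le \lfloor r \rfloor$. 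Hence $\Su_\epsilon(T) \setminus \Su_\epsilon(T,C) \subseteq \Su_\epsilon^{\Top}(\lfloor r \rfloor)$, and the problem is reduced to a purely topological count.

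Next, I would quote the upper bound of \cite{k-m-1}: there exists a constant $\alpha > 0$, depending only on $\M$, such that
$$
|\Su_\epsilon^{\Top}(g)| \le (\alpha g)^{2g}
$$
for every integer $g \ge 2$. This is precisely the $(Cg)^{2g}$ bound referenced in the outline of the proof of Theorem~\ref{thm-main-top}. Applying it with $g = \lfloor r \rfloor$ and combining with the inclusion above gives
$$
|\Su_\epsilon(T)| - |\Su_\epsilon(T,C)| \le (\alpha \lfloor r \rfloor)^{2\lfloor r \rfloor} \le (\alpha r)^{2r},
$$
which is the required estimate (after possibly enlarging $\alpha$ slightly to absorb the integer-part adjustment).

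There is no real obstacle here: the proposition is essentially a repackaging of the Kahn--Markovi\'c topological upper bound applied to the modest observation that surfaces failing the inequality of Definition~\ref{definition-sc} are genus-deficient relative to their area. The substantive work in the proof of Lemma~\ref{lemma-comp} lies in the matching lower bound (Proposition~\ref{prop-party-2}) for $|\Su_\epsilon(T,C)|$, which rests on the existence of at least one element of $\Su_\epsilon(T,C)$ (Proposition~\ref{prop-model-1}) together with the M\"uller--Puchta cover count. In this proof, the key conceptual point is simply the implication \emph{low area compared to genus} $\Longrightarrow$ \emph{genus is at most roughly $T/4\pi$}, which follows from a one-line manipulation of the Gauss--Bonnet-type inequality built into Definition~\ref{definition-sc}.
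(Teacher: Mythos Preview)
Your proof is correct and follows essentially the same approach as the paper: both derive a genus bound of order $r$ from the defining inequality of $\Su_\epsilon(T,C)$, then apply the Kahn--Markovi\'c $(\alpha g)^{2g}$ upper bound from \cite{k-m-1}. The only cosmetic difference is that the paper sets $g=\lceil (1+C\epsilon^2)T/4\pi\rceil+1$ where you use $g=\lfloor r\rfloor$, and the paper passes directly from $(\alpha g)^{2g}$ to $(\alpha r)^{2r}$ without comment (implicitly assuming $\alpha\ge 1$, which your parenthetical remark makes explicit).
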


\begin{proposition}\label{prop-party-2} There exist constants $\beta , C_0>0$  such that for every $\epsilon$ small enough there exists $T_0=T_0(\epsilon)$ so that  the inequality
$$
(\beta s)^{2s} \le     |\Su_\epsilon(T,C_0)| 
$$
holds for every large enough $T$. Here
$$
s=s(T,C_0)=\frac{(1+C_0\epsilon^2)(T-T_0)}{4\pi}.
$$
\end{proposition}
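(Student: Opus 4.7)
The plan is to apply Müller--Puchta's lower bound \cite{m-p} on subgroup growth to the genus-two seed surface supplied (via Rao's theorem) by Proposition \ref{prop-model-1}, and use its many finite covers to produce abundantly many elements of $\Su_\epsilon(T, C_0)$.

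Let $C^* > 0$ denote the constant from Proposition \ref{prop-model-1}, and set $C_0 := C^*/2 > 0$; the strict inequality $C_0 < C^*$ will provide the slack needed to make the final constant $\beta$ universal. For each small $\epsilon$, Proposition \ref{prop-model-1} yields $\Sigma_0 \in \Su_\epsilon(T_0, C^*)$ for some $T_0 = T_0(\epsilon)$, represented by a minimal immersion $f_0 \from S_0 \to \M$ of a hyperbolic surface of genus $g_0 = \gen(\Sigma_0)$, with $4\pi(g_0 - 1) \ge (1 + C^* \epsilon^2) T_0$.

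Given $T$ large, set $d = \lfloor T/T_0 \rfloor$ and let $\pi \from S \to S_0$ be any degree-$d$ cover. Then $f_0 \circ \pi \from S \to \M$ is again a minimal immersion whose image subgroup in $\pi_1(\M)$ has the same limit set as the image of $\pi_1(S_0)$, namely a $(1+\epsilon)$-quasicircle. By uniqueness of the minimal representative (Proposition \ref{prop-us}), the homotopy class $\Sigma$ of $f_0 \circ \pi$ satisfies $\Area_\M(\Sigma) = dT_0 \le T$ and $\gen(\Sigma) - 1 = d(g_0-1)$, so
\[
4\pi(\gen(\Sigma) - 1) = 4\pi d(g_0 - 1) \ge d(1 + C^*\epsilon^2)T_0 \ge (1 + C_0 \epsilon^2)\Area_\M(\Sigma),
\]
which places $\Sigma$ in $\Su_\epsilon(T, C_0)$. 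Müller--Puchta then provides at least $(cd)^{(2g_0 - 2)d}$ index-$d$ subgroups of $\pi_1(S_0)$ for some $c = c(g_0) > 0$, and a standard commensurator argument shows that the overcount from passing to conjugacy classes in $\pi_1(\M)$ is only polynomial in $d$, hence absorbed in the super-exponential count; this produces at least $(c' d)^{(2g_0 - 2)d}$ distinct elements of $\Su_\epsilon(T, C_0)$.

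It remains to translate this into the form $(\beta s)^{2s}$ with universal $\beta$. Set $\lambda_0 := (1 + C^* \epsilon^2)/(1 + C_0 \epsilon^2) > 1$; then $d(g_0 - 1) \ge \lambda_0 s$, so the exponent $(2g_0 - 2)d$ exceeds $2s$ by the factor $\lambda_0$. The desired inequality $(c' d)^{(2g_0 - 2)d} \ge (\beta s)^{2s}$ reduces (after taking $2s$-th roots) to $(c' d)^{\lambda_0} \ge \beta s$, equivalently to $(c')^{\lambda_0} d^{\lambda_0 - 1} \ge \beta(g_0 - 1)/\lambda_0$; since $\lambda_0 - 1 > 0$, the left side tends to infinity with $d$ (and hence with $T$), so any fixed universal $\beta > 0$ works once $T$ is taken large enough, with threshold depending on $\epsilon$ (and thus on $g_0, T_0, \lambda_0$), as permitted by the statement. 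The main obstacle is precisely this translation: one must exploit the strict slack $C_0 < C^*$ so that $\lambda_0 > 1$, since a naive direct reading of Müller--Puchta would only yield $\beta \sim 1/(g_0 - 1)$, which is not uniform in $\epsilon$.
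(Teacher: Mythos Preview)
Your proof is correct and follows the same overall strategy as the paper: take the seed surface $\Sigma_0$ from Proposition~\ref{prop-model-1}, apply M\"uller--Puchta to its finite covers, and verify that these covers land in $\Su_\epsilon(T,C_0)$.

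The difference lies in the endgame bookkeeping. The paper takes $C_0$ equal to the constant from Proposition~\ref{prop-model-1} and derives the clean inequality $s \le g_n$ (the genus of the degree-$n$ cover), then simply uses the monotonicity of $x\mapsto(\beta x)^{2x}$; its $\beta$ is the M\"uller--Puchta constant for $\Sigma_0$ and thus a priori depends on $\epsilon$. You instead halve the constant to $C_0 = C^*/2$, and the resulting slack $\lambda_0>1$ lets you absorb the surface-dependent M\"uller--Puchta constant into the diverging factor $d^{\lambda_0-1}$, yielding a genuinely universal $\beta$. Your version therefore matches the \emph{stated} proposition more literally, though for the application in Lemma~\ref{lemma-comp} the paper's $\epsilon$-dependent $\beta$ already suffices. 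You also make explicit the commensurator bound controlling the overcount when passing from index-$d$ subgroups of $\pi_1(S_0)$ to conjugacy classes in $\pi_1(\M)$, which the paper leaves implicit. One cosmetic point: the seed produced by Proposition~\ref{prop-model-1} is a finite cover (via Rao) of the genus-two model, not itself genus two; since you only use $g_0=\gen(\Sigma_0)$, this does not affect the argument.
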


\subsection{Proof of Lemma \ref{lemma-comp}}
Let $0<C_1<C_0$, where $C_0$ is the constant from Proposition \ref{prop-party-2}.
In the remainder of the proof we show
\begin{equation}\label{eq-rib}
\lim_{T\to \infty} \frac{|\Su_\epsilon\big(T,C_1\big)|}{|\Su_\epsilon(T)|}=1
\end{equation}
which proves the lemma.

We have
\begin{equation}\label{eq-pesma}
\frac{|\Su_\epsilon(T)|-|\Su_\epsilon(T,C_1)|}{|\Su_\epsilon(T)|}\le \frac{(\alpha r)^{2r}}{(\beta s)^{2s}}
\end{equation}
where we used the above propositions to estimate the numerator and denominator respectively. Here $r=r(T,C_1)$, and $s=s(T,C_0)$. Replacing the values of $r(T,C_1)$ and $s(T,C_0)$ from the statements of the propositions, we get
$$
\lim_{T\to \infty} \frac{r(T,C_1)}{s(T,C_0)}=\frac{(1+C_1\epsilon^2)}{(1+C_0\epsilon^2)}<1
$$
because $C_1<C_0$. But this implies that the right hand side of the inequality (\ref{eq-pesma}) tends to $0$ when $T\to \infty$ which proves (\ref{eq-rib}).

\section{Genera of  geometrically random surfaces III}\label{section-6}

We prove the two propositions from the previous section.

\subsection{Proof of Proposition \ref{prop-party-1}}

The proof is based on the counting result by Kahn-Markovi\'c \cite{k-m-1} which states that there exists $\alpha>0$ such that the number of homotopy classes of all surfaces of genus at most $g$ in $\M$ is smaller than  $(\alpha g)^{2g}$. In particular this enables us to bound the number of $\epsilon$-quasifuchsian surfaces of genus at most $g$. That is, we have
\begin{equation}\label{eq-sin}
|\Su_\epsilon(g)|\le (\alpha g)^{2g},
\end{equation}
where we recall that $\Su_\epsilon(g)$ is the set of classes in $\Su_\epsilon$ of genus at most $g$.

Now, from the definition of the set $\Su_\epsilon(T,C)$  we derive the inclusion 
$$
\left(\Su_\epsilon(T)\setminus \Su_\epsilon(T,C)\right)\subset \Su_\epsilon(g)
$$ 
where 
$$
g=\left\lceil\frac{(1+C\epsilon^2)T}{4\pi} \right\rceil+1 \le \frac{(1+C\epsilon^2)T}{4\pi}+2= r(T,C).
$$
Combining this with (\ref{eq-sin}) yields
$$
|\Su_\epsilon(T)|-|\Su_\epsilon(T,C)| \le |\Su_\epsilon(g)|\le (\alpha g)^{2g}\le (\alpha r)^{2r} ,
$$
which proves  the proposition.

\subsection{Proof of Proposition \ref{prop-party-2}}
The proof of this proposition is based on Proposition \ref{prop-model-1} and the counting result by M\"uller-Puchta \cite{m-p} (see also \cite{k-m-1}) that the number of different degree $n$ covers of a closed hyperbolic surface is at least $(\beta g_n)^{2g_{n}}$, for every $n$ large enough, and  for some $\beta>0$ depending on the surface. Here $g_n$ denotes the genus of the  covering surface of
degree $n$.

Fix $\epsilon$ which is small enough so that Proposition \ref{prop-model-1} holds. Let $\Sigma_0\in \Su_\epsilon(T_0)$, where $T_0=T(\epsilon)$, denote the class from Proposition \ref{prop-model-1}.  From the definition of  $\Su_\epsilon(T_0,C_0)$ we get
\begin{equation}\label{eq-t-mars}
\frac{(1+C_0\epsilon^2)T_0}{4\pi}+1\le \gen(\Sigma_0)
\end{equation}
where $C_0$ is the constant from  Proposition \ref{prop-model-1}.

Let $\Sigma$ be a degree $n$ cover of $\Sigma_0$. Then $\Sigma\in \Su_\epsilon(nT_0,C_0)$, and the genus of $\Sigma$ is given by
$\gen(\Sigma)=n(\gen(\Sigma_0)-1)+1$. 
Therefore by  M\"uller-Puchta, for large $n$  we have
\begin{equation}\label{eq-mp}
(\beta g_n)^{2g_{n}} \le |\Su_\epsilon(nT_0,C_0)|
\end{equation}
where $g_n=n(\gen(\Sigma_0)-1)+1$. 

Let $T$ be large, and $n$ an integer such that $nT_0\le T \le (n+1)T_0$. Then from (\ref{eq-mp}) we get
\begin{equation}\label{eq-mp-1}
(\beta g_n)^{2g_{n}} \le |\Su_\epsilon(nT_0,C_0)|\le |\Su_\epsilon(T,C_0)|.
\end{equation}
From (\ref{eq-t-mars}) we find 
$$
\frac{(1+C_0\epsilon^2)nT_0}{4\pi}\le n\big(\gen(\Sigma_0)-1\big)=g_n-1,
$$
implying that 
$$
s(T,C_0)=\frac{(1+C_0\epsilon^2)(T-T_0)}{4\pi} \le \frac{(1+C_0\epsilon^2)nT_0}{4\pi}\le g_n
$$
where we used the fact that $T-T_0\le nT_0$. Replacing this into (\ref{eq-mp-1}) proves the proposition.

\section{The model quasifuchsian manifold of genus two}\label{section-model}

In this section we construct the model quasifuchsian manifold from Proposition \ref{prop-model}. 

\subsection{Complex earthquakes} Let $R$ denote a hyperbolic Riemann surface and suppose $(\Lambda,\mu)$ is a measured lamination. This means that $\Lambda$ is a lamination on $R$, and $\mu$ a complex valued transverse measure. Let $G<\text{Isom}(\Ha)$ be the Fuchsian group such that  $R\approx \Ha/G$. Recall  the complex earthquake  developing map $E_{(\Lambda,\mu)}\from \Ha\to {\Ho}$. This map is equivariant with respect to $G$, and it induces a homomorphism $e\from G\to \text{Isom}({\Ho})$.

The norm $||\mu||$ is the supremum of the total measure of $|\mu|$ over all transverse intervals of length one. Assuming  $||\mu||$ is small enough, it follows that the homomorphism $e\from G\to \text{Isom}({\Ho})$ is quasifuchsian, that is the image group $e(G)$ is quasifuchsian. The following quantitative version of this claim is well known (for example, see Lemma 4.14 in \cite{e-m-m}).
\begin{theorem}\label{thm-emm} There exist  universal constants $C,a>0$ such that the group $e(G)$ is 
$(1+C||\mu||)$-quasifuchsian when $||\mu||\le a$. Here $R$ is any hyperbolic surface, and $(\Lambda,\mu)$ any measured lamination on $R$.
\end{theorem}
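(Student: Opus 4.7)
The strategy is to extract a quantitative Lipschitz dependence on $\|\mu\|$ from the complex earthquake framework of \cite{e-m-m}, and then convert a quasisymmetric boundary estimate into a quasicircle estimate for the limit set of $e(G)$. First I would reduce to the case of a \emph{finite} lamination by a density argument: any measured lamination $(\Lambda,\mu)$ can be approximated in the weak-$*$ sense by laminations $(\Lambda_n,\mu_n)$ supported on finitely many geodesics, with $\|\mu_n\|\to \|\mu\|$; the homomorphism $e$, the developing map $E$, and the quasifuchsian deformation space all depend continuously on $(\Lambda,\mu)$, so it suffices to prove the estimate uniformly for finite laminations satisfying $\|\mu\|\le a$.

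For a finite lamination, the developing map $E=E_{(\Lambda,\mu)}$ admits an explicit description: it is obtained from the inclusion $\Ha\hookrightarrow {\Ho}$ by cutting along each leaf of the preimage $\wt\Lambda$ and regluing via a complex shear, i.e.\ a loxodromic M\"obius transformation fixing the leaf's endpoints whose complex translation length equals the transverse weight of the leaf. A complex shear of length $w$ differs from the identity by $O(|w|)$ in any fixed norm on $\text{Isom}({\Ho})$, and any two leaves of $\wt\Lambda$ separated in $\Ha$ by a transverse arc of hyperbolic length $O(1)$ carry cumulative $|\mu|$-mass bounded by a uniform multiple of $\|\mu\|$. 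Composing shears along such an arc therefore yields a transformation at distance $O(\|\mu\|)$ from the identity.

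From this I would deduce that $E$ extends continuously to a map $\phi\from \pt\Ha\to \pt{\Ho}=\hat{\C}$, and that this boundary map is $(1+C\|\mu\|)$-quasisymmetric, by estimating the distortion of any four-point cross-ratio in terms of the total complex shear accumulated along bounded transverse arcs between the defining leaves. The quasisymmetric map $\phi$ then extends to a $(1+C'\|\mu\|)$-quasiconformal homeomorphism of $\hat{\C}$ conjugating $G$ to $e(G)$, so $\phi(\pt\Ha)$ is a $(1+C''\|\mu\|)$-quasicircle, and is the limit set of $e(G)$; relabelling the constant gives the theorem.

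The main obstacle is promoting the qualitative statement (that a small complex earthquake produces a quasifuchsian group) to a linear estimate: one must show that the quasisymmetric dilatation depends linearly, not merely continuously, on $\|\mu\|$. This requires an integral cocycle formulation of the complex earthquake and a careful comparison of shears along bounded transverse paths, which is precisely the content of Lemma 4.14 in \cite{e-m-m} cited in the statement, so in practice I would simply invoke that lemma after carrying out the reduction above.
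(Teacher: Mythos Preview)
The paper does not actually prove this theorem: it states the result as well known and cites Lemma~4.14 in \cite{e-m-m} as a reference. Your proposal is therefore not so much an alternative proof as an informal sketch of the content behind that citation, and you yourself conclude by invoking the very same lemma. In that sense your approach and the paper's are identical---both defer to \cite{e-m-m}---and the additional outline you provide (reduction to finite laminations, shear estimates along bounded transversals, quasisymmetric-to-quasiconformal extension) is a reasonable summary of the machinery in that reference, though it is not something the present paper attempts or needs.
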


\subsection{Bending along a closed geodesic} Now fix some
closed Riemann surface of genus two. We name it $R$. Fix a simple closed geodesic $\gamma\subset R$, and let $\Lambda$ be the lamination whose only leaf  is $\gamma$. Given $t>0$, we let $\mu_t$ be the transverse measure on $\Lambda$ whose weight on $\gamma$ is equal to $t\IM$ 
(here $\IM$ is the imaginary unit). By $e_t\from G\to \text{Isom}({\Ho})$ we denote the corresponding homomorphism induced by the measured lamination $(\Lambda,\mu_t)$.

Let $D_1=||\mu_1||$. Then $||\mu_t||=tD_1$. From Theorem \ref{thm-emm} we get that when $t$ is small enough, the group $e_t(G)$ is a $(1+C_1D_1t)$-quasifuchsian group of genus two (here $C_1$ denotes the constant from Theorem \ref{thm-emm}). Let $N_t$ denote the quasifuchsian manifold corresponding to $e_t(G)$. We have proved the following claim.
\begin{claim}\label{claim-aero} There exists a constant $C>0$ such that the quasifuchsian manifold  $N_t$ is a 
$(1+Ct)$-quasifuchsian manifold. 
\end{claim}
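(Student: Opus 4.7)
The plan is to deduce Claim \ref{claim-aero} as an immediate quantitative consequence of Theorem \ref{thm-emm}, by tracking how the norm $\norm{\mu_t}$ of the measured lamination scales with $t$. The key observation is that, since $\Lambda$ has only the single leaf $\gamma$ and $\mu_t$ assigns weight $t\IM$ to that leaf, the complex-valued transverse measure $\mu_t$ is simply the scalar $t\IM$ times the intersection-counting measure along $\gamma$. Taking total variations and then the supremum over unit-length transverse arcs gives $\norm{\mu_t} = |t\IM| \cdot \norm{\mu_1}/|\IM| = t\norm{\mu_1} = tD_1$, i.e.\ the norm is linear in $t$.

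With this scaling in hand, the claim is a one-line unwinding of Theorem \ref{thm-emm}. For $t$ sufficiently small that $tD_1 \le a$ (where $a$ is the universal threshold from that theorem), the image group $e_t(G)$ is $(1 + C_1\norm{\mu_t})$-quasifuchsian, with $C_1$ the universal constant from Theorem \ref{thm-emm}. Substituting $\norm{\mu_t} = tD_1$ and setting $C := C_1 D_1$ yields that $N_t$ is $(1 + Ct)$-quasifuchsian, as claimed. Note that $C$ depends on the pair $(R,\gamma)$ through $D_1$, but this is harmless since $R$ and $\gamma$ are fixed once and for all in this subsection.

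There is essentially no obstacle here; the argument is a pure substitution. The only point requiring attention is the range of validity, namely $t \le a/D_1$, which imposes no real restriction on the subsequent construction, since the model manifold in Proposition \ref{prop-model} is built only for small $\epsilon$ and $t$ will ultimately be chosen proportional to $\epsilon$.
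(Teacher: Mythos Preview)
Your proposal is correct and follows essentially the same approach as the paper: the paper observes $\norm{\mu_t}=tD_1$ where $D_1=\norm{\mu_1}$, applies Theorem \ref{thm-emm} to conclude that $e_t(G)$ is $(1+C_1D_1t)$-quasifuchsian for $t$ small, and then takes $C=C_1D_1$. Your write-up adds a bit more justification for the linear scaling and notes the validity range $t\le a/D_1$, but the argument is identical in substance.
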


It remains to estimate the area of the minimal surface homotopic to $N_t$.

\subsection{The area of the minimal surface in $N_t$} To prove Proposition \ref{prop-model} we need to find a suitable upper bound on the area of the minimal surface in $N_t$. We obtain such a bound by producing an explicit surface (not necessarily minimal) homotopic to $N_t$, and estimating its area from above. 

One obvious surface homotopic to $N_t$ is the pleated surface $R_t\subset N_t$. Let $E_t\from R\to N_t$ be the induced pleating map. The map $E_t$ is locally an isometry, except at $\gamma$ where it bends the surface $R$ by the angle $t$. Set $E_t(R)=R_t$. Then the area of $R_t$ is given by 
\begin{equation}\label{eq-t-f1}
\Area_{N_{t}}(R_t)=4\pi.
\end{equation}
Therefore, to prove the lemma we need to find a surface of smaller area than $R_t$. We do that by bevelling the surface $R_t$ near the geodesic $E_t(\gamma)\subset N_t$.

Fix $\alpha,\beta\subset R$ which are equidistant, and embedded, lines either side of the geodesic $\gamma$. By $U\subset R$ we denote the  annulus (embedded in $R$) bounded by $\alpha$ and $\beta$. The annulus $U$ is foliated by geodesic segments $\{s_x\}_{x\in \gamma}$,  where each $s_x$ intersects $\gamma$ orthogonally at $x$,  and the endpoints of $s_x$ are on $\alpha$ and $\beta$ respectively.

\begin{figure}[t]
\begin{center}
\includegraphics[height=5cm]{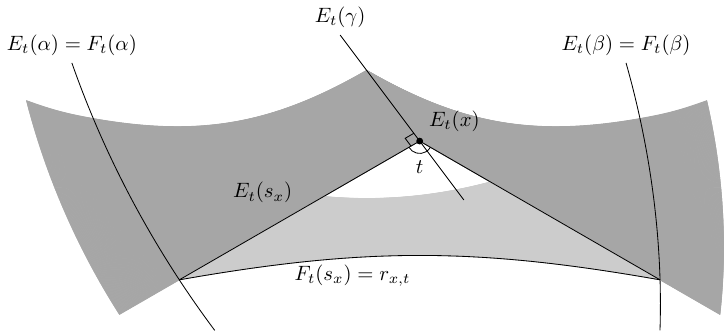}
\caption{The hyperbolic bevel $F_t(U)$ associated to the hyperbolic dihedral angle $E_t(U)$.}
\label{Fig-2}
\end{center}
\end{figure}

Define a new map $F_t\from R\to N_t$ as follows. We let  $F_t=E_t$ away from the annulus $U$. On $U$, we define $F_t$ so it maps the segment $s_x$ onto the geodesic segment $r_{x,t}$ which has the same endpoints as the piecewise geodesic arc $E_t(s_x)$. Let $S_t=F_t(R)$. We show that the area of $S_t$ admits the required upper bound.

Let $V_t=F_t(U)$. Then $V_t$ is a hyperbolic bevel which is placed to make a triangular shape together with the sloping surface $E_t(U)$ (see Figure \ref{Fig-2}).  Let $d_0=\frac{|s_x|}{2}$, and $d_t=\frac{|r_{x,t}|}{2}$ (obviously $d_0$ and $d_t$ do not depend on the choice of $x\in \gamma$). From the hyperbolic law of sines we compute
\begin{equation}\label{eq-hyp}
\sinh(d_t)=\cos\left(\frac{t}{2}\right)\sinh(d_0).
\end{equation}
Moreover, using elementary hyperbolic geometry we compute the hyperbolic areas of $U$ and $V_t$, and get
\begin{equation}\label{eq-hyp-1}
\Area_{\M}\big(E_t(U)\big)=2|\gamma|\sinh(d_0),\,\,\,\,\,\,\,\,\,\,   \Area_{N_{t}}(V_t)=2|\gamma|\tanh(d_t) \cosh(d_0).
\end{equation}

We have
\begin{align*}
\Area_{N_{t}}(S_t)&=\Area_{N_{t}}(R_t)-(\Area_{\M}\big(E_t(U)\big)-\Area_{N_{t}}(V_t)) \\
&=4\pi -2|\gamma|\big(\sinh(d_0)-\tanh(d_t) \cosh(d_0)\big)\\
&= 4\pi\left( 1- C_1 \big(1-\tanh(d_t) \coth(d_0)\big)   \right)
\end{align*}
where $C_1=\frac{2|\gamma|\sinh(d_0)}{4\pi} $. Then 
$$
\Area_{N_{t}}(S_t) \le \frac{4\pi}{1+C_1 \big(1-\tanh(d_t) \coth(d_0)\big)}.
$$
But from (\ref{eq-hyp}) we derive $d_t\le d_0-C_2t^2$ for some $C_2>0$, which gives 
$C_3t^2 \le (1-\tanh(d_t) \coth(d_0))$ for some $C_3>0$. Thus, 
$$
\Area_{N_{t}}(S_t) \le \frac{4\pi}{1+C_1C_3t^2}.
$$
Therefore we have proved the following claim (note that the minimal  surface homotopy equivalent to $N_t$ minimises the area in its homotopy class). 
\begin{claim}\label{claim-aero-1} There exists a constant $C>0$ such that if $T_t$ denotes the area of the minimal surface homotopic to $N_t$, then for every small enough $t$ we have 
$$
T_t \le \frac{4\pi}{1+Ct^2}.
$$
\end{claim}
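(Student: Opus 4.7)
The plan is to take the bevelled comparison surface $S_t = F_t(R)$ constructed above and show that its area already satisfies the claimed bound. Since $S_t$ is homotopic to $N_t$ and the minimal surface minimises area within its homotopy class, this yields $T_t \le \Area_{N_t}(S_t)$, so it suffices to prove $\Area_{N_t}(S_t) \le 4\pi/(1 + Ct^2)$ directly. All geometric input is already in hand: the exact identity
$$
\Area_{N_t}(S_t) = 4\pi\bigl(1 - C_1(1-\tanh(d_t)\coth(d_0))\bigr),
$$
computed just before the claim with $C_1 = 2|\gamma|\sinh(d_0)/(4\pi)$, together with the hyperbolic law of sines $\sinh(d_t) = \cos(t/2)\sinh(d_0)$.

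The argument has two steps. The first is purely algebraic: I would apply the inequality $1 - x \le 1/(1+x)$, valid for $x \ge 0$, with $x = C_1(1 - \tanh(d_t)\coth(d_0))$, which is nonnegative since $d_t \le d_0$. This recasts the identity as
$$
\Area_{N_t}(S_t) \le \frac{4\pi}{1 + C_1\bigl(1 - \tanh(d_t)\coth(d_0)\bigr)}.
$$
The second step is a pair of Taylor expansions. From $\cos(t/2) = 1 - t^2/8 + O(t^4)$ one gets $\sinh(d_0) - \sinh(d_t) = \sinh(d_0)\, t^2/8 + O(t^4)$, and inverting $\sinh$ near $d_0$ gives $d_0 - d_t \ge C_2 t^2$ for some $C_2 > 0$ and all small $t$ (this is the estimate asserted in the paragraph before the claim). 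Feeding this into the smooth function $x \mapsto 1 - \tanh(x)\coth(d_0)$, which vanishes at $x = d_0$ with strictly negative derivative $-\operatorname{sech}^2(d_0)\coth(d_0)$, yields $1 - \tanh(d_t)\coth(d_0) \ge C_3 t^2$ for a positive constant $C_3$. Combining gives $\Area_{N_t}(S_t) \le 4\pi/(1 + C_1 C_3 t^2)$, and setting $C := C_1 C_3$ completes the argument.

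No step is a serious obstacle: the content of the claim is already encoded in the area identity and the law of sines, and the remainder is routine Taylor expansion together with an elementary inequality. The only point to verify carefully is that $C_1, C_2, C_3$ are determined by the fixed data $R$, $\gamma$, $\alpha$, $\beta$, $d_0$ (all chosen once and for all before $t$ is introduced) and do not depend on $t$, which is immediate from the explicit formulas.
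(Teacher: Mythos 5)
Your proposal is correct and follows essentially the same route as the paper: the paper's own argument consists precisely of the area identity for the bevelled surface $S_t$, the passage from $4\pi(1-x)$ to $4\pi/(1+x)$, the derivation of $d_t \le d_0 - C_2t^2$ from the law of sines, and the resulting lower bound $C_3t^2 \le 1-\tanh(d_t)\coth(d_0)$, concluded by area-minimality of the minimal representative. You merely make explicit the Taylor expansions that the paper leaves implicit.
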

Combining this with Claim \ref{claim-aero} proves Proposition \ref{prop-model}.

\section{Local and global properties of minimal maps}

The proof of Lemma \ref{lemma-comp-1} occupies Sections 8 to 11. We roughly outline the idea of the proof.  As usual, $f\from S\to \M$ denotes an $\epsilon$-nearly geodesic minimal surface. The  application of the Gauss-Bonnet formula yields
$$
4\pi(\gen(S)-1)=\int\limits_{S} (1+\la^2(x))\,dA_\sigma \le \left(1+\frac{2}{|S|}\int\limits_{S} \la^2(x)\,dA \right)T,
$$
where $T$ is the area of  the minimal surface $f(S)$, and $\lambda(x)$ the principal curvature at the point $f(x)$. Thus, to prove the lemma  it suffices to prove the bound
\begin{equation}\label{eq-out}
\frac{2}{|S|}\int\limits_{S} \la^2(x)\,dA\le C\left(1-\frac{|S(\delta_1(h),h))|}{|S|}\right)\epsilon^2.
\end{equation}
The two main ingredients in proving this bound are as follows:
\begin{itemize}
\item Seppi's universality estimate $|\lambda(x)|\le C_1\epsilon$ (see Theorem \ref{thm-seppi} below).
\item Each connected component  of $S(\delta_1(h),h)$ has a rigid convex core $\Omega$  (see Definition \ref{def-rigid} below). This implies that for $x\in \text{int}(\Omega)$ the relation $|\lambda(x)|= o(1) \epsilon$ holds, where $o(1)\to 0$  when $\dis_S(x,\partial \Omega)\to \infty$.
\end{itemize}
The purpose of the next three sections is to prove Lemma \ref{lemma-close} which estimates the integral of $\la^2$ over rigid subsurfaces. 
The proof of Lemma \ref{lemma-comp-1} is completed in Section \ref{section-comp-1}. 
 
\subsection{Global bound on principal curvatures} 
Let $f\from S\to \M$ be an $\epsilon$-nearly geodesic minimal surface. By $\wt{f}\from \Ha\to {\Ho}$ we denote its lift  (here we identify  $\Ha$ and ${\Ho}$ with the universal covers of $S$ and $\M$ respectively). Clearly $\wt{f}$ is a quasiisometry (because it is a developing map of a quasifuchsian representation). But we need a more quantitative version of this which we derive first. Then we recall the basic properties of the distance function between $\wt{S}$ and any totally geodesic plane $P\subset {\Ho}$. These results are used in the next two sections where  we obtain an upper bound on  the principal curvature of $S$ on its subsurfaces which are rigid.

Let $\la\from S\to [0,\infty)$ be the non-negative eigenvalue of the shape operator corresponding to the minimal map $f\from S\to \M$. The significance of the function $\la$ is that the principal curvatures of a minimal surface $f(S)$ at the point $f(x)$ are equal to $\la(x)$ and $-\la(x)$ respectively.  The following global estimate was shown by Seppi  \cite{seppi}.
\begin{theorem}\label{thm-seppi} There exists a universal constant $C$ such that 
\begin{equation}\label{eq-seppi}
||\lambda||_{\infty}\le C \epsilon
\end{equation}
for every $\epsilon$-nearly geodesic minimal surface $f\from S\to \M$ when  $\epsilon$ is small enough.
\end{theorem}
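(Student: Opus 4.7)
The plan is to pass to the universal cover: the lift $\wt{f}\from \D \to {\Ho}$ is a complete minimal disk whose asymptotic boundary is the $(1+\epsilon)$-quasicircle $\Lambda_\epsilon \subset \partial_\infty {\Ho}$. Since $\lambda$ is invariant under the deck group, it descends to the closed surface $S$ and attains its supremum there, so it suffices to bound $\lambda$ at its maximum linearly in $\epsilon$. In a conformal coordinate $z$ with induced metric $e^{2u}|dz|^2$, the Codazzi equation implies that the Hopf differential $\phi(z)\,dz^2$ is holomorphic, the principal curvature satisfies $\lambda = |\phi|\,e^{-2u}$, and the Gauss equation $K_S = -1-\lambda^2$ translates into the Liouville-type PDE $\Delta_0 u = (1+\lambda^2)e^{2u}$. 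Controlling $\lambda$ thus reduces to controlling the pair $(\phi, u)$ in terms of $\epsilon$.

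The first ingredient I would use is a qualitative stability statement: for $\epsilon$ small, the family of minimal disks in ${\Ho}$ with $(1+\epsilon)$-quasicircle asymptotic boundary is precompact in $C^\infty_{\mathrm{loc}}$, and its limits (as $\epsilon \to 0$) are totally geodesic disks bounding round circles, on which $\lambda \equiv 0$. This already yields $\sup \lambda \to 0$ as $\epsilon \to 0$. To promote this to the desired linear estimate, I would argue by contradiction and rescale. Suppose there exist sequences $\epsilon_n \to 0$ and $\epsilon_n$-nearly geodesic minimal surfaces $S_n$ with $\|\lambda_n\|_\infty / \epsilon_n \to \infty$. Choose basepoints $p_n$ realizing the supremum of $\lambda_n$, lift, and rescale the ambient metric by the factor $\|\lambda_n\|_\infty$. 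Under this rescaling the ambient hyperbolic spaces flatten into $\R^3$, the rescaled quasicircle constants tend to $0$, and extracting a limit gives a nontrivial complete minimal surface in $\R^3$ with planar asymptotic boundary and principal curvature $1$ at the basepoint. This contradicts the rigidity that the only complete minimal surface in $\R^3$ with planar asymptotic behavior is a plane.

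The hardest part will be making the rescaling argument and the convergence of asymptotic boundaries rigorous. Concretely, one must formulate closeness of $\Lambda_\epsilon$ to a round circle in a norm that scales correctly under the blow-up, and show that the rescaled minimal disks converge with enough regularity up to the ideal boundary to preserve the asymptotic data in the limit. A cleaner alternative would be to apply the implicit function theorem directly, viewing the map $\phi \mapsto \Lambda_\phi$ from (suitably normed) holomorphic quadratic differentials on the normalized minimal disk to asymptotic boundaries as a smooth map of Banach manifolds near $\phi = 0$, computing its linearization and inverting it to obtain $\sup |\phi| e^{-2u} \le C\epsilon$; combined with the uniform bound on $u$ coming from the Liouville equation, this yields the stated estimate $\|\lambda\|_\infty \le C\epsilon$. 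The technical delicacy here is choosing function spaces adapted to the noncompactness of the universal cover and to the quasifuchsian behavior at infinity.
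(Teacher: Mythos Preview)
The paper does not prove this theorem; it is quoted from Seppi \cite{seppi} and used as a black box. So there is no ``paper's own proof'' to compare against. For context, Seppi's actual argument is the one the present paper partially reproduces in Propositions \ref{prop-esta} and equation \eqref{eq-lap}: one takes a nearby totally geodesic plane $P$, sets $u = \sinh(\dis_\M(\wt f(\cdot), P))$, uses the structural equation $\Delta_\sigma u = 2u$ together with Schauder estimates to control the Hessian of $u$ (and hence the second fundamental form) by $\|u\|_{C^0}$ on a unit ball, and then bounds $\|u\|_{C^0}$ linearly in $\epsilon$ by choosing $P$ to be the plane spanned by the limit quasicircle at the given point. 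Your proposal takes a completely different route.

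There is a genuine gap in your blow-up argument. When you rescale the ambient hyperbolic metric by $\|\lambda_n\|_\infty \to 0$, the sphere at infinity does not survive: the pointed Gromov--Hausdorff limit is $\R^3$ with no boundary, and the $(1+\epsilon_n)$-quasicircle is pushed off to infinity and carries no information in the limit. What you actually obtain is a complete minimal surface in $\R^3$ with bounded second fundamental form and $|\lambda|=1$ at the basepoint. This is \emph{not} a contradiction: helicoids, catenoids, Enneper's surface, and many others satisfy this. The phrase ``planar asymptotic boundary'' has no meaning after the rescaling, so the rigidity you invoke is not available. To make a blow-up work you would need some extra property of the limit (e.g.\ stability, or a graphical/area bound) that forces it to be a plane, and your setup does not provide one. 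The implicit-function-theorem alternative you sketch is closer in spirit to what can be made to work, but as stated it is only a wish list; the actual linear estimate is exactly what Seppi's Schauder argument supplies, and that is the honest proof.
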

The first corollary we derive from this theorem concerns the quasigeodesic properties of the minimal map $\wt{f}$. 
\begin{proposition}\label{prop-rata} There exists a constant $C>0$ such that if
$\alpha\subset \Ha$ is any geodesic, and $\beta\subset{\Ho}$ the geodesic with the same endpoints as the quasigeodesic $\wt{f}(\alpha)$, then 
$\wt{f}(\alpha)\subset \Ne_{C\epsilon}(\beta)$.
\end{proposition}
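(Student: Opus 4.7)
The strategy is to combine Seppi's bound on principal curvatures (Theorem \ref{thm-seppi}) with the standard hyperbolic-geometry principle that a $C^2$ curve of small ambient curvature is contained in a small tubular neighbourhood of the geodesic sharing its endpoints at infinity.

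First I would bound the ambient geodesic curvature of $\gamma := \wt{f}(\alpha)$ as a curve in $\Ho$. Endow $\Ha$ with the induced metric $\sigma := \wt{f}^* g_{\Ho}$, with respect to which $\wt{f}$ is an isometric immersion onto the minimal surface $\wt{f}(\Ha)\subset\Ho$ with principal curvatures $\pm\lambda$, satisfying $\|\lambda\|_\infty\le C_0\epsilon$ by Theorem \ref{thm-seppi}. Since $\wt{f}$ is a conformal immersion, $\sigma$ and the hyperbolic metric $g_\Ha$ on $\Ha$ differ by a conformal factor; using the Gauss equation $K_\sigma=-1-\lambda^2$ together with elliptic regularity and the cocompactness of the quasifuchsian action on $\Ha$, one shows that this conformal factor is $1+O(\epsilon^2)$ globally. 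In particular, the $g_\Ha$-geodesic $\alpha$ has $\sigma$-geodesic curvature of order $O(\epsilon^2)$. The $\Ho$-ambient curvature of $\gamma$ then decomposes as the $\sigma$-intrinsic curvature (of order $O(\epsilon^2)$) plus the normal component coming from the second fundamental form (of order $|\lambda|\le C_0\epsilon$), giving a pointwise bound $C_1\epsilon$.

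Second I would invoke the classical fact that any $C^2$ curve $\gamma$ in $\Ho$ whose ambient geodesic curvature is pointwise at most $\kappa<1$ has two distinct endpoints at infinity and stays inside the tubular $\mathrm{arctanh}(\kappa)$-neighbourhood of the geodesic $\beta$ joining them. This is the $\Ho$-analogue of the fact that constant-curvature-$\kappa$ curves in $\Ha$ are equidistant to a geodesic; in general it follows from a Riccati-type comparison for $\cosh d_\Ho(\gamma(s),\beta)$ along $\gamma$. Applied with $\kappa = C_1\epsilon < 1$ (for $\epsilon$ small), this yields $\wt{f}(\alpha)\subset\Ne_{C\epsilon}(\beta)$ for a universal constant $C>0$.

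The main technical obstacle is the first step, specifically extracting a genuinely quadratic-in-$\epsilon$ bound on the conformal factor between $g_\Ha$ and $\sigma$ from the linear-in-$\epsilon$ Seppi bound on $\lambda$, so that the $\sigma$-geodesic curvature of the $g_\Ha$-geodesic $\alpha$ is of strictly lower order than the leading $O(\epsilon)$ contribution from the second fundamental form. A soft application of the Morse stability lemma for quasigeodesics in $\Ho$ would only give an $O(1)$-neighbourhood bound and would miss the precise linear-in-$\epsilon$ estimate that this proposition demands.
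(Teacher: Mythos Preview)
Your approach is essentially the paper's approach: bound the ambient geodesic curvature of $\wt f(\alpha)$ in $\Ho$ by $O(\epsilon)$ via Seppi's theorem, then invoke the standard fact that a curve in hyperbolic space with curvature $\kappa<1$ stays in the $O(\kappa)$-tube around the geodesic with the same endpoints. The paper cites this last step as Lemma~4.4 in \cite{e-m-m}; your Riccati comparison is the same statement.

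The one place where you are more careful than the paper is the decomposition of the ambient curvature of $\wt f(\alpha)$ into its tangential (intrinsic $\sigma$-geodesic curvature of the $g_{\Ha}$-geodesic $\alpha$) and normal ($II(T,T)$, bounded by $\lambda$) parts. The paper simply asserts that the ``infinitesimal bending'' is bounded by $\lambda(x)$ and moves on; your treatment makes explicit why the tangential contribution is harmless. However, the obstacle you flag is milder than you suggest. The Ahlfors inequality (\ref{eq-ahlfors}) already gives $\sigma^2/\rho^2 \in [1/(1+\lambda^2),\,1] = 1+O(\epsilon^2)$ pointwise, so no separate elliptic argument is needed for the $C^0$ estimate; and for the proposition as stated you do not actually need the intrinsic term to be $O(\epsilon^2)$---an $O(\epsilon)$ bound would already give total ambient curvature $O(\epsilon)$, which is all that is required. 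So your worry in the last paragraph is unnecessary, though your instinct to track the two contributions separately is sound.
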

\begin{proof} Let $x\in \alpha$. Then the infinitesimal bending of the curve $\wt{f}(\alpha)$ at the point $\wt{f}(x)$ is bounded above by the maximum of principal curvatures at $\wt{f}(x)$ which is equal to $\la(x)$. Let $C_1$ denote the constant from Theorem \ref{thm-seppi}. 
Then the  infinitesimal bending of the curve $\wt{f}(\alpha)$ at all of its points is bounded above by $C_1\epsilon$.  

Let $g\from \alpha\to \wt{f}(\alpha)$ be the path-length reparametrization of the quasigeodesic $\wt{f}(\alpha)$. From  Lemma 4.4 in \cite{e-m-m} we conclude that 
$g(\alpha)\subset \Ne_{C_{2}\epsilon}(\beta)$  which proves the proposition.
\end{proof}

\subsection{The induced metric on $S$} Denote by  $\sigma^2(z)|dz|^2$  the conformal metric on $S$ obtained by pulling back the hyperbolic metric from $\M$ by the minimal map $f$ (here $z$ is a local complex parameter on $S$). The Gauss curvature of this metric $\sigma$ is equal to $-(1+\la^2)$. Let $\rho^2(z)|dz|^2$  denote the density of the hyperbolic metric on $S$.  Recall the Ahlfors estimate:
\begin{equation}\label{eq-ahlfors}
\frac{\rho^2(z)}{1+\la^2(x)} \le \sigma^2(x)\le \rho^2(z).
\end{equation}
The following is an immediate corollary of  (\ref{eq-ahlfors}).
\begin{proposition}\label{prop-ahlfors} Let $\phi\from \Ha\to \R$ be any function, and let $\nabla_\sigma \phi$ and $\Delta_\sigma \phi$ denote respectively the gradient and the laplacian of $\phi$ with respect to the lift of the metric $\sigma$ to $\Ha$. Then 
$$
|\nabla \phi|\le |\nabla_\sigma \phi| \le 2 |\nabla \phi|,
$$
and 
$$
|\Delta \phi|\le |\Delta_\sigma \phi| \le 2 |\Delta \phi|,
$$
assuming $\epsilon$ is small enough. 
\end{proposition}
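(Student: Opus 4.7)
The plan is to derive both inequalities as immediate pointwise consequences of the Ahlfors estimate \eqref{eq-ahlfors} combined with Seppi's universal bound $\norm{\lambda}_\infty \le C\epsilon$ from Theorem \ref{thm-seppi}. The key observation is conformality: in a local complex coordinate $z$ on $S$, the lifts of $\sigma$ and $\rho$ to $\Ha$ are $\sigma^2|dz|^2$ and $\rho^2|dz|^2$, so as metrics they differ only by the scalar factor $\rho/\sigma$, which Ahlfors pins down explicitly.

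Concretely, for any conformal metric $\tau^2|dz|^2$ on a surface, the standard identities
$$
|\nabla_\tau \phi|_\tau = \tau^{-1}\,|\nabla^{e}\phi|, \qquad \Delta_\tau \phi = \tau^{-2}\,\Delta^{e}\phi
$$
hold, where $\nabla^e$ and $\Delta^e$ denote the Euclidean gradient and Laplacian in the coordinate $z$. Applying these with $\tau = \sigma$ and $\tau = \rho$ and taking ratios gives
$$
\frac{|\nabla_\sigma \phi|_\sigma}{|\nabla \phi|} = \frac{\rho}{\sigma}, \qquad \frac{|\Delta_\sigma \phi|}{|\Delta \phi|} = \frac{\rho^2}{\sigma^2}.
$$
The Ahlfors bound \eqref{eq-ahlfors} translates directly into $1 \le \rho^2/\sigma^2 \le 1+\lambda^2(x)$, and hence also $1 \le \rho/\sigma \le \sqrt{1+\lambda^2(x)}$.

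To finish, invoke Seppi: since $\lambda(x) \le C\epsilon$ pointwise, for $\epsilon$ small enough one has $\sqrt{1+\lambda^2} \le 2$ and $1+\lambda^2 \le 2$. Substituting these back into the two ratio identities delivers both chains of inequalities simultaneously. There is essentially no obstacle here: the proposition is an algebraic corollary of Ahlfors and Seppi, and the constant $2$ is simply a convenient (generous) upper bound for what is really a $1 + O(\epsilon^2)$ factor, stated in this clean form because it is all that will be needed in the subsequent integration-by-parts estimates for $\lambda^2$ on rigid subsurfaces.
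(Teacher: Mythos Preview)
Your argument is correct and is precisely what the paper intends: the proposition is stated there as an immediate corollary of the Ahlfors estimate \eqref{eq-ahlfors}, and you have spelled out the conformal identities for $|\nabla_\tau\phi|$ and $\Delta_\tau\phi$ and the use of Seppi's bound exactly as required. The paper gives no further details beyond declaring it immediate, so your write-up is in fact more explicit than the original.
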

\begin{remark}
Here, and in the rest of the paper,  $\nabla \phi$, $\Delta \phi$ denote  the gradient and the laplacian of $\phi$ with respect to the hyperbolic metric.
\end{remark}
\subsection{The distance function}
Seppi also obtained local estimates on $\la$ which we utilise below.  Let $P\subset {\Ho}$ be a geodesic plane and define the function $u\from \Ha\to \R$ by 
\begin{equation}\label{eq-u}
u(x)=\sinh(\dis_{\M}(\wt{f}(x),P)).
\end{equation}
Here $\dis_{\M}(\wt{f}(x),P)$ is the signed distance from the point $\wt{f}(x)$ to the plane $P$. 
A significant property of minimal surfaces is that the distance function $u$ satisfies the second order elliptic PDE
\begin{equation}\label{eq-lap}
\Delta_\sigma u=2u
\end{equation}
on $\Ha$ (for example see \cite{seppi}).

For a function $\phi\from \Ha\to \R$, we let 
$$
||\phi||_{C^0(B_{r}(x))}=\max\{|\phi(y)|: y\in B_r(x)\}.
$$
The following local estimates were established in \cite{seppi} by applying the standard Schauder theory to  the formula (\ref{eq-lap}). 
\begin{proposition}\label{prop-esta} There exists a constant $C>0$ such that for every $x\in \Ha$ the inequalities
$$
|\nabla u(x)|  \le  C ||u||_{C^0(B_{1}(x))},
$$
and
$$
\la(x)  \le  \frac{C ||u||_{C^0(B_{1}(x))}}{\sqrt{ 1-C ||u^2||_{C^0(B_{1}(x))}} },
$$
hold when $\epsilon$ is small enough.
\end{proposition}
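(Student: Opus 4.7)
The plan is to treat (\ref{eq-lap}) as a uniformly elliptic equation in the hyperbolic metric on $\Ha$ and apply interior regularity estimates to $u$ on the ball $B_1(x)$. By Proposition \ref{prop-ahlfors}, the Laplacian $\Delta_\sigma$ differs from the hyperbolic Laplacian $\Delta$ by a conformal factor $\rho^2/\sigma^2$ which, in view of the Ahlfors inequalities (\ref{eq-ahlfors}) and Seppi's global bound $\lambda = O(\epsilon)$ from Theorem \ref{thm-seppi}, is uniformly bounded between $1$ and $1+\lambda^2$. Therefore, rewriting (\ref{eq-lap}) as
\[
\Delta u \;=\; 2(\sigma^2/\rho^2)\,u,
\]
we obtain a uniformly elliptic equation on $B_{10}(x) \subset \Ha$ whose coefficients, together with their derivatives up to a fixed order, are uniformly bounded once $\epsilon$ is small.

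Once this is in place, the first inequality follows from standard interior Schauder (or $W^{2,p}$) estimates applied to $u$ on $B_1(x) \subset B_2(x)$. Concretely, one gets
\[
\|u\|_{C^{1,\alpha}(B_1(x))} \;\le\; C\|u\|_{C^0(B_2(x))}
\]
which in particular gives $|\nabla u(x)| \le C \|u\|_{C^0(B_1(x))}$. (If one wants to work directly with the unit ball used in the statement, one applies the estimate on a slightly smaller concentric ball and uses interpolation; this is a cosmetic issue.)

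For the second inequality one needs to relate $\lambda$ to the $C^2$-jet of $u$. Since $u = \sinh d_P$ is the signed distance (after $\sinh$) to a geodesic plane $P \subset \Ho$ and $f(S)$ is a minimal surface, a direct computation (using the Gauss equation together with the fact that $\nabla^{\Ho}\nabla^{\Ho} u = u\cdot g_{\Ho}$ and projecting onto $S$) yields an identity of the form
\[
\mathrm{Hess}_\sigma u \;=\; u\,(g_\sigma - \alpha \otimes \alpha) \;+\; (\text{linear in }\nu\text{ and the shape operator}),
\]
where $\alpha$ is the tangential component of $\nabla^{\Ho} d_P$. Taking traces reproduces (\ref{eq-lap}); taking the traceless part relates $\lambda$ to $\mathrm{Hess}_\sigma u$, $u$, and $\nabla_\sigma u$. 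After applying standard interior estimates to bound $\|\mathrm{Hess}_\sigma u\|_{C^0(B_{1/2}(x))}$ by $\|u\|_{C^0(B_1(x))}$, and using Proposition \ref{prop-ahlfors} to pass between $\sigma$ and the hyperbolic metric, one arrives at an algebraic inequality of the shape
\[
\lambda(x)^2 \;\le\; C\,\|u\|_{C^0(B_1(x))}^2 \;+\; C\,\lambda(x)^2\,\|u^2\|_{C^0(B_1(x))},
\]
and solving for $\lambda(x)^2$ produces the claimed bound.

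The main obstacle is the algebraic step in the last paragraph: one must carry out the geometric computation relating the shape operator of a minimal surface in $\Ho$ to the Hessian of $\sinh(d_P)$ carefully enough to see explicitly how the quadratic term $\lambda^2 u^2$ arises on the right side; this is precisely what allows the denominator $\sqrt{1 - C\|u^2\|_{C^0(B_1(x))}}$ in the statement. Everything else (the uniform ellipticity and the Schauder step) is routine once Theorem \ref{thm-seppi} and Proposition \ref{prop-ahlfors} are in hand.
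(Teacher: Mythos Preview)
Your sketch is sound and follows exactly the route the paper indicates: the text preceding the proposition says these bounds ``were established in \cite{seppi} by applying the standard Schauder theory to the formula (\ref{eq-lap})'', and the paper's own proof is simply a citation---the first inequality is formula (27) and the second is formula (32) in Seppi's paper. So you are not taking a different route; you are reconstructing the argument that the paper outsources.

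One small remark: in your first step the Schauder estimate naturally gives control on $B_1(x)$ in terms of $\|u\|_{C^0}$ on a \emph{larger} ball, and you acknowledge this. In the paper's usage this is harmless because the proposition is only ever applied (via Propositions \ref{prop-laba} and \ref{prop-ave-3}) in situations where $u$ is already controlled on a neighborhood of $B_1(x)$; but if you want the statement exactly as written you should apply the interior estimate on $B_{1/2}(x)\subset B_1(x)$ rather than the other way around.
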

\begin{remark} Cleary, the second estimate makes sense only when $||u||_{C^0(B_{1}(x))}$ is small enough so that 
$C ||u^2||_{C^0(B_{1}(x))}<1$.
\end{remark}
\begin{proof} The first estimate is a special case of the formula (27) in \cite{seppi}, while the second one is the estimate (32) from \cite{seppi}. 
\end{proof}

\section{An auxiliary function}

The purpose of this section is to establish some basic properties of the auxiliary function $v\from \Ha \to [0,\infty)$ defined as the averaging of the square of the distance function $u$ from the previous section. The auxiliary function $v$ is used  in the proof of Lemma \ref{lemma-close} in the next section.

\subsection{The square of the distance function}

With a view towards applications that follow, we consider the square $u^2$. The advantage of considering the function $u^2$ is that it is non-negative and subharmonic.
\begin{proposition}\label{prop-lap-1} The inequality
\begin{equation}\label{eq-lap-1}
0\le u^2 \le \Delta u^2
\end{equation}
holds on $\Ha$.
\end{proposition}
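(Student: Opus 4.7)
\textbf{Proof plan for Proposition \ref{prop-lap-1}.}
The nonnegativity $u^2 \ge 0$ is immediate, so the content is the estimate $u^2 \le \Delta u^2$. My plan is to first compute $\Delta_\sigma u^2$ in terms of the PDE (\ref{eq-lap}) and then transfer the resulting pointwise inequality from $\Delta_\sigma$ to the hyperbolic Laplacian $\Delta$ using the conformal relationship between $\sigma$ and $\rho$, controlled via the Ahlfors estimate (\ref{eq-ahlfors}) and Seppi's global bound on $\la$ (Theorem \ref{thm-seppi}).

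Concretely, the plan is as follows. First, apply the product rule for a Laplace--Beltrami operator to $u^2 = u \cdot u$, which yields
\[
\Delta_\sigma u^2 = 2 u \, \Delta_\sigma u + 2 |\nabla_\sigma u|^2.
\]
Substituting the PDE $\Delta_\sigma u = 2u$ from (\ref{eq-lap}) gives
\[
\Delta_\sigma u^2 = 4 u^2 + 2 |\nabla_\sigma u|^2 \ge 4 u^2.
\]
Second, convert this inequality to one for $\Delta u^2$. Since $\sigma^2 |dz|^2$ and $\rho^2 |dz|^2$ are both conformal metrics in the local parameter $z$, in two dimensions the Laplace--Beltrami operators are related by $\sigma^2 \Delta_\sigma = \rho^2 \Delta$, i.e. $\Delta u^2 = (\sigma^2/\rho^2) \Delta_\sigma u^2$. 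The Ahlfors estimate (\ref{eq-ahlfors}) gives $\sigma^2/\rho^2 \ge 1/(1+\la^2)$, so
\[
\Delta u^2 \ge \frac{4 u^2}{1 + \la^2}.
\]
Third, invoke Seppi's bound (Theorem \ref{thm-seppi}), which says $\la \le C\epsilon$ uniformly. For $\epsilon$ small enough we certainly have $\la^2 \le 3$, so $4/(1+\la^2) \ge 1$, and the desired inequality $\Delta u^2 \ge u^2$ follows.

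I do not see a substantial obstacle here; the only delicate point is the transfer from $\Delta_\sigma$ to $\Delta$, which is why I prefer to use the exact conformal identity $\sigma^2 \Delta_\sigma = \rho^2 \Delta$ rather than the crude absolute-value bound in Proposition \ref{prop-ahlfors}: the latter only compares $|\Delta \phi|$ and $|\Delta_\sigma \phi|$, whereas here I need a genuine lower bound on the (positive) quantity $\Delta u^2$. Once one observes that $\Delta_\sigma u^2 \ge 0$, both Laplacians have the same sign and the conformal identity produces the stated pointwise estimate.
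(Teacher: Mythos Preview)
Your argument is correct and follows essentially the same route as the paper: compute $\Delta_\sigma u^2 \ge 4u^2$ from the product rule and \eqref{eq-lap}, then pass to the hyperbolic Laplacian via the conformal factor controlled by \eqref{eq-ahlfors}. Your use of the exact identity $\rho^2\Delta = \sigma^2\Delta_\sigma$ is a slight sharpening of the paper's appeal to Proposition~\ref{prop-ahlfors} and cleanly handles the sign issue you flag, but the underlying argument is the same.
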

\begin{proof}
Computing the laplacian of $u^2$ in terms of the function $u$ yields the inequality 
$$
 2u\Delta_\sigma u \le \Delta_\sigma u^2,
$$
which together with (\ref{eq-lap}) yields the estimate
$$
4u^2\le\Delta_\sigma u^2.
$$
Applying Proposition \ref{prop-ahlfors} we get $\Delta_\sigma u^2 \le 2\Delta u^2$ when $\epsilon$ is small enough. Replacing this in the previous displayed inequality  proves that $2u^2 \le \Delta u^2$ which in turn  proves the proposition.
\end{proof}

The fact that $u^2$ is subharmonic enables us to control the (local) supremum norm of $u^2$ in terms of its (local) $L^1$ norm.
\begin{proposition}\label{prop-svidj} For every $x\in \Ha$, the inequality
\begin{equation}\label{eq-svidj}
||u^2||_{C^0(B_{1}(x))} \le \int\limits_{B_{2}(x)} \, u^2(y)\, dA
\end{equation}
holds, where $dA$ is the hyperbolic area form.
\end{proposition}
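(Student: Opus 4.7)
The plan is to reduce the estimate to the standard sub-mean-value inequality for subharmonic functions on $\Ha$, applied to the nonnegative function $w := u^2$. By Proposition~\ref{prop-lap-1} we have $\Delta w \ge w \ge 0$, so in particular $w$ is (hyperbolic-)subharmonic on $\Ha$; this is the only input I need from the previous section.

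The first step is to establish the standard submean bound: for every $y_0 \in \Ha$ and every $r > 0$,
$$
w(y_0) \le \frac{1}{|B_r(y_0)|}\int_{B_r(y_0)} w \, dA.
$$
To prove this, let $W(s)$ denote the spherical average of $w$ over $\partial B_s(y_0)$. The divergence theorem gives
$$
2\pi\sinh(s)\, W'(s) \;=\; \int_{\partial B_s(y_0)} \partial_n w \, d\ell \;=\; \int_{B_s(y_0)} \Delta w \, dA \;\ge\; 0,
$$
so $W$ is non-decreasing, and $W(s) \ge W(0) = w(y_0)$ for all $s$. Integrating against $|\partial B_s| \, ds = 2\pi \sinh(s)\, ds$ on $[0,r]$ yields $\int_{B_r(y_0)} w \, dA \ge w(y_0)\,|B_r(y_0)|$, which is the submean inequality.

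The second step is the numerical observation $|B_1| = 2\pi(\cosh 1 - 1) > 1$. Applying the submean inequality with $r = 1$ and dropping the (favourable) normalising factor gives $w(y_0) \le \int_{B_1(y_0)} w \, dA$ for every $y_0 \in \Ha$. Now, for any $y_0 \in B_1(x)$ the triangle inequality gives $B_1(y_0) \subset B_2(x)$, and since $w \ge 0$ this yields $w(y_0) \le \int_{B_2(x)} w \, dA$. Taking the supremum over $y_0 \in B_1(x)$ produces \eqref{eq-svidj}. There is no real obstacle: the only delicate point is the fortunate inequality $|B_1| > 1$, which is precisely what allows the unnormalised form stated in the proposition; the strict inequality $\Delta u^2 \ge u^2$ from Proposition~\ref{prop-lap-1} is not needed here, only the weaker conclusion that $u^2$ is subharmonic.
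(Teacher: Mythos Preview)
Your proof is correct and follows exactly the same route as the paper: apply the sub-mean-value inequality for the subharmonic function $u^2$ on the ball $B_1(y)$ for $y\in B_1(x)$, use $|B_1|>1$ to drop the normalising factor, and then enlarge $B_1(y)\subset B_2(x)$. The only difference is that you spell out the derivation of the sub-mean-value inequality via spherical averages, whereas the paper simply cites it.
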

\begin{proof} 
Let $y\in B_1(x)$. Since $\Delta u^2\ge 0$, the Mean Value Theorem yields the estimate
$$
u^2(y)\le \frac{1}{|B_{1}(y)|}\int\limits_{B_{1}(y)} \, u^2(z) \, dA(z).
$$
From $|B_{1}(y)|>1$, we get
$$
\frac{1}{|B_{1}(y)|}\int\limits_{B_{1}(y)} \, u^2(z) \, dA(z)\le \int\limits_{B_{1}(y)} \, u^2(z) \, dA(z)\le \int\limits_{B_{2}(x)} \, u^2(z) \, dA(z),
$$
where in the last inequality we used the fact that $B_1(y)\subset B_2(x)$ for every  $y\in B_1(x)$.  This together with the mean value estimate proves the proposition.

\end{proof}

On the other hand, computing  $\nabla u^2$ in terms of $u$, and applying Proposition \ref{prop-esta}, yields the following proposition.
\begin{proposition}\label{prop-laba} There exist constants $C,\eta>0$ such that assuming $\epsilon$ is small enough,
for every $x\in \Ha$ the inequality
\begin{equation}\label{eq-laba}
|\nabla u^2(x)|  \le  C||u^2||_{C^0(B_{1}(x))}
\end{equation}
holds. Moreover, if $||u^2||_{C^0(B_{1}(x))}<\eta$, then
\begin{equation}\label{eq-laba'}
\la^2(x)  \le  C ||u^2||_{C^0(B_{1}(x))}.
\end{equation}
\end{proposition}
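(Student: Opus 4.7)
The plan is to deduce both inequalities directly from Proposition \ref{prop-esta}, exploiting the elementary identity
$$||u^2||_{C^0(B_{r}(x))} = ||u||_{C^0(B_{r}(x))}^{\,2},$$
which holds because $y \mapsto y^2$ is monotone on $[0,\infty)$ and can be applied to $|u|$. This is the key translation between the two normings: everything Seppi's estimates say about $u$ transfers to statements about $u^2$, which is the better-behaved object (nonnegative and subharmonic, by Proposition \ref{prop-lap-1}).

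For the first inequality, I would compute $\nabla u^2 = 2u\,\nabla u$, so that
$$|\nabla u^2(x)| \le 2\,|u(x)|\,|\nabla u(x)| \le 2\,||u||_{C^0(B_{1}(x))}\cdot C_0\,||u||_{C^0(B_{1}(x))} = 2C_0\,||u^2||_{C^0(B_{1}(x))},$$
where $C_0$ denotes the constant supplied by Proposition \ref{prop-esta} and $|u(x)| \le ||u||_{C^0(B_1(x))}$ is immediate. This proves \eqref{eq-laba} with $C = 2C_0$.

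For the second inequality, I would simply square the second estimate of Proposition \ref{prop-esta}:
$$\la^2(x) \le \frac{C_0^{\,2}\,||u||_{C^0(B_{1}(x))}^{\,2}}{1 - C_0\,||u^2||_{C^0(B_{1}(x))}} = \frac{C_0^{\,2}\,||u^2||_{C^0(B_{1}(x))}}{1 - C_0\,||u^2||_{C^0(B_{1}(x))}}.$$
Choosing $\eta > 0$ so that $C_0\eta \le 1/2$ ensures that the denominator is at least $1/2$ whenever $||u^2||_{C^0(B_1(x))} < \eta$, giving $\la^2(x) \le 2C_0^{\,2}\,||u^2||_{C^0(B_{1}(x))}$, which is \eqref{eq-laba'} with $C = 2C_0^{\,2}$. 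One can of course enlarge $C$ so that a single constant serves both inequalities.

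There is no real obstacle here: the analytic content is already in Proposition \ref{prop-esta}, and the point of this proposition is cosmetic repackaging so that in the next section one can work exclusively with the nonnegative subharmonic quantity $u^2$, whose local sup norm is controlled by its $L^1$ norm through Proposition \ref{prop-svidj}. The only care needed is the choice of $\eta$, which must be chosen small enough relative to $C_0$ to keep the denominator bounded away from zero; this is routine.
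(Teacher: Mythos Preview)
Your proof is correct and follows essentially the same approach as the paper: both arguments compute $\nabla u^2 = 2u\,\nabla u$ and apply the first estimate of Proposition \ref{prop-esta} for \eqref{eq-laba}, then square the second estimate of Proposition \ref{prop-esta} and choose $\eta$ small enough to control the denominator for \eqref{eq-laba'}. The only cosmetic difference is in the bookkeeping of constants (the paper records $C = 2C_1^2$ and $\eta = 1/(4C_1^2)$ uniformly), which you already anticipate in your final remark.
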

\begin{proof} Let $C_1$ be the constant from Proposition \ref{prop-esta}. Since $\nabla u^2=2u\nabla u$, it follows that 
(\ref{eq-laba}) holds for $C=2C_1^2$.
On the other hand, if $||u^2||_{C^0(B_{1}(x))}<\frac{1}{4C^2_{1}}$, then (\ref{eq-laba'}) follows from the second inequality in Proposition \ref{prop-esta} by letting $C=2C_1^2$. Thus, the proposition holds for $C=2C_1^2$, and $\eta=\frac{1}{4C^2_{1}}$.

\end{proof}

\subsection{The averaging function and its properties}
For $x\in \Ha$, we set
$$
v(x)=\int\limits_{B_{2}(x)} \, u^2(y)\, dA(y).
$$
The following propositions list important properties of the  function $v$.

\begin{proposition}\label{prop-ave-1} The inequality
\begin{equation}\label{eq-lap-2} 
0 \le v \le \Delta v
\end{equation}
holds on $\Ha$. 
\end{proposition}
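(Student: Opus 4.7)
The non-negativity $0 \le v$ is immediate from $u^2 \ge 0$. For the bound $v \le \Delta v$, the strategy is to commute the Laplacian past the ball integral defining $v$, so as to reduce to the pointwise subharmonicity estimate $u^2 \le \Delta u^2$ of Proposition~\ref{prop-lap-1}.

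Concretely, I would first establish the identity
\[
\Delta v(x) \;=\; \int_{B_2(x)} \Delta u^2(y)\, dA(y).
\]
Once this is in hand, integrating the inequality of Proposition~\ref{prop-lap-1} over $B_2(x)$ yields $\Delta v(x) \ge \int_{B_2(x)} u^2(y)\, dA(y) = v(x)$, exactly as required.

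To establish the identity, I would smooth the indicator of $[0, 2]$ by a sequence of radial cutoffs $\psi_n \colon [0, \infty) \to [0, 1]$ supported in $[0, 3]$ with $\psi_n \nearrow \mathbf{1}_{[0, 2]}$, and set
\[
v_n(x) \;=\; \int_{\Ha} \psi_n\bigl(d_\Ha(x, y)\bigr)\, u^2(y)\, dA(y).
\]
The kernel $\psi_n(d_\Ha(x, y))$ is symmetric in its two arguments and, for each fixed choice of the other variable, is a radial function with profile $\psi_n$; expressing the hyperbolic Laplacian in geodesic polar coordinates about either $x$ or $y$ gives the same formula $\psi_n''(r) + \coth(r)\psi_n'(r)$ at $r = d_\Ha(x, y)$. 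Consequently $\Delta_x \psi_n(d_\Ha(x, y)) = \Delta_y \psi_n(d_\Ha(x, y))$, and after differentiating under the integral in $x$ and integrating by parts twice in $y$---with no boundary terms, since the kernel is smooth and compactly supported in $y$ for each fixed $x$---one obtains
\[
\Delta v_n(x) \;=\; \int_{\Ha} \psi_n\bigl(d_\Ha(x, y)\bigr)\, \Delta u^2(y)\, dA(y).
\]

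The main obstacle is passing to the limit $n \to \infty$ on the left-hand side. Since $u$ is smooth (by elliptic regularity applied to~\eqref{eq-lap}), $\Delta u^2$ is continuous, and so the right-hand sides converge locally uniformly by dominated convergence to $\int_{B_2(x)} \Delta u^2\, dA$. On the left, $v_n \to v$ uniformly on compact subsets of $\Ha$, hence $\Delta v_n \to \Delta v$ in the sense of distributions; and since the limiting right-hand side is continuous in $x$, the distributional Laplacian of $v$ agrees with $\int_{B_2(x)} \Delta u^2\, dA$ as a classical function, which proves the identity and hence the proposition.
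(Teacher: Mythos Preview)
Your proof is correct and follows essentially the same approach as the paper: both establish the commutation identity $\Delta v(x)=\int_{B_2(x)}\Delta u^2\,dA$ and then invoke Proposition~\ref{prop-lap-1}. The paper's proof is a one-liner appealing to the invariance of the hyperbolic Laplacian under isometries (which is exactly what underlies your identity $\Delta_x\psi_n(d(x,y))=\Delta_y\psi_n(d(x,y))$), while you spell out the analytic justification via smoothing, integration by parts, and a distributional limit.
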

\begin{proof}
Since the hyperbolic laplacian $\Delta$ is invariant under the isometries of $\Ha$ it follows that 
$$
\int\limits_{B_{2}(x)} \, \Delta u^2(y)\, dA=\Delta v(x).
$$
Combining this with Proposition \ref{prop-lap-1} yields (\ref{eq-lap-2}).
\end{proof}

\begin{proposition}\label{prop-ave-2} There exists $C>0$ such that  
\begin{equation}\label{eq-lap-3}
|\nabla v(x)| \le C||u^2||_{C^{0}(B_{3}(x))}
\end{equation}
for every $x\in \Ha$ assuming  $\epsilon$ is small enough.
\end{proposition}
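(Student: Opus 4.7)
Since $v(x) = \int_{B_2(x)} u^2(y)\, dA(y)$ is an integral over a geodesic ball whose center moves with $x$, the natural plan is to differentiate under the integral sign via the Reynolds transport theorem. Because the integrand $u^2(y)$ does not depend on $x$, the only contribution to $\nabla v(x)$ comes from the motion of the boundary $\partial B_2(x)$. One then bounds this boundary term pointwise by $\|u^2\|_{C^0(B_3(x))}$.

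\textbf{Execution.} Fix $x \in \Ha$ and a unit vector $e \in T_x\Ha$, and let $x_t = \exp_x(te)$ be the unit-speed geodesic with $x_0 = x$ and initial velocity $e$. Applying the Reynolds transport formula to the smoothly moving domain $B_2(x_t)$ in $\Ha$, we obtain
\begin{equation*}
\frac{d}{dt}\bigg|_{t=0} v(x_t) = \int_{\partial B_2(x)} u^2(y)\, \nu(y)\, dS(y),
\end{equation*}
where $\nu(y)$ is the outward normal velocity at $y$ of $\partial B_2(x_t)$ at time $t=0$. Writing $\nu(y) = -\tfrac{d}{dt}\big|_{t=0} d(x_t, y) = \langle e, \nabla_x d(x,y)\rangle$ and using $|\nabla_x d(x,\cdot)| = 1$, we get $|\nu(y)| \le 1$. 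Since $\partial B_2(x) \subset \overline{B_2(x)} \subset B_3(x)$ and $|\partial B_2(x)| = 2\pi\sinh 2$ in $\Ha$, this gives
\begin{equation*}
|\langle e, \nabla v(x)\rangle| \le \int_{\partial B_2(x)} u^2(y)\, dS(y) \le 2\pi\sinh(2)\cdot \|u^2\|_{C^0(B_3(x))}.
\end{equation*}
Taking the supremum over unit $e \in T_x\Ha$ yields the desired bound with $C = 2\pi\sinh 2$.

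\textbf{Main obstacle.} There is essentially no obstacle: the argument is elementary and purely geometric. The only mild technical point is the application of the moving-domain formula in $\Ha$, but this is classical since $\partial B_2(x)$ is a smooth embedded circle for every $x$. A reader who wishes to avoid Reynolds altogether may argue directly via finite differences: for $0 < t \le 1$, the symmetric difference $B_2(x_t)\,\triangle\, B_2(x)$ is contained in $B_3(x)$ and has hyperbolic area bounded by $(1+o(1))\, t\cdot |\partial B_2(x)|$ as $t\to 0$, so
\begin{equation*}
|v(x_t) - v(x)| \le \|u^2\|_{C^0(B_3(x))}\cdot |B_2(x_t)\,\triangle\, B_2(x)|,
\end{equation*}
and dividing by $t$ and letting $t\to 0$ yields the same estimate. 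Notably, neither Seppi's bound nor Proposition \ref{prop-laba} plays any role here; Proposition \ref{prop-ave-2} is a pure averaging statement about $u^2$, and the reason $B_3(x)$ (rather than $\overline{B_2(x)}$) appears in the conclusion is simply to leave room for the finite-difference version of the argument.
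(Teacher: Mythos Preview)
Your proof is correct and takes a genuinely different route from the paper's argument.

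The paper proceeds by writing $v(x_1)-v(x)$ as an integral over the \emph{fixed} ball $B_2(x)$ via an isometry $Q$ of $\Ha$ sending $x$ to $x_1$, obtaining
\[
|v(x_1)-v(x)| \le (1+o(1))\int_{B_2(x)} |\nabla u^2(y)|\, d_{\Ha}(Q(y),y)\, dA(y),
\]
and then invokes Proposition~\ref{prop-laba} to bound $|\nabla u^2(y)|$ by $C\|u^2\|_{C^0(B_1(y))}$ for $y\in B_2(x)$. This is precisely why $B_3(x)$ appears in the paper's statement, and also why the hypothesis ``$\epsilon$ small enough'' is needed there (Proposition~\ref{prop-laba} rests on Seppi's Schauder-type estimates).

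By contrast, your Reynolds/boundary-flux argument never differentiates $u^2$: it bounds $|\nabla v(x)|$ directly by $|\partial B_2(x)|\cdot \|u^2\|_{C^0(\partial B_2(x))}$, with the explicit constant $2\pi\sinh 2$. This is more elementary, yields a slightly sharper conclusion (only $\|u^2\|_{C^0(\overline{B_2(x)})}$ is needed), and in fact does not require $\epsilon$ to be small. So your side remark that Proposition~\ref{prop-laba} ``plays no role'' is accurate for your proof, but your explanation of why $B_3(x)$ appears in the statement is off: in the paper's own argument the radius $3$ is forced by the use of Proposition~\ref{prop-laba}, not by a finite-difference cushion.
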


\begin{proof}

Let $x_1\in \Ha$ be such that $\dis_{\Ha}(x,x_1)<1$. We can choose an isometry $Q$ of $\Ha$ such that $Q(x)=x_1$, and so that 
\begin{equation}\label{eq-LL}
\max_{y\in B_{2}(x)} \dis_{\Ha}(y,Q(y))\le L\dis_{\Ha}(x,x_1)
\end{equation}
for some universal constant $L>0$. Then 
\begin{align*}
|v(x_1)-v(x)|&\le \int\limits_{B_{2}(x)} \,  |u^2(Q(y))-u^2(y)| \, dA(y)\\
&\le (1+o(1))\int\limits_{B_{2}(x)} \,  |\nabla u^2(y)|  \dis_{\Ha}(Q(y),y)\, dA(y),
\end{align*}
where $o(1)\to 0$ when $x_1\to x$. Combining this with (\ref{eq-LL}) gives
$$
\frac{|v(x_1)-v(x)|}{\dis(x,x_{1})} \le L \int\limits_{B_{2}(x)} \,  |\nabla u^2(y)| \, dA(y),
$$
which yields 
$$
|\nabla v(x)| \le L \max_{y\in B_{2}(x)} |\nabla u^2(y)|.
$$
Combining this with (\ref{eq-laba}) proves (\ref{eq-lap-2}). 
\end{proof}

\begin{proposition}\label{prop-ave-3} There exist $C,\eta>0$ such that when $\epsilon$ is small enough the estimate 
\begin{equation}\label{eq-lala}
\la^2(x)\le C v(x)
\end{equation}
holds for every $x\in \Ha$ assuming  $||u^2||_{C^0(B_{1}(x))}<\eta$.
\end{proposition}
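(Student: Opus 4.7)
The proof is essentially a direct combination of two earlier results. The plan is to chain together Proposition \ref{prop-laba} (which bounds $\la^2(x)$ pointwise by a local sup of $u^2$) with Proposition \ref{prop-svidj} (which bounds that local sup by the $L^1$-norm of $u^2$ over a slightly larger ball, i.e.\ precisely the quantity $v(x)$).

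More concretely, suppose $\epsilon$ is small enough for both Proposition \ref{prop-laba} and Proposition \ref{prop-svidj} to apply, and let $\eta > 0$ be the constant from Proposition \ref{prop-laba}. If $\|u^2\|_{C^0(B_1(x))} < \eta$, then Proposition \ref{prop-laba} gives a constant $C_1 > 0$ with
\[
\la^2(x) \le C_1 \, \|u^2\|_{C^0(B_1(x))}.
\]
Proposition \ref{prop-svidj} then provides the further bound
\[
\|u^2\|_{C^0(B_1(x))} \le \int_{B_2(x)} u^2(y)\, dA(y) = v(x).
\]
Combining the two displays yields $\la^2(x) \le C_1 v(x)$, which is the desired inequality with $C := C_1$ and $\eta$ as above.

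There is no real obstacle here; the work has already been done in establishing Propositions \ref{prop-laba} and \ref{prop-svidj}. The only small point to be careful about is that the same smallness assumption on $\epsilon$ must be taken in both invoked propositions, but since each requires only ``$\epsilon$ small enough,'' one simply chooses $\epsilon$ less than the minimum of the two implicit thresholds. The hypothesis $\|u^2\|_{C^0(B_1(x))} < \eta$ is carried over directly from Proposition \ref{prop-laba}, so no additional assumption on $v(x)$ is needed.
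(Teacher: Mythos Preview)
Your proof is correct and follows essentially the same approach as the paper: apply Proposition \ref{prop-laba} (equation \eqref{eq-laba'}) to bound $\la^2(x)$ by $C\|u^2\|_{C^0(B_1(x))}$, then apply Proposition \ref{prop-svidj} to bound this sup by $v(x)$. One minor remark: Proposition \ref{prop-svidj} does not actually require any smallness of $\epsilon$, so only the threshold from Proposition \ref{prop-laba} is relevant.
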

\begin{proof}
Applying first (\ref{eq-laba'}), and then (\ref{eq-svidj}), we get
$$
\la^2(x)  \le  C ||u^2||_{C^0(B_{1}(x))}\le  C \int\limits_{B_{2}(x)} \, u^2(y)\, dA=Cv(x),
$$
which proves (\ref{eq-lala}).

\end{proof}

\section{Rigid subsurfaces of a minimal surface}
Let $\Omega \subset S$ be a subsurface with geodesic boundary. In this section we show that if the lift of $f(\Omega)$ to ${\Ho}$ is closely aligned with a geodesic plane $P\subset {\Ho}$, then the size of the integral of the principal curvature is controlled by the length of the boundary of $\Omega$. This is the content of Lemma \ref{lemma-close} which plays the key part in the proof of Lemma \ref{lemma-comp-1}. 

\begin{definition}\label{def-rigid} Let $f\from S\to \M$ be a minimal map, and $\Omega \subset S$ an essential subsurface with geodesic boundary.  We say that $\Omega$ is $f$-rigid if 
\begin{itemize}
\item  there exists $D>0$,
 
\item there exists a geodesic plane $P\subset {\Ho}$,
\end{itemize}
such that  $\dis_\M(\wt{f}(x),P)<D$ for  every point  $x\in \wt{\Omega}$. Here $\wt{\Omega}$ is a connected component of the lift of $\Omega$ to the universal cover $\Ha$.
\end{definition} 

\begin{lemma}\label{lemma-close} There exists $C>0$ such that for every $\epsilon$-nearly geodesic subsurface the estimate
\begin{equation}\label{eq-vreme}
\int\limits_{\Omega} \la^2 \,dA\le C  |\partial \Omega| \epsilon^2
\end{equation}
holds providing  that $\Omega$ is $f$-rigid, and $\epsilon$ is small enough.
\end{lemma}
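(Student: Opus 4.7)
The plan is to use the auxiliary distance function $u(x) = \sinh(d_\M(\wt{f}(x), P))$ and its averaging $v(x) = \int_{B_2(x)} u^2 \, dA$, where $P$ is the plane furnished by $f$-rigidity, in order to convert the area integral $\int_\Omega \lambda^2 \, dA$ into a boundary integral via the divergence theorem. The three tools from Section~9 driving the argument are the pointwise estimate $\lambda^2 \le Cv$ from Proposition~\ref{prop-ave-3}, the subharmonic-type inequality $v \le \Delta v$ from Proposition~\ref{prop-ave-1}, and the gradient bound $|\nabla v| \le C\|u^2\|_{C^0(B_3)}$ from Proposition~\ref{prop-ave-2}.

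First I would work on a chosen lift $\wt\Omega \subset \Ha$ and observe that the stabilizer of $\wt\Omega$ in $\pi_1(S)$ must preserve the plane $P$: this stabilizer preserves the quasicircle limit set of $\wt{f}(\wt\Omega)$, which by rigidity lies in a uniform neighborhood of the ideal boundary of $P$, forcing the stabilizer into the isometric stabilizer of $P$. Hence $u$, $u^2$, and $v$ descend to functions on $\Omega$, and the divergence theorem applies on $\Omega$ directly.

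The main computation splits $\Omega$ into the $3$-collar $N$ of $\partial\Omega$ and its complement $\Omega_0$. On $N$, Seppi's pointwise estimate $\lambda \le C\epsilon$ (Theorem~\ref{thm-seppi}) combined with the standard collar area bound $|N| \le C|\partial\Omega|$ (immediate from $\partial\Omega$ being geodesic in $S$) yields $\int_N \lambda^2 \, dA \le C\epsilon^2 |\partial\Omega|$. On $\Omega_0$, Proposition~\ref{prop-ave-3} applies --- its smallness hypothesis $\|u^2\|_{C^0(B_1)} < \eta$ is inherited from the $f$-rigidity bound on $u$ for $\epsilon$ small --- and since $|\partial\Omega_0| \le C|\partial\Omega|$ one obtains
\[
\int_{\Omega_0} \lambda^2 \, dA \;\le\; C\int_{\Omega_0} v \, dA \;\le\; C\int_{\Omega_0} \Delta v \, dA \;=\; C\oint_{\partial\Omega_0} \partial_\nu v \, ds \;\le\; C\|u^2\|_\infty \, |\partial\Omega|.
\]

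The remaining --- and main --- obstacle is to upgrade the $f$-rigidity bound $\|u^2\|_\infty \le \sinh^2(D)$ on $\wt\Omega$ to the sharper $\|u^2\|_\infty \le C\epsilon^2$ needed to close with the correct power of $\epsilon$. I expect this to follow by replacing the plane $P$ from the rigidity definition with a better plane $P'$: for $\epsilon$ small, a normal-families/compactness argument using Proposition~\ref{prop-near} together with the Seppi curvature bound shows that $\wt{f}|_{\wt\Omega}$ is a $C^2$-perturbation, at rate $O(\epsilon)$, of an isometric embedding into some totally geodesic plane $P'$; running the preceding analysis with $P'$ in place of $P$ then produces the desired $O(\epsilon^2)$ control on $\|u^2\|_\infty$ and completes the proof. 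Ensuring the existence of such a $P'$ with a uniform $O(\epsilon)$ rate, independent of the specific $\wt\Omega$, is the technical heart of the argument.
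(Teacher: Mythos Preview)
Your overall architecture matches the paper's proof exactly: split $\Omega$ into a $3$-collar and its complement, use Seppi's global bound on the collar, and on the inner region chain together $\lambda^2 \le Cv$, $v \le \Delta v$, Green's formula, and the gradient bound on $v$. You have also correctly isolated the one nontrivial ingredient, namely the upgrade from $\|u^2\|_{C^0(\Omega)} \le \sinh^2 D$ to $\|u^2\|_{C^0(\Omega)} \le C\epsilon^2$.

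Where your proposal diverges from the paper is precisely in how to obtain this $O(\epsilon^2)$ bound, and your proposed route has a genuine gap. You suggest replacing $P$ by a better plane $P'$ found via a compactness/normal-families argument based on Proposition~\ref{prop-near}. But Proposition~\ref{prop-near} is purely local (balls of radius $10$), while $\wt\Omega$ is non-compact; you give no mechanism for patching the locally-found planes into a single global $P'$, nor for making the rate uniform over all rigid $\wt\Omega$. You acknowledge this as ``the technical heart'' but do not actually carry it out.

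The paper's argument (Proposition~\ref{prop-prom}) is both simpler and avoids this difficulty entirely: it keeps the \emph{same} plane $P$ given by rigidity. The key observation you are missing is that $\Omega$ has geodesic boundary, hence equals its own convex core, so every $x \in \wt\Omega$ lies on a bi-infinite geodesic $\alpha \subset \wt\Omega$. By rigidity $\wt f(\alpha)$ stays within distance $D$ of $P$, so the ideal endpoints of $\wt f(\alpha)$ lie in $\partial_\infty P$, and the straightened geodesic $\beta$ therefore lies in $P$ itself. Proposition~\ref{prop-rata} then gives $d_\M(\wt f(x), P) \le d_\M(\wt f(x), \beta) \le C\epsilon$, whence $\|u^2\|_{C^0(\Omega)} \le C\epsilon^2$ directly. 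No new plane, no compactness, and the constant is manifestly uniform.
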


\subsection{Rigid subsurfaces are $\epsilon$ rigid} In Definition \ref{def-rigid} we assume that the minimal subsurface $\wt{f}(\wt{\Omega})$ is $D$ away from a geodesic plane $P$. The purpose of this subsection is to promote $D$ to $\epsilon$, meaning that if we assume that  $\wt{f}(\wt{\Omega})$ is $D$ away from $P$, then it is actually $C\epsilon$ away from $P$.

Let $G<\pi_1(S)$ be a subgroup corresponding to the subsurface $\Omega\subset S$. Then $f_*(G)<\pi_1(\M)<\PSLC$. Let $\wt{\Omega}$ be the component of the lift of $\Omega$ which is invariant under $f_*(G)$. 
Since $\Omega$ is $f$-rigid, it follows that the plane $P$ is invariant under the group $f_*(G)$.
This implies that  the function $u\from \wt{\Omega}\to \R$ defined by (\ref{eq-u}) is equivariant and therefore well defined on $\Omega$. 
\begin{proposition}\label{prop-prom} There exists $C>0$ such that 
\begin{equation}\label{eq-pink-1}
||u^2||_{C^{0}(\Omega)}\le C\epsilon^2
\end{equation}
when $\epsilon$ is small enough.
\end{proposition}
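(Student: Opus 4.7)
The approach is to combine the strong maximum principle for the subharmonic function $u^{2}$ with a purely hyperbolic-geometric rigidity argument that pins down the image of $\partial \Omega$ to a $C\epsilon$-neighbourhood of $P$.

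First I would apply the maximum principle. Proposition \ref{prop-lap-1} gives $\Delta u^{2} \ge u^{2} \ge 0$ on $\wt{\Omega}$, and this descends to $\Omega$ since $u$ (and hence $u^{2}$) is $f_{*}(G)$-equivariant, as observed in the discussion preceding the proposition. If $u^{2}$ attained a positive interior maximum at some $x_{0} \in \Omega$, the second-derivative test would give $\Delta u^{2}(x_{0}) \le 0$, and then $u^{2}(x_{0}) \le \Delta u^{2}(x_{0}) \le 0$ would force $u^{2}(x_{0}) = 0$, a contradiction. Since $\Omega$ is compact with geodesic boundary we conclude
\[
\|u^{2}\|_{C^{0}(\Omega)} = \max_{\partial \Omega} u^{2}.
\]

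Next I would bound $u^{2}$ on $\partial \Omega$. The boundary is a disjoint union of simple closed geodesics $\alpha_{1}, \dots, \alpha_{k}$ of $S$. For each $i$, choose a lift $\wt{\alpha}_{i} \subset \wt{\Omega}$, which is a geodesic of $\Ha$. By Proposition \ref{prop-rata}, $\wt{f}(\wt{\alpha}_{i}) \subset \Ne_{C\epsilon}(\beta_{i})$, where $\beta_{i} \subset {\Ho}$ is the geodesic sharing the ideal endpoints of the quasigeodesic $\wt{f}(\wt{\alpha}_{i})$. It now suffices to prove that $\beta_{i} \subset P$.

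To show $\beta_{i} \subset P$, note that by rigidity $\wt{f}(\wt{\alpha}_{i}) \subset \Ne_{D}(P)$, so combining with the previous step gives $\beta_{i} \subset \Ne_{D+C\epsilon}(P)$. But any complete geodesic of ${\Ho}$ lying in a bounded neighbourhood of a totally geodesic plane $P$ must be contained in $P$: each of its two ideal endpoints must lie in $\partial P$ (otherwise the distance to $P$ would diverge along the ray approaching the offending endpoint), and the unique geodesic of ${\Ho}$ joining two points of $\partial P$ lies inside $P$. Hence $\beta_{i} \subset P$, so $\wt{f}(\wt{\alpha}_{i}) \subset \Ne_{C\epsilon}(P)$, which gives $u \le \sinh(C\epsilon) \le 2C\epsilon$ on $\partial \wt{\Omega}$ for small $\epsilon$. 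Together with the maximum principle step, this yields $\|u^{2}\|_{C^{0}(\Omega)} \le 4C^{2}\epsilon^{2}$, as required.

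The only nontrivial content of the proof is the hyperbolic-geometric fact that a geodesic at bounded distance from a totally geodesic plane is contained in that plane; everything else is a direct application of Propositions \ref{prop-lap-1} and \ref{prop-rata}. The one place where care is needed is in reducing the maximum-principle step cleanly to the boundary via the pointwise inequality $\Delta u^{2} \ge u^{2}$ rather than merely $\Delta u^{2} \ge 0$, as this ensures that any positive interior extremum is ruled out rather than merely controlled.
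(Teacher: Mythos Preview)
Your proof is correct, but it takes a different route from the paper's. The paper observes that since $\Omega$ has geodesic boundary it equals its own convex core, so \emph{every} point $x\in\wt{\Omega}$ lies on a complete geodesic $\alpha\subset\wt{\Omega}$; applying Proposition~\ref{prop-rata} to this $\alpha$ and the same ``geodesic near a plane lies in the plane'' fact gives $\dis_\M(\wt f(x),P)\le C_1\epsilon$ directly, with no appeal to the maximum principle or to Proposition~\ref{prop-lap-1}.

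Your argument instead uses the differential inequality $\Delta u^2\ge u^2$ to push the supremum to $\partial\Omega$, and then runs the geodesic argument only along the boundary curves. Both approaches rest on the same hyperbolic-geometric rigidity fact and on Proposition~\ref{prop-rata}; the paper's version is slightly more self-contained (it does not need the PDE input), while yours makes the role of subharmonicity explicit and would still go through if one only knew rigidity along $\partial\Omega$ rather than on all of $\Omega$.
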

\begin{proof}
Since $\Omega$ is a subsurface with geodesic boundary, it follows that $\Omega$ is equal to its convex core. Thus, 
every $x\in \wt{\Omega}$ belongs to a geodesic $\alpha$ which is entirely contained in $\wt{\Omega}$.

Let $\beta\subset {\Ho}$ be the  geodesic with the same endpoints as $\wt{f}(\alpha)$. Since $\Omega$ is $f$-rigid it follows that $\beta$ is a finite distance away from the plane $P$. But then $\beta\subset P$ since $P$ is a geodesic plane. On the other hand, from Proposition \ref{prop-rata} we have that $\dis_{\M}(f(\alpha),\beta)\le C_1\epsilon$, where $C_1$ is the constant from  Proposition \ref{prop-rata}.  Therefore, we derived the inequality
$$
\dis_{\M}(f(x),P)\le C_1\epsilon
$$
for every $x\in \wt{\Omega}$. But $\sinh(C_1\epsilon)\le 2C_1\epsilon$\, for $\epsilon$ small enough. Thus, (\ref{eq-pink-1}) holds for  $C=4C_1^2$.

\end{proof}

\subsection{Proof of Lemma \ref{lemma-close}}  We start by defining
$$
\Omega'=\{x\in \Omega: \dis_{\Omega}(x,\partial \Omega)\ge 3\}.
$$
Observe  that there exists a universal constant $L$ such that 
\begin{equation}\label{eq-lutam-1}
|\Omega\setminus \Omega'|\le L|\partial \Omega|,
\end{equation}
and
\begin{equation}\label{eq-lutam}
|\partial \Omega'|\le L|\partial \Omega|.
\end{equation}
Furthermore, without loss of generality we may assume that $\partial \Omega'$ is a union of finitely many smooth curves (the point is that $\Omega'$ can be approximated by such domains). 

We first estimate the integral on the left hand side of (\ref{eq-vreme}) over $\Omega\setminus \Omega'$.
Let $C_0$ be the constant from Theorem \ref{thm-seppi}. Then from (\ref{eq-lutam-1}) we get
\begin{equation}\label{eq-vreme-1}
\int\limits_{\Omega\setminus\Omega'} \la^2 \,dA\le C_0 |\Omega\setminus \Omega'| \epsilon^2\le 
C_0L |\partial \Omega| \epsilon^2.
\end{equation}
It remains to estimate this integral over $\Omega'$ which we do in the remainder of the proof.

Let $\numb$ be the inward-pointing  unit normal vector field on the  boundary $\partial \Omega'$. Green's formula gives
\begin{equation}\label{eq-green}
\int\limits_{\Omega'} \Delta v \,dA=\int\limits_{\partial \Omega'} \frac{\partial v}{\partial \numb } \,d\len,
\end{equation}
where  $d\len$ is the hyperbolic length form.

Firstly, let $C_1$, and $\eta_1$, be the constants from  Proposition \ref{prop-ave-3}. When $x\in \Omega'$, then 
$B_1(x)\subset \Omega$. Thus,   by (\ref{eq-pink-1}) the inequality
$$
||u^2||_{C^{0}(\Omega)}\le \eta_1
$$
holds when $\epsilon$ is small enough. Then from Proposition \ref{prop-ave-3} we derive the inequality
\begin{equation}\label{eq-pk-1}
\la^2(x)\le C_1 v(x),\,\,\,\,\,\,\,   x\in \Omega'.
\end{equation}
Combining this with the fact that $v\le \Delta v$, we obtain the inequality
\begin{equation}\label{eq-home}
\int\limits_{\Omega'} \la^2 \,dA \le  C_1 \int\limits_{\Omega'} \Delta v \,dA
\end{equation}
assuming $\epsilon$ is small enough.

On the other hand, let $C_2$ be the constant from Proposition \ref{prop-ave-2}. Then 
\begin{align}\label{eq-fait}
\int\limits_{\partial \Omega'} \frac{\partial v}{\partial \numb } \, d\len &\le \int\limits_{\partial \Omega'} |\nabla v| \, d\len 
\le \max_{x\in \partial \Omega'} |\nabla v(x)|  |\partial \Omega'|   \notag \\ 
\\
&\le C_2 ||u^2||_{C^{0}(\Omega)}  |\partial \Omega'|   \le C_2 C_3 |\partial \Omega'| \epsilon^2  \notag
\end{align}
where $C_3$ is the constant from Proposition \ref{prop-prom}. Replacing (\ref{eq-home}) and (\ref{eq-fait}) in the Green's formula (\ref{eq-green}) proves
$$
\int\limits_{\Omega'} \la^2 \,dA\le C_1C_2C_3  |\partial \Omega'| \epsilon^2\le  LC_1C_2C_3  |\partial \Omega| \epsilon^2
$$
where we applied (\ref{eq-lutam}) in the last inequality. Together with (\ref{eq-vreme-1}) this gives
$$
\int\limits_{\Omega} \la^2 \,dA\le L(C_0+C_1C_2C_3)  |\partial \Omega'| \epsilon^2.
$$
This proves the lemma by letting $C=L(C_0+C_1C_2C_3)$.

\section{Proof of Lemma \ref{lemma-comp-1} }\label{section-comp-1}


\subsection{Smoothing out the boundary of $S(\delta_1(h),h)$} In the remainder of this section we fix $h\in \N$.
To alleviate the notation we let $\delta_0=\delta_0(h)$,  $\delta_1=\delta_1(h)$, and $\epsilon_0=\epsilon_0(h)$ (see Remark \ref{remark-prva}).

\begin{proposition}\label{prop-druga} Let  $f\from S\to \M$ be any  $\epsilon$-nearly geodesic minimal surface.  There exists a (possibly disconnected) subsurface  $R \subset S$ (with piecewise smooth boundary) with the following properties:
\begin{enumerate}
\item  $S(\delta_1,h)\subset  R \subset S(\delta_0,h)$,
\item $|\partial R| \le 25 \big(|S|- |S(\delta_1,h)|\big)$,
\end{enumerate}
assuming $\epsilon<\epsilon_0$. 
\end{proposition}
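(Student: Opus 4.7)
The plan is to apply the smoothing Lemma \ref{lem-smoothing} not to $S(\delta_1,h)$ itself, but to its ``outer complement'' $T := S \setminus S(\delta_0,h)$, and then to take $R$ to be the complement of the smoothed set. Concretely: apply Lemma \ref{lem-smoothing} to $T$ to obtain $T' \subset S$ with piecewise smooth boundary such that $T \subset T' \subset \Ne_1(T)$ and $|\partial T'| \le 25|\Ne_1(\partial T)|$, and then set $R := S \setminus T'$. Since $S$ is a closed surface, $\partial R = \partial T'$, so $R$ automatically has piecewise smooth (possibly disconnected) boundary.

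Condition (1) splits into two steps. The containment $R \subset S(\delta_0,h)$ is immediate from $T \subset T'$, since then $R = S \setminus T' \subset S \setminus T = S(\delta_0,h)$. The containment $S(\delta_1,h) \subset R$ is where Proposition \ref{prop-prva} plays its essential role: by the definition $\delta_1 = \delta'(\delta_0,h)$ (Remark \ref{remark-prva}), that proposition gives $\Ne_{10}(S(\delta_1,h)) \subset S(\delta_0,h)$ as soon as $\epsilon < \epsilon_0$, which is equivalent to $d_S(S(\delta_1,h), T) > 10$. In particular $\Ne_1(T) \cap S(\delta_1,h) = \emptyset$, so $T' \subset \Ne_1(T) \subset S \setminus S(\delta_1,h)$; taking complements yields $S(\delta_1,h) \subset R$.

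Condition (2) is then immediate bookkeeping. Lemma \ref{lem-smoothing} gives $|\partial R| = |\partial T'| \le 25|\Ne_1(\partial T)|$. Since $S(\delta_0,h)$ is open, $T$ is closed and $\partial T \subset T$; combined with the inclusion $\Ne_1(T) \subset S \setminus S(\delta_1,h)$ from the previous paragraph, this gives $\Ne_1(\partial T) \subset S \setminus S(\delta_1,h)$, so $|\Ne_1(\partial T)| \le |S| - |S(\delta_1,h)|$, and $|\partial R| \le 25(|S| - |S(\delta_1,h)|)$ as required.

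The only nontrivial choice is what set to smooth: applying Lemma \ref{lem-smoothing} directly to $S(\delta_1,h)$ would only bound $|\partial R|$ in terms of $|\Ne_1(\partial S(\delta_1,h))|$, which in general contains points of $S(\delta_1,h)$ itself, producing a wrong-direction estimate. Smoothing the outer complement $T$ instead exploits the $10$-wide geometric buffer between $S(\delta_1,h)$ and $T$ supplied by Proposition \ref{prop-prva}, which forces the entire thickening $\Ne_1(T)$ to sit inside $S \setminus S(\delta_1,h)$ and makes the area bound fall out with no further work.
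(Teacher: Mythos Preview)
Your proof is correct and rests on the same two ingredients as the paper's (Lemma~\ref{lem-smoothing} together with the buffer $\Ne_{10}(S(\delta_1,h))\subset S(\delta_0,h)$ from Proposition~\ref{prop-prva}), but you make a different choice of which set to smooth. The paper applies Lemma~\ref{lem-smoothing} directly to $S(\delta_1,h)$, obtaining $S(\delta_1,h)\subset R\subset\Ne_1(S(\delta_1,h))\subset S(\delta_0,h)$, and then has to bound $|\Ne_1(\partial S(\delta_1,h))|$ by $|S|-|S(\delta_1,h)|$; you instead smooth the closed complement $T=S\setminus S(\delta_0,h)$ and set $R=S\setminus T'$. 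Your route is cleaner for exactly the reason you articulate in the final paragraph: since $T$ is closed, $\partial T\subset T$, and the buffer gives $\Ne_1(\partial T)\subset\Ne_1(T)\subset S\setminus S(\delta_1,h)$ as an honest set inclusion, so the area bound is immediate. In the direct approach $\Ne_1(\partial S(\delta_1,h))$ genuinely protrudes into $S(\delta_1,h)$, so that last inequality is not a containment and needs a separate justification that the paper does not spell out. One harmless slip: the buffer yields $d_S(S(\delta_1,h),T)\ge 10$, not strictly $>10$, but of course this changes nothing.
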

\begin{proof}
We apply Lemma \ref{lem-smoothing} to $S(\delta_0,h)$ to obtain $R$ such that 
$$S(\delta_1,h) \subset R \subset \Ne_1(S(\delta_1,h)) \subset S(\delta_0,h),$$
and 
$$|\partial R| \le 25 |\Ne_1(\partial S(\delta_0,h))| \le 25 \big( |S| - |S(\delta_1,h)| \big). \qedhere$$
\end{proof}

\subsection{The endgame} Let $R_1,...,R_k$ be the connected components of $R$. 
By $\Omega_i\subset S$ we denote  the convex core of $R_i$. In other words, $\Omega_i$ is the subsurface whose boundary curves are geodesics homotopic to the homotopically non-trivial components of $\partial R_i$.

\begin{proposition}\label{prop-azra} Each $\Omega_i$ is $f$-rigid.
\end{proposition}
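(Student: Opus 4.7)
The plan is to locate, for each $i$, a single geodesic plane $P \subset \Ho$ (underlying a component of $\wt\HH_h$) such that the associated lift $\wt\Omega_i$ stays uniformly close to $P$ under $\wt f$. Fix a connected component $\wt R_i \subset \Ha$ of the preimage of $R_i$, let $G_i < \pi_1(S)$ be its stabilizer, and let $\wt\Omega_i$ be the component of the preimage of $\Omega_i$ also stabilized by $G_i$. Since $R_i \subset S(\delta_0,h)$, every $x \in \wt R_i$ satisfies $\dis_\G(\wt f(x), \wt\HH_h) < \delta_0$, and the second bullet of Proposition \ref{prop-delta} ensures that the $\delta_0$-neighborhoods of distinct components of $\wt\HH_h$ in $\G_2(\Ho)$ are pairwise disjoint. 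Writing $\wt R_i$ as the disjoint union, indexed over components $Q$ of $\wt\HH_h$, of the open sets $\{x \in \wt R_i : \wt f(x) \in \Ne_{\delta_0}(Q)\}$, connectedness forces all but one such summand to be empty; call the remaining one $Q_0$ and let $P \subset \Ho$ be its underlying geodesic plane.

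The next step is to show that $f_*(G_i)$ preserves $Q_0$ (equivalently $P$) setwise. By equivariance of $\wt f$, the image $\wt f(\wt R_i)$ is $f_*(G_i)$-invariant in $\G_2(\Ho)$, so for every $g \in G_i$ we have
\[
\emptyset \ne \wt f(\wt R_i) \subset \Ne_{\delta_0}(Q_0) \cap \Ne_{\delta_0}\big(f_*(g)^{-1} Q_0\big).
\]
Because $\wt\HH_h$ is $\pi_1(\M)$-invariant, $f_*(g)^{-1} Q_0$ is again a component of $\wt\HH_h$, and if it differed from $Q_0$ Proposition \ref{prop-delta} would make the two $\delta_0$-neighborhoods disjoint, contradicting the display. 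Hence $f_*(g) P = P$ for every $g \in G_i$.

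Once $f_*(G_i)$ preserves $P$, the function $\phi(x) = \dis_\M(\wt f(x), P)$ on $\wt\Omega_i$ is $G_i$-invariant and thus descends to a continuous function on the compact surface $\Omega_i$; its supremum is the constant $D$ required by Definition \ref{def-rigid}. The main obstacle is the invariance step: it works only because the quantitative $10\delta_0$ separation from Proposition \ref{prop-delta} strictly exceeds $2\delta_0$ and so prevents the $\delta_0$-neighborhoods of two distinct components of $\wt\HH_h$ from meeting. Both the initial identification of $P$ and the final compactness argument are then formal consequences of this separation and the equivariance of $\wt f$.
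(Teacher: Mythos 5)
Your argument is correct, and its first half coincides with the paper's: both proofs use the $10\delta_0$ separation between distinct components of $\wt\HH_h$ from Proposition \ref{prop-delta}, together with the connectedness of $\wt R_i$ and the inclusion $R_i\subset S(\delta_0,h)$, to pin down a single geodesic plane $P$ with $\wt f(\wt R_i)\subset \Ne_{\delta_0}(P)$. Where you diverge is in passing from $\wt R_i$ to $\wt\Omega_i$. The paper does this purely metrically: since $\Omega_i$ is the convex core of $R_i$, the component $\wt\Omega_i$ lies within some Hausdorff distance $D_1$ of $\wt R_i$ (a constant depending on the surface, which is all Definition \ref{def-rigid} requires), so every point of $\wt\Omega_i$ maps within $D_1+\delta_0$ of $P$. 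You instead first upgrade the containment to the group-theoretic statement that $f_*(G_i)$ preserves $P$ --- again via the disjointness of the $\delta_0$-neighbourhoods of distinct components --- and then conclude by noting that the $G_i$-invariant function $x\mapsto \dis_\M(\wt f(x),P)$ descends to the compact surface $\Omega_i$ and is therefore bounded. Your route costs an extra (easy) invariance step but buys you independence from any quantitative comparison between $\Omega_i$ and $R_i$; the paper's route is shorter but leans on the unproved (though true) assertion that the convex core stays a bounded distance from $R_i$. Both are complete; the only cosmetic slip in yours is that equivariance gives $\wt f(\wt R_i)\subset \Ne_{\delta_0}\big(f_*(g)Q_0\big)$ rather than $\Ne_{\delta_0}\big(f_*(g)^{-1}Q_0\big)$, which of course changes nothing since $g$ ranges over all of $G_i$.
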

\begin{proof} Let $\wt{R}_i$ be a component of the lift of $R_i$ to $\Ha$. Then there exists a component $\wt{\Omega}_i$ of the lift of 
$\Omega$ such that every point on  $\wt{\Omega}_i$ is at most distance $D_1>0$ away from $\wt{R}_i$. On the other hand, since $R_i\subset S(\delta_0,h)$ it follows from Proposition \ref{prop-delta}  that for each point  $p\in \wt{R}_i$ there exists a unique 
connected component of $\wt{\HH}$ which is $\delta_0$ away from $\wt{f}(x)$. Since $\wt{R}_i$ is connected, this component is the same for all $x\in \wt{R}_i$, and we conclude that there exists a geodesic plane $P_i\subset {\Ho}$ such that every point on 
$\wt{R}_i$ is $\delta_0$ away from $P_i$. Thus, every point on $\wt{\Omega}_i$ is $(D_1+\delta_0)$ away from $P_i$ and we are done.
\end{proof}

We apply the Gauss-Bonnet formula and  get
\begin{align}\label{eq-o}
4\pi(\gen(S)-1)&=\int\limits_{S} (1+\la^2(x))\,dA_\sigma=\int\limits_{S} \,dA_\sigma+ \int\limits_{S} \la^2(x)\,dA_\sigma \le \notag  \\
\\
&\le \Area_\sigma(S)\left(1+\frac{2}{|S|}\int\limits_{S} \la^2(x)\,dA \right), \notag 
\end{align}
where in the last inequality we used the inequality (\ref{eq-ahlfors}) which implies  that $\frac{1}{2}dA\le dA_\sigma\le dA$ when $\epsilon$ is small enough. It remains to find a suitable upper bound for the last term in (\ref{eq-o}).

Let $C_1$ and $C_2$ be the constants from  Lemma \ref{lemma-close} and Theorem \ref{thm-seppi} respectively. Then
\begin{align*}
\int\limits_{S} \la^2(x)\,dA&= \sum_{i=1}^k \int\limits_{\Omega_i} \la^2(x)\,dA+ \int\limits_{S\setminus \Omega} \la^2(x)\,dA
\\
&\le C_1 \sum_{i=1}^k |\partial \Omega_i| \epsilon^2+ C^2_2|S\setminus \Omega| \epsilon^2
\end{align*}
where  $\Omega=\Omega_1\cup \cdots \cup \Omega_k$.
Since  $\sum_{i=1}^k |\partial \Omega_i| \le  |\partial R|$, from the previous inequality we get
\begin{align*} 
\int\limits_{S} \la^2(x) \,dA  &\le C_1 |\partial R| \epsilon^2+ C^2_2 \big( |S|-|\Omega | \big) \epsilon^2 \\
&\le C_1 |\partial R| \epsilon^2+ C^2_2  \left( (|S|-|R|)+\big||R|-|\Omega |\big|\right)\epsilon^2  \\
&\le  C_1 |\partial R| \epsilon^2+ C^2_2  \left( (|S|-|S(\delta_1,h)|)+|\partial R|\right)\epsilon^2, 
\end{align*}
where in the last inequality we used the fact that $S(\delta_1,h)\subset R$, and also Proposition \ref{prop-iso-1} to estimate $\big||R|-|\Omega |\big|$. 

Let $C_3=25$ be the constant from Proposition \ref{prop-druga}. We now use the estimate 
$|\partial R|\le C_3(|S|-|S(\delta_1,h)|)$, and get
\begin{align*} 
\int\limits_{S} \la^2(x) \,dA  &\le  C_1C_3 \big(|S|-|S(\delta_1,h)|\big) \epsilon^2+ C^2_2  
\left( \big(|S|-|S(\delta_1,h)|\big)+ C_3 \big(|S|-|S(\delta_1,h)|\big) \right)\epsilon^2\\
&= \big(C_1C_3+C^2_2+C^2_2C_3\big)\big(|S|-|S(\delta_1,h)|\big)\epsilon^2.
\end{align*}
Thus,
$$
\frac{1}{|S|}\int\limits_{S} \la^2(x) \,dA \le C_4\left(1-\frac{|S(\delta_1,h)|}{|S|}\right)\epsilon^2.
$$
Replacing this back into (\ref{eq-o}) we get 
$$
4\pi(\gen(S)-1)\le \Area_\sigma(S)\left(1+ 2   C_4\left(1-\frac{|S(\delta_1,h)|}{|S|}\right)\epsilon^2 \right),
$$
and the lemma is proved.

\section{The proof of Theorem \ref{thm-main-top}} \label{sec-main-top}
We start by defining the set of minimal surfaces which have a definite part of  their area located outside a $\delta_0 \equiv \delta_0(h)$ neighbourhood of the locus $\hh$ of totally geodesic surfaces of genus at most $h$. (We remind the reader that $\delta_0$ was defined in Section \ref{subsec-geod}.)

We remind the reader of our standing assumption  that $\epsilon\le \wh{\epsilon}$, where $\wh{\epsilon}>0$ is the universal constant from Proposition \ref{prop-us}. This ensures that each conjugacy class  $\Sigma\in \Su_\epsilon$ corresponds to a unique minimal surface.

To simplify our constructions we will work mostly with one genus at a time; accordingly we let $\Su_\epsilon(\gen) \subset \Su_\epsilon(g)$ be the surfaces with exactly genus $\gen$, so that
$$\Su_\epsilon(g) = \bigcup_{\gen \le g} \Su_\epsilon(\gen).$$
The reader should not be overly concerned with this abuse of notation, because after this section we will talk only about $\Su_\epsilon(\gen)$.

\begin{definition} Let $0\le q\le 1$. We say $\Sigma\in \Su_\epsilon(g,h,q)\subset \Su_\epsilon(g)$\,\, if 
$$
\frac{|S(\delta_0, h)|}{|S|}\le 1-q
$$
where $f\from S\to\M$ is the $\epsilon$-nearly geodesic minimal map representing the class $\Sigma$. 
\end{definition}
We also define $\Su_\epsilon(\gen,h, q)$ in analogy to $\Su_\epsilon(\gen)$, so that 
$$\Su_\epsilon(g,h,q) = \bigcup_{\gen \le g} \Su_\epsilon(\gen,h,q).$$

In the remainder of this section we prove Theorem \ref{thm-main-top} assuming the following lemma, which will be proven in Section \ref{sec-big-count} after the introduction and discussion in Section \ref{sec-geo-comb}.
\begin{lemma}\label{lemma-baza} 
For all $q >0$ and  $c>0$, there exists $h$ and  $\epsilon_0=\epsilon_0(q,c)$  such that 
\begin{equation}\label{eq-baza}
\lim_{\gen \to \infty} \frac{|\Su_\epsilon(\gen, h, q)|}{(c\gen)^{2\gen}}=0
\end{equation}
for every $\epsilon<\epsilon_0$.
\end{lemma}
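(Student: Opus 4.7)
The plan is to refine the Kahn--Markovi\'c count of homotopy classes of genus $\gen$ surfaces, replacing the cover $\Dol$ used there by a cover of $\G_2(\M)$ by ``wafer-thin'' disks of a small transverse width $\eta = \eta(q,c)$, to be chosen at the end. Each element of $\Dol$ will be the $\eta$-neighbourhood of a small geodesic 2-disk in $\G_2(\M)$; such a disk has volume $O(\eta)$ and so $|\Dol| \asymp \eta^{-1}$. Declare $D_0, D_1 \in \Dol$ to be \emph{mutually compatible} if some $\epsilon$-nearly-geodesic surface passes through both within surface-distance $1$; the nearly-geodesic condition, via Proposition~\ref{prop-near}, forces each $D$ to have at most a universal number $K$ of compatible partners, with $K$ independent of $\eta$ provided $\epsilon < \epsilon(\eta)$.

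Given $\Sigma \in \Su_\epsilon(\gen, h, q)$ with minimal representative $f\from S \to \M$, I would build a $\Dol$-valued $\gen$-polygonalization $(\tau, \lab)$ as in \cite{k-m-1}: a bounded triangulation of $S$, a spanning tree $\wh\tau$, $2\gen$ distinguished edges $e(\tau)$ so that $\wh\tau \cup e(\tau)$ is $\pi_1$-surjective, and vertex labels $\lab(v) \in \Dol$ with $f(v) \in \lab(v)$. The critical new ingredient is Step 3 of the introduction's outline: choose $e(\tau)$ so that at least $q'\gen$ of the distinguished edges (for some $q' = q'(q) > 0$) have both endpoints outside $f^{-1}(\Ne_{\delta_0}(\hh))$. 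This is possible because the hypothesis $|S(\delta_0,h)|/|S| \le 1 - q$ forces a $q$-fraction of vertices of a bounded triangulation to lie outside $\Ne_{\delta_0}(\hh)$, so a greedy extension of the spanning tree to a basis of $H_1(S; \Z)$ can be biased to pick up enough outside generators.

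With $\tau$ fixed, I count valid labellings by BFS: $|\Dol|$ root choices, then $K$ choices per tree edge, and then for each of the $2\gen$ distinguished edges both endpoints are already labeled and compatibility is an a-posteriori filter. For an outside distinguished edge, the induced marginal distribution of endpoint labels over the labelling space is asymptotically uniform on $\Dol_{\mathrm{out}}$ by Ratner's theorem (the only $\PSLR$-invariant measures supported away from $\hh$ are Liouville), so this filter retains only a fraction $K/|\Dol_{\mathrm{out}}| = O(\eta)$ of labellings per outside distinguished edge. Inside distinguished edges give merely a bounded filter. Multiplying,
\begin{equation*}
\#\{\text{valid } \lab\,|\,\tau\} \;\le\; |\Dol|\cdot K^{V-1} \cdot (K\eta)^{q'\gen}\cdot C^{(2-q')\gen} \;\le\; K^{O(\gen)}\,\eta^{\,q'\gen},
\end{equation*}
and combining with the Kahn--Markovi\'c bound $(A\gen)^{2\gen}$ on the number of abstract graph pairs yields
\begin{equation*}
|\Su_\epsilon(\gen,h,q)| \;\le\; (A\gen)^{2\gen}\cdot K^{O(\gen)}\eta^{\,q'\gen} \;=\; \bigl(A\,K^{O(1)}\,\eta^{\,q'/2}\,\gen\bigr)^{2\gen}.
\end{equation*}
Choosing $\eta$ small enough that $A K^{O(1)} \eta^{\,q'/2} < c$, and $\epsilon_0 = \epsilon(\eta)$, gives the lemma.

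The main obstacle is turning the Ratner-equidistribution heuristic in the preceding paragraph into a rigorous per-surface (or per-finite-$\gen$) bound on the marginal of the spanning-tree labelling at the two endpoints of a distinguished edge. Equidistribution of $f_* m_S$ away from $\hh$ is a statement about the push-forward measure on $\G_2(\M)$; converting it to a statement about how the combinatorially-specified outside distinguished endpoints distribute in $\Dol$ requires that the selection of distinguished edges in Step 3 be carried out in a sufficiently ``Borel'' manner not to distort the empirical measure. Handling this will likely involve choosing $h$ large enough that the non-Liouville contributions to any limit are exhausted (so that Ratner gives genuine absolute continuity on the complement of $\hh$), together with a uniform-in-$\gen$ concentration argument of ``large deviations'' type for the count of endpoints in each $D \in \Dol$.
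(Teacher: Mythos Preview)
Your overall architecture---wafer-thin disks for $\Dol$, a $\Dol$-valued polygonalization, and $q'\gen$ distinguished edges chosen outside a neighbourhood of $\hh$---matches the paper's. But the counting step has a genuine gap, and the mechanism you propose for the saving is not the one that actually works.

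In the paper the saving does \emph{not} come from treating distinguished-edge compatibility as an a-posteriori filter on tree-compatible labellings. Instead the paper proves (Property~3 of Lemma~\ref{lemma-Dol}, via Ratner applied to weak limits of the surface measures $\mu_{f_n}$) that for every wafer-thin $D\not\subset\Ne_\delta(\hh)$ and \emph{every} $\epsilon$-nearly geodesic $f$, one has $\mu_f(\cl{\Hull_{1,\epsilon}(D)})\le c_0$. Bounded geometry of the triangulation then converts this into a per-surface combinatorial constraint: in the labelling coming from $f$, at most $c_1\gen$ vertices carry any fixed label $D\in\Dol'$. The count then fixes $\wh\tau$ \emph{and} $\lab$ first, and only afterwards chooses the distinguished edges. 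For an outside edge one picks the first endpoint ($\le L\gen$ ways), then the second endpoint's label is one of $\le C$ compatible disks, and there are $\le c_1\gen$ vertices carrying that label---giving $\le CLc_1\gen^2$ per outside edge versus $L^2\gen^2$ per generic edge, hence the factor $c_1^{q\gen}$.

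Your mechanism is different: you fix $\tau$ with its distinguished edges, BFS-label the tree, and claim that for an outside distinguished edge $\{v,w\}$ the marginal of $\lab(v)$ over the space of tree-compatible labellings is approximately uniform on $\Dol'$, so that compatibility with $\lab(w)$ holds only an $O(\eta)$ fraction of the time. This is a statement about the random walk on the compatibility graph on $\Dol$, not about surface measures, and Ratner's theorem says nothing about it. The endpoints $v,w$ are joined by a path in $\wh\tau$ along which the labelling is already constrained step-by-step; the conditional law of $\lab(w)$ given $\lab(v)$ depends on the combinatorics of that path and of the compatibility graph, and there is no reason it should be close to uniform. You correctly flag this as the ``main obstacle'', but it is not a technicality to be patched---it is the wrong mechanism. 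The paper's route sidesteps the issue entirely by extracting the small constant from a \emph{per-surface geometric} bound (few vertices per outside label) rather than a \emph{combinatorial mixing} bound.

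A secondary gap: your Step~3 is understated. A $q$-fraction of the area (hence of vertices) lying outside $S(\delta_0,h)$ does not by itself force $q'\gen$ distinguished \emph{edges} outside---all of the topology of $S$ could in principle sit inside $S(\delta_0,h)$. The paper's Lemma~\ref{lemma-tri} handles this with an isoperimetric/smoothing step: one finds $\delta\ll\delta_0$ and a smooth subsurface $R$ with $S(\delta,h)\subset R\subset S(\delta_0,h)$ and $|\partial R|$ arbitrarily small relative to $|S|$, then uses Gauss--Bonnet plus a convex-core comparison to bound $\dim\operatorname{im}\bigl(H_1(R)\to H_1(S)\bigr)\le(2-q)\gen$, which forces at least $q\gen$ of the $2\gen$ homology generators to come from edges outside $R$.
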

The reader can easily verify that this implies the unboldfaced version:
\begin{corollary} \label{cor-baza}
For all $q >0$ and  $c>0$, there exists $h$ and  $\epsilon_0=\epsilon_0(q,c)$  such that 
\begin{equation}\label{eq-baza-unbold}
\lim_{g \to \infty} \frac{|\Su_\epsilon(g,h,q)|}{(cg)^{2g}}=0
\end{equation}
for every $\epsilon<\epsilon_0$.
\end{corollary}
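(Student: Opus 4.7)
The plan is to deduce the unboldfaced statement from the boldfaced one by a straightforward summation over $\gen \le g$, paying only a polynomial price in $g$ and a tiny geometric factor. In particular, the hypothesis of Lemma \ref{lemma-baza} is stronger than what we literally need (it gives a bound for each fixed $\gen$), so we will apply it with a smaller constant $c' < c$ and then absorb the extra budget by the usual ``largest term dominates'' observation.

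Concretely, given $q>0$ and $c>0$, set $c' = c/2$ (any $c' \in (0,c)$ works). Apply Lemma \ref{lemma-baza} to the pair $(q,c')$ to obtain $h = h(q,c')$ and $\epsilon_0 = \epsilon_0(q,c')$. Now fix $\epsilon < \epsilon_0$; by the lemma there exists $\gen_0 = \gen_0(\epsilon,q,c')$ such that
\[
|\Su_\epsilon(\gen,h,q)| \le (c'\gen)^{2\gen} \qquad \text{for all } \gen \ge \gen_0.
\]
By enlarging $\gen_0$ if necessary we may also assume $c'\gen_0 \ge 1$, which ensures that the function $\gen \mapsto (c'\gen)^{2\gen}$ is increasing on $[\gen_0,\infty)$.

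Next, decompose
\[
|\Su_\epsilon(g,h,q)| \;=\; \sum_{\gen \le g} |\Su_\epsilon(\gen,h,q)| \;\le\; M \;+\; \sum_{\gen_0 \le \gen \le g} (c'\gen)^{2\gen},
\]
where $M := \sum_{\gen < \gen_0} |\Su_\epsilon(\gen,h,q)|$ is a constant (each individual term is finite, e.g.\ by the Kahn--Markovi\'c bound $|\Su_\epsilon(\gen)| \le (\alpha\gen)^{2\gen}$, and the sum is over finitely many $\gen$). Since the summand is increasing in $\gen$ on the range considered, the sum is bounded by $g\,(c'g)^{2g}$. Dividing by $(cg)^{2g}$ gives
\[
\frac{|\Su_\epsilon(g,h,q)|}{(cg)^{2g}} \;\le\; \frac{M}{(cg)^{2g}} \;+\; g\left(\frac{c'}{c}\right)^{2g} \;=\; \frac{M}{(cg)^{2g}} \;+\; g\cdot 2^{-2g},
\]
both of which tend to $0$ as $g \to \infty$.

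There is essentially no obstacle here: the only subtlety is ensuring $(c'\gen)^{2\gen}$ is monotone on the summation range so that we can cleanly bound the sum by $g$ times its largest term, and this is handled by choosing $\gen_0$ large enough that $c'\gen_0 \ge 1$. The geometric gap $(c'/c)^{2g}$ beats the polynomial factor $g$, and the tail of small $\gen$ contributes only a bounded constant, which is negligible compared to $(cg)^{2g}$.
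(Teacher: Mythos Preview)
Your proposal is correct and is precisely the kind of routine verification the paper has in mind: the paper gives no proof at all beyond the remark ``The reader can easily verify that this implies the unboldfaced version,'' and your argument---applying Lemma~\ref{lemma-baza} with a smaller constant $c'<c$, splitting the sum $\Su_\epsilon(g,h,q)=\sum_{\gen\le g}\Su_\epsilon(\gen,h,q)$ into a bounded head and a tail dominated by $g(c'g)^{2g}$---is the standard way to carry this out.
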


Combining Corollary \ref{cor-baza} with the M\"uller-Puchta result we derive the following proposition.

\begin{proposition}\label{prop-baza-cor} 
For every $0< q \le 1$, there exists $h$ such that we have
$$
\lim_{g\to \infty} \frac{|\Su_\epsilon(g,h,q)|}{|\Su_\epsilon(g)|}=0,
$$
when $\epsilon$ is small enough.
\end{proposition}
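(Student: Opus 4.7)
The plan is to combine Corollary \ref{cor-baza} (which is an upper bound on $|\Su_\epsilon(g, h, q)|$ in terms of an arbitrarily slow super-exponential rate $(cg)^{2g}$) with a matching lower bound on the denominator $|\Su_\epsilon(g)|$ coming from M\"uller--Puchta applied to totally geodesic surfaces in $\M$. The key point is that the upper bound from Corollary \ref{cor-baza} lets us choose $c$ as small as we like, while the M\"uller--Puchta lower bound fixes an absolute constant $\beta' = \beta'(\M) > 0$; by taking $c < \beta'$ the ratio collapses.

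First I would establish the lower bound $|\Su_\epsilon(g)| \ge (\beta' g)^{2g}$ for all sufficiently large $g$, where $\beta' = \beta'(\M) > 0$. Because $\M$ contains a totally geodesic surface (the standing hypothesis of Theorem \ref{thm-main-top}), of some genus $h_0 = h_0(\M)$, all of its degree-$n$ covers are themselves totally geodesic surfaces in $\M$; their limit sets are round circles, hence $(1+\epsilon)$-quasicircles for every $\epsilon > 0$, so they all lie in $\Su_\epsilon$. By M\"uller--Puchta \cite{m-p} (see also \cite{k-m-1}) the number of such degree-$n$ covers of a closed hyperbolic surface grows like $(\beta g_n)^{2g_n}$, where $g_n$ is the genus of the cover and $\beta > 0$ depends on the base surface. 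Passing from $n$ to arbitrary large $g$ by choosing $n$ so that $g_n \le g \le g_{n+1}$, exactly as in the proof of Proposition~\ref{prop-party-2}, yields the desired
\begin{equation}\label{eq-mp-top}
|\Su_\epsilon(g)| \ge (\beta' g)^{2g}
\end{equation}
for $g$ large and some $\beta' = \beta'(\M) > 0$ independent of $\epsilon$.

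Next, given $0 < q \le 1$, I set $c = \beta'/2$ and apply Corollary \ref{cor-baza} to this pair $(q, c)$, obtaining $h = h(q)$ and $\epsilon_0 = \epsilon_0(q) > 0$ such that for every $\epsilon < \epsilon_0$,
$$
\lim_{g \to \infty} \frac{|\Su_\epsilon(g, h, q)|}{(cg)^{2g}} = 0.
$$
Combining this with \eqref{eq-mp-top} gives, for every $\epsilon < \epsilon_0$ and large $g$,
$$
\frac{|\Su_\epsilon(g, h, q)|}{|\Su_\epsilon(g)|} \le \frac{|\Su_\epsilon(g, h, q)|}{(\beta' g)^{2g}} = \frac{|\Su_\epsilon(g, h, q)|}{(cg)^{2g}} \cdot \left(\frac{c}{\beta'}\right)^{2g} = \frac{|\Su_\epsilon(g, h, q)|}{(cg)^{2g}} \cdot 2^{-2g},
$$
whose right-hand side tends to $0$ as $g \to \infty$. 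This proves the proposition.

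There is essentially no obstacle: the only non-trivial step is the M\"uller--Puchta lower bound \eqref{eq-mp-top}, but it is obtained by a verbatim repetition of the argument used in Proposition~\ref{prop-party-2}, using a totally geodesic seed surface instead of the model quasifuchsian manifold of Proposition~\ref{prop-model-1}.
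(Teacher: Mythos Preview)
Your proof is correct and follows essentially the same approach as the paper: both arguments bound the denominator from below via M\"uller--Puchta applied to covers of a totally geodesic surface, then invoke Corollary \ref{cor-baza} with $c$ strictly smaller than the M\"uller--Puchta constant. The only cosmetic difference is that the paper records the lower bound in the shifted form $(\beta(g-g_0))^{2(g-g_0)}$ and then computes $\limsup (cg)^{2g}/(\beta(g-g_0))^{2(g-g_0)}=0$ directly, whereas you absorb the shift into a smaller constant to write $(\beta' g)^{2g}$; note that Proposition \ref{prop-party-2} itself yields the shifted form, so your ``exactly as in'' needs one extra elementary line to pass to $(\beta' g)^{2g}$, but this is immediate.
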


\begin{proof}  
We are assuming (throughout the paper) that $\M$ contains a totally geodesic surface;
we denote by
$\Sigma_0\in \Su_\epsilon(g_0)$ the corresponding conjugacy class (here $g_0= \gen(\Sigma_0)$).
By the counting result by M\"uller-Puchta \cite{m-p}, there exists $\beta>0$ such that the number of different degree $n$ covers of $\Sigma_0$ is at least  $(\beta g_n)^{2g_{n}}$, where $g_n=(n(g_0-1)+1)$ is the genus of degree $n$ covering surface. 

Suppose $g$ is large, and let $n$ be an integer such that $g_n\le g \le g_{n+1}$. Then 
$$
(\beta g_n)^{2g_{n}} \le |\Su_\epsilon(g_n)|\le |\Su_\epsilon(g)|.
$$
Since $g_n\ge (g-g_0)$, we derive the estimate 
\begin{equation}\label{eq-mq-1}
(\beta (g-g_0))^{2(g-g_{0})} \le |\Su_\epsilon(g)|.
\end{equation}

Let $c<\beta$ in Corollary \ref{cor-baza}. Then  using (\ref{eq-mq-1}) and (\ref{eq-baza-unbold}), we get
$$
\limsup_{g\to \infty}  \frac{|\Su_\epsilon(g,q)|}{|\Su_\epsilon(g)|}\le \limsup_{g\to \infty}
\frac{(cg)^{2g}}{(\beta (g-g_0))^{2(g-g_{0})}}=0
$$
when $\epsilon<\epsilon_0(q,c)$. This proves the proposition.
\end{proof}

\newcommand{\muL}{\mu_{\mathcal{L}}}

Let $\mu^\Top _\epsilon$ be an $\epsilon$-topological limiting measure on $\G_2(\M)$. That is, there exists a sequence $g_n\to \infty$ such that 
$$
\mu^\Top_\epsilon=\lim\limits_{g_{n}\to \infty} \frac{1}{|\Su_\epsilon(g_n)|} \sum_{\Sigma\in \Su_\epsilon(g_n)} \mu(\Sigma)
$$
(recall that $\mu(\Sigma)$ is the pushforward of the hyperbolic area measure from $S$  to $\G_2(\M)$).
Let $\mu^\Top$ be a limit of the $\mu^\Top _\epsilon$ as $\epsilon \to 0$.  We then have
$$
\mu^\Top=\mu_{\HH}+\muL
$$
where $\mu_{\HH}$ is supported in $\HH$, and $\mu_{\mathcal{L}}$ is a multiple of the Liouville measure.
We will assume that $\muL$ is nonzero and obtain a contradiction. 

We choose  $q=|\muL|/4$ and choose $h > 0$ to satisfy Propostion \ref{prop-baza-cor} with this value of $q$.

We set
$$
\wh{\Su}_\epsilon(g_n,h,q) =\Su_\epsilon(g_n)\setminus \Su_\epsilon(g_n,h,q).
$$
Then from Proposition \ref{prop-baza-cor} we conclude that 
\begin{equation}\label{eq-tot}
\mu^\Top_\epsilon=\lim\limits_{g_{n}\to \infty} \frac{1}{|\wh{\Su}_\epsilon(g_n,h,q)|} \sum_{\Sigma \in\wh{\Su}_\epsilon(g_n,h,q)} \mu(\Sigma).
\end{equation}
Let $\Sigma\in \wh{\Su}_\epsilon(g_n,h,q)$, and denote by $f\from S\to \M$ the minimal surface representing $\Sigma$. Then
\begin{equation}\label{eq-tot-1}
1-q\le \frac{|S(\delta_0, h)|}{|S|}.
\end{equation}
The inequality (\ref{eq-tot-1}) yields the estimate
$$
1-q\le \mu(\Sigma)\big(\Ne_{\delta_0}(\HH) \big) 
$$
for every $\Sigma\in  \wh{\Su}_\epsilon(g_n,h,q)$. Combining this with (\ref{eq-tot}) gives  the estimate
$$
1-q\le \mu^\Top_\epsilon(\cl{\Ne_{\delta_0}(\HH)}  )
$$
for every $\epsilon$-topological limiting measure on $\G_2(\M)$. In turn, this shows that
\begin{equation}\label{eq-tot-2}
1-q \le \mu^\Top(\cl{\Ne_{\delta_0}(\HH)})
\end{equation}
for every topological limiting measure $\mu^\Top$ on $\G_2(\M)$. 

On the other hand, 
by the first property of $\delta$ in Propostion \ref{prop-delta},
taking our particular limiting measure $\mu^\Top = \mu_\HH + \muL$,
we have
\begin{align*}
\mu^\Top\big(\HH \setminus \cl{\Ne_{\delta_0}(\HH)} \big) 
&\ge \muL\big(\G_2(\M) \setminus \cl{\Ne_{\delta_0}(\HH)}\big)  \\
&\ge \frac12 \muL(\G_2(\M)) = 2q > q.
\end{align*}
This and \eqref{eq-tot-2} are a contradiction.

%

\section{From geometry to combinatorial data}  \label{sec-geo-comb}
In this section we describe a set of allowable combinatorial data records, such that for any $\epsilon$-nearly geodesic $f\from S\to \M$,
we can produce such a data record, and also reconstruct $f$ from the data. This reduces the upper bound on $\Su(\epsilon, \gen, h)$ to an upper bound on the number of allowable data records, and this bound is provided in Section \ref{sec-big-count}.
\subsection{The original construction and count}
We begin with a summary of the proof of the upper bound in  \cite{k-m-1}.
While there is nothing mathematically new in this subsection,
it contains lemmas and definitions that will be used in the remainder of the paper. 

In the setting of \cite{k-m-1},
we also have a closed hyperbolic 3-manifold $\M$,
and we let $\Su(\gen)$ denote the set of conjugacy classes of genus $\gen$ surface subgroups of $\pi_1(\M)$.
We say that a $\pi_1$-injective map $f\from S \to \gen$ represents $\Sigma \in \Su(\gen)$
if $\Sigma = [f_*\pi_1(S)]$ (where $[f]$ is the homotopy class of $f$),
and observe that for each $\Sigma \in \Su(\gen)$, 
there is a closed hyperbolic surface $S$ of genus $\gen$ and a pleated surface (and hence 1-Lipshitz) map $f\from S \to \M$
that represents $\Sigma$.
 
The upper bound in \cite{k-m-1} then goes as follows.
\begin{theorem} \label{thm-basic-count}
There exists $C = C(\M)$ such that $|\Su(\gen)| \le (C\gen)^{2 \gen}$.
\end{theorem}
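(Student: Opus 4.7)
The plan is to follow the outline sketched in the introduction: to each $\Sigma \in \Su(\gen)$ I will associate a piece of combinatorial data of complexity $O(\gen)$ from which $\Sigma$ can be reconstructed, and then bound the number of possible data records by $(C\gen)^{2\gen}$.

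Given $\Sigma \in \Su(\gen)$, I pick a pleated surface representative $f\colon S\to \M$, which is $1$-Lipschitz. Using standard thick-thin decomposition arguments (with care taken in the Margulis thin parts, e.g.\ by cutting along short geodesics and standardizing the resulting collars), I construct a triangulation $T$ of $S$ with $V = O(\gen)$ vertices of bounded degree and edges of bounded length in $S$, and hence in $\M$. I then choose a spanning tree $\wh\tau \subset T^{(1)}$, and supplement it with $2\gen$ further edges $e(\tau) \subset T^{(1)}$ so that $\wh\tau \cup e(\tau)\hookrightarrow S$ surjects onto $H_1(S;\Z)$; this is possible because $H_1(S;\Z)$ has rank $2\gen$, and hence carries $\pi_1(S)$ up to conjugation. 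Finally, I fix once and for all a finite open cover $\Dol$ of $\G_2(\M)$ by sets of sufficiently small diameter $\eta$, and assign to each vertex $v$ of $\tau = (\wh\tau, e(\tau))$ a label $\lab(v) \in \Dol$ containing $f(v)$ (viewing $f$ as a map to $\G_2(\M)$ via the induced tangent plane on the smooth locus of the pleated surface).

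The central claim is that the combinatorial data $(\tau, \lab)$, where $\tau$ is viewed as an abstract graph with a cyclic ordering of edges at each vertex, determines $\Sigma$. Reconstruction proceeds edge by edge: for each edge $(v_0, v_1)$ of $\tau$, we know $f(v_0), f(v_1)$ to within $\eta$ in $\G_2(\M)$; since the edge has bounded length and $\eta$ is small relative to the injectivity radius of $\M$, the relative homotopy class of the corresponding path is forced. The cyclic orderings then prescribe how to fill in the $2$-cells, and since $\wh\tau \cup e(\tau)$ already carries $\pi_1(S)$ up to conjugation, we recover $f_*\pi_1(S)$ up to conjugation, that is $\Sigma$.

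It remains to count possible data records $(\tau, \lab)$. The number of vertex labellings by $\Dol$ is only $|\Dol|^{V} = e^{O(\gen)}$, so the dominant term comes from the number of abstract graphs $\tau$ with spanning tree $\wh\tau$ and $2\gen$ distinguished edges, decorated with cyclic orderings at each vertex. The main obstacle is to show that this combinatorial count is $(C\gen)^{2\gen}$: morally, the superexponential growth in the exponent $2\gen$ corresponds precisely to the $2\gen$ distinguished edges, each contributing a choice of endpoint among the $O(\gen)$ tree vertices (plus a position in the cyclic ordering of bounded degree at that vertex), while the spanning tree structure and cyclic orderings on $\wh\tau$ alone contribute only $e^{O(\gen)}$. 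Getting this accounting precise — and verifying that the reconstruction is truly unambiguous given appropriately chosen $\eta$ and the bounded triangulation — is the technical heart of the argument.
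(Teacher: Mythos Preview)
Your proposal is correct and follows essentially the same approach as the paper (which is itself a summary of \cite{k-m-1}): pleated representative, bounded-geometry triangulation, spanning tree plus $2\gen$ distinguished edges, $\Dol$-labelling, unique reconstruction, and the combinatorial count dominated by the $(O(\gen))^{2\gen}$ choices for the distinguished edges. Two minor deviations worth noting: the paper covers $\M$ rather than $\G_2(\M)$ for this basic count (your choice works too, but a pleated surface only lifts to $\G_2(\M)$ off the bending lamination, so you must place vertices in the smooth locus), and your concern about thin parts is unnecessary here since $\pi_1$-injectivity and the $1$-Lipschitz property force $\sys(S)\ge \sys(\M)$.
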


To prove Theorem \ref{thm-basic-count},
we begin by proving the following theorem, which appears as (or follows immediately from)
Lemma 2.1 of \cite{k-m-1}. Let $r_\M$ denote the injectivity radius of $\M$.
\begin{theorem} \label{thm-bounded-geom}
There exists an $L\equiv L(r_\M)$ such that any closed hyperbolic surface $S$ of injectivity radius at least $r_\M /2$
has a geodesic triangulation such that
\begin{itemize}
\item
the edges have length at most $\min(1, r_\M/20)$,
\item
there are at most $L\gen(S)$ vertices
\item
each vertex has degree at most $L$, and 
\item
there are no more than $L$ vertices in any ball of radius 1 in $S$. 
\end{itemize}
\end{theorem}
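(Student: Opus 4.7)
The plan is to build the triangulation as the Delaunay triangulation dual to a maximal $r_0$-separated net in $S$, where $r_0 = \min(1, r_\M/20)/10$.

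First I would apply Lemma~\ref{lem-vitali} to produce a finite set $\{x_1,\dots,x_n\}\subset S$ with the $B_{r_0}(x_i)$ pairwise disjoint and $S \subset \bigcup_i B_{2r_0}(x_i)$. Because $4r_0 < r_\M/2$ and the injectivity radius of $S$ is at least $r_\M/2$, every ball of radius $4r_0$ in $S$ lifts isometrically to $\Ha$; this means the Voronoi cells $V_i = \{p\in S : d_S(p,x_i) \le d_S(p,x_j) \text{ for all } j\}$ are geodesically convex regions of diameter at most $4r_0$ that partition $S$, and the dual Delaunay complex consists of well-defined geodesic segments and geodesic polygons inside $S$.

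Next I would upgrade this cell complex to a geodesic triangulation $\mathcal{T}$. Generically (after a harmless perturbation of the $x_i$ at a scale much smaller than $r_0$) no four net points are concyclic and the Delaunay complex is already a triangulation; otherwise I would arbitrarily subdivide any polygonal Delaunay cell into triangles, which worsens the constants only by a bounded amount. I would then verify the four required properties in turn. An edge of $\mathcal{T}$ joins two net points in adjacent Voronoi cells, so its length is at most $4r_0 \le \min(1, r_\M/20)$. The disjoint balls $B_{r_0/2}(x_i)$ each have area at least a positive constant $c(r_\M)$, so Gauss--Bonnet gives $n\cdot c(r_\M) \le |S| = 4\pi(\gen(S)-1)$, hence $n \le L_1 \gen(S)$. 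A vertex $x_i$ can be adjacent only to those $x_j\in B_{4r_0}(x_i)$, and the disjoint $(r_0/2)$-balls around those $x_j$ all fit in $B_{5r_0}(x_i)$, so the degree is bounded by the packing ratio $L_2 = |B_{5r_0}|/|B_{r_0/2}|$. The same packing argument, with $B_{1+r_0/2}$ in place of $B_{5r_0}$, bounds the number of $x_i$ in any unit ball by $L_3 = |B_{1+r_0/2}|/|B_{r_0/2}|$. Setting $L = \max(L_1,L_2,L_3)$ finishes the proof.

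The one place requiring care is confirming that the Delaunay dual is literally a triangulation rather than a more general geodesic cell complex. This can fail only at cocircular configurations, and it is resolved either by a generic perturbation of $\{x_i\}$ or by arbitrarily subdividing non-triangular Delaunay faces; either option affects the constants by a universal factor but leaves the four estimates intact. The rest of the argument is a standard volume and packing count in $\Ha$, enabled by the local isometry to the hyperbolic plane guaranteed by the injectivity radius hypothesis.
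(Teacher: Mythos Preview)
The paper does not give its own proof of this theorem; it simply records that the statement ``appears as (or follows immediately from) Lemma~2.1 of \cite{k-m-1}''. Your Delaunay-triangulation-of-a-net construction is a standard and correct way to prove such a statement, and is essentially the approach taken in \cite{k-m-1}. A couple of small points: you write ``disjoint $(r_0/2)$-balls'' in the degree and unit-ball estimates, but your application of Lemma~\ref{lem-vitali} actually gives disjoint $r_0$-balls, which only improves the constants; and when you subdivide a non-triangular Delaunay face you should note that all its vertices lie on a common circumcircle of radius at most $2r_0$, so the same packing bound controls both the number of such vertices and the resulting degree increase. With those clarifications the argument goes through.
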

We will fix such an $L \equiv L(r_\M) \equiv L(\M)$ and call such a triangulation a bounded geometry triangulation. 

In Section 2.2 of \cite{k-m-1} we also prove the following:
\begin{lemma} \label{lem-old-e-tau}
Let $S$ be a closed hyperbolic surface of genus $\gen$ and let $\sigma$ be a graph embedded in $S$.
Then we can find $(\hat \tau, e(\tau))$ where 
\begin{enumerate}
\item
$\hat\tau$ is a spanning tree for $\sigma$, 
\item
$e(\tau)$ is an additional set of $2\gen$ ``distinguished'' edges of $\sigma$, and
\item
the inclusion of $\hat\tau \cup e(\tau)$ into $S$ is $\pi_1$-surjective.
\end{enumerate}
\end{lemma}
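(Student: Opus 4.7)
The plan is to construct a subgraph $B \subset \sigma$ whose complement in $S$ is an open disk and then set $\hat\tau$ to be a spanning tree of $B$ (which will automatically be a spanning tree of $\sigma$, since $B$ will contain every vertex) and $e(\tau) := B \setminus \hat\tau$. I work under the tacit hypothesis, satisfied in the intended application (where $\sigma$ is the 1-skeleton of the bounded-geometry triangulation of Theorem~\ref{thm-bounded-geom}), that $\sigma$ together with a set of $F$ 2-cells gives a CW structure on $S$; in particular $V - E + F = 2 - 2\gen$, where $V$ and $E$ are the numbers of vertices and edges of $\sigma$.

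First I would form the dual graph $\sigma^*$ (one vertex per 2-cell, one edge per edge of $\sigma$, since each edge of $\sigma$ is adjacent to exactly two faces) and pick a spanning tree $\hat\tau^*$ of $\sigma^*$, which has $F - 1$ edges. I then let $B \subset \sigma$ be the union of the edges dual to $\sigma^* \setminus \hat\tau^*$ together with their endpoints, so that $|B| = E - F + 1$. The classical fundamental-polygon construction --- provable by induction on $F$, adding one face at a time in an order compatible with $\hat\tau^*$ and attaching each new face to the running disk along its unique $\hat\tau^*$-predecessor edge --- shows that cutting $S$ along $B$ yields a single polygonal disk $D$ whose boundary gets identified in pairs to reconstruct $S$. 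In particular $S \setminus B \cong \mathrm{int}(D)$.

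This has three consequences: $B$ is connected (as the image of $\partial D \cong S^1$), the inclusion $B \hookrightarrow S$ is $\pi_1$-surjective (its complement being simply connected), and $B$ contains every vertex of $\sigma$ --- otherwise the link of an omitted vertex $v$ would supply a cycle of $\sigma^*$-edges all lying in $\hat\tau^*$, contradicting that $\hat\tau^*$ is a tree. Consequently $B$ has $V$ vertices and $E - F + 1$ edges, hence first Betti number $|B| - V + 1 = 2 - \chi(S) = 2\gen$. Any spanning tree $\hat\tau$ of $B$ is automatically a spanning tree of $\sigma$, and $e(\tau) := B \setminus \hat\tau$ contains exactly $2\gen$ edges, with $\hat\tau \cup e(\tau) = B$ being $\pi_1$-surjective, as required.

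The main technical step is the fundamental-polygon unfolding in the second paragraph; once that is in hand, everything else reduces to elementary Euler characteristic bookkeeping together with the observation that a spanning tree of a subgraph containing every vertex of the ambient graph is already a spanning tree of the ambient graph.
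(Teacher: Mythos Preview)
Your argument is correct (under the CW hypothesis you add, which is indeed necessary---the lemma as literally stated fails for, say, $\sigma$ a tree---and which holds in the intended application). It takes a different route from the one the paper borrows from \cite{k-m-1}. You work on the dual side: a spanning tree $\hat\tau^*$ of the dual graph determines a cut graph $B\subset\sigma$ whose complement is a single disk, after which any spanning tree of $B$ plus the $2\gen$ remaining edges finishes the job. The paper instead fixes an arbitrary spanning tree $\hat\tau$ of $\sigma$ first, observes that the cycles coming from the edges of $\sigma\setminus\hat\tau$ span $H_1(S;\R)\cong\R^{2\gen}$, extracts by linear algebra $2\gen$ of them that still span, and then invokes the remark recorded just after the lemma that $H_1$-surjectivity of an embedded graph in a closed surface already forces $\pi_1$-surjectivity. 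Your construction is more geometric and delivers the bonus that $S\setminus(\hat\tau\cup e(\tau))$ is a \emph{single} polygon; the paper's approach is more flexible because the spanning tree can be prescribed in advance---a freedom that is genuinely exploited in the proof of Lemma~\ref{lemma-tri}, where $\hat\tau$ is chosen to span a carefully constructed intermediate graph $T_1$ before the distinguished edges are selected.
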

We call $\utau\equiv(\hat\tau, e(\tau))$ an \emph{effective graph pair} for $S$, 
and let $\tau = \hat\tau \cup e(\tau)$. 
We observe that Property 3 is implied by the inclusion being surjective on $H_1$, and implies that $S\setminus \tau$ is a union of disjoint polygons.

We can now abstract out the properties of an effective graph pair of a surface of genus $\gen$.
We say that $\utau=(\wh{\tau},e(\tau))$ is an (abstract) graph pair if $\wh{\tau}$ is an (abstract) tree, 
and $e(\tau)$ an additional set of edges whose endpoints are vertices of $\wh{\tau}$. 
We call $\wh\tau$ the \emph{spanning tree} of $\utau$, and $e(\tau)$ the \emph{distinguished edges} of $\utau$, 
and let $\tau = \wh\tau \cup e(\tau)$.

\begin{definition} We say that a  graph pair $\utau=(\wh{\tau},e(\tau))$ is a $\gen$-polygonalization if:
\begin{enumerate}
\item the set $e(\tau)$ has exactly $2\gen$ elements,
 
\item the graph $\tau$ is equipped with a cyclic ordering of  edges round each vertex,
 
\item the ribbon graph $R(\tau)$ is a surface of genus $\gen$.
\end{enumerate}
We furthermore say that a $\gen$-polygonalization $\utau$ is $L$-bounded if  $\tau$ has at most $L\gen$ vertices, and each vertex has degree at most $L$. 
\end{definition}

\begin{remark} \label{rem-ribbon}
Recall that  the ribbon graph $R(\tau)$  is obtained by thickening the edges of the graph $\tau$. In particular, $R(\tau)$ is a surface with (polygonal) boundary; we can then add polygons to this boundary to form a closed surface.  
So we can reconstruct our surface (with an effective graph pair) from the abstract $\gen$-polygonalization. 
Moreover,
if a $\gen$-polygonalization is embedded into a closed surface of genus $\gen$ (so as to preserve the cyclic ordering around the vertices), then the complementary components are polygons, canonically associated to the boundary components of $R(\tau)$. 
\end{remark}

In \cite{k-m-1} we then choose a cover $\Dol$ of $\M$ (with small open balls), and use it to turn the effective graph pair into combinatorial data. Accordingly, given $\Dol$,
we have the following definition. 
\begin{definition}  We say that the pair  $(\utau,\lab)$ is a $\Dol$-valued $\gen$-polygonalization if 
$\utau$ is a $\gen$-polygonalization, and $\lab\from V(\tau)\to \Dol$ a map (colouring). Here $V(\tau)$ is the set of vertices of $\tau$.
\end{definition}
We denote the set of all $\Dol$-valued $L(\M)$-bounded $\gen$-polygonalizations by $\Tr^\gen_\Dol$ (where $L(\M)$ is given by Theorem \ref{thm-bounded-geom}).


%
%
%
%
%
%
%
%
%

\begin{definition}  Suppose that $\Sigma \in \Su(\gen)$. 
We say that $\Sigma$ is compatible with  $(\tau,\lab)\in \Tr^\gen_\Dol$
if there exists a 2-Lipschitz map $f\from S \to \M$ representing $\Sigma$ and an associated embedding $b\from \tau\to S$
such that $b(\tau)$ has bounded geometry
and $f(b(v)) \in \lab(v)$ for all $v\in V(\tau)\subset S$.
\end{definition}

Given any $\Sigma \in \Su(\gen)$,
we can find a pleated surface map $f\from S\to \M$ representing $\Sigma$,
find a bounded geometry triangulation of $S$ by Theorem \ref{thm-bounded-geom},
and then find an effective graph pair $\utau$ for $S$ by applying Lemma \ref{lem-old-e-tau} to this triangulation.
We can then define a labelling $\lab\from V(\tau)\to \Dol$ by letting $\lab(v)$ be any $D \in \Dol$ that contains $f(v)$;
the result is an element of $\Tr^\gen_\Dol$ that is compatible with $\Sigma$.

We then observe the following, which was essentially shown in Section 2 of  \cite{k-m-1}.
\begin{proposition}\label{prop-tri-1} Suppose 
$$
\max_{D\in \Dol}\,\, \operatorname{diam} D \le \frac{\inj_{\M}}{10}. 
$$
Then 
each $(\tau,\lab)\in \Tr^\gen_\Dol$ is compatible with at most one $\Sigma \in \Su_\epsilon(\gen)$.
\end{proposition}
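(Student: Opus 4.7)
The plan is to argue that two classes $\Sigma_1, \Sigma_2 \in \Su_\epsilon(\gen)$ compatible with the same datum $(\tau,\lab)$ must coincide, by exhibiting a homotopy between their representatives. Let $f_i \from S_i \to \M$ and $b_i \from \tau \to S_i$ witness the compatibility of $\Sigma_i$, so that each $f_i$ is $2$-Lipschitz, $b_i(\tau) \subset S_i$ has bounded geometry, and $f_i(b_i(v)) \in \lab(v)$ for every vertex $v$.

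First I would reconstruct a homeomorphism $\phi \from S_1 \to S_2$ with $\phi\circ b_1 = b_2$. By Remark \ref{rem-ribbon}, the ribbon graph $R(\tau)$ together with its canonical polygonal filling closes up to a closed surface of genus $\gen$ determined by the abstract data $\utau$. Since each $b_i$ embeds $\tau$ into the oriented surface $S_i$ realizing the prescribed cyclic order at every vertex, and the complementary regions $S_i\setminus b_i(\tau)$ are polygons by the $\gen$-polygonalization property, both pairs $(S_i, b_i(\tau))$ are homeomorphic to this canonical model, producing $\phi$ with $\phi \circ b_1 = b_2$. The task then reduces to showing $f_1 \simeq f_2 \circ \phi$ as maps $S_1 \to \M$.

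Next I would promote this to a homotopy on the $1$-skeleton $b_1(\tau)$. At each vertex $v$, the two points $f_1(b_1(v))$ and $f_2(b_2(v))$ both lie in the contractible disk $\lab(v)$, so a short arc $\sigma_v \subset \lab(v)$ joins them, of length at most $\operatorname{diam}(\lab(v)) \le \inj_\M/10$. For each edge $e = (v,w)$ of $\tau$, the images $f_1(b_1(e))$ and $f_2(b_2(e))$ each have length at most $2\cdot(\inj_\M/20) = \inj_\M/10$, since the bounded-geometry triangulation has edges of length at most $\inj_\M/20$ and the $f_i$ are $2$-Lipschitz. Concatenating these two edge paths with $\sigma_v$ and $\sigma_w$ yields a loop in $\M$ of total length at most $4\cdot\inj_\M/10 = 2\inj_\M/5 < 2\,\inj_\M$. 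Any loop shorter than $2\,\inj_\M$ is null-homotopic, because a lift to ${\Ho}$ has endpoints within distance less than $2\,\inj_\M$, while the minimal translation length of a nontrivial deck transformation is at least $2\,\inj_\M$, forcing the lift to close. Hence the two edge paths are homotopic rel endpoints, and homotoping first vertex by vertex along the $\sigma_v$'s and then edge by edge produces a map homotopic to $f_1$ that agrees with $f_2\circ\phi$ on all of $b_1(\tau)$.

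Finally, the components of $S_1 \setminus b_1(\tau)$ are disks by the polygonalization property, and on each such disk the two maps agree on the boundary and take values in the aspherical manifold $\M$. Since $\pi_2(\M)=0$, the two disk extensions are homotopic rel boundary, so assembling these homotopies yields $f_1 \simeq f_2\circ\phi$, whence $\Sigma_1 = \Sigma_2$. The key technical point to execute carefully is the short-loop calculation in the third paragraph, where the hypothesis $\operatorname{diam} D \le \inj_\M/10$ is used together with the bounded-geometry edge length to force the loop length below the Margulis threshold $2\,\inj_\M$; everything else is ribbon-graph reconstruction plus standard obstruction theory.
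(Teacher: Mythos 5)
Your proof is correct and follows essentially the same route as the paper's: reconstruct the homeomorphism $\phi$ with $\phi\circ b_1=b_2$ from the ribbon-graph structure, then show $f_1\simeq f_2\circ\phi$. The only difference is that where the paper cites Lemma 2.4 of Masters \cite{masters} for the homotopy step, you prove it directly via the short-loop argument on the $1$-skeleton plus asphericity of $\M$ on the $2$-cells, which is exactly the content of that cited lemma.
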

\begin{remark} Although each $(\tau,\lab)\in \Tr^\gen_\Dol$ is compatible with at most one  $\Sigma$, a given $\Sigma$ can be compatible with many pairs $(\tau,\lab)\in \Tr^\gen_\Dol$.
\end{remark} 

\begin{proof}[Proof of Proposition \ref{prop-tri-1}]
Suppose, for $i=1,2$, that $f_i\from S_i \to \M$ is an $\epsilon$-nearly geodesic minimal  map,  and   $(\tau,\lab)$  a $\Dol$-polygonalization which is compatible with each $f_i$, and let $b_i\from  \tau \to S_i$, $i = 1, 2$ be an associated embedding (for the compatibility with $f_i$).  
As observed in Remark \ref{rem-ribbon},
the complementary components to the images of the $b_i$ are polygons, 
so we can find a homeomorphism $h\from S_1\to S_2$ such that $h\circ b_1 = b_2$.
Moreover, 
for every $v\in V(\tau)$,
the points $f_1(b_1(v))$ and $f_2(h(b_1(v))$ belong to a common $D \in \Dol$. 
Then by  Lemma 2.4 in 
\cite{masters}, the maps $f_1$ and $f_2   \circ h$ are homotopic%
.
Thus, they determine the same conjugacy class $\Sigma\in \Su_\epsilon(\gen)$.
\end{proof}
Since every $\Sigma \in \Su(\gen)$ is compatible with at least one $(\utau, \lab)$,
and each $(\utau, \lab)$ is compatible with at most one $\Sigma$, it follows that 
\begin{equation}
|\Su(\gen)| \le |\Tr^\gen_\Dol|.
\end{equation}
A simple combinatorial argument then shows that $|\Tr^\gen_\Dol| \le (C_\M \gen)^{2 \gen}$, which completes the proof of Theorem \ref{thm-basic-count}.

\subsection{The new ingredients}
We now introduce the refinements needed to prove Lemma \ref{lemma-baza}.
First, 
we must choose a cover $\Dol$,
not of $\M$,
but of $\G_2(\M)$.
Moreover,
it must be a cover with wafer-thin disks, which we now make precise.

Let $(y,\Pi)\in \G_2(\M)$.  We define the set $\Hull_{r,\epsilon}(y,\Pi)\subset \G_2(\M)$,  called the $(r,\epsilon)$-hull of $(y,\Pi)$, by letting
$$
\Hull_{r,\epsilon}(y,\Pi)=\bigcup_{f} f\big(B_r(x)\big),
$$
where $(f, x)$ varies over all $\epsilon$-nearly geodesic maps $f\from S\to \M$ with $x\in S$ and $f(x)=(y,\Pi)$. 
We define the $(r,\epsilon)$-hull of a subset $D\subset \G_2(\M)$ by
$$
\Hull_{r,\epsilon}(D)=\bigcup_{(y,\Pi)\in D} \Hull_{r,\epsilon}(y,\Pi).
$$

\begin{definition}\label{definition-akses} We say that  sets  $D_1,D_2 \subset \G_2(\M)$ are mutually  $\epsilon$-accessible if
$\Hull_{1,\epsilon}(D_1)\cap D_2\neq \emptyset$.
\end{definition}

The existence of our desired cover $\Dol$ is summarized in the following, which we will prove in Section \ref{section-Dol}. 
\begin{lemma} \label{lemma-Dol}
There exists $C\equiv C_{\ref{lemma-Dol}}(r_\M) \in \N$ such that
for all $c > 0$ there exists $h \in \N$ such that for all $\delta > 0$, 
there exist $\epsilon > 0$
and a finite open covering $\Dol$ of $\G_2(\M)$,  with the following properties for all $D \in \Dol$:
\begin{enumerate}
\item 
$\text{diam}_\M\big(\xi(D)\big)\le \frac{\inj_{\M}}{10}$, 
\item \label{it-mapped-C}
there are at most $C$ elements of $\Dol$ that are mutually $\epsilon$-accessible with $D$,  
\item \label{it-c-small}
if $D\not\subset \Ne_{\delta}(\hh)$
and $f\from S\to \M$ is an $\epsilon$-nearly geodesic minimal map,
then  $\mu_f(\cl{\Hull_{1,\epsilon}(D)}) \le c$.
\end{enumerate}
\end{lemma}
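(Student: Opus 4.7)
The plan is to cover $\G_2(\M)$ by \emph{wafer-thin disks}, each an $\eta$-thickening of a horizontal 2-disk. For $p=(y,\Pi)\in\G_2(\M)$ and $\rho>0$, let $\wt D(p,\rho)\subset\G_2(\M)$ denote the lift of the totally geodesic disk of radius $\rho$ at $y$ tangent to $\Pi$, obtained by parallel-transporting $\Pi$ along radii; by the construction of $\pgn$ in Section~\ref{sec-prelim}, $\wt D(p,\rho)$ is tangent to the horizontal distribution of the connection on $\G_2(\M)$. Let $D(p,\rho,\eta)$ be its open $\eta$-neighborhood. Fix $\rho<\inj_\M/20$ once and for all (this yields Property~(1)), and for each $\eta>0$ cover $\G_2(\M)$ by finitely many wafers $\Dol=\{D(p_i,\rho,\eta)\}_i$ with the centers $p_i$ forming a suitable $\eta$-net.

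\textbf{Property~(2).} Suppose $D_0,D_1\in\Dol$ are mutually $\epsilon$-accessible via an $\epsilon$-nearly geodesic $f\from S\to\M$ and points $a_0,a_1\in S$ with $d_S(a_0,a_1)\le 1$ and $f(a_i)\in D_i$. Choosing $\epsilon<\epsilon(\eta)$ in Proposition~\ref{prop-near} and lifting to universal covers, $f(B_1(a_0))$ lies within $\eta$ of the horizontal 2-disk $\wt D(q_0,1+\rho)$ for an appropriate lift $q_0$ of the center of $D_0$ near $f(a_0)$. Hence the center $p_1$ lies in a $3\eta$-tube about this 2-disk. As the $p_i$ form an $\eta$-net, only $O((1+\rho)^2/\rho^2)$ wafers meet the tube, bounding $C$ independently of $\eta$.

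\textbf{Property~(3) and the crux.} This is the main difficulty. By Proposition~\ref{prop-near}, $\Hull_{1,\epsilon}(D)$ is contained in the $2\eta$-neighborhood of the horizontal 2-disk $\wt D(p,1+\rho)$ extending $D$. I argue by contradiction: if the bound fails, there exist $c>0$ and sequences $h_n\to\infty$, $\delta_n\to 0$, $\eta_n\to 0$, $\epsilon_n\to 0$, wafers $D_n$ with centers $p_n$ at distance $>\delta_n$ from $\HH_{h_n}$, and $\epsilon_n$-nearly geodesic minimal maps $f_n\from S_n\to\M$ with $\mu_{f_n}(\Hull_{1,\epsilon_n}(D_n))>c$. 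Then a subsurface $A_n\subset S_n$ of hyperbolic area $\ge c|S_n|$ maps into an $O(\eta_n)$-neighborhood of the totally geodesic plane $P_n$ containing $\wt D(p_n,\rho)$.

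\textbf{Main obstacle.} The core of the argument is to promote this local near-alignment to a global rigidity statement. Using the minimality and $\pi_1$-injectivity of $f_n$ together with the near-isometric property of Proposition~\ref{prop-near} on $A_n$, the restriction $f_n|_{A_n}$ can be approximated up to homotopy by a locally isometric immersion into $P_n$, in the spirit of Lemma~2.4 of \cite{masters}. This forces $P_n$ to descend to a closed totally geodesic subsurface $Q_n\subset\M$ whose genus is bounded in terms of $c$ alone via Gauss-Bonnet applied to the plaque of area $\ge c|S_n|$ mapping onto $Q_n$ with bounded multiplicity. Choosing $h$ larger than this $c$-dependent bound places $Q_n\subset\hh$, hence $p_n\in\Ne_{\delta_n}(\hh)$ for large $n$, contradicting the choice of $D_n$. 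The hard point is executing this rigidity uniformly in $f$ and disposing of the case in which $P_n$ does not close up at all; for the latter one passes to a weak-$*$ limit of the $\mu_{f_n}$ and invokes Ratner's theorem to extract a totally geodesic component supported at $p_\infty=\lim p_n$, contradicting $p_n$ being a positive distance from $\HH$ in the limit.
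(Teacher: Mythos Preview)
Your construction of wafer-thin disks and the handling of Properties~(1) and~(2) are in the same spirit as the paper's (the paper uses balls in a rescaled ``$\eta$-metric'' on $\G_2(\M)$, which makes the packing count for Property~(2) more transparent, but your version is workable with care). The real problem is Property~(3), where both branches of your argument break down.

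First, the ``rigidity'' argument is not valid. The set $A_n=f_n^{-1}(\Hull_{1,\epsilon_n}(D_n))$ is merely an open subset of $S_n$; it need not be a subsurface with controlled topology, and its image lies near a \emph{disk} of radius about $2$, not near a full plane. Nothing in Proposition~\ref{prop-near} or in \cite{masters} lets you promote ``large-area preimage near a small disk'' to ``the ambient plane closes up to a bounded-genus surface''; many disjoint patches of $S_n$ can all land on the same disk without $P_n$ being invariant under anything. Second, your contradiction hypothesis has the quantifiers wrong: you let $h_n\to\infty$ and $\delta_n\to 0$, but then from $d(p_n,\HH_{h_n})>\delta_n$ you can conclude nothing about $p_\infty$. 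Indeed, when $\M$ is arithmetic $\HH$ is dense in $\G_2(\M)$, so ``$p_\infty$ at positive distance from $\HH$'' is never available. The paper's proof fixes $h=\lceil 2/c\rceil$ \emph{before} the contradiction, keeps $\delta$ fixed, and lets only $\eta_n,\epsilon_n\to 0$; then $p_\infty\notin\hh$ (a closed set), the weak-$*$ limit $\mu$ is $\PSLR$-invariant, and Ratner gives $\mu(P_2(p_\infty))=\mu_{\hh}(P)+\mu_{\hhh}(P)+\mu_{\mathcal L}(P)\le 0+|P|/(4\pi h)+0<2/h\le c$, the middle bound coming from the fact that $P$ meets at most one surface in $\hhh$ and occupies at most that fraction of its area. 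The choice of $h$ is precisely what controls the $\hhh$ contribution; without it the argument cannot close.
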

A finite covering $\Dol$ satisfying the conditions of the lemma is called a $(c,\delta)$-covering. 
The crucial new property is Property \ref{it-c-small};
when $\hh = \HH$,
this follows from $D$ being a wafer-thin neighborhood of some $P_r(p)$ that is disjoint from $\HH$. Here $P_r(p)\subset \G_2(\M)$ denotes the geodesic disc 
of radius $r$ that is tangent to the pointed plane $p$ at $0$.

We can naturally relate the properties of a cover $\Dol$ (such as the one given in Lemma \ref{lemma-Dol}) to properties of a compatible $\Dol$-labeled triangulation.
\begin{proposition}\label{prop-c} 
Suppose $f\from S\to \M$ is an $\epsilon$-nearly geodesic minimal map,
$\tau$ is a bounded geometry graph on $S$ (e.g.\ a subgraph of a bounded geometry triangulation),
$\Dol$ is a cover of $\G_2(\M)$, and $\lab\from V(\tau)\to \Dol$ satisfies $f(v) \in \lab(v)$ for all $v \in V(\tau)$. 
Then 
\begin{enumerate}
\item \label{it-supp-mut}
if $v_1$ and $v_2$ are two vertices connected by an edge from $\tau$,
then  $\lab(v_1), \lab(v_2) \in \Dol$ are mutually $\epsilon$-accessible.
\item \label{it-mapped}
for any $D \in \Dol$, we have
$$
|V_D|  \le 4\pi L \gen \mu_f(\Hull_{1, \epsilon}(D)),
$$
where $V_D$ is the set of vertices of $\tau$ that are mapped by $\lab$ to $D$. 
\end{enumerate}
\end{proposition}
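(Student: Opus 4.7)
The plan is to prove both parts directly from the definitions and the bounded geometry of $\tau$; neither part requires substantial new ideas beyond the setup already in place.

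For Part~(\ref{it-supp-mut}), I would start from the observation that, by the bounded geometry of $\tau$, any edge of $\tau$ has hyperbolic length at most $1$. Hence if $v_1$ and $v_2$ are joined by an edge, then $v_2 \in \overline{B_1(v_1)}$. Since $f$ is an $\epsilon$-nearly geodesic map and $f(v_1) \in \lab(v_1)$, the definition of $\Hull_{1,\epsilon}$ (taking the specific pair $(f, v_1)$) gives
$$f(\overline{B_1(v_1)}) \subset \Hull_{1,\epsilon}(f(v_1)) \subset \Hull_{1,\epsilon}(\lab(v_1)),$$
so $f(v_2) \in \Hull_{1,\epsilon}(\lab(v_1))$. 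Combining this with $f(v_2) \in \lab(v_2)$ shows that $\Hull_{1,\epsilon}(\lab(v_1)) \cap \lab(v_2)$ is nonempty, i.e.\ $\lab(v_1)$ and $\lab(v_2)$ are mutually $\epsilon$-accessible, as required by Definition~\ref{definition-akses}.

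For Part~(\ref{it-mapped}), the strategy is to exploit two facts: first, that each $v \in V_D$ contributes a full hyperbolic $1$-ball $B_1(v)$ to $f^{-1}(\Hull_{1,\epsilon}(D))$; second, that by Theorem~\ref{thm-bounded-geom} no hyperbolic ball of radius $1$ on $S$ contains more than $L$ vertices of $\tau$. The first fact is immediate: for $x \in B_1(v)$, $f(x) \in f(B_1(v)) \subset \Hull_{1,\epsilon}(f(v)) \subset \Hull_{1,\epsilon}(D)$. The second, rephrased, says that $\sum_{v \in V(\tau)} \chi_{B_1(v)} \le L$ pointwise on $S$. Restricting this pointwise bound to the support $\bigcup_{v \in V_D} B_1(v) \subset f^{-1}(\Hull_{1,\epsilon}(D))$ and integrating over $S$, I obtain
$$|V_D| \cdot |B_1| = \sum_{v \in V_D} |B_1(v)| \le L \cdot |f^{-1}(\Hull_{1,\epsilon}(D))| = L|S| \, \mu_f(\Hull_{1,\epsilon}(D)),$$
using that $\mu_f = f_* m_S$ with $m_S$ the normalized area measure on $S$. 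Finally, $|B_1| = 2\pi(\cosh 1 - 1) > 1$ and $|S| = 4\pi(\gen - 1) \le 4\pi \gen$ together give the stated bound $|V_D| \le 4\pi L \gen \, \mu_f(\Hull_{1,\epsilon}(D))$.

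There is no real obstacle here; the proposition is essentially a bookkeeping lemma that packages the geometric hypotheses (bounded geometry of $\tau$, near-isometric behavior of $f$ captured by $\Hull_{1,\epsilon}$) into combinatorial constraints ready for the counting argument in Section~\ref{sec-big-count}. The only step warranting care is the conversion $|f^{-1}(A)| = |S| \, \mu_f(A)$, which requires fixing the normalization convention for $\mu_f$ (namely, as the pushforward of the probability measure $m_S$, consistent with the definition of $\mu(\Sigma)$ from equation \eqref{eq-1}).
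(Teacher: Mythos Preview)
Your proof is correct and follows essentially the same approach as the paper's: for Part~(1) you use $d_S(v_1,v_2)\le 1$ and the definition of the hull exactly as the paper does, and for Part~(2) you use the bounded-overlap property of the radius-1 balls (at most $L$ vertices in any such ball) together with the containment $\bigcup_{v\in V_D} B_1(v)\subset f^{-1}(\Hull_{1,\epsilon}(D))$, just as in the paper. The only cosmetic difference is that you write the intermediate step as $|V_D|\cdot|B_1|=\sum_{v\in V_D}|B_1(v)|$ (an equality that presumes the radius-1 balls are embedded), whereas the paper passes directly from the overlap bound to $|V_D|\le L|X_D|$; both versions are tacitly using $|B_1(v)|\ge 1$, which holds here since the injectivity radius of $S$ is bounded below in terms of $r_\M$ and the constant $L$ already depends on $r_\M$.
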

\begin{proof} 
For Statement \ref{it-supp-mut},
let $D_i = \lab(v_i)$,  so $f(v_i) \in D_i$, for $i = 1, 2$.
Since $\dis_S(v_1,v_2)\le 1$ it follows that $\Hull_{1,\epsilon}(D_1)\cap D_2\neq \emptyset$. Therefore $D_1$ and $D_2$ are mutually $\epsilon$-accessible.

Let us now prove Statement \ref{it-mapped} for a given $D \in \Dol$. 
We have
\begin{equation}\label{eq-spir}
\bigcup_{v\in V_D}  f\big(B_{1}(v)\big) \, \, \subset\,\, \Hull_{1, \epsilon}(D).
\end{equation}
Set
$$
X_D=\bigcup_{v\in V_{D}(\tau)}  B_{1}(v).  
$$
Then \eqref{eq-spir} implies
\begin{equation}\label{eq-spir-1}
|X_D|\le \mu_f(\Hull_{1, \epsilon}(D))|S|.
\end{equation}

On the other hand, we know the number of vertices of $\tau$ in any ball of radius $1$ is at most $L$.
Therefore every point $x\in X_D$ belongs to at most $L$  different balls $B_1(v)$, $v\in V_D$. We conclude 
$$
|V_D|\le L|X_D|\le Lc|S| \le 4\pi cL\gen,
$$
where $c := \mu_f(\Hull_{1, \epsilon}(D))$.
\end{proof}

The second new ingredient needed to prove Lemma \ref{lemma-baza} is the following refinement of Lemma \ref{lem-old-e-tau}, 
which we will prove in Section \ref{sec-proof-tri}.
\begin{lemma}\label{lemma-tri} 
For every $h \in \N$ and $0<q\le 1$ there exists $\delta\equiv\delta_{\ref{lemma-tri}}(h, q)>0$ and $\epsilon_0 > 0$  with the following properties.
Suppose $f\from S \to \M$ is the minimal map representing $\Sigma \in \Su_\epsilon(g, h, q)$.
Then we can find an effective graph pair $(\hat\tau, e(\tau))$ for $S$ 
such that $q\gen$ of the edges in $e(\tau)$ lie outside of $S(\delta, h)$. 
\end{lemma}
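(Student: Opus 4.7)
\emph{Overall plan.} The plan is to refine the construction of Lemma~\ref{lem-old-e-tau} by selecting the $2\gen$ distinguished edges preferentially from a region $R \subset S$ kept well away from $S(\delta, h)$. I would choose $\delta = \delta_1(h)$ (as in Remark~\ref{remark-prva}) and $\epsilon_0 < \epsilon_0(h)$. Given $\Sigma \in \Su_\epsilon(g, h, q)$ with minimal representative $f \from S \to \M$, let $R := S \setminus S(\delta_0(h), h)$, so $|R| \ge q|S|$ by hypothesis. By Proposition~\ref{prop-prva} every point of $R$ lies at $S$-distance at least $10$ from $S(\delta, h)$. Fixing a bounded geometry triangulation $T$ of $S$ (Theorem~\ref{thm-bounded-geom}), whose edges have length at most $1$, every edge of $T$ with both endpoints in $R$ is automatically contained in $S \setminus S(\delta, h)$.

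\emph{Reduction to a homological inequality.} Let $\sigma = T^{(1)}$ and let $\sigma_R \subset \sigma$ be the subgraph supported on vertices and edges in $R$. Take a spanning forest $F_R$ of $\sigma_R$ and extend it to a spanning tree $\hat\tau$ of $\sigma$. Each non-tree edge $e \in \sigma_R \setminus F_R$ determines a cycle $c_e \subset \hat\tau \cup \{e\}$, and the collection of classes $\{[c_e]\}$ spans precisely $\mathrm{Im}(H_1(R) \to H_1(S))$. To extract $q\gen$ independent distinguished edges inside $R$---and then complete to $2\gen$ distinguished edges by adjoining further non-tree edges of $\sigma$ whose classes extend ours to a basis of $H_1(S)$---it therefore suffices to establish
\begin{equation} \label{eq-plan-tri-bound}
\dim \mathrm{Im}\bigl(H_1(R) \to H_1(S)\bigr) \ge q\gen.
\end{equation}

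\emph{Proof of \eqref{eq-plan-tri-bound}.} Set $Y = S \setminus \mathrm{int}(R)$, write $g_R, g_Y$ for the total genera and $c_Y$ for the number of components of $Y$. The long exact sequence of the pair $(S, R)$ combined with Lefschetz duality $H_2(S, R) \cong H_2(Y, \partial Y) \cong \Z^{c_Y}$, together with the bookkeeping identity $g_R + g_Y + b - c_R - c_Y = \gen - 1$ coming from $\chi(S) = \chi(R) + \chi(Y)$, yields the clean identity
$$\dim \mathrm{Im}\bigl(H_1(R) \to H_1(S)\bigr) = \gen + g_R - g_Y.$$
Since $|Y| \le (1-q)|S| = 4\pi(1-q)(\gen - 1)$, a Gauss--Bonnet analysis componentwise on $Y$---after smoothing $\partial Y$ via Proposition~\ref{prop-druga} to obtain the boundary bound $|\partial Y| \le 25(|S| - |S(\delta_1, h)|)$, and using the near-isometry of $f$ to control the geodesic curvature of equidistant curves to $\HH_h$---controls $-\chi(Y)$ in terms of $|Y|$ up to lower-order corrections, yielding the desired inequality $g_Y - g_R \le (1-q)\gen$ and hence \eqref{eq-plan-tri-bound}.

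\emph{Main obstacle.} The hardest step will be the Gauss--Bonnet analysis underlying the bound on $g_Y$. A naive estimate gives $g_Y \le |Y|/(4\pi) + c_Y$, which is too weak since $c_Y$ need not be $o(\gen)$. The key observation is that only positive-genus components of $Y$ contribute to $g_Y$, and by the near-isometry of $f$ each such component is approximately a cover of a component of $\HH_h$ of area at least $4\pi(h_0 - 1)$, where $h_0 \ge 2$ is the minimum genus of a component of $\HH_h$. This structural information constrains both the number of contributing components and their combined genus. Combining it with Proposition~\ref{prop-druga} should yield $g_Y \le g_R + (1-q)\gen + O(1)$, closing the argument for all sufficiently large $\gen$.
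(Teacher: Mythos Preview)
Your approach is exactly dual to the paper's: via Poincar\'e--Lefschetz duality the images of $H_1(R)$ and $H_1(Y)$ in $H_1(S)$ are complementary, so your target $\inbetti{R}{S} \ge q\gen$ is equivalent to the paper's bound $\inbetti{Y}{S} \le (2-q)\gen$ (with your $Y$ playing the role of the paper's $R$). Your identity $\inbetti{R}{S} = \gen + g_R - g_Y$ is correct, and the reduction to $g_Y - g_R \le (1-q)\gen$ is the right target.

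The gap is in the final step. Proposition~\ref{prop-druga} bounds $|\partial Y|$ by $25\bigl(|S| - |S(\delta_1,h)|\bigr)$, but the hypothesis $|S(\delta_0,h)| \le (1-q)|S|$ gives only a \emph{lower} bound $|S| - |S(\delta_1,h)| \ge q|S|$ on this quantity; the resulting bound on $|\partial Y|$ is of order $|S|$ and therefore useless for controlling $c_Y$. Your proposed repair---that each positive-genus component of $Y$ is approximately a cover of a component of $\HH_h$ and hence has area at least $4\pi(h_0-1)$---is false: these are subsurfaces \emph{with boundary}, and nothing prevents, say, a genus-one component whose convex core has area $2\pi$. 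Even granting a uniform lower area bound of this sort, the resulting estimate is $g_Y \lesssim |Y|/(2\pi) \approx 2(1-q)\gen$, off by a factor of two from what you need, with no compensating lower bound on $g_R$ available.

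What you are missing is the paper's Proposition~\ref{prop-druga-new}: a pigeonhole over a long nested chain $S(\delta_{2n}) \subset \cdots \subset S(\delta_0)$ locates a level at which the boundary of the sandwiched region has length at most $\alpha|S|$, for $\alpha$ as small as one likes. This is precisely what kills the component term (via $\numcomp(\partial Y) \le |\partial Y|/\sys(S)$), and it forces $\delta$ to depend on $q$ (through $n \sim 1/\alpha \sim 1/q$), not only on $h$ as you propose. With this ingredient in hand, either your lower bound on $\inbetti{R}{S}$ or the paper's upper bound on $\inbetti{Y}{S}$ follows immediately from Proposition~\ref{prop-bound}; the paper takes the latter route and thereby sidesteps the $g_R$, $g_Y$ bookkeeping altogether.
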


\subsection{Proportionately compatible polygonalizations}
We now refine  our notion of a compatible $\Dol$-valued $\gen$-polygonalization as follows. 
Given any cover $\Dol$ of $\G_2(\M)$,
we let  $\Dol'_{h, q}$ be the $D\in\Dol$ which are not subsets of $\Ne_\delta(\HH_h)$, where $\delta = \delta_{\ref{lemma-tri}}(h, q)$. 

\begin{definition}
Suppose that $\Sigma \in \Su_\epsilon(\gen,h,q)$.
We say that $(\utau,\lab) \in \Tr^\gen_\Dol$ is proportionately compatible with $\Sigma$ if
it is compatible with $\Sigma$ and there are at least $q\gen$ edges in $e(\tau)$ whose endpoints are both mapped by $\lab$ to elements of $\Dol'_{h, q}$. 
\end{definition}

This is a mouthful,
but we have an immediate corollary to Lemma \ref{lemma-tri}:
\begin{proposition} \label{prop-q-compat}
For every $h \in \N$ and $q > 0$, 
there exists $\epsilon_0$ such that for all $\epsilon < \epsilon_0$,
any $\Dol$,
and $\Sigma \in \Su_\epsilon(\gen, h, q)$,
there exists $(\utau, \lab) \in \Tr^\gen_\Dol$ that is proportionately compatible with $\Sigma$.
\end{proposition}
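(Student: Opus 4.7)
The plan is to invoke Lemma \ref{lemma-tri} and then package its output as a proportionately compatible $\Dol$-valued $\gen$-polygonalization. Set $\delta = \delta_{\ref{lemma-tri}}(h, q)$ and let $\epsilon_0$ be the constant supplied by Lemma \ref{lemma-tri}. Given $\Sigma \in \Su_\epsilon(\gen, h, q)$ with $\epsilon < \epsilon_0$, represented by the minimal map $f\from S \to \M$, apply Lemma \ref{lemma-tri} to obtain an effective graph pair $\utau = (\wh\tau, e(\tau))$ for $S$ such that at least $q\gen$ of the edges in $e(\tau)$ lie outside $S(\delta, h)$.

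Since the graph pair of Lemma \ref{lemma-tri} is constructed as a subgraph of a bounded geometry triangulation of $S$ (cf.\ Theorem \ref{thm-bounded-geom} and Lemma \ref{lem-old-e-tau}), $\utau$ is automatically $L$-bounded and thus defines an element of $\Tr^\gen$. Define the labelling $\lab\from V(\tau) \to \Dol$ by choosing, for each vertex $v$, any $D \in \Dol$ containing $f(v)$; such a $D$ exists because $\Dol$ is a cover of $\G_2(\M)$. The embedding $b\from \tau \hookrightarrow S$ coming from the triangulation, together with $f$ itself, then certifies that $(\utau, \lab) \in \Tr^\gen_\Dol$ is compatible with $\Sigma$ in the sense used earlier in this section.

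It remains to verify the proportionality condition. For each of the $q\gen$ distinguished edges $e \in e(\tau)$ that lies outside $S(\delta, h)$, both endpoints $v$ of $e$ satisfy $f(v) \notin S(\delta, h)$, hence $\dis_\G(f(v), \HH_h) \ge \delta$ by Definition \ref{def-deltaset}; in particular $f(v) \notin \Ne_\delta(\HH_h)$. Since $f(v) \in \lab(v)$, this forces $\lab(v) \not\subset \Ne_\delta(\HH_h)$, i.e.\ $\lab(v) \in \Dol'_{h,q}$. Therefore at least $q\gen$ edges in $e(\tau)$ have both endpoints mapped by $\lab$ into $\Dol'_{h,q}$, which is exactly proportionate compatibility. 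There is no substantive obstacle here; the proposition is a bookkeeping corollary that unpacks Definition \ref{def-deltaset} against the definition of $\Dol'_{h,q}$, and all of the real work has been absorbed into Lemma \ref{lemma-tri}.
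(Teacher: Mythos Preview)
Your proof is correct and follows exactly the approach the paper intends: the paper states this proposition as ``an immediate corollary to Lemma \ref{lemma-tri}'' without further argument, and you have faithfully unpacked that corollary. One tiny notational slip: you write ``both endpoints $v$ of $e$ satisfy $f(v) \notin S(\delta, h)$'', but $S(\delta,h)$ is a subset of $S$, not of $\G_2(\M)$; you mean $v \notin S(\delta,h)$, from which $\dis_\G(f(v),\HH_h) \ge \delta$ follows as you then correctly state.
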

%

We say that  $(\utau,\lab)\in \Tr^\gen_\Dol$ is a $(q, h, \gen, \epsilon)$-allowable $\Dol$-valued polygonalization 
if it is proportionately compatible with some $\Sigma \in \Su_\epsilon(\gen,h,q)$,
and we denote by $\Tr^\gen_\Dol(\epsilon,h,q)\subset  \Tr^\gen_\Dol$
the set of all $(q, h, \gen, \epsilon)$-allowable $\Dol$-valued polygonalizations.
Since every $\epsilon$-nearly geodesic minimal map is 2-Lipshitz for $\epsilon$ small enough,
we can combine Proposition \ref{prop-tri-1} and Proposition \ref{prop-q-compat} to immediately conclude:
\begin{proposition}\label{prop-estim} For any $\Dol$,  the estimate
\begin{equation}\label{eq-jasta}
|\Su_\epsilon(\gen,h,q)|\le |\Tr^\gen_\Dol(\epsilon,h,q)|
\end{equation}
holds when $\epsilon < \epsilon_0(h, q)$.
\end{proposition}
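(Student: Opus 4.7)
The plan is to construct an injection $\Phi\from \Su_\epsilon(\gen,h,q) \hookrightarrow \Tr^\gen_\Dol(\epsilon,h,q)$ and read off the inequality from its injectivity. The two ingredients are already stated: Proposition \ref{prop-q-compat} gives existence of a proportionately compatible polygonalization for each $\Sigma$, and Proposition \ref{prop-tri-1} provides uniqueness of $\Sigma$ given a compatible polygonalization. All that is left is to splice them together.

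First, assume $\epsilon < \epsilon_0(h,q)$ where $\epsilon_0(h,q)$ is at least as small as the threshold from Proposition \ref{prop-q-compat}; then for each $\Sigma \in \Su_\epsilon(\gen,h,q)$ we may apply that proposition to produce a $(\utau, \lab) \in \Tr^\gen_\Dol$ proportionately compatible with $\Sigma$. By definition of $\Tr^\gen_\Dol(\epsilon,h,q)$, this $(\utau,\lab)$ lies in $\Tr^\gen_\Dol(\epsilon,h,q)$. Choosing one such pair for each $\Sigma$ defines the map $\Phi$.

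For injectivity, observe that \emph{proportionate compatibility implies compatibility} (the extra condition only restricts which $\Dol$-elements may label endpoints of distinguished edges). Hence if $\Phi(\Sigma_1) = \Phi(\Sigma_2) = (\utau,\lab)$, then $(\utau,\lab)$ is compatible with both $\Sigma_1$ and $\Sigma_2$. Proposition \ref{prop-tri-1} then forces $\Sigma_1 = \Sigma_2$, provided the diameter hypothesis on $\Dol$ is met. This gives $|\Su_\epsilon(\gen,h,q)| \le |\Tr^\gen_\Dol(\epsilon,h,q)|$.

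The main (mild) obstacle is checking that the hypotheses of Proposition \ref{prop-tri-1} actually hold in our setting. Two points need attention: (i)~the compatibility notion requires a $2$-Lipschitz representative of $\Sigma$, which is automatic for $\epsilon$-nearly geodesic minimal maps once $\epsilon$ is small (by Seppi's curvature bound, Theorem \ref{thm-seppi}, and Proposition \ref{prop-near} such maps are arbitrarily close to local isometries); (ii)~the diameter condition $\operatorname{diam} D \le \inj_\M/10$ on $\Dol$. For coverings produced by Lemma \ref{lemma-Dol} this is satisfied (at least after projection via $\xi$), so one takes $\epsilon_0(h,q)$ to also be below the thresholds guaranteeing 2-Lipschitz control and compatibility of the smallness regime with the diameter assumption. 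Once these compatibilities are verified, the injectivity argument above goes through verbatim and yields \eqref{eq-jasta}.
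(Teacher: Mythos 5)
Your proposal is correct and follows exactly the paper's argument: existence of a proportionately compatible $(\utau,\lab)$ for each $\Sigma$ via Proposition \ref{prop-q-compat}, uniqueness of $\Sigma$ given a compatible $(\utau,\lab)$ via Proposition \ref{prop-tri-1}, together with the observation that $\epsilon$-nearly geodesic minimal maps are $2$-Lipschitz for small $\epsilon$. The points you flag (the $2$-Lipschitz requirement and the diameter hypothesis on $\Dol$) are exactly the implicit hypotheses the paper relies on, so nothing is missing.
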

We will show Lemma \ref{lemma-baza} in Section \ref{sec-big-count} by estimating the size of $\Tr^\gen_\Dol(\epsilon,h,q)$ for a suitable choice of $\Dol$.

\section{Counting allowable $\Dol$-valued polygonalizations} \label{sec-big-count}
Now suppose we are given $c_0 > 0$ and $q \in (0, 1]$.
We let $h = h_{\ref{lemma-Dol}}(c_0)$
(so $h$ is the $h$ provided in \ref{lemma-Dol} that depends on the $c$ in \ref{lemma-Dol}, that we take to be $c_0$),
and let $\delta = \delta_{\ref{lemma-tri}}(h,q)$,
and then let $\epsilon_0$ and $\Dol$ be given (in terms of $c_0$ and $\delta$) by Lemma \ref{lemma-Dol}, and take any $\epsilon < \epsilon_0$.
We will show that for any $c > 0$,
and an appropriate choice of $c_0$,
that $|\Tr^\gen_\Dol(\epsilon,q,h)| = o((c\gen)^{2\gen})$ as $\gen \to \infty$, 
which implies Lemma \ref{lemma-baza}.

We let $\Dol' \subset \Dol$ be those $D \in \Dol$ for which $D \not\subset \Ne_\delta(\hh)$.
Before computing our upper bound for $|\Tr^\gen_\Dol(\epsilon,q,h)|$, we will summarize the properties of every $(\tau, \lab) \in \Tr^\gen_\Dol(\epsilon,q, h)$.
\begin{enumerate}
\item   \label{it-upper-bound}
The graph $\tau$ has at most $L\gen$ vertices.
\item  \label{it-degree}
The degree of every vertex of $\tau$ is at most $L$.
\item \label{it-mut}
If $v_1$ and $v_2$ are two vertices connected by an edge from $\tau$ then  $\lab(v_1), \lab(v_2) \in \Dol$ are mutually accessible.
\item \label{it-upper-mapped}
For every $D \in \Dol'$,
the number of vertices of $\tau$ which are mapped by $\lab$ to $D$ is at most $c_1 \gen$, where $c_1 = 4\pi L c_0$.
\item \label{it-q}
There are at least $q\gen$ distinguished edges from the set $e(\tau)$ whose endpoints are mapped by $\lab$ into $\Dol'$.
\end{enumerate}
Properties \ref{it-upper-bound} and \ref{it-degree} follow from the definition of an allowable $\gen$-polygonalization. Property \ref{it-mut} follows from Statement \ref{it-supp-mut} in Proposition \ref{prop-c}, 
and Property \ref{it-upper-mapped} follows from Property \ref{it-c-small} of $\Dol$, 
the definition of $\Dol'$, and Statement \ref{it-mapped} of Proposition \ref{prop-c}.
Property \ref{it-q} 
follows immediately from the definitions of $\Tr^\gen_\Dol(\epsilon,q, h)$ and $\Dol'$.

We can obtain every $\Dol$-valued $\gen$-polygonalization of bounded geometry  $(\tau,\lab) \in \Tr^\gen_\Dol(\epsilon,q,h)$ as follows:
\begin{itemize}
\item 
First choose a spanning tree $\wh{\tau}$. 
\item 
Then chose a colouring $\lab\from V(\wh{\tau})\to \Dol$. 
\item 
Then add the $2\gen$ distinguished edges to the  $\wh{\tau}$.
 \item 
Then equip the graph $\tau$ with the cyclic ordering around each vertex.
\end{itemize}
Of course our choices will be constrained by Properties 1--5 listed above. 
Let us write down the estimate that comes out from the  description above:
\begin{equation}\label{eq-abc}
|\Tr^\gen_\Dol(\epsilon,q,h)|\le A_1A_2A_3A_4, 
\end{equation}
where 
\begin{itemize}
\item 
$A_1$ is the number of different trees $T$ which agree with  the tree $\hat\tau$ of $\utau = (\hat\tau, e)$ of  some pair  $(\utau,\lab) \in \Tr^\gen_\Dol(\epsilon,q,h)$, 
\item 
$A_2$ is the (maximal) number of ways we can define the colouring $\lab\from V(\wh{\tau})\to \Dol$ for a fixed $\wh{\tau}$ (here $\wh{\tau}$ is a spanning tree of some pair  $(\utau,\lab) \in \Tr^\gen_\Dol(\epsilon,q,h)$),
\item  $A_3$ is the (maximal) number of different pairs $(\utau,\lab) \in \Tr^\gen_\Dol(\epsilon,q,h)$ which share the same spanning tree $\wh{\tau}$ and  colouring $\lab$,
\item 
$A_4$ is the number of  cyclic orderings of edges of $\tau$.
\end{itemize}

\subsection{Estimating $A_1$, $A_2$, $A_3$, and $A_4$}
We will compute upper bounds for $A_1$, $A_2$, $A_3$, and $A_4$ in turn.

The number of different unlabelled trees on $n$ vertices is $\le C_1 12^n$, for some universal constant $C_1 > 0$ (for example see \cite{k-m-1}). It follows from Property \ref{it-upper-bound} that 
$$
A_1 \le C_1 12^{L\gen}.
$$

Next, we bound  $A_2$. Thus, we consider a fixed  allowable tree $\wh{\tau}$, and estimate  the number of allowable maps 
$\lab:V(\wh{\tau})\to \Dol$. Choose a base vertex $v_0 \in V(\wh{\tau})$, and record $\wh{\tau}$ as a rooted tree. Then every vertex $v\in V(\wh{\tau})$ (besides the base vertex) has a predecessor denoted by $v'$.   

There are at most $|\Dol|$ choices for the value of $\lab(v_0)$.
On the other hand, if the value $\lab(v')$ has already been specified, then there are at most $C_2$ possible values for $\lab(v)$, where $C_2= C_{\ref{lemma-Dol}}$. This is because the open sets $\lab(v), \lab(v') \in \Dol$ are mutually accessible by Property \ref{it-mut}. We conclude that there are at most $C_2^{|V(\wh{\tau})|}\le C_2^{L\gen}$ ways of defining $\lab$ on the set $V(\wh{\tau})\setminus \{v_0\}$. Putting this all together yields the bound
$$
A_2 \le |\Dol| C_2^{L\gen}.
$$

We will now bound $A_3$.
We fix an allowable  tree $\wh{\tau}$, and an allowable map $\lab\from V(\wh{\tau})\to \Dol$.
By Properties 5 and 3, 
we can generate all $(\gen, \epsilon, h, q)$-allowable 
sets of distinguished edges $e(\tau)$
by generating a set $e_1$ of $\ceil{q \gen}$ edges that connect two mutually accessible $D$'s in $\Dol'$,
and then generating a set $e_2$ of $2\gen - \ceil{q \gen}$ edges that connect mutually accessible $D$'s in $\Dol$.

When we choose an edge in $e_1$, 
there are $|V(\tau)|$ ways to choose the first vertex $v$.
By Property \ref{it-mut} above and Property 2 of $\Dol$ (from Lemma \ref{lemma-Dol}),
there are at most $C_2$ ways to choose the label $\lab(w) \in \Dol'$ of the second vertex $w$,
and by Property \ref{it-upper-mapped} above 
there are at most $c_1\gen$ choices for $w$ given its label. 
Therefore
there  
are at most $(|V(\tau)| c_1C_2 \gen)^{\ceil{q \gen}}$ choices of sets $e_1$ along with an ordering of the edges,
and hence at most $(|V(\tau)| c_1C_2 \gen)^{\ceil{q \gen}}/ \ceil{q \gen}!$ ways of choosing the set $e_1$. 
Similarly,
but without any constraints on the choice of the second vertex,
there are 
$(|V(\tau)|^2)^{2\gen - \ceil{q \gen}}/(2\gen - \ceil{q \gen})!$ ways to choose $e_2$.
We therefore have
$$
A_3 
\le  \frac{ \big(|V(\tau)| c_1C_2 \gen\big)^{\ceil{q \gen}}
			\,  (|V(\tau)|^2)^{2\gen - \ceil{q \gen}}}   
	{\ceil{q \gen}! (2\gen - \ceil{q \gen})!}
\le 
 \frac{ \big(c_1C_2 L \gen^2 \big)^{\ceil{q \gen}}\, (L^2\gen^2)^{2\gen - \ceil{q \gen}}\, 2^{2\gen}\, 4^{2\gen} }
 	{\gen^{2\gen}}
$$
where we used the following estimates
$$
|V(\tau)| \le L\gen\qquad \left(\frac{\gen}{4}\right)^{2\gen} < (2\gen)! \qquad \frac{(a+b)!}{a!b!} \le 2^{a+b}.
$$
Compiling like terms we get
$$
A_3 \le C_4^{\gen} c_1^{q\gen} \gen^{2\gen}
$$
for some constant $C_4>0$.

It remains to bound $A_4$. We easily find (as in \cite{k-m-1})
$$
A_4\le \big(\text{deg}(\tau)!\big)^{|V(\tau)|}\le (L!)^{L\gen}.
$$

Now, replacing these estimates in (\ref{eq-abc}) we get
$$
|\Tr^\gen_\Dol(\epsilon,q,h)|\le C_5^\gen|\Dol|c_1^{q\gen} \gen^{2\gen}
$$
for some constant $C_5$, and every small enough $\epsilon$. Then for every $c>0$, and every fixed $c_1>0$, we have
$$
\frac{|\Tr^\gen_\Dol(\epsilon,q, h)|}{(c\gen)^{2\gen}}\le
\frac{C_5^\gen|\Dol|c_1^{q\gen} \gen^{2\gen}}{(c\gen)^{2\gen}}=\left(C_5\big(1+o(1)\big)\frac{c_1^q}{c^2} \right)^{\gen}
$$
where $o(1)$ is a function of $\gen$ which tends to zero when $\gen\to \infty$. 
Taking $c_1 < (c^2/C_5)^{1/q}$, we obtain the desired result.

\section{Proof of Lemma \ref{lemma-tri}} \label{sec-proof-tri}

In \cite{k-m-1} the authors constructed a bounded geometry triangulation, and then found an embedded graph pair and resulting $\Dol$-valued polygonalization. Here we assume that $|S(\delta_0(h), h)| \le (1-q)|S|$, and use it to construct an embedded graph pair with at least $qg$ edges lying outside of $S(\delta)$, for some $\delta$ that will depend only on $\hh$ and $q$. We will first provide an upper bound for the portion of $H_1(S)$ that is generated by $S(\delta)$, and then use that bound when we build the graph pair. In what follows, we will assume we are given a value of $h$ and write $\delta_0$ for $\delta_0(h)$ and $S(\delta)$ for $S(\delta, h)$. 

\subsection{The size of $\partial{S(\delta)}$ }
As usual,   $f\from S\to \M$ denotes an $\epsilon$-nearly geodesic minimal map.  We first prove a version of Proposition \ref{prop-druga} where  the resulting subsurface has small boundary compared to $|S|$. 

\begin{proposition}\label{prop-druga-new} 
For any $\alpha > 0$ there exist $\delta>0$, $\epsilon_0>0$, 
such that for every $\epsilon$-nearly geodesic minimal surface  $f\from S\to \M$ (with $\epsilon < \epsilon_0$), 
there exists a subsurface  
$R \subset S$ which satisfies
\begin{enumerate}
\item  
$\Ne_{10}(S(\delta))\subset  R \subset S(\delta_0)$,
\item 
$|\partial R| \le \alpha|S|$.
 \end{enumerate} 
\end{proposition}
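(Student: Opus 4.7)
\medskip

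\noindent\textbf{Proof plan for Proposition \ref{prop-druga-new}.}
The plan is to build, once and for all in terms of $\alpha$ and $h$, a nested sequence of thresholds $\delta_0 > \delta_1 > \cdots > \delta_N$ and then apply a pigeonhole argument to the areas of the ``layers'' $A_i := S(\delta_{i-1},h) \setminus S(\delta_i,h)$, which partition a subset of $S$ and therefore have total area at most $|S|$. Concretely, having fixed $\delta_0=\delta_0(h)$, I would iteratively invoke Proposition~\ref{prop-prva} to define $\delta_{i+1} := \delta'(\delta_i, h)$ for $i = 0, 1, \ldots, N-1$, where $N := \lceil 1 + 100/\alpha\rceil$. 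By the very conclusion of Proposition~\ref{prop-prva}, one then has $\Ne_{10}(S(\delta_{i+1},h)) \subset S(\delta_i,h)$ for all $i$, provided $\epsilon < \epsilon_0 := \min_{0 \le i < N} \epsilon_0(\delta_i,h)$, which defines the required threshold on $\epsilon$. Finally, set $\delta := \delta_N$.

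For a given $\epsilon$-nearly geodesic minimal surface $f\from S\to \M$, the sets $A_1,\ldots,A_N$ are pairwise disjoint subsets of $S(\delta_0,h) \subset S$, so $\sum_{i=1}^N |A_i| \le |S|$. Since each $A_j$ appears in at most two of the $N-1$ consecutive pairs $(A_i,A_{i+1})$, we get
\begin{equation*}
\sum_{i=1}^{N-1}\bigl(|A_i|+|A_{i+1}|\bigr) \le 2|S|,
\end{equation*}
and hence by pigeonhole there exists an index $i \in \{1,\ldots,N-1\}$ with $|A_i|+|A_{i+1}| \le 2|S|/(N-1) \le \alpha|S|/50$. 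The main observation is that, because $\Ne_{10}(S(\delta_i,h))\subset S(\delta_{i-1},h)$ and $\Ne_{10}(S(\delta_{i+1},h))\subset S(\delta_i,h)$, every point within distance $1$ of $\partial S(\delta_i,h)$ lies either in $A_i$ or in $A_{i+1}$. Consequently $\Ne_1(\partial S(\delta_i,h)) \subset A_i \cup A_{i+1}$, so $|\Ne_1(\partial S(\delta_i,h))| \le \alpha|S|/50$.

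Now I would apply Lemma~\ref{lem-smoothing} to $S(\delta_i,h)$ to obtain $R$ with piecewise smooth boundary satisfying $S(\delta_i,h) \subset R \subset \Ne_1(S(\delta_i,h))$ and
\begin{equation*}
|\partial R| \le 25\, |\Ne_1(\partial S(\delta_i,h))| \le \alpha|S|.
\end{equation*}
The required inclusions are then routine verifications: on one side, $R \subset \Ne_1(S(\delta_i,h)) \subset S(\delta_{i-1},h) \subset S(\delta_0,h)$; on the other, since $\delta = \delta_N \le \delta_{i+1}$ we have $S(\delta,h) \subset S(\delta_{i+1},h)$ and therefore
\begin{equation*}
\Ne_{10}(S(\delta,h)) \subset \Ne_{10}(S(\delta_{i+1},h)) \subset S(\delta_i,h) \subset R,
\end{equation*}
as desired. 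No step here is particularly delicate — the only thing to watch is that $\delta$ and $\epsilon_0$ must be chosen \emph{before} seeing $f$ (so the layer index $i$ is allowed to depend on $f$, but the sequence $\delta_0,\ldots,\delta_N$ and the threshold $\epsilon_0$ are not), and this is precisely why we take $\delta$ to be the smallest element $\delta_N$ of the chain.
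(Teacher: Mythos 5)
Your proposal is correct and follows essentially the same route as the paper's proof: iterate Proposition \ref{prop-prva} to build a decreasing chain $\delta_0 > \delta_1 > \cdots$ (fixed before seeing $f$), pigeonhole on the areas of the disjoint annular layers to find a thin one containing a neighbourhood of $\partial S(\delta_i,h)$, and then apply the smoothing Lemma \ref{lem-smoothing} to $S(\delta_i,h)$. The only difference is bookkeeping — the paper groups the layers into $n=\lceil 25/\alpha\rceil$ disjoint double-layers $S(\delta_{k-1})\setminus S(\delta_{k+1})$ indexed by odd $k$, whereas you pair consecutive single layers at the cost of a factor of $2$ — which is immaterial.
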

\begin{proof}
We let $n = \ceil{25/\alpha}$.

Inductively applying Proposition \ref{prop-prva}, 
we see that there exists $\epsilon_0 = \epsilon_0(n) = \epsilon(\alpha)$
and 
$\delta_{2n}<\delta_{2n-1}<\cdots<\delta_1<\delta_0$, 
such that for every $\epsilon$-nearly geodesic minimal surface  $f\from S\to \M$ with $\epsilon < \epsilon_0$,
we have $\Ne_{10}(S(\delta_{k+1})) \subset S(\delta_k)$ (for $0 \le k < 2n$),
and hence, for $0 < k < 2n$, 
\begin{equation}\label{eq-baba}
\Ne_{10}(\partial (S(\delta_k))) \subset S(\delta_{k-1}) \setminus S(\delta_{k+1}).
\end{equation}
We show that the proposition holds for $\delta:=\delta_{2n}$ and the $\epsilon_0$ we have chosen.

Notice that the sets $S(\delta_{k-1})\setminus  S(\delta_{k+1})$ are mutually disjoint for different odd values of $k$. 
Since there are $n$ such sets, at least one of them has the area at most $|S|/n$. 
Thus we can choose $l$ odd such that  $|S(\delta_{l-1})\setminus  S(\delta_{l+1})|\le |S|/n$. 
By Lemma \ref{lem-smoothing} (applied to $S(\delta_l)$) and our choices of the $\delta_k$,  $l$, and $\alpha$,
we can find $R$ such that 
$$\Ne_{10}(S(\delta_{2n}))\subset S(\delta_l) \subset R \subset S(\delta_{l-1}) \subset S(\delta_0)$$ and 
$$|\partial R| \le 25 |S| / n \le \alpha |S|.$$

\end{proof}

\subsection{The homology of $S(\delta)$}
Let $S$ denote a closed hyperbolic surface. By $\sys(S)$ we denote the length of shortest closed geodesic on $S$ (note that $\sys(S)=2\inj(S)$).
If $T \subset S$ is closed, 
we let $\inbetti T S$ denote the dimension of the image of $H_1(T; \R)$ in $H_1(S; \R)$.
If $R \subset S$ is a subsurface we let $\numcomp(\partial R)$ denote the number of components of the boundary $\partial R$ of $R$.
We begin with the following corollary of the Gauss-Bonnet  theorem and the isoperimetric inequality.

\begin{proposition} \label{prop-bound}
There exists a constant $C>0$ with the following properties. Let   $S$ be a closed hyperbolic Riemann surface,  and
 suppose $R \subsetneq S$ is a subsurface with smooth boundary. Then 
\begin{align} \label{eq:b1-bound}
\inbetti R S &\le \frac{|R| + |\partial R|}{2 \pi} + \numcomp(\partial R) \\
&\le \frac{|R|}{2 \pi}+  \Big(1 + \frac1{\sys(S)} \Big) |\partial R|. \label{eq:b1-bound2}
\end{align}
\end{proposition}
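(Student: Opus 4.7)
The strategy is to pass to the convex core $\Omega$ of $R$ (from Proposition \ref{prop-iso-1}), where the geodesic boundary makes Gauss--Bonnet sharp, and then bound $\inbetti R S$ in terms of $b_1(\Omega)$.

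The first step is to show $\inbetti R S \le b_1(\Omega)$. Let $\Omega' \subset S$ denote $R$ together with each disk component of $S\setminus R$; then $\Omega'$ is homeomorphic to $\Omega$. Since each newly attached disk's boundary is already null-homologous in $S$, the composite map $H_1(R) \to H_1(\Omega') \to H_1(S)$ has the same image as $H_1(R)\to H_1(S)$, while the first arrow is surjective. Hence $\inbetti R S = \inbetti{\Omega'}{S} \le b_1(\Omega') = b_1(\Omega)$.

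Next I would estimate $b_1(\Omega)$. Because $\partial\Omega$ consists of closed geodesics, Gauss--Bonnet yields $-\chi(\Omega) = |\Omega|/(2\pi)$, and Proposition \ref{prop-iso-1} gives $|\Omega| \le |R|+|\partial R|$. For each component $\Omega_i$ with nonempty boundary, $b_1(\Omega_i) = 1-\chi(\Omega_i)$; summing over components,
$$b_1(\Omega) \le \numcomp(\Omega) - \chi(\Omega) \le \numcomp(\partial\Omega) + \frac{|R|+|\partial R|}{2\pi},$$
using $\numcomp(\Omega)\le\numcomp(\partial\Omega)$ and Gauss--Bonnet. Since straightening preserves the number of non-trivial boundary components while filling disks only removes trivial ones, $\numcomp(\partial\Omega) \le \numcomp(\partial R)$, which proves \eqref{eq:b1-bound}.

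For \eqref{eq:b1-bound2}, I would instead bound $\numcomp(\partial\Omega)$ more sharply: each component of $\partial\Omega$ is a closed geodesic of length at least $\sys(S)$, so $\numcomp(\partial\Omega) \le |\partial\Omega|/\sys(S)$; and because geodesic representatives minimize length in their free homotopy class, $|\partial\Omega|\le |\partial R|$. Combining with the previous display and using $1/(2\pi) < 1$,
$$\inbetti R S \le \frac{|R|+|\partial R|}{2\pi} + \frac{|\partial R|}{\sys(S)} \le \frac{|R|}{2\pi} + \Big(1 + \frac{1}{\sys(S)}\Big)|\partial R|.$$

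The main obstacle lies in the reduction $\inbetti R S \le b_1(\Omega)$ from Step~1: one must handle carefully the edge case where the filling process produces a closed component in $\Omega$ (which can only be all of $S$), since a closed component contributes an extra $+1$ to $b_1(\Omega)$ beyond the formula $b_1 = 1-\chi$ used for components with boundary. In this degenerate situation the extra $b_1$ is absorbed by using the isoperimetric inequality $|D|\le |\partial D|$ on each filled-in disk $D$ to increase $|R|+|\partial R|$ sufficiently.
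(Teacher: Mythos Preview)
Your approach is essentially the paper's: pass to the convex core $\Omega$, apply Gauss--Bonnet where the boundary is geodesic, and control $|\Omega|$ via Proposition~\ref{prop-iso-1}. The paper reduces to connected $R$ and asserts $\inbetti R S = \inbetti \Omega S$; you instead prove $\inbetti R S \le b_1(\Omega)$ through the intermediate $\Omega'$, which amounts to the same thing. Your explicit derivation of \eqref{eq:b1-bound2} via $\numcomp(\partial\Omega)\le |\partial\Omega|/\sys(S)\le |\partial R|/\sys(S)$ is exactly the right step, and the paper simply omits it --- note that one cannot pass directly from \eqref{eq:b1-bound} to \eqref{eq:b1-bound2}, since components of $\partial R$ (as opposed to $\partial\Omega$) may be short.

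The edge case you flag is real, but your isoperimetric fix does not close it. Take $R = S \setminus D$ for a single small embedded disk $D$: then $\inbetti R S = 2\gen$, while
\[
\frac{|R|+|\partial R|}{2\pi} + \numcomp(\partial R)
= 2\gen - 2 + \frac{|\partial D| - |D|}{2\pi} + 1
\longrightarrow 2\gen - 1
\]
as $D$ shrinks, so \eqref{eq:b1-bound} fails outright; \eqref{eq:b1-bound2} fails by $2$ in the same limit. The paper's proof has the identical gap (it applies the geodesic-boundary computation to $\Omega$ while tacitly assuming $\partial\Omega\ne\emptyset$). In the paper's sole application (Proposition~\ref{prop-druga-new-1}) one has both $|R|\le(1-q)|S|$ and $|\partial R|\le\alpha|S|$ with $\alpha$ small; if $S\setminus R$ were a union of disks then the isoperimetric inequality would force $|R|\ge(1-\alpha)|S|$, a contradiction, so the degenerate case cannot occur there. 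As a standalone statement, though, the proposition needs either the hypothesis that some component of $\partial R$ is essential, or an additive $+1$ on the right-hand side.
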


\begin{proof}
It suffices to prove \eqref{eq:b1-bound} for each component of $R$, so we will assume that $R$ is connected.
First suppose that $R$ has geodesic boundary. 
By Gauss-Bonnet we have
\begin{equation}\label{eq-kerb}
\inbetti R S = b_1(R) =  -\chi(R) + 1 = \frac{|R|}{2 \pi} + 1\le \frac{|R|}{2 \pi} + \numcomp(R)
\end{equation}
since $\partial{R}$ is nonempty. 

For the case of a general $R$, let $\Omega \subset S$ be convex core of $R$, i.e.\  the surface with geodesic boundary obtained by filling in the holes (contractible components of the complement) of $R$, and replacing each boundary curve of $R$ with its geodesic representative. (Note that we may have $\Omega \not\subset R$.)
On one hand, by Proposition \ref{prop-iso-1} we know that 
\begin{equation}\label{eq-panta}
|\Omega|\le |R|+|\partial{R}|. 
\end{equation}
On the other hand,
$\inbetti R S = \inbetti \Omega S$
because filling in the holes (and homotoping the boundary) does not change the image of $H_1$ in $H_1(S)$. 
Combining this with \eqref {eq-kerb} and \eqref{eq-panta}, 
we get
$$
\inbetti R S\le  \frac{|\Omega|}{2 \pi} + \numcomp(\partial\Omega)
\le   \frac{|R| + |\partial R|}{2 \pi}  + \numcomp(\partial R). \qedhere
$$
\end{proof}

Combining Proposition \ref{prop-druga-new} and Proposition \ref{prop-bound} enables us to estimate the homology of $S(\delta)$.

\begin{proposition}\label{prop-druga-new-1} For every $\eta>0$ there exist $\delta(\eta)>0$,  $\epsilon_0(\eta)>0$,  
such that
for every $\epsilon$-nearly geodesic minimal surface  $f\from S\to \M$,  assuming $\epsilon<\epsilon_0(\eta)$, 
there is a subsurface  (with smooth boundary)
$R_\eta \subset S$ for which
\begin{enumerate}
\item  
$\Ne_{10}(S(\delta(\eta)))\subset  R_\eta \subset S(\delta_0)$, 
\item 
$\inbetti {R_\eta} S \le \frac{|R_\eta|}{2 \pi} +\eta |S|$.
\end{enumerate} 
\end{proposition}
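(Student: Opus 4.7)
My plan is to combine Proposition \ref{prop-druga-new} with Proposition \ref{prop-bound} in a straightforward way. Given $\eta$, I will apply Proposition \ref{prop-druga-new} with a suitably small $\alpha$ to obtain a subsurface $R$ satisfying $\Ne_{10}(S(\delta))\subset R\subset S(\delta_0)$ and $|\partial R|\le \alpha|S|$, and then bound $\inbetti{R}{S}$ via Proposition \ref{prop-bound} as
$$\inbetti{R}{S}\le \frac{|R|}{2\pi}+\Big(1+\frac{1}{\sys(S)}\Big)|\partial R|.$$
The issue is that Proposition \ref{prop-bound} carries a $1/\sys(S)$ factor, so to absorb the boundary term into $\eta\,|S|$ I need a uniform lower bound on $\sys(S)$ that does not depend on the particular $\epsilon$-nearly geodesic map $f\from S\to \M$.

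For this lower bound I will argue that $\sys(S)\ge \sys(\M)$ for every $S$ appearing here. By the Ahlfors estimate \eqref{eq-ahlfors} we have $\sigma\le \rho$, so the minimal map $f$ is always $1$-Lipschitz from the hyperbolic metric on $S$ to $\M$. Given any closed geodesic $\gamma\subset S$, the loop $f(\gamma)$ then has $\M$-length at most $|\gamma|_S$, and by $\pi_1$-injectivity it is freely homotopic to a nontrivial closed geodesic in $\M$ of length at least $\sys(\M)$. Hence $|\gamma|_S\ge \sys(\M)$ for every closed geodesic $\gamma$ in $S$, which is exactly the desired uniform bound.

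Once this systole bound is in hand, the proof assembles itself. Set $C_\M:=1+1/\sys(\M)$, and given $\eta>0$ choose $\alpha:=\eta/C_\M$. Let $\delta(\eta):=\delta(\alpha)$ and $\epsilon_0(\eta):=\epsilon_0(\alpha)$ be the quantities from Proposition \ref{prop-druga-new}. For any $\epsilon$-nearly geodesic minimal surface with $\epsilon<\epsilon_0(\eta)$, the subsurface $R_\eta$ supplied by Proposition \ref{prop-druga-new} satisfies condition (1), and Proposition \ref{prop-bound} combined with the systole bound gives
$$\inbetti{R_\eta}{S}\le \frac{|R_\eta|}{2\pi}+C_\M\,\alpha\,|S|=\frac{|R_\eta|}{2\pi}+\eta\,|S|,$$
which is condition (2). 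The only nontrivial ingredient is the uniform systole estimate $\sys(S)\ge \sys(\M)$, which is essentially free from the Ahlfors estimate already stated in the paper, so I do not foresee any substantial obstacle.
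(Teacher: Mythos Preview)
Your proof is correct and follows essentially the same route as the paper: apply Proposition~\ref{prop-druga-new} with a small $\alpha$, then feed the resulting $R$ into the Betti-number bound of Proposition~\ref{prop-bound}, absorbing the boundary term via a uniform lower bound on $\sys(S)$.

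The one place you differ is the systole bound itself, and your version is actually cleaner. The paper invokes $\sys(S)\ge \sys(\M)/2$ for $\epsilon$ below some universal $\epsilon_1$ (hence takes $\epsilon_0(\eta)$ to be the minimum of two thresholds), whereas you observe directly from the Ahlfors estimate $\sigma\le\rho$ that $f\from (S,\rho)\to\M$ is $1$-Lipschitz, giving $\sys(S)\ge\sys(\M)$ with no extra smallness hypothesis on $\epsilon$. This removes the need for the auxiliary constant $\epsilon_1$ and yields the slightly sharper constant $C_\M=1+1/\sys(\M)$ rather than $1+2/\sys(\M)$. Otherwise the two arguments are identical.
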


\begin{proof} 
Let $\alpha = \eta/(1 + 2/\sys(\M))$, let $\delta(\eta)$ be the $\delta(\alpha)$ from  Proposition \ref{prop-druga-new},
and let $\epsilon_0(\eta)$ be the minimum of $\epsilon_0(\alpha)$ given by Proposition \ref{prop-druga-new} and a universal $\epsilon_1$ given below. 

Suppose we are given $f\from S \to \M$ that is $\epsilon_0(\eta)$-nearly geodesic, 
with $\epsilon_0(\eta) \le \epsilon_0(\alpha)$. 
Let $R= R(\alpha)$ be the surface from the statement of Proposition \ref{prop-druga-new}. 
First, from (\ref{eq:b1-bound2}) we get
\begin{align}
\inbetti {R} S
&\le \frac{|R|}{2 \pi} + \left(1+\frac{1}{\sys(S)}\right)|\partial R| 
&\le \frac{|R|}{2 \pi} + \left(1+\frac{2}{\sys(\M)}\right)|\partial R| \label{eq-betti-R-eta}
\end{align}
because $\sys(S) \ge \sys(\M)/2$ when $\epsilon_0 \le \epsilon_1$, where $\epsilon_1$ is a universal constant. 
We now apply the estimate  (2) from Proposition \ref{prop-druga-new}, and get
\begin{equation}\label{eq-ziv}
|\partial R| \le \alpha |S|.
\end{equation}
Putting together \eqref{eq-betti-R-eta}, \eqref{eq-ziv}, and the definition of $\alpha$, we obtain Property (2) of this proposition. 
Property 1 follows immediately from Property 1 of Proposition \ref{prop-druga-new}.
\end{proof}

\subsection{Proof of Lemma \ref{lemma-tri}}

Fix  an $\epsilon$-nearly geodesic minimal map $f\from S\to \M$. We may assume $\sys(\M)\le 2\sys(S)$. 
By Theorem \ref{thm-bounded-geom} we can find a bounded geometry triangulation $T$ of $S$. 

Let $\eta=\frac{q}{4\pi}$. Let $R=R_\eta$ be the subsurface from Proposition \ref{prop-druga-new-1}, and let $\delta \equiv \delta(q, h)=\delta(\eta)$.
Then from  (1) in Proposition \ref{prop-druga-new-1} we get
$R \subset S(\delta_0)$\, 
which implies 
\begin{equation}\label{eq-seta}
|R| \le (1-q)|S|.
\end{equation}

We let $T_0$ be the vertices and edges of $T$ that lie entirely in $R$.
We can then extend $T_0$ to a connected subgraph $T_1$ of $T$ that includes all the vertices of $T$; 
any minimal such extension will add no new cycles to those of $T_0$, and hence we'll have $\inbetti {T_1} S = \inbetti {T_0} S$.
We observe that 
\begin{align*}
\inbetti {T_1} S &= \inbetti {T_0} S \le \inbetti R S \\
&\le  \frac{|R|}{2 \pi} +\eta |S|  \le  \frac{(1-q)|S|}{2 \pi} + \eta |S| \\
&\le \big((1-q)+\frac{q}{2} \big)2(\gen-1) \\
&\le (2-q)\gen.
\end{align*}
Here we used  (2) from   Proposition \ref{prop-druga-new-1} in the second inequality, (\ref{eq-seta})  in the third inequality, and the choice of $\eta$ in the fourth.
Thus, 
we have shown 
\begin{equation}\label{eq-gru}
\inbetti {T_1} S \le (2-q)\gen.
\end{equation}

We then let $\wh{T}$ be a spanning tree for $T_1$. Then, 
as in \cite{k-m-1},
we find a set $e(T)$ of $2\gen$ edges of $T \setminus \wh{T}$ such that $\inbetti {e(T) \cup \wh{T}} S = 2\gen$.
Let $e_0(T) \subset e(T)$ be the edges that are contained in $R$. Then $\big(\wh{T} \cup e_0(T)\big) \subset T_1$, and by (\ref{eq-gru}) we have
$$
|e_0(T)| =\inbetti{\wh{T} \cup e_0(T)} S\le \inbetti {T_1} S \le (2-q)\gen.
$$
Letting $e_1(T)$ be $e(T)\setminus e_0(T)$, 
we have 
\begin{equation} \label{eq-e1qg}
|e_1(T)| \ge q\gen.
\end{equation}

We now  let $\wh{\tau}=\wh{T}$, and $e(\tau)=e(T)$.
In light of \eqref{eq-e1qg} we have thus produced a bounded geometry embedding of $\tau$ where at least $q\gen$ distinguished edges have the property required by the Lemma.

\section{Rescaled Riemannian metrics and a cover with wafer-thin disks} \label{section-Dol}
\subsection{A cover with wafer-thin disks}
In this section we use a rescaling of the Riemannian metric on $\G_2(\M)$ to construct the cover $\Dol$ needed for Lemma \ref{lemma-Dol}.

We will write $\G_2$ when the given operation applies to both $\G_2(\M)$ and $\G_2({\Ho})$.
In $\G_2$ we have already defined  an invariant Riemannian metric which we will just denote by $\pair{}{}$.
We also have the tautological foliation $\FF$ of $\G_2$ by hyperbolic planes. 
This gives us an invariant splitting $T\G_2= T\FF \oplus T\FF^\perp$. 
If $v \in T\G_2$, we say that $v$ \emph{respects the splitting} if  $v \in T\FF \cup T\FF^\perp$. 
If $v_0, v_1 \in T\G_2$ respect the splitting, 
we let $m(v_0, v_1)$ be the number of $i \in \setof{0, 1}$ such that $v_i \in T\FF^\perp$. 
We now define $\pair{}{}_\eta$ by 
$$\pair vw_\eta = \eta^{-m(v, w)} \pair vw$$
whenever $v$ and $w$ respect the splitting.
Thus the norm of a vector in $T\FF$ is unchanged, while the norm of a vector in $T\FF^\perp$ is rescaled by $\eta^{-1}$.
We call this Riemannian metric (and associated point-pair metric $d_\eta$) the $\eta$-metric. 
We note that $\pair vw_\eta$ and $d_\eta(p, q)$ are both nondecreasing functions of $\eta$. 
We let $B_r^\eta(p)$ be the $r$-ball in the $\eta$-metric around a pointed plane $p$.
We let $|B_r^\eta(p)|_\eta$ denote the volume of this ball in the $\eta$-metric (when it is embedded), and we observe that
\begin{equation} \label{eq-vol-vol}
|B_r^\eta(p)| = \eta^3|B_r^\eta(p)|_\eta,
\end{equation}
where the left-hand side is the volume of the same ball in the original metric. 

We can prove the following lemma with a simple compactness argument. 
\begin{lemma} \label{lem-hull-bound}
For any $r < r'$
and $\eta > 0$, there exists $\epsilon$ such that for $p \in \G_2$,
\begin{equation} \label{eq-hull-1}
\cl{\Hull_{r, \epsilon}(p)}\subset B_{r'}^\eta(p),
\end{equation}
and hence,
for sufficiently small $\epsilon\equiv\epsilon(r_0,  r_1, r')$,
\begin{equation} \label{eq-hull-2}
\Hull_{r_0, \epsilon}(B_{r_1}^\eta(p)) \subset B_{r'}^\eta(p)
\end{equation}
whenever $r' > r_0 + r_1$. 
\end{lemma}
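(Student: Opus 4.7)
The plan is to derive \eqref{eq-hull-1} directly from the near-isometry statement in Proposition \ref{prop-near}, and then bootstrap to \eqref{eq-hull-2} via the triangle inequality in the $\eta$-metric.

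First I would compare the two metrics. Using the orthogonal (in $\pgn$) splitting $T\G_2 = T\FF \oplus T\FF^\perp$ and writing $v = v_\top + v_\perp$ accordingly, the definition of the rescaled metric gives $\|v\|_\eta^2 = \|v_\top\|^2 + \eta^{-2}\|v_\perp\|^2$, whence $d_\eta \le C_\eta\, d_\G$ on $\G_2$, where $C_\eta := \max(1,\eta^{-1})$. On the other hand, the two metrics agree on each leaf of $\FF$, and every such leaf is totally geodesic in $\G_2$, so the totally geodesic disc $P_r(p)$ of radius $r$ sitting in the leaf through $p$ satisfies $P_r(p) \subset B_r^\eta(p)$. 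For any $\delta > 0$ with $C_\eta\,\delta < r' - r$, the triangle inequality in $d_\eta$ then yields $\cl{\Ne_\delta^\G(P_r(p))} \subset B_{r'}^\eta(p)$, where $\Ne_\delta^\G$ denotes the $\delta$-neighbourhood in $d_\G$.

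Next I would invoke Proposition \ref{prop-near} with its parameter set equal to this $\delta$ to obtain $\epsilon \equiv \epsilon(r,r',\eta) > 0$ such that for every $\epsilon$-nearly geodesic minimal map $f \from S \to \M$ and every $x_0 \in S$ with $f(x_0) = p$, the image $f(B_r(x_0))$ lies within $d_\G$-distance $\delta$ of the totally geodesic image $\iota(B_r(x_0)) = P_r(p)$, where $\iota$ is a local isometry with $\iota(x_0) = p$. (For $r > 10$ one covers $B_r(x_0)$ by finitely many unit balls and iterates the bound.) Taking the union over all admissible pairs $(f, x_0)$ yields $\Hull_{r,\epsilon}(p) \subset \Ne_\delta^\G(P_r(p))$, whose closure is contained in $B_{r'}^\eta(p)$ by the preceding paragraph. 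This proves \eqref{eq-hull-1}.

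For \eqref{eq-hull-2} I would pick an intermediate radius $r''$ with $r_0 < r'' < r' - r_1$ (possible precisely because $r' > r_0 + r_1$), apply \eqref{eq-hull-1} with $(r,r') = (r_0, r'')$ to obtain a uniform $\epsilon \equiv \epsilon(r_0,r_1,r')$ such that $\Hull_{r_0,\epsilon}(p') \subset B_{r''}^\eta(p')$ for every $p' \in \G_2$, and then apply the $d_\eta$-triangle inequality: for $p' \in B_{r_1}^\eta(p)$, one has $B_{r''}^\eta(p') \subset B_{r''+r_1}^\eta(p) \subset B_{r'}^\eta(p)$, so taking the union over such $p'$ gives the desired inclusion. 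The main (minor) obstacle is ensuring that the $\epsilon$ produced by Proposition \ref{prop-near} can be taken independent of $p$; in $\G_2(\M)$ this is automatic since $\M$ is compact and $\epsilon(\eta)$ in that proposition depends only on $\eta$ and $\M$, while in $\G_2({\Ho})$ it follows from the $\text{Isom}({\Ho})$-invariance of all the relevant structures.
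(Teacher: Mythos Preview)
Your argument is correct and somewhat more explicit than the paper's. The paper proves \eqref{eq-hull-1} by a pure compactness argument: it fixes $p$ at the origin by homogeneity, observes that the sets $\cl{\Hull_{r,\epsilon}(p)}$ are nested and have compact closure, and concludes that if no $\epsilon$ worked then $\bigcap_{\epsilon>0}\cl{\Hull_{r,\epsilon}(p)}\not\subset B_{r'}^\eta(p)$, contradicting the identity $\bigcap_{\epsilon>0}\cl{\Hull_{r,\epsilon}(p)}=\cl{P_r(p)}\subset B_{r'}^\eta(p)$. Your route instead unwinds this identity quantitatively via Proposition~\ref{prop-near} and the elementary comparison $d_\eta\le C_\eta\,d_\G$; this yields an explicit choice of $\epsilon$ (namely any $\epsilon$ for which Proposition~\ref{prop-near} gives error $<(r'-r)/C_\eta$), at the cost of a slightly longer write-up and the chaining step for $r>10$, which you only sketch. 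Both proofs derive \eqref{eq-hull-2} from \eqref{eq-hull-1} by the same triangle-inequality argument. One minor point: you do not actually need the leaves of $\FF$ to be totally geodesic in $\G_2$ to get $P_r(p)\subset B_r^\eta(p)$; the trivial bound $d_\eta\le$ (intrinsic leaf distance) already suffices.
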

\begin{proof}
The second statement follows from the first, so we will just prove the first.
Because of the homogeneity, 
we can assume that $p$ is at the origin of ${\Ho}$. 
Suppose that for a given $r < r'$ and $\eta$, there is no $\epsilon$.
Then we have $\bigcap_{\epsilon>0} \cl{\Hull_{r, \epsilon}(p)} \not\subset B_{r'}^\eta(p)$,
because $\Hull_{r, \epsilon}(p)$ is nested as $\epsilon$ decreases to 0, and has compact closure. 
On the other hand,
$$
\bigcap_{\epsilon>0} \cl{\Hull_{r, \epsilon}(p)} = \cl{P_r(p)} \subset B_{r'}^\eta(p). \qedhere
$$
\end{proof}

The following lemma may seem obvious, but requires a little thought to prove, so the proof is included.
\begin{lemma} \label{lem-eta-B-P}
For any $R, \delta > 0$ there exists an $\eta_0 > 0$ such that $B_{r}^\eta(p) \subset \Ne_\delta(P_{r}(p))$ for all $r \le R$ and $\eta \le \eta_0$.  
\end{lemma}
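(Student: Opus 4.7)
The plan is to argue by contradiction using compactness in $\G_2({\Ho})$, leveraging the observation that a path parametrized by $\eta$-arclength has its $T\FF^\perp$-component of velocity bounded by $\eta$ in the original metric, so that in the limit $\eta\to 0$ the only admissible motion is along a leaf.

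Suppose the lemma fails, so there exist $R,\delta>0$, sequences $\eta_n\to 0$, $r_n\in(0,R]$, pointed planes $p_n\in\G_2$, and points $q_n\in B_{r_n}^{\eta_n}(p_n)$ with $d_\G(q_n,P_{r_n}(p_n))\ge\delta$. Lifting to $\G_2({\Ho})$ and using the transitivity of $\operatorname{Isom}({\Ho})$ on pointed planes, I may assume $p_n=p_0$ for a fixed basepoint $p_0$. Since the $\eta$-metric dominates $\pgn$ whenever $\eta\le 1$, each $q_n$ lies in the compact ball $\cl{B_R(p_0)}$ of $\G_2({\Ho})$, so after a subsequence $q_n\to q_\infty$ and $r_n\to r_\infty\in[0,R]$. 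For each $n$, choose a smooth path $\gamma_n\from[0,L_n]\to\G_2({\Ho})$ from $p_0$ to $q_n$ parametrized by $\eta_n$-arclength, with $L_n\le r_n+1/n$. Decomposing $\gamma_n'(s)=a_n(s)+b_n(s)\in T\FF\oplus T\FF^\perp$, the unit-speed constraint $|a_n(s)|^2+\eta_n^{-2}|b_n(s)|^2=1$ gives $|a_n(s)|\le 1$ and $|b_n(s)|\le\eta_n$, so $|\gamma_n'(s)|\le\sqrt 2$ uniformly. Extending the $\gamma_n$ constantly past $L_n$ to a common interval and applying Arzelà--Ascoli yields, along a subsequence, $L_n\to L_\infty\le r_\infty$ and $\gamma_n\to\gamma_\infty$ uniformly, with $\gamma_\infty$ Lipschitz.

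Since $\|b_n\|_\infty\to 0$ while $\|a_n\|_\infty\le 1$, any weak-$\ast$ $L^\infty$ subsequential limit of $\gamma_n'$ equals $\gamma_\infty'$ a.e.\ and takes values in the continuous distribution $T\FF$; thus $\gamma_\infty'(s)\in T_{\gamma_\infty(s)}\FF$ for almost every $s$. By the standard fact that an absolutely continuous curve with velocity a.e.\ tangent to a smooth integrable distribution stays in a single leaf, $\gamma_\infty$ is contained in the leaf of $\FF$ through $p_0$, with intrinsic length at most $L_\infty\le r_\infty$. Consequently $q_\infty=\gamma_\infty(L_\infty)\in\cl{P_{r_\infty}(p_0)}$, which by continuity of $(r,p)\mapsto\cl{P_r(p)}$ in the Hausdorff metric contradicts $d_\G(q_n,P_{r_n}(p_0))\ge\delta$ for all $n$.

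The main technical point is the passage from the a.e.\ tangency $\gamma_\infty'\in T\FF$ to the conclusion that $\gamma_\infty$ lies in a single leaf. For a $C^1$ curve this is immediate from the Frobenius theorem, but $\gamma_\infty$ is only Lipschitz; the standard fix is to work in a smooth foliated chart at each point of the image of $\gamma_\infty$ and observe that the transverse coordinates have a.e.\ zero derivative, hence are constant.
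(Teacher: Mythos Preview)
Your proof is correct and takes a genuinely different route from the paper's. The paper argues directly: it fixes a small transverse hyper-disk $V$ through $p$, defines a leafwise projection $\pi$ to $V$, and uses a compactness bound $\|w\|\ge\alpha\|D\pi(w)\|$ for $w\in T\FF^\perp$ to show that any path of $\eta$-length at most $R$ projects to a path in $V$ of length at most $\eta R/\alpha$, hence stays in a thin tube around the leaf. This is quantitative and in principle yields an explicit $\eta_0$. Your approach is a soft compactness argument: extract a limit curve via Arzel\`a--Ascoli, observe that the transverse velocity vanishes in the limit, and conclude the limit lies in a leaf, contradicting the assumed separation. Each approach has its merits: the paper's avoids the Lipschitz-curve-in-a-foliation subtlety entirely, while yours is more conceptual and sidesteps the construction of $V$ and $\pi$ (and the slightly delicate final step in the paper's argument).

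Two minor comments. First, your bound $|\gamma_n'|\le\sqrt2$ is weaker than necessary: since $\eta_n\le1$ one has $|a_n|^2+|b_n|^2\le|a_n|^2+\eta_n^{-2}|b_n|^2=1$, so in fact $|\gamma_n'|\le1$, which makes the ``intrinsic length at most $L_\infty$'' claim immediate by lower semicontinuity of length. Second, the weak-$\ast$ sentence is not quite the right justification for $\gamma_\infty'\in T\FF$: the distribution varies along the curve, so pointwise membership does not follow directly from weak-$\ast$ convergence. The foliated-chart argument you outline in your last paragraph is exactly what is needed and should be promoted to the main argument: for each local transverse coordinate $\phi$, the functions $\phi\circ\gamma_n$ have derivatives $d\phi(b_n)$ tending to $0$ uniformly, hence $\phi\circ\gamma_\infty$ is constant. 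This simultaneously shows that $\gamma_\infty$ stays in the leaf and bypasses the weak-$\ast$ step.
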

\begin{proof}
We work in $\G_2({\Ho})$; it is then a simple matter to conclude the same for $\G_2(\M)$. 

For a sufficiently small $u$, we can form the radius $u$ orthogonal hyper-disk $V$ to $P_{r}(p)$ at $p$ with the exponential map (for the standard metric) applied to the orthogonal complement to the tangent subspace in $\G_2(\M)$ of $P_{r}(p)$. 
We can define a projection $\pi$ to $V$ on every hyperbolic plane in $\G_2({\Ho})$ that intersects $V$ (where we project each plane to its intersection with $V$). 
We then let $B' = B_R(p) \cap \pi^{-1}(V)$. 
We observe that, by compactness of $\cl{B'}$, there exists $\alpha > 0$ such that $\norm{w} \ge \alpha\norm{D\pi(w)}$ for every vector $w$ based at a point in $B'$ and orthogonal to $\FF$. 
Then for any path $\gamma$ in $B'$,
we have 
\begin{equation}
l_\eta(\gamma) \ge \frac{\alpha}{\eta} l(\pi(\gamma)).
\end{equation}
Therefore%
\footnote{By ``continuous induction''}
any path $\gamma$ starting at $p$ of length at most $R$ lies in $B'$ when $\eta < \alpha u/R$,
and $\pi(\gamma)$ then has length at most $\eta R/\alpha$.
So for $\eta$ sufficiently small,
every point in $B_R^\eta(p)$ is connected to $p$ by a path of length $R$ that lies in the preimage by $\pi$ of a small ball around $p$ in $V$. 
This must then lie in $\Ne_\delta(P_R(p))$%
\footnote{Because the hyperbolic planes in $\G_2({\Ho})$ are locally length-minimizing.}.

For $r \in [\delta, R]$ we can apply the same reasoning, and obtain a uniform $\eta_0$ by compactness.
For $r < \delta$, we can take $\eta_0 = 1$. 
\end{proof}

%
We now proceed to constructing the desired cover  $\Dol$ with balls in the $\eta$-metric. We first observe
\begin{lemma} \label{lem-bounded-sectional}
The sectional curvatures of $\pair{}{}_\eta$ are uniformly bounded for $\eta \in (0, 1]$. 
\end{lemma}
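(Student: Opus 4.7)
My plan is to pass to the universal cover $\G_2({\Ho})$ and exploit homogeneity together with a Riemannian submersion structure. The metric $g_\eta$ is built from the splitting $T\G_2 = T\FF \oplus T\FF^\perp$, which is invariant under $\text{Isom}({\Ho})$, so $g_\eta$ is $\text{Isom}({\Ho})$-invariant on $\G_2({\Ho})$ and the covering $\G_2({\Ho}) \to \G_2(\M)$ is a local isometry. It therefore suffices to bound the sectional curvatures of $(\G_2({\Ho}), g_\eta)$, and by homogeneity these only need to be computed at a single base point.

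The main step is to realize $\pi\colon \G_2({\Ho}) \to \mathcal L$ as a Riemannian submersion, where $\mathcal L$ is the 3-dimensional homogeneous space of unpointed geodesic planes in ${\Ho}$ and the fibers of $\pi$ are the leaves of $\FF$. Vertical vectors are then in $T\FF$ and horizontal ones in $T\FF^\perp$; with $g_\eta$ upstairs, the base metric is $\eta^{-2}g_\mathcal L$, where $g_\mathcal L$ is the metric for which $\pi$ is a submersion at $\eta = 1$. A direct computation with Koszul's formula shows that the leaves remain totally geodesic for all $\eta \in (0, 1]$: the only potentially horizontal contribution to $\nabla^\eta_V W$ for vertical $V, W$ comes from the term $g_\eta([V, W], X)$, and this vanishes because $[V, W]$ is vertical by integrability of $T\FF$, while the remaining Koszul terms depended only on the (unchanged) vertical part of the metric. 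O'Neill's formulas then express $K_\eta$ case by case. Vertical planes have curvature $-1$. Horizontal planes on $g_\eta$-unit vectors contribute $K_\mathcal L(\pi_*\sigma) - \tfrac{3}{4}|A_X Y|_\eta^2$, where the base term equals $\eta^2$ times the fixed curvature of $(\mathcal L, g_\mathcal L)$, and $A_X Y = \tfrac{1}{2}V[X, Y]$ evaluated on horizontal vectors of $g_1$-norm $\eta$ is vertical of $g_\eta$-size $O(\eta^2)$, yielding an $O(\eta^4)$ contribution. Mixed planes are controlled analogously through the $A$-tensor. All three contributions are uniformly bounded on $(0, 1]$.

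The main obstacle I anticipate is the careful bookkeeping of the $\eta$-rescaling in Koszul's formula and in O'Neill's $A$- and $T$-tensors: separating vertical from horizontal components of Lie brackets, distinguishing $g_1$-unit from $g_\eta$-unit vectors, and confirming that no negative power of $\eta$ is hidden in the formulas. Once this is done, boundedness follows immediately from the scaling analysis above. A conceptually equivalent alternative would be to write $\G_2({\Ho}) = G/H$ with $G = \text{Isom}({\Ho})$ and $H$ the stabilizer of a pointed plane, express $g_\eta$ as an $H$-invariant inner product on $\mathfrak g/\mathfrak h$, and compute the curvature tensor directly from the Lie bracket; the same scaling structure would appear, with each entry of the curvature tensor a polynomial in $\eta$ with nonnegative powers, hence bounded on $(0, 1]$.
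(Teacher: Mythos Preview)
Your approach is correct but differs from the paper's. The paper gives a short direct computation with a local orthonormal frame $(e_i)$ respecting the splitting (not assumed invariant): writing the structure functions $\alpha_{ijk} = \langle [e_i,e_j],e_k\rangle$ and rescaling $e_i\mapsto e_i^\eta$, one finds $\alpha_{ijk}^\eta=\eta^{n_{ijk}}\alpha_{ijk}$ with $n_{ijk}\ge 0$ whenever $\alpha_{ijk}\neq 0$; the single potentially negative-exponent case, namely $e_i,e_j\in T\FF$ and $e_k\in T\FF^\perp$, has $\alpha_{ijk}=0$ by integrability of $T\FF$. Since the Christoffel symbols and curvature tensor in this frame are polynomial in the $\alpha_{ijk}^\eta$ (and their frame derivatives, which scale the same way), boundedness follows immediately. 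Your O'Neill approach is structurally more transparent --- it isolates exactly where integrability enters (killing the dangerous term in the second fundamental form of the fibres) and gives the geometric picture of a rescaled base with totally geodesic fibres --- at the cost of heavier machinery and more case-by-case bookkeeping. The paper's frame calculation is quicker and more elementary, and is essentially a local version of the $G/H$ Lie-bracket alternative you sketched at the end. Both arguments rest on the same key fact: integrability of $T\FF$. One small point in your writeup: the claim that the leaves remain totally geodesic for all $\eta$ follows from your Koszul observation \emph{together with} the fact that they are already totally geodesic at $\eta=1$ (being fixed-point sets of isometric reflections); the ``remaining Koszul terms'' you identify as $\eta$-independent are not individually zero, only their sum is.
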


\begin{corollary} \label{cor-volume}
There are functions $c(r)$ and $C(r)$ such that for all $\eta \in (0, 1]$ and $r >0$, 
\begin{equation}
c(r) \le  |B_r^\eta(p)|_\eta \le C(r).
\end{equation}
\end{corollary}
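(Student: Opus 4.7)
The plan is to derive both bounds from the standard Riemannian volume comparison theorems, using Lemma~\ref{lem-bounded-sectional} as the only non-trivial input. Throughout, $n = \dim \G_2 = 5$ is fixed, and Lemma~\ref{lem-bounded-sectional} furnishes a constant $K_0 > 0$, independent of $\eta \in (0,1]$, such that the sectional curvatures $K_\eta$ of $\pair{}{}_\eta$ satisfy $|K_\eta| \le K_0$. In particular, the Ricci curvatures satisfy $\operatorname{Ric}_\eta \ge -(n-1)K_0$ uniformly in $\eta$. The bounds $C(r)$ and $c(r)$ will depend only on $K_0$, $n$, and $r$.

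For the upper bound, I would apply the Bishop--Gromov volume comparison theorem to the complete Riemannian manifold $(\G_2(\M), \pair{}{}_\eta)$, obtaining
\[
|B_r^\eta(p)|_\eta \le V^n_{-K_0}(r) =: C(r),
\]
where $V^n_{-K_0}(r)$ denotes the volume of a ball of radius $r$ in the simply connected $n$-manifold of constant sectional curvature $-K_0$. This upper bound is independent of $\eta$ and of $p$, as required.

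For the lower bound, I would apply G\"unther's comparison theorem on a ball of a uniform radius $r_0$ on which the exponential map is a diffeomorphism. The upper bound $K_\eta \le K_0$ already gives conjugate radius $\ge \pi/\sqrt{K_0}$ uniformly in $\eta$. For the injectivity radius of $(\G_2(\M), \pair{}{}_\eta)$, the key observation is that $\pair{}{}_\eta$ pointwise dominates $\pair{}{}$ when $\eta \le 1$: decomposing $v \in T\G_2$ as $v = v^\top + v^\perp$ with $v^\top \in T\FF$ and $v^\perp \in T\FF^\perp$ (these subbundles being orthogonal in both metrics), the bilinear extension of the definition gives
\[
\pair{v}{v}_\eta \;=\; \pair{v^\top}{v^\top} + \eta^{-2}\pair{v^\perp}{v^\perp} \;\ge\; \pair{v}{v}.
\]
Hence every closed loop is at least as long in the $\eta$-metric as in the original metric, so the injectivity radius of $\pair{}{}_\eta$ is bounded below by that of $\pair{}{}$, uniformly in $\eta$. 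Choosing any $r_0 > 0$ smaller than both this injectivity radius and $\pi/\sqrt{K_0}$, G\"unther's theorem gives $|B_r^\eta(p)|_\eta \ge V^n_{K_0}(r)$ for $r \le r_0$, and monotonicity handles $r > r_0$: $|B_r^\eta(p)|_\eta \ge |B_{r_0}^\eta(p)|_\eta \ge V^n_{K_0}(r_0) > 0$. Setting $c(r) := V^n_{K_0}(\min(r, r_0))$ gives the lower bound.

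The main point to watch is the uniform lower bound on the injectivity radius as $\eta \to 0$; a priori, rescaling the metric on one subbundle could shorten closed geodesics and collapse injectivity, but here the rescaling factor $\eta^{-m}$ is at least $1$ for $\eta \in (0,1]$, so the $\eta$-metric globally dominates the original one. Apart from this observation, the argument is routine comparison geometry, and the uniform curvature control of Lemma~\ref{lem-bounded-sectional} does all the real work.
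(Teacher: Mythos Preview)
Your argument is essentially the paper's: both deduce the corollary from the uniform sectional curvature bound of Lemma~\ref{lem-bounded-sectional} via the Rauch/Bishop volume comparison (the paper simply cites Bishop--Crittenden, while you separate Bishop--Gromov for the upper bound from G\"unther for the lower). You are in fact more explicit than the paper in isolating the injectivity-radius hypothesis required for G\"unther's inequality.

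One caveat on that point: the deduction ``every closed loop is at least as long in the $\eta$-metric, hence $\operatorname{inj}_\eta \ge \operatorname{inj}$'' is not literally valid. The domination $d_\eta \ge d$ shows that the $\eta$-systole is bounded below by the original systole, so by Klingenberg any sufficiently short $\eta$-geodesic loop would have to be contractible; but domination alone does not rule out short \emph{contractible} $\eta$-geodesic loops (for general metrics $g' \ge g$ one can have $\operatorname{inj}_{g'} < \operatorname{inj}_g$ even with a conjugate-radius bound). In the present homogeneous setting the conclusion is true and not hard to verify, and the paper's one-line proof glosses over exactly the same issue, so this is a minor imprecision rather than a genuine gap.
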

%
The proofs of Lemma \ref{lem-bounded-sectional} and Corollary \ref{cor-volume} involve a modicum of Riemannian geometry and are deferred until Section \ref{subsec-cv}.

\begin{corollary} \label{cor-balls-balls}
For any $r, R \in \R^+$, there is an $N \equiv N(r, R)$ such that for all $\eta \in (0, 1]$, 
any $R$-ball in the $\eta$-metric contains at most $N$ disjoint $r$-balls (in the $\eta$-metric). 
\end{corollary}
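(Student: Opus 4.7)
The plan is to use a standard volume-packing argument, made possible by the uniform two-sided volume bounds provided by Corollary \ref{cor-volume}.

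Suppose we have $k$ pairwise disjoint balls $B_r^\eta(p_1), \ldots, B_r^\eta(p_k)$, all contained in some $R$-ball $B_R^\eta(p)$. Since these $r$-balls are pairwise disjoint and all sit inside the ambient $R$-ball, additivity of volume (computed with respect to the $\eta$-metric) gives
\begin{equation}
\sum_{i=1}^k |B_r^\eta(p_i)|_\eta \le |B_R^\eta(p)|_\eta.
\end{equation}
By Corollary \ref{cor-volume}, applied uniformly for all $\eta \in (0,1]$, the left-hand side is at least $k \cdot c(r)$ and the right-hand side is at most $C(R)$.

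Therefore $k \le C(R)/c(r)$, and we may take $N(r,R) := \lfloor C(R)/c(r) \rfloor$. This constant depends only on $r$ and $R$ and not on $\eta$, as required. The whole content of the argument is packaged inside Corollary \ref{cor-volume}; there is no real obstacle here beyond citing it correctly.
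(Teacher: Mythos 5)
Your proof is correct and is essentially identical to the paper's: the paper also sets $N(r,R) = C(R)/c(r)$ and relies on exactly this volume-packing argument via Corollary \ref{cor-volume}. You have simply written out the one-line disjointness-plus-volume-bounds computation that the paper leaves implicit.
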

\begin{proof}
We let $N(r, R) = C(R) /c(r)$ where $C$ and $c$ are taken from Corollary \ref{cor-volume}.
\end{proof}

We can now easily prove the main estimate for this section:
\begin{theorem} \label{thm-nice-cover}
For all $r, R \in \R^+$, there is an $N \in \Z^+$ such that for all $\eta \in (0, 1]$,
there is a finite set $F \subset \G_2(\M)$ such that $\G_2(\M) \subset \bigcup_{p \in F} B_r^\eta(p)$, 
and every point of $\G_2(\M)$ lies in $B_R^\eta(p)$ for at most $N$ distinct $p \in F$. 
\end{theorem}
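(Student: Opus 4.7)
The plan is to prove Theorem \ref{thm-nice-cover} by a maximal-packing (Vitali-type) construction, combined with the uniform volume bounds provided by Corollary \ref{cor-volume}. The key point is that the lower bound $c(r)$ and upper bound $C(R)$ on the $\eta$-volume of balls are both independent of $\eta \in (0, 1]$, which is what makes the constant $N$ independent of $\eta$.

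First I would construct $F$ exactly as in the proof of Lemma \ref{lem-vitali}. Namely, choose $F \subset \G_2(\M)$ to be a maximal finite set (with respect to inclusion) such that the balls $\{B_{r/2}^\eta(p)\}_{p \in F}$ are pairwise disjoint. Such a maximal set exists and is finite, since $\G_2(\M)$ has finite total volume in the $\eta$-metric (say at most $V_\eta$), and each disjoint $B_{r/2}^\eta(p)$ contributes at least $c(r/2) > 0$ to that volume. The covering property $\G_2(\M) \subset \bigcup_{p \in F} B_r^\eta(p)$ then follows by the standard Vitali argument: for any $x \in \G_2(\M)$, maximality forces some $p \in F$ with $B_{r/2}^\eta(p) \cap B_{r/2}^\eta(x) \ne \emptyset$, and the triangle inequality for $d_\eta$ gives $x \in B_r^\eta(p)$.

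For the multiplicity bound I would argue as follows. Fix $x \in \G_2(\M)$ and let $p_1, \ldots, p_k \in F$ be the distinct points with $x \in B_R^\eta(p_i)$. Then $d_\eta(x, p_i) < R$ for each $i$, so by the triangle inequality each $B_{r/2}^\eta(p_i)$ is contained in $B_{R + r/2}^\eta(x)$. Since the $B_{r/2}^\eta(p_i)$ are pairwise disjoint by construction of $F$, Corollary \ref{cor-balls-balls} (applied with parameters $r/2$ and $R + r/2$) gives $k \le N(r/2, R + r/2)$. Taking $N = N(r/2, R + r/2)$ finishes the proof, and this bound is independent of $\eta$ and $x$.

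No step here is expected to be a serious obstacle: the real content has already been bundled into Lemma \ref{lem-bounded-sectional} and Corollary \ref{cor-volume} (i.e.\ the uniform comparability of the $\eta$-metrics from the Riemannian-geometry point of view). The only place one needs to be a little careful is in confirming that the maximal packing set $F$ is actually finite for each fixed $\eta$, but this is immediate from the fact that $\G_2(\M)$ is compact and the $\eta$-metric is a genuine Riemannian metric, so it has finite total $\eta$-volume. Conceptually, the theorem is just the standard statement that a Riemannian manifold with uniform two-sided volume bounds on small balls admits covers of controlled multiplicity, transported to the $\eta$-metric family.
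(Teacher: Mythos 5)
Your proposal is correct and follows essentially the same route as the paper: a maximal disjoint packing by $B_{r/2}^\eta$-balls (Lemma \ref{lem-vitali}) gives the cover by $B_r^\eta$-balls, and the multiplicity bound comes from packing disjoint $r/2$-balls into a ball of radius roughly $R+r$ via Corollary \ref{cor-balls-balls}. The only (immaterial) difference is that you use radius $R+r/2$ where the paper uses $R+r$.
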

\begin{proof}
Given $r, R$ and $\eta$, 
by Lemma 2.6,
we can find a finite set $F \subset \G_2(\M)$ such that $B_{r/2}(p)$ for $p \in F$ are mutually disjoint,
and $\G_2(\M) \subset \bigcup_{p \in F} B_r^\eta(p)$.

For any $q \in \G_2(\M)$, if $q \in B_R^\eta(p)$, 
then $B_{r/2}(p) \subset B_{R + r}(q)$.
Therefore $q$ lies in $B_R^\eta(p)$ for at most $N(r/2, r+R)$ distinct $p \in F$, where $N(\cdot, \cdot)$ is given by Corollary \ref{cor-balls-balls}.
\end{proof}

\newcommand{\sm}{s_\M}
\newcommand{\tm}{t_\M}
\newcommand{\uu}{\mathbf u}

Equipped with this theorem we can now prove Lemma \ref{lemma-Dol}.
\begin{proof}[Proof of Lemma \ref{lemma-Dol}]
We take $r \in (0, 1]$ such that $\diam \xi(B_r(p)) \le r_\M/10$, and let $R = 4$.
We then let $C_{\ref{lemma-Dol}} = N_{\ref{thm-nice-cover}}(r, R)$. 
We then observe that for any $\eta >0$ we can apply Theorem \ref{thm-nice-cover} to obtain a finite cover $\Dol$ 
by disks of the form $B_r^\eta(p)$ (for $p$ in a finite set $F$) 
that satisfies Properties 1 and 2 of Lemma \ref{lemma-Dol} for $\epsilon$ sufficiently small.
Property 1 is immediate. 
To see Property 2,
suppose that 
\begin{equation} \label{eq-hbb}
\Hull_{1, \epsilon}(B_r^\eta(p)) \cap B_r^\eta(q) \neq \emptyset .
\end{equation}
Since $\Hull_{1, \epsilon}(B_r^\eta(p)) \subset B_3^\eta(p)$ for $\epsilon$ sufficiently small,
\eqref{eq-hbb} implies that $B_r^\eta(q) \subset B_4^\eta(p)$. 
But the conclusion of Theorem \ref{thm-nice-cover} is that this can happen, given $p \in F$, for at most $C$ values of $q$ in $F$. 

So now we need only find $h \in \N$, $\eta > 0$, and $\epsilon > 0$ that satisfy Property 3. 
Given $c$, we let $h = \ceil{2/c}$. 
We then claim that for $D = B_r^\eta(p) \not\subset\Ne_\delta(\hh)$, 
and $f\from S \to \M$ an $\epsilon$-nearly geodesic map,
we will have $\mu_f(\cl{\Hull_{1,\epsilon}(D)}) \le c$ for $\eta$ and $\epsilon$ sufficiently small given $\delta > 0$ (and independent of $p$).

Note that $\Hull_{1, \epsilon}(B_r^\eta(p)) \subset B_2^\eta(p)$ when $\epsilon$ is sufficiently small given $\eta$
(we can always choose $\epsilon$ after $\eta$),
and, assume, for the sake of contradiction, that there is a  $\delta >0$ and sequences $\eta_n \to 0$, $p_n \in \G_2(\M) \setminus \Ne_\delta(\hh)$, and  $\epsilon_n$-nearly geodesic maps $f_n\from S_n \to \M$, where $\epsilon_n \to 0$, such that 
$$
\mu_{f_n}(B_2^{\eta_n}(p_n)) \ge c.
$$   
Passing to a subsequence, we may assume that $\mu_{f_n}\to \mu$ and $p_n \to p$ with $p \notin \hh$; 
by Lemma \ref{lem-eta-B-P}, $\cl{B_2^{\eta_n}(p_n)} \to \cl{P_2(p)}$ in the Hausdorff metric. 
Letting $P = P_2(p)$ we then have
\begin{equation}
\mu(P) \ge c.
\end{equation}
Since $\mu$ is $\PSLR$ invariant, 
by Ratner's theorem we can write $\mu=\mu_{\hh} + \mu_{\hhh}+\mu_\mathcal{L}$, 
where $\mu_{\hh}$ and $\mu_{\hhh}$ are supported on $\hh$ and $\hhh$ respectively, 
and $\mu_\mathcal{L}$ is a multiple of the Liouville measure. 
Since $p \notin \hh$, we must have $P \cap \hh = \emptyset$, and hence 
\begin{equation} \label{eq-le}
\mu_{\hh}(P) = 0.
\end{equation}
We also have
\begin{equation} \label{eq-L}
\mu_\mathcal{L}(P) = 0 
\end{equation}
because $P$ is infinitely thin.
Finally,
letting $|P| = 2\pi(\cosh 2 - 1) < 8\pi$ be the area of the hyperbolic disk of radius 2, we have
\begin{equation} \label{eq-gg}
\mu_{\hhh}(P) \le \frac{|P|}{4\pi h} < 2/h \le c
\end{equation}
because $P$ can only intersect one totally geodesic surface in $\hhh$, 
and it represents at most the given fraction of the total area of that surface. 
Combining \eqref{eq-le}, \eqref{eq-L}, and \eqref{eq-gg}, we obtain $\mu(D) < c$, a contradiction. 
\end{proof}

\subsection{Estimates of curvature and volume for the $\eta$-metric}
\label{subsec-cv}
We observe that around every point $p$ of $\G_2$ we can find an orthonormal basis $(e_i)$ of vector fields in $T \G_2$ of a neighborhood $U$ of $p$ that respect the splitting. (This basis will not actually be invariant under the isometry group of ${\Ho}$, but this will not be a problem.) We then define the \emph{structure functions} $\alpha_{ijk}$ by $\alpha_{ijk} = \pair {[e_i, e_j]}{e_k}$. Because $\pair{e_i}{e_j}=\delta_{ij}$ is constant, the Koszul formula for $\pair{\nabla_{e_i}{e_j}}{e_k}$ will just be a sum and difference of the $\alpha_{ijk}$. It then follows that the 
curvature tensor $R^{ijkl}$ is bounded when the $\alpha_{ijk}$ are, and then the sectional curvatures are as well. 

So, we take $e_i^\eta = e_i$ for $e_i \in T\FF$, and $e_i^\eta = \eta e_i$ for $e_i \in T\FF^\perp$, so that $(e_i^\eta)$ is an  invariant orthonormal basis for the $\eta$-metric (and respects the splitting). We let $\alpha_{ijk}^\eta$ be the corresponding structure constants. 
We then observe (letting $m(i, j) = m(e_i, e_j)$)
\begin{enumerate}
\item
$\alpha^\eta_{ijk} = 0$ when $e_i, e_j \in T\FF$, and $e_k \in T\FF^\perp$,
\item
$\alpha^\eta_{ijk} = \eta^{m(i, j)-1} \alpha_{ijk}$ when $e_k \in T\FF^\perp$, and
\item
$\alpha^\eta_{ijk} = \eta^{m(i, j)} \alpha_{ijk}$ when $e_k \in T\FF$. 
\end{enumerate}
Hence the exponent for $\eta$ is non-negative whenever $\alpha_{ijk}$ is non-zero.
This completes the proof of Lemma \ref{lem-bounded-sectional}.

Corollary \ref{cor-volume} follows immediately from Lemma \ref{lem-bounded-sectional} and the following comparison theorem (see Corollary 3 in Section 11.9 of the book \cite{b-c}).
\begin{theorem} \label{thm-vol-compare}
Suppose all sectional curvatures have absolute value less than or equal to $K$.
Then the volume of any $r$-ball $B_r(p)$ satisfies
$$
|B_r(p)| \le V(r, -K),
$$
and also, for $r < \pi/K$, 
$$
|B_r(p)| \ge V(r, K),
$$
where $V(r, K)$ denotes the volume of the $r$-ball in constant curvature $K$. 
\end{theorem}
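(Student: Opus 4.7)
The plan is to prove this classical volume comparison via the Rauch comparison theorem, pulling the Riemannian volume form back to $T_p M$ under the exponential map and estimating its Jacobian along each radial geodesic against the Jacobian of the exponential map in a model space of constant curvature. In polar coordinates $(r,\theta)$ on $T_p M$, with $\theta \in S^{n-1}$ and $n = \dim M$, the pullback of the volume form can be written as $A(r,\theta)\,dr\,d\theta$, where $A(r,\theta) = |J_1(r) \wedge \cdots \wedge J_{n-1}(r)|$ and $J_1,\ldots,J_{n-1}$ are Jacobi fields along $\gamma_\theta(r) := \exp_p(r\theta)$ satisfying $J_i(0)=0$ with $\{J_i'(0)\}$ an orthonormal basis of $\theta^\perp \subset T_p M$. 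The model analogue is $A_H(r) = s_H(r)^{n-1}$, where $s_H(r)$ is the model sine function: $\sinh(\sqrt{-H}\,r)/\sqrt{-H}$ for $H<0$, $r$ for $H=0$, and $\sin(\sqrt{H}\,r)/\sqrt{H}$ for $H>0$.

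The key step is then to invoke Rauch in its two standard forms. If the sectional curvatures are bounded below by $H$, then $|J_i(r)| \le s_H(r)$, and by Bishop's monotonicity of $A(r,\theta)/A_H(r)$ one upgrades this to $A(r,\theta) \le A_H(r)$ pointwise; dually, if the sectional curvatures are bounded above by $H$, then $|J_i(r)| \ge s_H(r)$ and $A(r,\theta) \ge A_H(r)$ on the interval where the model has no conjugate points. Applying these two inequalities under $|K_{\mathrm{sec}}| \le K$, with $H = -K$ and $H = K$ respectively, produces the pointwise sandwich $A_K(r) \le A(r,\theta) \le A_{-K}(r)$, valid for the lower inequality up to the first model conjugate time. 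Integrating $\theta$ over $S^{n-1}$ and $r$ over $(0,s)$ then gives $V(s,K) \le \int_0^s \int_{S^{n-1}} A(t,\theta)\,d\theta\,dt \le V(s,-K)$.

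It remains to connect this integrated Jacobian to the genuine ball volume $|B_r(p)|$. The upper bound is immediate from the area formula: since $B_r(p) \subseteq \exp_p(B_r^{T_pM}(0))$ and $\exp_p$ can only cover points of $M$ with positive multiplicity, one has $|B_r(p)| \le \int_0^r\!\int_{S^{n-1}} A(s,\theta)\,d\theta\,ds \le V(r,-K)$, with no injectivity condition required. The lower bound, which is essentially Günther's theorem, is the main obstacle: one must relate the integrated Jacobian on $B_r^{T_pM}(0)$ to the volume of the image, and for this the hypothesis $r < \pi/\sqrt{K}$ (interpreted as in \cite{b-c}) is critical. The argument restricts the integration to the star-shaped domain $D_r \subset B_r^{T_pM}(0)$ on which $\exp_p$ is injective and maps into $B_r(p)$, and uses that Rauch rules out conjugate points inside $B_{\pi/\sqrt{K}}^{T_pM}(0)$, so that a Klingenberg-type analysis of the cut locus (carried out in \cite{b-c}, Section 11.9) forces $D_r$ to be large enough for $\int_{D_r} A \ge V(r,K)$ to persist. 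Combining the two bounds yields the stated theorem.
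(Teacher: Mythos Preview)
Your approach is essentially the same as the paper's: both derive the volume comparison from the Rauch comparison theorem by comparing the volume forms on $r$-spheres via Jacobi field estimates and then integrating, with the details deferred to Bishop--Crittenden \cite{b-c}. The paper gives only a one-paragraph sketch of this reduction, while you fill in considerably more of the standard argument; in particular you correctly flag that the lower bound (G\"unther's inequality) is the delicate half, requiring control of the cut locus and not just the conjugate locus, and you point to the same section of \cite{b-c} for that analysis.
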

Theorem \ref{thm-vol-compare} is an immediate corollary to the Rauch comparison theorem. 
More precisely, Theorem \ref{thm-vol-compare} follows from the analogous comparison theorem for $r$-spheres in the two metrics;
this in turn follows from the analogous comparison between the volume forms of the  Riemannian metrics on the $r$-spheres in the natural coordinates (parametrized by the unit spheres in the tangent spaces at the centers of the balls).
The comparison of the volume forms follows from the comparison of the norms, and this is literally a special case of the Rauch comparison theorem.

\end{document}